\DeclareMathOperator{\im}{Im}
\DeclareMathOperator{\Aut}{Aut}
\DeclareMathOperator{\Ker}{Ker}
\DeclareMathOperator{\rank}{rank}
\theoremstyle{plain}        \newtheorem{thm}{Theorem}
\theoremstyle{plain}        \newtheorem{pro}[thm]{Proposition}
\theoremstyle{plain}        \newtheorem{lem}[thm]{Lemma}
\theoremstyle{plain}        \newtheorem{cor}[thm]{Corollary}
\theoremstyle{plain}        
\theoremstyle{plain}     \newtheorem{rem}{Remark}
\begin{document}
\pagestyle{empty}
\title{\begin{large}
\centerline{\bf Asymptotic total geodesy of local holomorphic curves exiting } 
\vskip 0.1cm
\centerline{\bf a bounded symmetric domain and applications to}
\vskip 0.1cm
\centerline{\bf a uniformization problem for algebraic subsets}
\end{large}}
\author{\textbf{Shan Tai Chan and Ngaiming Mok}}
\date{\text{ }}
\maketitle

\pagestyle{myheadings}
\setcounter{page}{1}

\linespread{1}
\begin{abstract}

\noindent
The current article stems from our study on the asymptotic behavior of holomorphic isometric embeddings of the Poincar\'e disk into bounded symmetric domains. As a first result we prove that any holomorphic curve exiting the boundary of a bounded symmetric domain $\Omega$ must necessarily be asymptotically totally geodesic.  Assuming otherwise we derive by the method of {\it rescaling} a hypothetical holomorphic isometric embedding of the Poincar\'e disk with $\text{\rm Aut}(\Omega')$-equivalent tangent spaces into a tube domain $\Omega' \subset \Omega$ and derive a contradiction by means of the Poincar\'e-Lelong equation. We deduce that equivariant holomorphic embeddings between bounded symmetric domains  must be totally geodesic.  Furthermore, we solve a uniformization problem on algebraic subsets $Z \subset \Omega$. More precisely, if $\check \Gamma\subset \text{\rm Aut}(\Omega)$ is a torsion-free discrete subgroup leaving $Z$ invariant such that $Z/\check \Gamma$ is compact, we prove that $Z \subset \Omega$ is totally geodesic. In particular, letting $\Gamma \subset\text{\rm Aut}(\Omega)$ be a torsion-free cocompact lattice, and $\pi: \Omega \to \Omega/\Gamma =: X_\Gamma$ be the uniformization map, a subvariety $Y \subset X_\Gamma$ must be totally geodesic whenever some (and hence any) irreducible component $Z$ of $\pi^{-1}(Y)$ is an algebraic subset of $\Omega$. For cocompact lattices this yields a characterization of totally geodesic subsets of $X_\Gamma$ by means of bi-algebraicity without recourse to the celebrated monodromy result of Andr\'e-Deligne on subvarieties of Shimura varieties, and as such our proof applies to {\it not necessarily arithmetic} cocompact lattices. In place of the monodromy result of Andr\'e-Deligne we exploit the existence theorem of Aubin and Yau on K\"ahler-Einstein metrics for projective manifolds $Y$ satisfying $c_1(Y) < 0$ and make use of Nadel's semisimplicity theorem on automorphism groups of noncompact Galois covers of such manifolds, together with the total geodesy of equivariant holomorphic isometric embeddings between bounded symmetric domains.
\end{abstract}

\linespread{1.04}
\section{Introduction}
For a bounded symmetric domain $\Omega\Subset \mathbb C^N$ in its Harish-Chandra realization, we denote by $ds_\Omega^2$ its Bergman metric.  As a first motivation for the current article, we are interested in the study of holomorphic isometries $f: (\Omega_1,\lambda ds_{\Omega_1}^2) \to (\Omega_2,ds_{\Omega_2}^2)$, $\lambda > 0$, between bounded symmetric domains.  When $\Omega_1$ is irreducible and of rank $\ge 2$, it follows from the proof of Hermitian metric rigidity that $f$ is necessarily totally geodesic (cf.\,Mok \cite{Mo89}, Clozel-Ullmo \cite{CU03}). The interest lies therefore in the cases where $\Omega_1 \cong \mathbb B^n$, $n \ge 1$, is the complex unit ball. By Mok \cite{Mo12}, it follows from the rationality of Bergman kernels of bounded symmetric domains in Harish-Chandra coordinates that any holomorphic isometry $f: (\mathbb B^n,\lambda ds_{\mathbb B^n}^2) \to (\Omega,ds_{\Omega}^2)$, $\lambda > 0$, must necessarily be a proper holomorphic isometric embedding such that ${\rm Graph}(f) \subset \mathbb B^n \times \Omega$ can be analytically continued to an affine algebraic variety $V \subset \mathbb C^n \times \mathbb C^N$. The first objective in the current article is to study the case where $\Omega_1 = \Delta := \mathbb B^1$, and we prove a more general result ascertaining that an arbitrary local holomorphic curve exiting $\Omega$ must necessarily be asymptotically totally geodesic.  More precisely, we have  

\begin{thm}\label{MainThm}
Let $\Omega\Subset \mathbb C^N$ be a
bounded symmetric domain in its Harish-Chandra realization equipped with the Bergman metric $ds_\Omega^2$.
Let $\mu:U=\mathbb B^1(b_0,\epsilon)\to \mathbb C^N$, $\epsilon>0$, be a holomorphic embedding such that $\mu(U\cap\Delta)\subset \Omega$ and $\mu(U\cap\partial\Delta)\subset \partial\Omega$, where $b_0\in \partial\Delta$.
Denote by $\sigma(z)$ the second fundamental form of $\mu(U\cap\Delta)$ in $(\Omega,ds_\Omega^2)$ at $z=\mu(w)$. Then, for a general point $b\in U\cap\partial\Delta$ we have $\lim_{w\in U\cap \Delta,\;w\to b} \lVert \sigma(\mu(w))\rVert = 0$.
\end{thm}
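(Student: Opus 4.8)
The plan is to argue by contradiction via a rescaling construction that yields, in the limit, a model holomorphic isometry of the Poincar\'e disk which is then excluded by means of the Poincar\'e--Lelong equation. Since $\partial\Omega$ is the union of finitely many $\Aut(\Omega)$-orbits, each a real-analytic submanifold, for a general $b\in U\cap\partial\Delta$ the point $p:=\mu(b)$ lies in the smooth locus of one such orbit and $\mu(U)$ meets $\partial\Omega$ transversally at $p$; from the boundary behaviour of the Bergman metric along the complex normal direction, the induced metric $\mu^{*}ds_\Omega^2=h(w)\,|dw|^2$ then satisfies $h(w)\sim\lambda\,(1-|w|^2)^{-2}$ as $w\to b$ for some $\lambda>0$, i.e.\ it is asymptotic to $\lambda$ times the Poincar\'e metric near $b$. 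Assume the conclusion fails at such a $b$, so that there is a sequence $w_k\to b$ in $U\cap\Delta$ with $\lVert\sigma(\mu(w_k))\rVert\ge c>0$.

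\emph{Rescaling and the limiting isometry.} At $p$ one runs a rescaling procedure: realising $\Omega$ near $p$ as a Siegel domain of the third kind fibred over the boundary component through $p$ and applying affine changes of coordinates adapted to this realisation --- normalising $\mu(w_k)$ to a fixed base point and dilating $\Omega$ along the cone direction --- one arranges, after passing to a subsequence, that the rescaled copies of $\Omega$ converge in the local Hausdorff sense to a domain $\Omega'$ which, after identification, satisfies $\Omega'\subset\Omega$ and is a tube domain over a symmetric cone (hence a bounded symmetric domain). Precomposing with M\"obius transformations $\phi_k$ of $\Delta$ sending $0$ to $w_k$, the induced rescalings $\mu_k$ of $\mu$ are defined on exhausting subdisks of $\Delta$; by the asymptotics of $\mu^{*}ds_\Omega^2$ and Cauchy estimates a subsequence converges locally uniformly to a nonconstant holomorphic map $F\colon\Delta\to\overline{\Omega'}$, and completeness of the limiting Poincar\'e-type metric forces $F(\Delta)\subset\Omega'$. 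As the rescalings act by isometries of the relevant invariant metrics and $\mu^{*}ds_\Omega^2$ is asymptotically $\lambda\,ds_\Delta^2$, the limit $F\colon(\Delta,\lambda\,ds_\Delta^2)\to\Omega'$ is a holomorphic isometric embedding with $\lVert\sigma_F\rVert\equiv\lim_k\lVert\sigma(\mu(w_k))\rVert>0$, hence nowhere totally geodesic; and recentring the rescaling at other points $\mu(w_k')$ of the curve realises the limits of the translations along the curve as automorphisms of $\Omega'$ carrying tangent lines of $F(\Delta)$ to tangent lines of $F(\Delta)$, so that $F$ has $\Aut(\Omega')$-equivalent tangent spaces.

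\emph{The contradiction via Poincar\'e--Lelong.} By the Gauss equation, $\lVert\sigma_F\rVert^2$ equals the holomorphic sectional curvature of $\Omega'$ along the tangent direction of $F(\Delta)$ minus the constant curvature of $\lambda\,ds_\Delta^2$; since the tangent lines of $F(\Delta)$ form a single $\Aut(\Omega')$-orbit and automorphisms preserve curvature, $\lVert\sigma_F\rVert^2\equiv c_0$ is constant, and $c_0>0$. Regarding the holomorphic second fundamental form $\sigma_F$ as a nowhere-vanishing holomorphic section of the line sub-bundle it spans inside $(T^{*(1,0)}\Delta)^{\otimes 2}\otimes N_{F(\Delta)/\Omega'}$, the Poincar\'e--Lelong equation applied to $\sigma_F$ (whose norm is now constant) forces the Chern form of that line bundle to vanish identically. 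Computing this Chern form through the Gauss--Codazzi equations and the explicit curvature tensor of the tube-type bounded symmetric domain $\Omega'$ --- the canonical-bundle factor contributing a strictly positive multiple of the Poincar\'e form that the ambient-curvature and $\sigma_F\otimes\overline{\sigma_F}$ contributions cannot cancel --- one arrives at a contradiction. Hence $\lim_{w\in U\cap\Delta,\,w\to b}\lVert\sigma(\mu(w))\rVert=0$ for a general $b\in U\cap\partial\Delta$.

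\emph{Main obstacle.} Two steps are delicate. First, one must carry out the rescaling so that its limit really is a holomorphic isometric embedding of the Poincar\'e disk, with $\Aut(\Omega')$-equivalent tangent spaces, into a tube domain $\Omega'\subset\Omega$: this uses the asymptotically-Poincar\'e behaviour of the induced metric at a general boundary point, the identification of $\Omega'$ as a tube domain lying inside $\Omega$, and the exclusion of degeneration of $F$ or of escape of $F(\Delta)$ into $\partial\Omega'$. Second --- and this is the heart of the matter --- one must extract the contradiction from the Poincar\'e--Lelong equation: pinning down the relevant Chern/curvature forms precisely, using the homogeneity of the tangent spaces to make $\lVert\sigma_F\rVert^2$ constant, and showing that the explicit curvature of the tube-type domain $\Omega'$ is incompatible with $c_0>0$, all in the face of the non-compactness of $(\Delta,ds_\Delta^2)$.
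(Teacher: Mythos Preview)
Your overall architecture---rescale to produce a limiting holomorphic isometric embedding of the Poincar\'e disk with $\Aut(\Omega')$-equivalent tangent spaces, then derive a contradiction via Poincar\'e--Lelong---matches the paper. But two of the steps, as you present them, do not go through, and they are precisely the two steps you flag yourself as ``delicate''.

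\textbf{The tube domain $\Omega'$ is not produced by the rescaling.} In the paper the rescaling is done by automorphisms $\Phi_k\in\Aut(\Omega)$, so the ambient domain stays $\Omega$ throughout; the limit $\widetilde\mu:(\Delta,m_0 g_\Delta)\to(\Omega,g_\Omega)$ lands in $\Omega$, not in some smaller limit domain. What the rescaling does buy (Proposition~3.3) is that the \emph{normal form} of the unit tangent $\widetilde\mu'(w)/\lVert\widetilde\mu'(w)\rVert$ is constant along $Z=\widetilde\mu(\Delta)$---this is proved by showing that the eigenvalues of the Hermitian form $H_\eta(\alpha,\beta)=R_{\eta\overline\eta\alpha\overline\beta}$ extend real-analytically across $\partial\Delta$ and hence freeze in the limit. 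The tube domain $\Omega'\subset\Omega$ is then found \emph{a posteriori} by an argument that is the technical heart of the paper: one builds a holomorphic subbundle $W\subset T_\Omega|_Z$ with fibres $W_x=T_x(\Omega'_x)$ for invariantly geodesic tube subdomains $\Omega'_x$, proves that its second fundamental form $\tau:T_Z\otimes W\to T_\Omega|_Z/W$ is holomorphic, and then shows $\tau\equiv 0$. The vanishing of $\tau$ requires estimating $\lVert\hat\tau(\zeta)\rVert$ by Kobayashi-metric/distance inequalities on the convex domain $\Omega$ (so that $\lVert d\zeta\rVert\sim\delta(\zeta)$ and $\lVert\epsilon_k(\zeta)\rVert_{g_\Omega}\lesssim\delta(\zeta)^{-1}$), extending $W$ holomorphically across a general boundary point via an alternate Lie-theoretic description $V_x=[[T_x(Z),\mathfrak m^-],T_x(Z)]$, and then iterating the rescaling. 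Only after $\tau\equiv 0$ does a foliation argument on the Grassmann bundle force $Z\subset\Omega'$. Your Siegel-domain dilation with ``rescaled copies of $\Omega$ converging to $\Omega'\subset\Omega$'' is a different mechanism, and as stated it neither explains why the limit domain sits inside $\Omega$ nor why it is of tube type; absent these, the sandwiching $Z\subset\Omega'\subset\Omega$ is not established.

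\textbf{The Poincar\'e--Lelong equation is applied to the wrong section.} The paper does \emph{not} apply Poincar\'e--Lelong to $\sigma_F$. It applies it to the canonical section $s\in\Gamma(\mathbb PT_{\Omega'},L^{-k}\otimes\pi^*E^2)$ whose zero divisor is the $(k{-}1)$-st characteristic variety $\mathcal S_{k-1}(\Omega')$ (this is where tube type is essential: $\mathcal S_{k-1}(\Omega')$ is then a hypersurface). Since the tangent lines of $Z$ have constant normal form of rank $k$, the tautological lift $\hat Z\subset\mathbb PT_{\Omega'}$ is disjoint from $\mathcal S_{k-1}(\Omega')$ and $\lVert s\rVert_o$ is \emph{constant} on $\hat Z$. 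Poincar\'e--Lelong then gives the exact identity $k\,c_1(T_Z,g_{\Omega'}|_Z)=2\,c_1(E)|_Z$, i.e.\ $\kappa_Z\equiv -2/k$, which is the \emph{maximum} holomorphic sectional curvature of $\Omega'$; Gauss now forces $\lVert\sigma'\rVert^2\le 0$. Your proposal---treating $\sigma_F$ as a nowhere-vanishing section of a line subbundle of $(T^*_\Delta)^{\otimes 2}\otimes N$ and asserting that ``the canonical-bundle factor\ldots\ cannot be cancelled''---does not yield a contradiction without precise control of $c_1$ of that normal-line subbundle, and no such control is available from the hypotheses. The constancy of $\lVert\sigma_F\rVert$ alone only says that a certain combination of Chern forms vanishes; it does not by itself pin down $\kappa_Z$.
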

For the last statement we also say for short that $\mu$ is asymptotically totally geodesic at a general point $b \in \partial\Delta$.  
From Mok \cite{Mo12} we deduce readily the asymptotic total geodesy of holomorphically embedded Poincar\'e disks on $\Omega$, as follows. 

\begin{thm}\label{ThmHIAT}Let $f:(\Delta,\lambda ds_\Delta^2)\to (\Omega,ds_\Omega^2)$ be a holomorphic isometric embedding, where $\lambda$ is a positive real constant and $\Omega\Subset \mathbb C^N$ is a bounded symmetric domain in its Harish-Chandra realization.
Then, $f$ is asymptotically totally geodesic at a general point $b\in \partial\Delta$.
\end{thm}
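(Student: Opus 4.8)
The plan is to reduce Theorem~\ref{ThmHIAT} to Theorem~\ref{MainThm}: I would extend $f$ holomorphically across a general boundary point $b_0\in\partial\Delta$, verify that the extension satisfies the hypotheses of Theorem~\ref{MainThm}, and then invoke that theorem. The input I rely on is Mok~\cite{Mo12}, by which the holomorphic isometry $f\colon(\Delta,\lambda ds_\Delta^2)\to(\Omega,ds_\Omega^2)$ is automatically a \emph{proper} holomorphic isometric embedding whose graph $\mathrm{Graph}(f)\subset\Delta\times\Omega$ extends to a one-dimensional affine algebraic variety $V\subset\mathbb C\times\mathbb C^N$.

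First I would observe that the first projection $\pi_1\colon V\to\mathbb C$ is generically one-to-one, since it restricts to a biholomorphism of $\mathrm{Graph}(f)$ onto $\Delta$, hence it is a birational morphism; therefore $f$ is given in each coordinate by a rational function on $\mathbb C$, and so there is a finite set $E\subset\mathbb C$ off which $f$ extends holomorphically. After adjoining to $E$ the finitely many zeros near $\overline\Delta$ of the extension of $f'$, I would pick $b_0\in\partial\Delta\setminus E$ and a small disk $U=\mathbb B^1(b_0,\epsilon)$ on which $f$ extends to a holomorphic map $\mu\colon U\to\mathbb C^N$ with $\mu'\neq 0$ throughout; shrinking $\epsilon$, $\mu$ is a holomorphic embedding. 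Next I would check the boundary hypothesis: $\mu(U\cap\Delta)=f(U\cap\Delta)\subset\Omega$ by construction, and for $b\in U\cap\partial\Delta$ continuity of $\mu$ gives $\mu(b)\in\overline\Omega$; were $\mu(b)$ in the interior of $\Omega$, a small closed ball about $\mu(b)$ inside $\Omega$ would be a compact set meeting $f(w)$ for $w\in\Delta$ arbitrarily close to $b$, contradicting properness of $f$. Hence $\mu(U\cap\partial\Delta)\subset\partial\Omega$, so Theorem~\ref{MainThm} applies to $\mu$ and gives $\lim_{w\in U\cap\Delta,\;w\to b}\lVert\sigma(\mu(w))\rVert=0$ for a general $b\in U\cap\partial\Delta$, where $\sigma$ is the second fundamental form of $\mu(U\cap\Delta)$ in $(\Omega,ds_\Omega^2)$.

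To finish I would note that, $f$ being an isometric immersion, the second fundamental form of $f$ at $w$ is by definition the second fundamental form of the complex curve $f(\Delta)\subset(\Omega,ds_\Omega^2)$ at $f(w)$, which on $U\cap\Delta$ coincides with $\sigma(\mu(w))$ because $\mu=f$ there; thus the limit just obtained is precisely the assertion that $f$ is asymptotically totally geodesic at $b$. Letting $b_0$ range over the complement of a finite subset of $\partial\Delta$, with the conclusion holding for each such $b_0$ off a null subset of $U\cap\partial\Delta$, yields asymptotic total geodesy of $f$ at a general point of $\partial\Delta$. The substance of the argument is carried entirely by Theorem~\ref{MainThm}; the construction of $\mu$ is routine once Mok's algebraicity-and-properness theorem is in hand. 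The one step I expect to require (minor) care is pinning down $\mu(b)\in\partial\Omega$: this genuinely uses properness of $f$, and not merely the algebraicity of $\mathrm{Graph}(f)$, since an algebraic extension of the graph could a priori carry part of $\partial\Delta$ into the interior of $\Omega$.
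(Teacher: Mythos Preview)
Your proposal is correct and follows essentially the same approach as the paper, which simply states that Theorem~\ref{ThmHIAT} is deduced ``readily'' from Theorem~\ref{MainThm} via Mok~\cite{Mo12}. You have supplied precisely the details the paper leaves implicit: the local holomorphic extension across a general boundary point (which the paper invokes repeatedly elsewhere directly from \cite{Mo12}), and the verification $\mu(U\cap\partial\Delta)\subset\partial\Omega$ via properness.
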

Theorem \ref{ThmHIAT} was stated in \cite[Theorem 3.5.1]{Mo11} where it was indicated that the proof relies on the Poincar\'e-Lelong equation. Then, Mok \cite{Mo14} obtained an elementary proof of the special case of Theorem \ref{MainThm} where the local holomorphic curve exits at a smooth boundary point, i.e., at $p \in {\rm Reg}(\partial\Omega)$, and the write-up of a complete proof of Theorem \ref{ThmHIAT} was delayed in part since the second author was searching for a proof along the lines of argument of \cite{Mo14}.  In joint efforts towards that goal we soon realized that the geometry of local holomorphic curves exiting other strata of $\partial\Omega$ is much more subtle and a proof using {\it the rescaling argument\,} and the Poincar\'e-Lelong equation remains methodologically the most useful for the study of holomorphic isometries. This resulted in the write-up of the proof of Theorem \ref{MainThm} in the current article and a substantial new application of Theorem \ref{ThmHIAT} to a uniformization problem on bounded symmetric domains arising from functional transcendence theory given in Section \ref{Sec:5}. 

The rescaling argument, which was discovered by Wong \cite{Won77} and applied to characterize the complex unit ball as the strictly pseudoconvex domain with smooth boundary, unique up to biholomorphic equivalence, admitting an infinite number of automorphisms, is currently made use of in the study of uniformization problems related to the hyperbolic Ax-Lindemann conjecture.
The latter conjecture, which asserts that for a torsion-free arithmetic lattice $\Gamma \subset {\rm Aut}(\Omega)$, the Zariski closure of the image of an algebraic subset $S \subset \Omega$ under the uniformization map $\pi: \Omega \to \Omega/\Gamma =: X_\Gamma$ is necessarily totally geodesic, has been established by Klingler-Ullmo-Yafaev \cite{KUY16} (after Ullmo-Yafaev \cite{UY14} in the compact case and Pila-Tsimerman \cite{PT14} in the Siegel modular case) using methods of o-minimality in model theory in combination with methods from Hodge theory and complex differential geometry.  However, when the arithmeticity assumption on the lattice $\Gamma$ is dropped, it has so far not been possible to adapt the methods of the aforementioned articles to the problem.  In the rank-1 case, the approach of Mok \cite{Mo10} \cite{Mo18} using methods from several complex variables, algebraic geometry and complex differential geometry has yielded a resolution of the analogous conjecture in the affirmative for not necessarily arithmetic lattices, and a key point of the method is the rescaling argument applied to an irreducible component $Z$ of the preimage of the Zariski closure $\overline{\pi(S)}^{\mathscr Z\!ar}$ 
with respect to the uniformization map $\pi: \Omega \to X_\Gamma$.

Here for bounded symmetric domains $\Omega$ of arbitrary rank we give for the first time a geometric application of the rescaling argument in the proof of Theorem \ref{MainThm} for local holomorphic curves $C$  exiting $\partial\Omega$. We do this by pulling back $C$ by a divergent sequence of automorphisms of $\Omega$ to yield by taking limits of subvarieties the image of a holomorphic isometric embedding of the Poincar\'e disk.  Arguing by contradiction, in the event that Theorem \ref{MainThm} fails, by rescaling we construct holomorphically embedded Poincar\'e disks $Z$ which are closed to being homogeneous, e.g., the norm of the second fundamental form of $Z \subset \Omega$ can be made to be a nonzero constant, in order to derive a contradiction by means of the Poincar\'e-Lelong equation.  The latter equation was applied in Mok \cite{Mo02} for the characterization of totally geodesic holomorphic curves in the case of tube domains.  Exploiting the parallelism of the curvature tensor on bounded symmetric domains and estimates of the Kobayashi metric and the Kobayashi distance on bounded convex domains (cf.\,Mercer \cite{Me93}) we solve in this article a new type of integrability problem by sandwiching a tube domain between $Z$ and $\Omega$, thereby allowing us to apply the Poincar\'e-Lelong equation.  

It should be noted that, in view of the construction in Mok \cite{Mo16} of nonstandard holomorphic isometric embeddings of the complex unit ball $\mathbb B^n$ into an irreducible bounded symmetric domain of rank $\ge 2$ by means of varieties of minimal rational tangents, the analogue of Theorem \ref{MainThm} fails in general when local holomorphic curves are replaced by local complex submanifolds of dimension $\ge 2$.  But precisely Theorem \ref{MainThm} as it stands is enough to imply that any equivariant holomorphic embedding between bounded symmetric domains must be totally geodesic (cf.\,Theorem \ref{ThmEquiv}), a result which in the cases of classical domains was due to Clozel \cite{Cl07}, and we will make use of the result to give an application of Theorem \ref{MainThm} to a uniformization problem on algebraic subsets of bounded symmetric domains which is a first step towards an affirmative resolution of the Ax-Lindemann conjecture for not necessarily arithmetic lattices in the (locally reducible) higher rank case.  
 
\begin{thm}\label{ThmBialg}
Let $\Omega \Subset \mathbb C^N$ be a bounded symmetric domain in its Harish-Chandra realization, $\Gamma \subset \mbox{\rm Aut}(\Omega)$ be a not necessarily arithmetic torsion-free cocompact lattice. Write $X_\Gamma := \Omega/\Gamma$, $\pi: \Omega \to X_\Gamma$ for the uniformization map. Let $Y \subset X_\Gamma$ be an irreducible subvariety, and $Z \subset \Omega$ be an irreducible component of $\pi^{-1}(Y)$.  Suppose $Z \subset\Omega$ is an algebraic subset. Then, $Z \subset \Omega$ is a totally geodesic complex submanifold.
\end{thm}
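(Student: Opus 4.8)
The plan is to recognise $Z$ as a bounded symmetric domain and then apply the total geodesy of equivariant holomorphic embeddings between bounded symmetric domains (Theorem \ref{ThmEquiv}). The first step is a reduction to an equivariant formulation. Put $\check\Gamma := \{\gamma \in \Gamma : \gamma Z = Z\}$. Since $\pi$ is an unramified covering and the irreducible components of the analytic set $\pi^{-1}(Y)$ form a $\Gamma$-invariant locally finite family, $\check\Gamma$ is a torsion-free discrete subgroup of $\Aut(\Omega)$, the restriction $\pi|_Z$ is a local biholomorphism that induces a finite surjection $Z/\check\Gamma \to Y$, and hence $Z/\check\Gamma$ is compact. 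It therefore suffices to prove: if $Z \subset \Omega$ is an algebraic subset invariant under a torsion-free discrete subgroup $\check\Gamma \subset \Aut(\Omega)$ with $Z/\check\Gamma$ compact, then $Z$ is a totally geodesic complex submanifold --- Theorem \ref{ThmBialg} being the case $\check\Gamma \subset \Gamma$.

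I would first show that $Z$ is nonsingular. If $p \in Z$ were a singular point, one applies a divergent sequence in $\Aut(\Omega)$ carrying $\check\Gamma$-translates of $p$ to a fixed base point and passes to the limit of the resulting subvarieties --- legitimate because automorphisms of a bounded symmetric domain act by rational maps of bounded degree on the ambient $\mathbb C^N$, so these subvarieties have bounded degree --- obtaining an algebraic subset of $\Omega$ that contains the affine tangent cone $C_pZ$ through its vertex, which is impossible in the bounded domain $\Omega$ unless $\operatorname{mult}_p Z = 1$. Thus $M := Z/\check\Gamma$ is a compact complex manifold; moreover, by the Gauss equation the $\check\Gamma$-invariant K\"ahler metric $ds_\Omega^2|_Z$ has holomorphic sectional curvature bounded above by that of $(\Omega, ds_\Omega^2)$, hence by a negative constant, whence $c_1(M) < 0$ by the theorem of Wu--Yau relating negative holomorphic sectional curvature to ampleness of the canonical bundle. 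By the Aubin--Yau theorem $M$ then carries a K\"ahler--Einstein metric of negative Ricci curvature, and its pull-back $\omega_Z$ is the unique complete K\"ahler--Einstein metric on $Z$ of that Einstein constant, hence is invariant under every biholomorphism of $Z$; since a closed complex submanifold of a bounded domain is taut, $\Aut(Z)$ is a real Lie group acting properly and isometrically on $(Z, \omega_Z)$. Applying Nadel's semisimplicity theorem to the noncompact Galois cover $Z \to M$ of the compact manifold $M$ with $c_1(M) < 0$, the identity component $G := \Aut(Z)^\circ$ is semisimple without compact factors.

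The decisive step is to upgrade $G$ to a group acting transitively on $Z$. Once this is done, $(Z, \omega_Z)$ is a homogeneous K\"ahler--Einstein manifold whose automorphism group is semisimple; the isotropy group is compact, and since $Z$ carries no positive-dimensional compact complex submanifold it must be a maximal compact subgroup, forcing $Z$ to be a Hermitian symmetric space of noncompact type, i.e.\ a bounded symmetric domain. It is exactly here that the algebraicity of $Z \subset \Omega$ must be exploited, through the rescaling argument of the present paper: running it at a general boundary point $b \in \overline Z \cap \partial\Omega$, the local holomorphic curves in $Z$ exiting $\partial\Omega$ at $b$ are asymptotically totally geodesic by Theorem \ref{MainThm}, and pulling $Z$ back by a divergent sequence in $\Aut(\Omega)$ adapted to $b$ produces a limit algebraic subset of $\Omega$ along which the cocompact action of $\check\Gamma$ ``thickens'' to a positive-dimensional connected group of automorphisms of $\Omega$ preserving that limit; this limit is then analysed by the same Aubin--Yau and Nadel mechanism and shown to be homogeneous, and transporting the extra symmetry back one concludes that $G$ itself is transitive on $Z$. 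With $Z$ recognised as a bounded symmetric domain, $\check\Gamma$ is a torsion-free cocompact lattice in $\Aut(Z)$ and the inclusion $\iota : Z \hookrightarrow \Omega$ is an embedding of bounded symmetric domains equivariant under $\check\Gamma$ acting through $\Aut(\Omega)$, so Theorem \ref{ThmEquiv} yields that $\iota$ is totally geodesic, completing the proof.

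I expect the main obstacle to be precisely the passage from semisimplicity to transitivity of $G$: making the rescaled limits of the algebraic sets behave well --- boundedness of degree, nondegeneracy of the limit, and persistence of a large automorphism group together with a compact quotient of negative first Chern class, so that Aubin--Yau and Nadel apply to the limit as well --- and then transferring the resulting homogeneity back to $Z$ is where the full force of the asymptotic total geodesy of Theorem \ref{MainThm}, together with the Poincar\'e--Lelong and tube-domain techniques developed earlier, is needed. By contrast, the reduction to the equivariant statement, the nonsingularity of $Z$, and the implications from $c_1(M) < 0$ through the Aubin--Yau theorem to Nadel's semisimplicity theorem should be comparatively formal.
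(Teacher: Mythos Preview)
Your reduction to the equivariant statement (defining $\check\Gamma$ and passing to $Z/\check\Gamma$ compact) matches the paper exactly, and your invocation of Aubin--Yau and Nadel is on target. However, the ``decisive step'' --- upgrading semisimplicity to transitivity --- is where your approach diverges from the paper's and contains a genuine gap.

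The paper does \emph{not} rescale $Z$ at boundary points or use Theorem~\ref{MainThm} on $Z$ itself. Instead it works throughout with the stabiliser $H_0 \subset G_0 = \Aut_0(\Omega)$ of $Z$ (not with $\Aut(Z)^\circ$), complexifies to $H \subset G = \Aut_0(X_c)$, and proves by a plurisubharmonic-function and maximum-principle argument (Proposition~\ref{density}) that $Z$ is an irreducible component of the orbit $Hx \cap \Omega$. This simultaneously gives nonsingularity of $Z$ and the crucial dimension bound $\dim_{\mathbb R}(H_0x) \ge \dim_{\mathbb C}(Z)$. Nadel then makes $H_0$ semisimple of noncompact type, so $S := H_0x$ is an embedded Riemannian symmetric space. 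To force $S = Z$ the paper rules out the totally real and mixed cases by constructing from the $H$-homogeneity of $\mathcal O = Hx$ a holomorphic ${\rm O}(p;\mathbb C)\times{\rm GL}(s-p;\mathbb C)$-structure on a finite cover $\hat Y$ of $\check Y$; the ${\rm O}(p;\mathbb C)$ piece forces $c_1$ of a direct summand of $T_{\hat Y}$ to vanish, contradicting the strict slope inequality coming from polystability of $T_{\hat Y}$ with respect to the K\"ahler--Einstein metric. Only Theorem~\ref{ThmEquiv} (hence Theorem~\ref{ThmHIAT} indirectly) is used, applied to the $H_0$-equivariant embedding $S \hookrightarrow \Omega$ once $S = Z$ is known.

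Your proposed mechanism --- rescaling $Z$ along divergent sequences in $\Aut(\Omega)$, invoking Theorem~\ref{MainThm} on curves exiting $\partial\Omega$, and ``transporting homogeneity back'' --- is too vague to be a proof, and it is unclear how asymptotic total geodesy of \emph{curves} yields transitivity on a higher-dimensional $Z$ (the paper explicitly warns that the analogue of Theorem~\ref{MainThm} fails above dimension one). More seriously, even granting that $\Aut(Z)^\circ$ acts transitively, your final appeal to Theorem~\ref{ThmEquiv} does not go through as stated: that theorem requires a homomorphism $\Aut_0(D) \to \Aut_0(\Omega)$ of connected Lie groups, whereas you only have equivariance under the discrete group $\check\Gamma$. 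There is no a priori reason why automorphisms of $Z$ extend to automorphisms of $\Omega$; the paper sidesteps this precisely by working with $H_0 \subset G_0$ from the start, so that the needed homomorphism into $\Aut_0(\Omega)$ is the inclusion.
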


The analogue of Theorem \ref{ThmBialg} in the case of arithmetic and not necessarily cocompact lattices $\Gamma \subset {\rm Aut}(\Omega)$ was established by Ullmo-Yafaev \cite{UY11}, and that gives the characterization of totally geodesic subsets of $X_\Gamma$ as the unique bi-algebraic subvarieties, thus yielding a reduction of the hyperbolic Ax-Lindemann conjecture.  The proof of \cite{UY11} relies heavily on the result of Andr\'e-Deligne \cite{An92} ascertaining the Zariski density of the monodromy representation of the fundamental group of an algebraic subvariety of $X_\Gamma$ unless it is contained in a proper totally geodesic algebraic subvariety, a deep result which relies on Hodge theory and which is in general not available for nonarithmetic lattices.  In its place we will deduce Theorem \ref{ThmBialg} from  Theorem \ref{ThmEquiv}, by a proof which relies in the first place on the semisimplicity theorem of Nadel \cite{Na90} on the automorphism groups of universal covering spaces of compact complex manifolds with ample canonical line bundle.  

Using the maximum principle for plurisubharmonic functions and an adaptation of \cite{Na90} we deduce that $Z \subset \Omega$ is a nonsingular complex submanifold, and the identity component $H_0$ of the subgroup of ${\rm Aut}_0(\Omega)$ which stabilizes $Z$ is a positive-dimensional real semisimple Lie group without compact factors.
Considering any $H_0$-orbit $S$ in $Z$, the proof will have followed from Theorem \ref{ThmEquiv} if we can show that $S=Z \subset \Omega$ is a Hermitian symmetric space of the semisimple and noncompact type.
We prove that this is indeed the case by means of cohomological arguments using the compactness of $Y\subset X_\Gamma$.
Using such arguments, we show on the one hand that an $H_0$-orbit $S$ in $Z$ must have real dimension equal to $\dim_{\mathbb R}(Z)$ and thus $S=Z\subset \Omega$ is complex-analytic, and on the other hand that $S\cong H_0/L$ for some maximal compact subgroup $L$ of $H_0$.
As a consequence, $S=Z\subset \Omega$ is a Hermitian symmetric space of the semisimple and noncompact type, and Theorem \ref{ThmEquiv} applies to yield Theorem \ref{ThmBialg}.
Our arguments actually yield a more general result (Theorem \ref{ThmUnif}) in which $\Gamma\subset{\rm Aut}(\Omega)$ is any torsion-free discrete subgroup and $Y\subset X_\Gamma=\Omega/\Gamma$ is a compact complex-analytic subvariety.

For the link between the study of holomorphic isometries and uniformization problems we refer the reader to the expository article Mok \cite{Mo18}.   

\noindent\textbf{Acknowledgment.}
Research work done by the second author for the current article has been supported by GRF grant 17301518 of the Hong Kong Research Grants Council.
The authors would like to thank the referee for helpful suggestions.

\section{Preliminaries}
Let $\Omega\Subset \mathbb C^N$ be an irreducible bounded symmetric domain of rank $r$.
We may identify $\Omega\cong G_0/K=:X_0$ as a Hermitian symmetric space $X_0$ of the noncompact type, where $G_0=\Aut_0(\Omega)$ and $K\subset G_0$ is the isotropy subgroup at ${\bf 0}\in \Omega$ (cf.\,\cite{Wol72}, \cite{Mo14}).
We follow some basic terminology introduced in \cite{Wol72} (cf.\,\cite{Mo89}, \cite{Mo14}).
Let $G$ be the complexification of $G_0$ and $\mathfrak g$ be the complex Lie algebra of $G$. 
Let $\mathfrak g_0\subset \mathfrak g$ be the real Lie algebra of $G_0$, which is a noncompact real form of $\mathfrak g$, and $\mathfrak k\subset \mathfrak g_0$ be the Lie algebra of $K$.
Fixing a Cartan subalgebra $\mathfrak h$ of $\mathfrak k$, the complexification $\mathfrak h^{\mathbb C}$ of $\mathfrak h$ lies in the complexification $\mathfrak k^{\mathbb C}$ of $\mathfrak k$. Then, $\mathfrak h^{\mathbb C}\subset \mathfrak g$ is also a Cartan subalgebra of $\mathfrak g$, and the set of all roots of $\mathfrak g$ lies in $\sqrt{-1}\mathfrak h^*$.
Let $\Delta_M^+$ (resp.\,$\Delta_M^-$) be the set of noncompact positive (resp.\,negative) roots as a subset of the set of all roots of $\mathfrak g$. Then, we have $\mathfrak m^+ = \bigoplus_{\varphi \in \Delta_M^+} \mathbb C e_\varphi$ and $\mathfrak g_\varphi = \mathbb C e_\varphi$ with $e_\varphi$ being of unit length with respect to the canonical K\"ahler-Einstein metric $h$.
Moreover, we have $\mathfrak m^-=\bigoplus_{\varphi \in \Delta_M^-} \mathbb C e_\varphi$ and the compact dual Hermitian symmetric space $X_c=G/P$ of $X_0$, where $P$ is the parabolic subgroup of $G$ corresponding to the parabolic subalgebra $\mathfrak p:=\mathfrak k^{\mathbb C}\varoplus\mathfrak m^-$.
We let $\Psi=\{\psi_1,\ldots,\psi_r\}$ be a maximal strongly orthogonal set of noncompact positive roots.
From the Polydisk Theorem (cf.\,\cite{Wol72}, \cite{Mo14}), there is a maximal polydisk $\Delta^r \cong \Pi\subset \Omega$ given by
$\Pi=\left(\bigoplus_{j=1}^r \mathfrak g_{\psi_j}\right)\cap \Omega$ such that $(\Pi,h|_\Pi)\subset (\Omega,h)$ is totally geodesic and $\Omega = \bigcup_{\gamma\in K} \gamma\cdot \Pi$.

For $\Lambda\subset \Psi$ we let $\mathfrak g_\Lambda=[\mathfrak l_\Lambda,\mathfrak l_\Lambda]$ be the derived algebra of $\mathfrak l_\Lambda:=\mathfrak h^{\mathbb C} + \sum_{\phi \perp \Psi\smallsetminus \Lambda}\mathfrak g_\phi$, where $\perp$ means the orthogonality with respect to the metric induced by the Killing form of $\mathfrak g$.
Then, $\mathfrak g_{\Lambda,0}:=\mathfrak g_0\cap \mathfrak g_\Lambda$ is a real form of $\mathfrak g_\Lambda$ (cf.\,Wolf \cite[p.\,287]{Wol72}).
Letting $G_{\Lambda,0}$ be the connected Lie subgroup of $G_0$ for $\mathfrak g_{\Lambda,0}$, we define the orbit $X_{\Lambda,0}:=G_{\Lambda,0}(o)\subset X_0=G_0/K$, where $o\in X_0$ is the base point identified with ${\bf 0}\in \Omega\cong X_0$.
Write $\mathfrak m_\Lambda^+:=\mathfrak m^+\cap \mathfrak g_{\Lambda}$.
Note that we have the Harish-Chandra embedding $\xi:\mathfrak m^+\to X_c=G/P$.
Denote by $\xi_\Lambda$ the restriction of $\xi$ to $\mathfrak m_\Lambda^+$.
The sets $\Omega_\Lambda:=\xi_\Lambda^{-1}(X_{\Lambda,0})\Subset \mathfrak m_\Lambda^+$ are called \emph{characteristic subdomains} of $\Omega=\xi^{-1}(X_0)$, which are also irreducible bounded symmetric domains of rank $|\Lambda|$ by \cite[pp.\,287--290]{Wol72}.
Moreover, $\Omega_\Lambda$ are also called the $|\Lambda|$-th characteristic subdomains of $\Omega$.
Actually, Wolf \cite[p.\,292]{Wol72} classified all the characteristic subdomains $\Omega_\Lambda$ of any irreducible bounded symmetric domain $\Omega$. We refer the readers to Mok-Tsai \cite{MT92} for details.

Let $X_c=G/P$ be a Hermitian symmetric space of the compact type and $h_c$ be a canonical K\"ahler metric on $X_c$, where $G=\mathrm{Aut}(X_c)$.
In Tsai \cite{Ts93}, a complex submanifold $M\subset X_c=G/P$ is said to be an \emph{invariantly geodesic submanifold} of $X_c$ if and only if $\varphi(M)$ is a totally geodesic submanifold of $(X_c,h_c)$ for any $\varphi\in G=\mathrm{Aut}(X_c)$. Let $(X_0,h)$ be the noncompact dual Hermitian symmetric space of $(X_c,h_c)$ and $X_0\subset X_c$ be the Borel embedding. We have the bounded symmetric domain $\Omega:=\xi^{-1}(X_0)$ corresponding to $X_0$ and we identify $\Omega\cong X_0\subset X_c$. A complex submanifold $\Sigma\subset \Omega$ of the bounded symmetric domain $\Omega$ is said to be an invariantly geodesic submanifold if and only if there exists an invariantly geodesic submanifold $M\subset X_c$ such that $M$ contains $\Sigma$ as an open subset (see Mok \cite[p.\,138]{Mo08}).
Then, any characteristic symmetric subdomain of $\Omega$ is an invariantly geodesic submanifold of $\Omega$ by \cite[Lemma 2.1]{Mo08}.
In addition, Tsai \cite[Proof of Proposition 4.6]{Ts93} showed that for any irreducible bounded symmetric domain $\Omega$ of rank $r\ge 2$, all invariantly geodesic submanifolds of $\Omega$ are irreducible bounded symmetric domains of rank $\le r$.
More generally, let $\Omega=\Omega_1\times\cdots \times \Omega_m$ be a reducible bounded symmetric domain with irreducible factors $\Omega_j$, $1\le j\le m$, any invariantly geodesic submanifold $\Omega'$ of $\Omega$ is of the form $\Omega'=\Omega'_1\times\cdots \times \Omega'_m$, where each $\Omega'_j \subseteq \Omega_j$ is $\Omega_j$ itself, or an invariantly geodesic submanifold of $\Omega_j$, or of dimension $0$.

\subsection{Canonical K\"ahler-Einstein metric on irreducible bounded symmetric domains}\label{Sec:2.1}
Given an irreducible bounded symmetric domain $\Omega\Subset \mathbb C^N$ in its Harish-Chandra realization, denote by $g_\Omega$ the canonical K\"ahler-Einstein metric on $\Omega$ normalized so that minimal disks of $\Omega\cong G_0/K$ are of constant Gaussian curvature $-2$.
Note that the Bergman kernel of $\Omega$ may be written as
\[ K_\Omega(z,z) = {1\over \mathrm{Vol}(\Omega)} h_\Omega(z,z)^{-(p(\Omega)+2)}, \]
where $z=(z_1,\ldots,z_N)$, $h_\Omega(z,z)$ is some polynomial in ($z_1$,\,$\ldots$,\,$z_N$, $\overline{z_1}$,$\ldots$,$\overline{z_N}$) with $h_\Omega({\bf 0},z)\equiv 1$, $\mathrm{Vol}(\Omega)$ is the Euclidean volume of $\Omega$ in $\mathbb C^N$ with respect to the standard Euclidean metric on $\mathbb C^N$ and $p(\Omega):=p(X_c)=\dim_{\mathbb C} \mathscr C_o(X_c)$ is the complex dimension of the VMRTs $\mathscr C_o(X_c)$ of $X_c\cong G_c/K$ at $o=eK$.
Here $G_c$ is a compact real form of $G=(G_0)^{\mathbb C}$ (cf.\,\cite{Mo89}).
Then, the K\"ahler form $\omega_{g_\Omega}$ with respect to $g_\Omega$ on $\Omega$ is given by
$\omega_{g_\Omega} = \sqrt{-1}\partial\overline\partial (-\log(-\rho))$,
where $\rho(z):=-h_\Omega(z,z)$.
\begin{lem}[cf.\,\cite{Mo14, Mo16}]\label{LemAsGC1}
Let $\mu: U\to \mathbb C^N$ be a holomorphic embedding such that $\mu(U\cap \Delta)\subset \Omega$ and $\mu(U\cap \partial\Delta)\subset \partial\Omega$, where $U\subset \mathbb C$ is an open neighborhood of some point $b_0\in \partial\Delta$ and $\Omega$ is an irreducible bounded symmetric domain of rank $r\ge 2$.
Assume $U\cap\partial\Delta$ is connected.
There is an integer $m\ge 1$ such that for a general point $b\in U\cap \partial\Delta$, $(U\cap \Delta, \mu^*g_\Omega|_{U\cap \Delta})$ is asymptotically of Gaussian curvature $-{2\over m}$ along $U_b\cap\partial\Delta$ for some open neighborhood $U_b$ of $b$ in $U$.
More precisely, there is an integer $m$ such that, denoting by $\kappa(w)$ the Gaussian curvature of $(U\cap \Delta, \mu^*g_\Omega|_{U\cap \Delta})$ at $w\in U\cap \Delta$, we have
\[ \kappa(w)=-{2\over m}+ O(\delta(w)^2) \]
as $w\to b$ for a general point $b\in U\cap \partial\Delta$, where $\delta(w)=1-|w|$ for $w\in \Delta$.
\end{lem}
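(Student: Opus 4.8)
\emph{Proof strategy.} The plan is to read off the asymptotic Gaussian curvature of $\mu^*g_\Omega|_{U\cap\Delta}$ directly from the explicit K\"ahler potential $-\log(-\rho)$ of $g_\Omega$, where $\rho=-h_\Omega$ and $h_\Omega$ is the polynomial of the preceding subsection, after restriction to the curve $\mu(U\cap\Delta)$. Accordingly I first set $\phi(w):=h_\Omega\bigl(\mu(w),\overline{\mu(w)}\bigr)$ for $w\in U$. Since $h_\Omega$ is a polynomial in $(z,\bar z)$ and $\mu$ is holomorphic, $\phi$ is real-analytic on $U$; it is strictly positive on $U\cap\Delta$ because $\mu(U\cap\Delta)\subset\Omega$, and it vanishes identically on $U\cap\partial\Delta$ because $\mu(U\cap\partial\Delta)\subset\partial\Omega\subset\{h_\Omega=0\}$. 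In particular $\phi\not\equiv0$, and $t(w):=1-|w|^2$ is, after shrinking $U$ away from $0$, a real-analytic defining function for $\partial\Delta$ on $U$ with nowhere-vanishing differential. The real-analytic division theorem then yields $\phi=t^m\,\phi_m$ for a well-defined integer $m\ge1$ and a real-analytic function $\phi_m$ on $U$ with $\phi_m|_{U\cap\partial\Delta}\not\equiv0$; as $U\cap\partial\Delta$ is a connected $1$-manifold, $\phi_m$ is nonvanishing there off a discrete subset $D$. This $m$ is the integer in the statement, and connectedness of $U\cap\partial\Delta$ is precisely what makes it unambiguous.

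Next, at a general point $b\in U\cap\partial\Delta$, meaning $b\notin D$, I put $\psi:=\phi_m$ near $b$; on a small neighbourhood $U_b$ of $b$ the function $\psi$ is real-analytic and, since $\phi>0$ for $t>0$, positive, with $\phi=t^m\psi$ on $U_b$. Restricting $-\log(-\rho)$ to $\mu(U_b\cap\Delta)$ gives the local K\"ahler potential $-\log\phi=-m\log(1-|w|^2)-\log\psi$ of $\mu^*g_\Omega$. Using $\partial_w\partial_{\bar w}\log(1-|w|^2)=-(1-|w|^2)^{-2}$, the conformal factor $\Lambda$ of $\mu^*g_\Omega=\Lambda\,|dw|^2$ is
\[
\Lambda(w) \;=\; -2\,\partial_w\partial_{\bar w}\log\phi(w) \;=\; \frac{2m}{(1-|w|^2)^2} - 2\,\partial_w\partial_{\bar w}\log\psi(w) \;=\; \frac{2m}{(1-|w|^2)^2}\bigl(1+\eta(w)\bigr),
\]
where $\eta:=-\tfrac{1}{m}(1-|w|^2)^2\,\partial_w\partial_{\bar w}\log\psi$. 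Because $\log\psi$ is real-analytic on $U_b$, the factor $\eta$ is real-analytic there and vanishes to order $\ge2$ along $U_b\cap\partial\Delta$; hence, as $w\to b$, one has $\eta=O(\delta(w)^2)$, $\partial_w\eta=\partial_{\bar w}\eta=O(\delta(w))$ and $\partial_w\partial_{\bar w}\eta=O(1)$, where $\delta(w)=1-|w|$ and $1-|w|^2=(1+|w|)\,\delta(w)$.

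Finally I substitute into the curvature formula $\kappa=-\dfrac{2}{\Lambda}\,\partial_w\partial_{\bar w}\log\Lambda$. From $\log\Lambda=\log(2m)-2\log(1-|w|^2)+\log(1+\eta)$ we get $\partial_w\partial_{\bar w}\log\Lambda=\dfrac{2}{(1-|w|^2)^2}+\partial_w\partial_{\bar w}\log(1+\eta)$, and the estimates on $\eta$ give $\partial_w\partial_{\bar w}\log(1+\eta)=O(1)$, so $(1-|w|^2)^2\,\partial_w\partial_{\bar w}\log(1+\eta)=O(\delta(w)^2)$. Therefore
\[
\kappa(w) \;=\; -\frac{(1-|w|^2)^2}{m\,(1+\eta)}\left(\frac{2}{(1-|w|^2)^2} + \partial_w\partial_{\bar w}\log(1+\eta)\right) \;=\; -\frac{2}{m\,(1+\eta)} + O(\delta(w)^2) \;=\; -\frac{2}{m} + O(\delta(w)^2)
\]
as $w\to b$, using $1/(1+\eta)=1+O(\delta(w)^2)$; this is the assertion, and the value $-2/m$ is consistent with the normalization that minimal disks of $\Omega$ have constant Gaussian curvature $-2$, the case $\phi=1-|w|^2$, $m=1$ already giving $\kappa\equiv-2$.

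The step I expect to be the main obstacle — and the one that forces the ``general point'' hypothesis — is pinning down the error term at second order. This is exactly where the real-analyticity of $h_\Omega$, hence of $\phi$ and of $\psi$, is indispensable: it is needed both to obtain the honest factorization $\phi=t^m\psi$ with $\psi$ nonvanishing at a general boundary point (the order $m$ can only jump up, on the discrete set $D$), and to bound $\partial_w\partial_{\bar w}\log(1+\eta)$ by a constant rather than by something unbounded, which is precisely what upgrades a soft $\kappa=-2/m+o(1)$ to the sharp $\kappa=-2/m+O(\delta(w)^2)$; with merely $C^\infty$ data one would only get the $o(1)$ statement. As a sanity check I would also verify the normalization constant against the explicit form of $h_\Omega$ restricted to the maximal polydisk $\Pi$, where the value $-2$ for minimal disks is anchored.
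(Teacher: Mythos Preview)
Your proof is correct and follows essentially the same approach as the paper: both factor $-\rho(\mu(w))=(1-|w|^2)^m\chi(w)$ (your $\psi$ is the paper's $\chi$), compute the conformal factor as $m/(1-|w|^2)^2$ plus a bounded remainder, and then read off $\kappa=-2/m+O(\delta(w)^2)$ from the standard curvature formula. Your write-up is in fact somewhat more self-contained, since you carry out the real-analytic division explicitly (using that $U\cap\partial\Delta$ is connected to fix a single generic $m$), whereas the paper defers this step to \cite{Mo14,Mo16}; the curvature computation itself is the same up to the cosmetic change of packaging the remainder as $1+\eta$ versus the paper's $u=m+q(w)(1-|w|^2)^2$.

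One small correction to your closing commentary: the reason real-analyticity is essential is not that smoothness of $\psi$ would degrade the $O(\delta^2)$ error to $o(1)$ --- once $\psi$ is smooth and positive up to the boundary, your derivative bounds on $\eta$ and hence the $O(\delta^2)$ conclusion go through verbatim. The genuine role of real-analyticity is earlier: it is what guarantees the factorization $\phi=(1-|w|^2)^m\psi$ with $\psi$ smooth and (generically) nonvanishing on $U_b$, a step that can fail for merely $C^\infty$ data.
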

\begin{proof}
From \cite{Mo14} and \cite{Mo16}, for a general point $b\in U\cap\partial\Delta$, the real-analytic function $-\rho(\mu(w))$ vanishes to the order $m$ on an open neighborhood of $b$ in $U\cap \partial\Delta$ for some integer $m\ge 1$. Then, we have $-\rho(\mu(w)) = (1-|w|^2)^{m}\chi(w)$ on $U_b$ for some positive smooth function $\chi$ defined on a neighborhood $U'$ of $\overline{U_b}$, where $U_b$ is an open neighborhood of $b$ in $U$ such that $U_b\Subset U$.
Then, on $U_b\cap \Delta$ we have
\[ \begin{split}
\mu^*\omega_{g_\Omega}
=& - \sqrt{-1} \partial\overline\partial \log (-\rho(\mu(w)))\\
=& -m\sqrt{-1}\partial\overline\partial\log(1-|w|^2)
-\sqrt{-1} \partial\overline\partial \log \chi(w)\\ 
=&\left({m\over (1-|w|^2)^2}+q(w)\right)\cdot \sqrt{-1}dw \wedge d\overline w
\end{split}\]
where $q(w)=-{\partial^2 \log\chi\over \partial w\partial\overline w}$ is a smooth function defined on $U'$ (cf.\,\cite{Mo14}).
From \cite{Mo14}, it suffices to show that
$q(w)\cdot (1-|w|^2)^2=O(\delta(w)^2)$ on $U_b\cap \Delta$, where $\delta(w):=1-|w|$.
It is clear that $|q(w)|^2$ is bounded on $\overline{U_b}$.
Now, on $U_b\cap \Delta$ we have
$\mu^*\omega_{g_\Omega}={u\over (1-|w|^2)^2}\cdot \sqrt{-1}dw \wedge d\overline w$,
where $u:=m+q(w)(1-|w|^2)^2$.
After shrinking $U_b$ if necessary, we may suppose that $u> 0$ on a neighborhood of $\overline{U_b}$ because $|q(w)|^2$ is locally bounded on $U'$.
Denote by $\kappa(w)$ the Gaussian curvature of $(U\cap \Delta, \mu^*g_\Omega|_{U\cap \Delta})$ at $w\in U\cap \Delta$.
For $w\in U_b\cap \Delta$, we have
\[ \begin{split}
\kappa(w) =&-{(1-|w|^2)^2\over u} {\partial^2 \over \partial w \partial \overline w} \log  {u\over (1-|w|^2)^2} \\
=&
-{1\over u}{\partial^2\log  u \over \partial w \partial \overline w}
(1-|w|^2)^2 - {2\over u}\\ 
=& -{2\over m} +\left({2q(w)\over m\cdot u} -{1\over u}{\partial^2\log  u \over \partial w \partial \overline w}\right)
(1+|w|)^2\cdot \delta(w)^2.
\end{split}\]
Note that ${2q(w)\over m\cdot u} -{1\over u}{\partial^2\log  u \over \partial w \partial \overline w}$ is smooth and real-valued on $U_b$.
Therefore, we have $\kappa(w) = -{2\over m} + O(\delta(w)^2)$ as $w\to b$ for a general point $b\in U\cap \partial\Delta$.
\end{proof}
\subsection{Convention}
Let $M$ be a smooth manifold and $E$ be a differentiable complex vector bundle over $M$. We denote by $\mathcal A(E)$ the sheaf of germs of smooth sections of $E$.  Thus, $\Gamma(M,\mathcal A(E))$ is the complex vector space of 
smooth sections of $E$ over $M$. When $M$ is a complex manifold and $E$ is a holomorphic vector bundle over $M$, $\mathcal O(E)$ denotes the sheaf of germs of holomorphic sections of $E$ over $M$, but we write for short $\Gamma(M,E) := \Gamma(M,\mathcal O(E))$. For germs of sheaves at a point $x \in M$, to emphasize the background manifold $M$ we also write $\Gamma_{{\rm loc},x}(M,\mathcal A(E)) := \mathcal A_x(E)$ and $\Gamma_{{\rm loc},x}(M,E) := \mathcal O_x(E)$.

\section{Construction of embedded Poincar\'e disks}
\label{Sec:CHIE}
\subsection{Holomorphic isometries via the rescaling argument}
\label{Sec:3.1}
Let $\Omega\Subset \mathbb C^N$ be an irreducible bounded symmetric domain of rank $r$ in its Harish-Chandra realization.
Let $\mu:U=\mathbb B^1(b_0,\epsilon)\to \mathbb C^N$, $\epsilon>0$, be a holomorphic embedding such that $\mu(U\cap\Delta)\subset \Omega$ and $\mu(U\cap\partial\Delta)\subset \partial\Omega$, where $b_0\in \partial\Delta$.
Let $\{w_k\}_{k=1}^{+\infty}$ be a sequence of points in $U\cap\Delta$ such that $w_k \to b$ as $k\to +\infty$.
Let $\varphi_k\in \Aut(\Delta)$ be the map
$\varphi_k(\zeta) = {\zeta+ w_k \over 1+\overline{w_k} \zeta}$ and $\Phi_k\in \Aut(\Omega)$ be such that $\Phi_k(\mu(w_k))={\bf 0}$, i.e., $\Phi_k(\mu(\varphi_k(0)))={\bf 0}$, for
$k=1,2,3,\ldots$.
For the sequence $\{\Phi_k\circ (\mu\circ \varphi_k)\}_{k=1}^{+\infty}$ of germs of holomorphic maps from $(\Delta;0)$ to $(\Omega;{\bf 0})$, there exists $\epsilon'>0$ such that all $\Phi_k\circ (\mu\circ \varphi_k)$ are defined on $U':=\mathbb B^1(0,\epsilon')\subset \Delta$.
\begin{lem}\label{LemSeqHE1}
Let $\{w_k\}_{k=1}^{+\infty}$ be a sequence of points in $U\cap\Delta$ converging to $b\in U\cap\partial\Delta$.
Then, shrinking $U'$ if necessary, there is a subsequence of $\{\widetilde \mu_j=\Phi_j\circ (\mu\circ \varphi_j)\}_{j=1}^{+\infty}$ converging to a holomorphic map $\widetilde \mu$ on $U'$ such that
$\widetilde \mu:(\Delta,m_0g_\Delta;0)\to (\Omega,g_\Omega;{\bf 0})$ is a germ of holomorphic isometry for some integer $m_0\ge 1$.
\end{lem}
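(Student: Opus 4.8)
The plan is to extract a limit from the rescaled sequence $\widetilde\mu_j = \Phi_j \circ (\mu \circ \varphi_j)$ by a normal families argument, and then to identify the limit as a holomorphic isometry with the metric rescaled by the integer $m$ appearing in Lemma~\ref{LemAsGC1}. First I would fix a general point $b \in U \cap \partial\Delta$ so that the conclusion of Lemma~\ref{LemAsGC1} applies, and set $m_0 := m$. The maps $\widetilde\mu_j$ all send $0$ to ${\bf 0}$, and since $\Omega$ is bounded they are uniformly bounded on $U' = \mathbb{B}^1(0,\epsilon')$; hence by Montel's theorem, after passing to a subsequence, $\widetilde\mu_j \to \widetilde\mu$ locally uniformly on $U'$ for some holomorphic map $\widetilde\mu\colon U' \to \overline\Omega$ with $\widetilde\mu(0) = {\bf 0}$. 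One needs to check $\widetilde\mu(U') \subset \Omega$ rather than merely $\subset \overline\Omega$: because $\widetilde\mu(0) = {\bf 0}$ lies deep in the interior and the derivatives $d\widetilde\mu_j(0)$ stay bounded below away from $0$ (see below), the image stays in a fixed compact subset of $\Omega$ near $0$, so after shrinking $U'$ we are safe.

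The heart of the matter is the metric computation. Since each $\varphi_j \in \Aut(\Delta)$ is an isometry of $g_\Delta$ and each $\Phi_j \in \Aut(\Omega)$ is an isometry of $g_\Omega$, we have $\widetilde\mu_j^* g_\Omega = \varphi_j^*(\mu^* g_\Omega)$. Pull back Lemma~\ref{LemAsGC1}: on $U_b \cap \Delta$ the metric $\mu^* g_\Omega$ has Gaussian curvature $\kappa(w) = -\tfrac{2}{m} + O(\delta(w)^2)$. Writing $\mu^* g_\Omega = 2 G(w)\, |dw|^2$ for a positive function $G$, the content of the proof of Lemma~\ref{LemAsGC1} is actually sharper: $G(w) = \dfrac{u(w)}{(1-|w|^2)^2}$ with $u = m + q(w)(1-|w|^2)^2$ and $q$ smooth up to the boundary, so $u \to m$ as $w \to b$. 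Therefore the conformal factor of $\varphi_j^*(\mu^* g_\Omega)$ at $\zeta \in U'$ is $G(\varphi_j(\zeta))\,|\varphi_j'(\zeta)|^2$, and using $|\varphi_j'(\zeta)|^2 = \dfrac{(1-|w_j|^2)^2}{|1+\overline{w_j}\zeta|^4}$ together with the standard identity $1 - |\varphi_j(\zeta)|^2 = \dfrac{(1-|w_j|^2)(1-|\zeta|^2)}{|1+\overline{w_j}\zeta|^2}$, the singular factors cancel exactly and one gets
\[
G(\varphi_j(\zeta))\,|\varphi_j'(\zeta)|^2 \;=\; \frac{u(\varphi_j(\zeta))}{(1-|\zeta|^2)^2}.
\]
As $j \to \infty$ one has $\varphi_j(\zeta) \to b$ for each fixed $\zeta \in U'$, hence $u(\varphi_j(\zeta)) \to m = m_0$ locally uniformly, and so $\widetilde\mu_j^* g_\Omega \to \dfrac{2m_0}{(1-|\zeta|^2)^2}|d\zeta|^2 = m_0\, g_\Delta$ locally uniformly on $U'$. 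Since also $\widetilde\mu_j \to \widetilde\mu$ in $C^\infty_{\mathrm{loc}}$ (convergence of holomorphic maps implies convergence of all derivatives), we conclude $\widetilde\mu^* g_\Omega = m_0\, g_\Delta$ on $U'$. In particular $d\widetilde\mu(0) \neq 0$, confirming the earlier claim that the limit does not collapse, and $\widetilde\mu$ is an immersion, hence (after shrinking $U'$) a holomorphic embedding; thus $\widetilde\mu\colon(\Delta, m_0 g_\Delta; 0) \to (\Omega, g_\Omega; {\bf 0})$ is a germ of holomorphic isometry, as asserted.

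The step I expect to be the main obstacle is the bookkeeping needed to guarantee that $\widetilde\mu(U')$ lands inside $\Omega$ and that $\widetilde\mu$ remains an embedding — i.e., controlling the limit near, but not at, the base point. The nondegeneracy at $0$ is forced by the metric convergence, but one must make sure the shrinking of $U'$ can be done uniformly in $j$; this follows because the conformal factors $\dfrac{u(\varphi_j(\zeta))}{(1-|\zeta|^2)^2}$ are uniformly bounded above and below on any $\mathbb{B}^1(0,\epsilon'')$ with $\epsilon'' < \epsilon'$, which pins $\widetilde\mu_j$ inside a fixed compact subset of $\Omega$ there. The only subtlety beyond routine estimates is invoking Lemma~\ref{LemAsGC1} for a \emph{general} boundary point $b$, which is exactly why the statement of Lemma~\ref{LemSeqHE1} quantifies over sequences converging to such a $b$; everything else is the Schwarz–Pick calculation above.
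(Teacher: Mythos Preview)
Your proposal is correct and follows essentially the same approach as the paper: extract a convergent subsequence via Montel, then use the decomposition $u = m + q(w)(1-|w|^2)^2$ from the proof of Lemma~\ref{LemAsGC1} to show that the rescaled pullback metric converges to $m_0\,g_\Delta$. Your Schwarz--Pick cancellation yielding $G(\varphi_j(\zeta))|\varphi_j'(\zeta)|^2 = \dfrac{u(\varphi_j(\zeta))}{(1-|\zeta|^2)^2}$ is exactly equivalent to the paper's observation that the error term $q(\varphi_k(w))|\varphi_k'(w)|^2$ tends to zero because $\varphi_k' \to 0$ uniformly on compacta; the two computations are the same identity read forwards and backwards.
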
 
\begin{proof}
It is clear that the sequence $\{\widetilde\mu_j=\Phi_j\circ (\mu\circ\varphi_j)\}_{j=1}^{+\infty}$ is bounded on compact subsets of $U'$, so it contains a subsequence $\{\widetilde \mu_{j_k}\}_{k=1}^{+\infty}$ converging uniformly on compact subsets of $U'$ to a holomorphic map $\widetilde \mu$ by Montel's Theorem.
After shrinking $U'$ if necessary, we may suppose that such a sequence $\{\widetilde \mu_{j_k}\}_{k=1}^{+\infty}$ converges uniformly to $\widetilde \mu$ on $\overline{U'}$.

In the proof of Lemma \ref{LemAsGC1}, we have $\mu^*\omega_{g_\Omega} = m_0\omega_{g_\Delta} + q(w) \sqrt{-1} dw\wedge d\overline w$ on $U_b\cap\Delta$, where $U_b=\mathbb B^1(b,\epsilon_b)$ for some $\epsilon_b>0$, $m_0>0$ is an integer and $q(w)$ is smooth and bounded on $U_b$.
For $k$ sufficiently large and $w\in U'$, after shrinking $U'$ if necessary we have $\varphi_k(U')\subset U_b\cap\Delta$ by choosing a suitable sequence $\{w_k\}_{k=1}^{+\infty}$ in $U\cap\Delta$ converging to $b\in \partial\Delta$ and we have
$$
\gathered
\sqrt{-1}\partial\overline\partial\log(-\rho(\widetilde \mu_k(w)))
= \sqrt{-1}\partial\overline\partial\log(-\rho(\mu(\varphi_k(w)))) \\
= m_0 \sqrt{-1}\partial\overline\partial\log(1-|w|^2)
+ q(\varphi_k(w)) |\varphi_k'(w)|^2 \sqrt{-1}dw\wedge d\overline w
\endgathered
$$
so that
${\partial^2\over \partial w\partial\overline w}\log(-\rho(\widetilde \mu_k(w)))
= m_0 {\partial^2\over \partial w\partial\overline w} \log(1-|w|^2) + q(\varphi_k(w)) |\varphi_k'(w)|^2$.
Since $\varphi_k'$ converges uniformly on compact subsets to 0, by taking limit as $k\to +\infty$ (passing to some subsequence of $\{\widetilde \mu_k\}_{k=1}^{+\infty}$ if necessary) we have
${\partial^2\over \partial w\partial\overline w}\log(-\rho(\widetilde \mu(w))) = m_0 {\partial^2\over \partial w\partial\overline w} \log(1-|w|^2)$
so that $\widetilde \mu^* {g_\Omega} = m_0 g_\Delta$ on some open neighborhood of $0$, i.e., $\widetilde \mu : (\Delta,m_0 g_\Delta; 0) \to (\Omega,g_\Omega;{\bf 0})$ is a germ of holomorphic isometry. 
\end{proof}

\subsection{Embedded Poincar\'e disks with uniform geometric properties}
For the purpose of studying properties of certain holomorphically embedded Poincar\'e disks in bounded symmetric domains, we need the following Lemma \ref{LemQuotRA1} and Lemma \ref{LemEigen1}.
\begin{lem}\label{LemQuotRA1}
Let $\phi(\tau)={p(\tau)\over q(\tau)}$ be a quotient of real-analytic functions $p$ and $q$ on $\hat U$, where $\hat U$ is some open neighborhood of $0$ in $\mathbb C$.
Denote by $\mathcal H=\{\tau\in \mathbb C: \mathrm{Im}\tau>0\}$ the upper half-plane in $\mathbb C$.
Assume that $\phi(\tau)$ is bounded on $\hat U\cap \mathcal H$. Then, $\phi(\tau)$ extends real-analytically across a general point $b\in \hat U\cap\partial\mathcal H$.
\end{lem}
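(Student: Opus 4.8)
The plan is to treat $p$ and $q$ as real-analytic functions of the two real variables $x=\mathrm{Re}\,\tau$ and $y=\mathrm{Im}\,\tau$, and to run an elementary division argument in which the boundedness hypothesis is used only on the one-sided region $\{y>0\}$. Since $\phi$ is defined somewhere on $\hat U\cap\mathcal H$, the real-analytic function $q$ is not identically zero; after shrinking $\hat U$ to a disk about $0$ we may assume $\hat U\cap\mathbb R$ is an interval, on which the identity theorem for real-analytic functions of one real variable applies.

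First I would expand $q$ in powers of $y$ with coefficients real-analytic in $x$, writing $q(x,y)=\sum_{n\ge 0}a_n(x)\,y^n$ (valid near any point of $\hat U\cap\mathbb R$), and let $k\ge 0$ be the least index with $a_k\not\equiv 0$; such $k$ exists, for otherwise $q$ would vanish to infinite order in $y$ along $\hat U\cap\mathbb R$, forcing $q\equiv 0$. Then $a_n\equiv 0$ for $n<k$, so $q(x,y)=y^k\,\tilde q(x,y)$ with $\tilde q$ real-analytic and $\tilde q(x,0)=a_k(x)\not\equiv 0$; the zero set $E$ of the real-analytic function $a_k$ is discrete in $\hat U\cap\mathbb R$, and the claim will be that $\phi$ extends real-analytically across every $b\in(\hat U\cap\mathbb R)\setminus E$, which is the asserted ``general point'' statement.

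Now fix such a $b$. Then $\tilde q(b,0)\ne 0$, so $\tilde q\ne 0$ on a neighbourhood $W$ of $b$ in $\mathbb C$, and on $W$ the function $q$ vanishes only along $\{y=0\}$. The key step is to show that $p$ vanishes to order at least $k$ in $y$ near $b$: writing $p(x,y)=\sum_{n\ge 0}b_n(x)\,y^n$, the boundedness hypothesis gives $|p(x,y)|=|\phi(x,y)|\,|q(x,y)|\le C'y^k$ for $0<y<\delta$ and $|x-b|<\delta$, where $C'$ bounds $C\,|\tilde q|$ on a compact neighbourhood of $(b,0)$; letting $y\to 0^+$ with $x$ fixed and comparing leading terms forces $b_n(x)=0$ for all $n<k$, hence $b_n\equiv 0$ near $b$ for $n<k$ by real-analyticity, so $p(x,y)=y^k\,\tilde p(x,y)$ with $\tilde p$ real-analytic near $b$. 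Consequently $\phi=p/q=\tilde p/\tilde q$ on $W\cap\mathcal H$, and since $\tilde q\ne 0$ throughout $W$ the right-hand side is a real-analytic function on all of $W$; this furnishes the desired real-analytic extension of $\phi$ across $b$.

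I expect the only genuinely delicate point to be this last comparison of leading terms, which is precisely where the one-sided boundedness on $\mathcal H$ is used; everything else is bookkeeping with $y$-Taylor expansions and the identity theorem. That a discrete exceptional set cannot be dispensed with is shown by $\phi(\tau)=\tau/\bar\tau$, which is bounded (indeed of modulus $1$) on $\mathcal H$ yet fails to extend real-analytically exactly across $\tau=0$.
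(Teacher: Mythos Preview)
Your proof is correct, and it takes a genuinely different route from the paper's. The paper complexifies: it regards $p$ and $q$ as real-analytic functions of $(x,y)$, extends them to holomorphic functions $\hat p,\hat q$ of two complex variables on a neighbourhood of $(0,0)\in\mathbb C^2$, and then argues with the meromorphic function $\hat\phi=\hat p/\hat q$. The indeterminacy locus $I(\hat\phi)$ (where both numerator and denominator vanish) has complex dimension $\le 0$, and the one-sided boundedness forces the real slice to miss the pure polar set $P(\hat\phi)\smallsetminus I(\hat\phi)$ near $\{y=0\}$; hence the bad points along the real axis are contained in the discrete set $I(\hat\phi)$ intersected with the real slice.

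Your argument instead stays entirely in the real-analytic category: you factor $q=y^k\tilde q$ by looking at the $y$-Taylor expansion with real-analytic coefficients in $x$, use the one-sided bound $|p|\le C'y^k$ on $\{0<y<\delta\}$ to kill the first $k$ Taylor coefficients of $p$ in $y$, and then write $\phi=\tilde p/\tilde q$ with $\tilde q\ne 0$ near a generic boundary point. This is more elementary---no meromorphic functions of two complex variables, no indeterminacy loci---and the role of the one-sided hypothesis is completely transparent (it enters exactly in the inductive step $|p|/y^j\le C'y^{k-j}\to 0$). The paper's complexification is more conceptual and would adapt more readily to higher-dimensional analogues, but for the statement at hand your factoring argument is cleaner and entirely self-contained. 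Your closing example $\phi(\tau)=\tau/\bar\tau$ is also a nice addition that the paper does not include.
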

\begin{proof}
We may regard $p$ and $q$ as functions of $(x,y)$, where $\tau=x+\sqrt{-1}y \in \hat U$ for $x,y\in \mathbb R$.
We write $p(\tau)=p(x,y),q(\tau)=q(x,y)$ as real-analytic functions of $(x,y)$.
Locally around $0$, we have
$p(x,y) = \sum_{i,j=0}^{+\infty} a_{ij} x^i y^j$ and $q(x,y) = \sum_{i,j=0}^{+\infty} b_{ij} x^i y^j$
for some $a_{ij},b_{ij}\in \mathbb C$.
Then, we have the local holomorphic functions on $\mathbb C^2$ around $(0,0)\in \mathbb C^2$ given by $\hat p(\tau,\zeta) := \sum_{i,j=0}^{+\infty} a_{ij} \tau^i \zeta^j$ and $\hat q(\tau,\zeta) := \sum_{i,j=0}^{+\infty} b_{ij} \tau^i \zeta^j$
with $\mathrm{Re}\tau=x$ and $\mathrm{Re}\zeta = y$.
Let $\hat \phi(\tau,\zeta)={\hat p(\tau,\zeta)\over \hat q(\tau,\zeta)}$, which is a quotient of holomorphic functions around $(0,0)\in \mathbb C^2$.
Then, $\hat \phi$ is a meromorphic function on an open neighborhood $U$ of $(0,0)$ in $\mathbb C^2$.
The set of indeterminacy $I(\hat \phi)$ of $\hat \phi$ is of dimension at most $0$ because it is the intersection of the set $Z(\hat\phi)$ of zeros and the set $P(\hat \phi)$ of poles of $\hat\phi$.
Moreover, the restriction of $\hat\phi$ to $U':=\{(\tau,\zeta)\in U: \im \tau =0,\;\im \zeta = 0\}$ is bounded after shrinking $U$ if necessary, so $U'$ does not intersect $P(\hat \phi)\smallsetminus I(\hat \phi)$.
Note that the set of singular points of $\hat\phi$ on $U$ is $P(\hat \phi)\cup I(\hat\phi)=P(\hat \phi)$, so the above arguments show that the set of potentially bad points of $\phi$ lies inside $I(\hat \phi)\cap U'$, which is of dimension at most $0$.
Hence, for a general point $b\in \hat U\cap\partial\mathcal H$,
$\phi(\tau)$ extends real-analytically around $b$.
\end{proof}

Let $v\in T_x(\Omega)$ be a non-zero tangent vector, $x\in \Omega$. Then, under the $G_0$-action, there is a unique normal form $\eta=(\eta_1,\ldots,\eta_r) \in T_{\bf 0}(\Pi)$ of $v$ satisfying $\eta_j\in \mathbb R$ ($1\le j\le r$) and $\eta_1\ge \cdots\ge \eta_r \ge 0$, where $\Pi\cong \Delta^r$ is a maximal polydisk in $\Omega$ containing ${\bf 0}$ and $r:=\rank(\Omega)$.
We say that a non-zero vector $v\in T_x(\Omega)$ is of rank $k$ if its normal form $\eta=(\eta_1,\ldots,\eta_r)$ satisfies $\eta_1\ge\cdots\ge \eta_k >0$ and $\eta_j=0$ for $k+1\le j\le r$ whenever $k<r$. A rank-$r$ vector $v\in T_x(\Omega)$ is also said to be a generic vector.
Moreover, a zero vector in $T_x(\Omega)$ is said to be a vector of rank $0$.
For the notion of normal forms of tangent vectors in $T_x(\Omega)$, $x\in \Omega$, we refer the readers to \cite{Mo02,Mo89} for details.
\begin{lem}\label{LemEigen1}
Let $v\in T_x(\Omega)$ be a tangent vector of unit length with respect to $g_\Omega$ at $x\in \Omega$ and $\eta=\sum_{j=1}^r \eta_j e_{\psi_j}\in T_{\bf 0}(\Pi)$ be the normal form of $v$.
Then, the Hermitian bilinear form $H_\eta$ defined by $H_\eta(\alpha,\beta)=R_{\eta\overline\eta \alpha \overline \beta}(\Omega,g_\Omega)$ has real eigenvalues lying inside the closed interval $[-2,0]$ and the corresponding Hermitian matrix $\hat H_{\eta}$ of $H_\eta$ can be represented as a diagonal matrix with respect to the standard orthonormal basis $\{e_\varphi:\varphi\in \Delta_M^+\}$ of $\mathfrak m^+$.
\end{lem}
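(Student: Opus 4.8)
The plan is to write the curvature tensor of $(\Omega,g_\Omega)$ at the base point $\mathbf 0$ in terms of Lie brackets of root vectors and then to use the strong orthogonality of $\Psi=\{\psi_1,\ldots,\psi_r\}$ to see that $\mathrm{ad}\big([\eta,\overline\eta]\big)$ acts diagonally on $\mathfrak m^+$. Since normal forms preserve length we have $\eta=\sum_{j=1}^r\eta_j e_{\psi_j}\in\mathfrak m^+\cong T_{\mathbf 0}(\Omega)$ with $\eta_j\in\mathbb R$ and $\sum_{j=1}^r\eta_j^2=\lVert\eta\rVert^2_{g_\Omega}=1$, the $e_{\psi_j}$ being of unit length with respect to $h=g_\Omega$. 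For the Hermitian symmetric space $G_0/K$ of the noncompact type one has, after fixing normalizations, the classical formula
\[
R_{\alpha\overline\beta\gamma\overline\delta}(\Omega,g_\Omega)=-c\,h\big([[\alpha,\overline\beta],\gamma],\delta\big),\qquad \alpha,\beta,\gamma,\delta\in\mathfrak m^+,
\]
for some constant $c>0$, where $\overline{(\cdot)}$ denotes the conjugation interchanging $\mathfrak m^+$ and $\mathfrak m^-$ (so $\overline{e_\varphi}\in\mathbb C\,e_{-\varphi}$), and where the relations $[\mathfrak m^+,\mathfrak m^+]=0$, $[\mathfrak m^+,\mathfrak m^-]\subset\mathfrak k^{\mathbb C}$, $[\mathfrak k^{\mathbb C},\mathfrak m^{\pm}]\subset\mathfrak m^{\pm}$ guarantee that $[[\alpha,\overline\beta],\gamma]\in\mathfrak m^+$ (cf.\,\cite{Wol72},\cite{Mo89}).

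Writing $\overline{e_{\psi_j}}=c_j e_{-\psi_j}$ and using that the $\eta_j$ are real, $[\eta,\overline\eta]=\sum_{i,j}\eta_i\eta_j c_j\,[e_{\psi_i},e_{-\psi_j}]$. For $i\ne j$ the element $\psi_i-\psi_j\in\sqrt{-1}\,\mathfrak h^*$ is nonzero and, by the strong orthogonality of $\Psi$, is not a root of $\mathfrak g$; hence $[\mathfrak g_{\psi_i},\mathfrak g_{-\psi_j}]=\mathfrak g_{\psi_i-\psi_j}=0$, so the cross terms vanish and $[\eta,\overline\eta]=\sum_{j=1}^r\eta_j^2 c_j\,[e_{\psi_j},e_{-\psi_j}]\in\mathfrak h^{\mathbb C}$. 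Since $[H,e_\varphi]=\varphi(H)\,e_\varphi$ for $H\in\mathfrak h^{\mathbb C}$ and $\varphi\in\Delta_M^+$, it follows that $[[\eta,\overline\eta],e_\varphi]=\lambda_\varphi\,e_\varphi$, where $\lambda_\varphi:=\sum_{j=1}^r\eta_j^2 c_j\,\varphi\big([e_{\psi_j},e_{-\psi_j}]\big)$. Substituting into the curvature formula and using that $\{e_\varphi:\varphi\in\Delta_M^+\}$ is $h$-orthonormal,
\[
H_\eta(e_\varphi,e_{\varphi'})=R_{\eta\overline\eta\,e_\varphi\overline{e_{\varphi'}}}(\Omega,g_\Omega)=-c\,\lambda_\varphi\,h(e_\varphi,e_{\varphi'})=-c\,\lambda_\varphi\,\delta_{\varphi\varphi'},
\]
so $\hat H_\eta$ is diagonal with respect to $\{e_\varphi\}$, and its diagonal entries are real because $H_\eta$ is Hermitian.

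It remains to bound the eigenvalue $H_\eta(e_\varphi,e_\varphi)=R_{\eta\overline\eta\,e_\varphi\overline{e_\varphi}}(\Omega,g_\Omega)$, which is the holomorphic bisectional curvature of $(\Omega,g_\Omega)$ in the unit directions $\eta$ and $e_\varphi$. With the normalization that minimal disks have constant Gaussian curvature $-2$, the holomorphic bisectional curvature of a bounded symmetric domain takes values in $[-2,0]$ (cf.\,\cite{Mo89}), which yields the claim. Alternatively, and without invoking this, the vanishing of the cross terms above already gives $H_\eta(e_\varphi,e_\varphi)=\sum_{j=1}^r\eta_j^2\,H_{e_{\psi_j}}(e_\varphi,e_\varphi)$ with $\sum_j\eta_j^2=1$, a convex combination; by the Harish--Chandra description of $\Delta_M^+$ relative to $\Psi$ the numbers $\varphi\big([e_{\psi_j},e_{-\psi_j}]\big)$ run over a finite set, and the scaling in $H_{e_{\psi_j}}$ is pinned down by $H_{e_{\psi_j}}(e_{\psi_j},e_{\psi_j})=-2$; one reads off $H_{e_{\psi_j}}(e_\varphi,e_\varphi)\in\{0,-1,-2\}$, so the convex combination $H_\eta(e_\varphi,e_\varphi)$ again lies in $[-2,0]$.

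The step I expect to be the main obstacle is not conceptual but a matter of conventions and bookkeeping: fixing once and for all the conjugation conventions (the relation between $\overline{e_\varphi}$ and $e_{-\varphi}$, the sign of the Killing form, and the constant $c$ relating the abstract curvature formula to the normalization of $g_\Omega$), and --- for the self-contained route --- exploiting the Harish--Chandra structure of $\Delta_M^+$ relative to the strongly orthogonal set $\Psi$ in order to read off that the rank-one eigenvalues $H_{e_{\psi_j}}(e_\varphi,e_\varphi)$ equal $0$, $-1$ or $-2$.
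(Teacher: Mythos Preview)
Your proposal is correct and follows essentially the same approach as the paper: both use strong orthogonality of $\Psi$ to kill the cross terms $R_{e_{\psi_i}\overline{e_{\psi_j}}\cdot\,\cdot}$ for $i\neq j$, observe that the resulting operator acts diagonally on $\{e_\varphi\}$, and bound the eigenvalues via the convex combination $H_\eta(e_\varphi,e_\varphi)=\sum_j\eta_j^2\,R_{e_{\psi_j}\overline{e_{\psi_j}}e_\varphi\overline{e_\varphi}}$ with $R_{e_{\psi_j}\overline{e_{\psi_j}}e_\varphi\overline{e_\varphi}}\in\{0,-1,-2\}$. Your framing via $[\eta,\overline\eta]\in\mathfrak h^{\mathbb C}$ is a slightly cleaner way to see the diagonality, and your alternative shortcut citing the known range $[-2,0]$ for bisectional curvature is a legitimate abbreviation the paper does not take, but neither difference is substantive.
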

\begin{proof}
\noindent We write $R_{\alpha\overline{\alpha'}\beta\overline{\beta'}}=R_{\alpha\overline{\alpha'}\beta\overline{\beta'}}(\Omega,g_\Omega)$ for simplicity.
From the assumption, we have $\sum_{j=1}^r \eta_j^2 = 1$ and $\eta_1\ge \cdots \ge \eta_r\ge 0$ are real numbers.
Writing $\alpha=\sum_{\varphi\in \Delta_M^+} \alpha_\varphi e_\varphi$, $\beta=\sum_{\varphi\in \Delta_M^+} \beta_\varphi e_\varphi \in T_{\bf 0}(\Omega)\cong \mathfrak m^+$, we compute
\[ \begin{split}
H_{\eta}(\alpha,\beta)
&= \sum_{j=1}^r \eta_j^2 R_{e_{\psi_j}\overline{e_{\psi_j}} \alpha\overline\beta}
= \sum_{j=1}^r \sum_{\varphi \in \Delta_M^+}\eta_j^2 \alpha_\varphi \overline{\beta_\varphi}
R_{e_{\psi_j}\overline{e_{\psi_j}} e_\varphi\overline{e_\varphi}}\\
&=  -2\sum_{j=1}^r \eta_j^2 \alpha_{\psi_j} \overline{\beta_{\psi_j}}
+  \sum_{\varphi \in \Delta_M^+\smallsetminus\Psi}\left(\sum_{j=1}^r \eta_j^2 R_{e_{\psi_j}\overline{e_{\psi_j}} e_\varphi\overline{e_\varphi}} \right) \alpha_\varphi \overline{\beta_\varphi}
\end{split}\]
From \cite{Mo89}, $R_{e_{\psi_j}\overline{e_{\psi_j}} e_\varphi\overline{e_\varphi}}=0$ (resp.\,$-1$) whenever $\psi_j - \varphi$ is not a root (resp.\,$\psi_j- \varphi$ is a root).
Eigenvalues of $H_{\eta}$ are $-2\eta_j^2$, $1\le j\le r$, and those of the form $-(\eta_{i_1}^2+\ldots+\eta_{i_m}^2)$ corresponding to $e_\varphi$ for some $\varphi \in \Delta_M^+\smallsetminus \Psi$ such that $\psi_{i_j} - \varphi$ is a root for $1\le j\le m$ and $\psi_l - \varphi$ is not a root for $l\not\in \{i_j:1\le j\le m\}$.
Here we have $-2\le -2\eta_j^2 \le 0$ ($1\le j\le r$) and $0\ge -(\eta_{i_1}^2+\ldots+\eta_{i_m}^2) \ge -1$ because $\sum_{j=1}^r \eta_j^2 = 1$ and $\eta_j\ge 0$, $1\le j\le r$.
In particular, the eigenvector corresponding to the eigenvalue $-2\eta_j^2$ is precisely $e_{\psi_j}$, $1\le j\le r$.
Note that the above computations imply that the corresponding Hermitian matrix $\hat H_{\eta}$ can be represented as a diagonal matrix with diagonal
$-2\eta_1^2,\ldots,-2\eta_r^2$ and those eigenvalues $-(\eta_{i_1}^2+\ldots+\eta_{i_m}^2)$ mentioned above with respect to the standard orthonormal basis $\{e_\varphi:\varphi\in \Delta_M^+\}$ of $\mathfrak m^+$.
\end{proof}

By \cite[Proof of Proposition 1]{Mo09}, Lemma \ref{LemAsGC1} and Lemma \ref{LemQuotRA1}, we have the following result regarding the local real-analytic extension of the square of the norm of the second fundamental form around a general boundary point of the unit disk.
\begin{pro}[cf.\,$\text{Mok \cite[Proposition 1]{Mo09}}$]\label{pro:2ndFundForm_Ex_RA1}
Let $\Omega\Subset \mathbb C^N$ be a bounded symmetric domain in its Harish-Chandra realization equipped with the Bergman metric $ds_\Omega^2$. Let $\mu:U=\mathbb B^1(b_0,\epsilon)\to \mathbb C^N$, $\epsilon>0$, be a holomorphic embedding such that $\mu(U\cap\Delta)\subset \Omega$ and $\mu(U\cap\partial\Delta)\subset \partial\Omega$, where $b_0\in \partial\Delta$. Denote by $\sigma(z)$ the second fundamental form of $\mu(U\cap\Delta)$ in $(\Omega,ds_\Omega^2)$ at $z=\mu(w)$.
Then, on $U\cap \Delta$ the function $\lVert \sigma(\mu(w))\rVert^2$ is a quotient ${p(s,t)\over q(s,t)}$ of real-analytic functions $p(s,t)$ and $q(s,t)$ in $(s,t)$, where $s=\mathrm{Re}(w)$ and $t=\mathrm{Im}(w)$. Moreover, for a general point $b\in U\cap \partial \Delta$, $\lVert \sigma(\mu(w))\rVert^2$ extends to a real-analytic function $\varphi_b$ on $U_b$ for some open neighborhood $U_b$ of $b$ in $U$. In particular, $\lVert \sigma(\mu(b))\rVert^2:=\varphi_b(b)$ is defined for a general point $b\in U\cap\partial\Delta$.
\end{pro}

Reinforcing the rescaling argument as introduced in Section \ref{Sec:3.1} we are going to construct special holomorphic embeddings of the Poincar\'e disk, as follows.
\begin{pro}\label{ProConstHI1}
Let $\Omega\Subset \mathbb C^N$ be an irreducible bounded symmetric domain of rank $r$ in its Harish-Chandra realization.
Let $\mu:U=\mathbb B^1(b_0,\epsilon)\to \mathbb C^N$, $\epsilon>0$, be a holomorphic embedding such that $\mu(U\cap\partial\Delta)\subset \partial\Omega$ and $\mu(U\cap\Delta)\subset\Omega$, where $b_0\in \partial\Delta$.
Denote by $\sigma(z)$ the second fundamental form of $(\mu(U\cap\Delta),g_\Omega|_{\mu(U\cap\Delta)})$ in $(\Omega,g_\Omega)$ at $z=\mu(w)$.
Let $\{w_k\}_{k=1}^{+\infty}$ be some sequence of points in $U\cap\Delta$ converging to a general point $b\in U\cap\partial\Delta$ as $k\to+\infty$, and let $\varphi_k\in \Aut(\Delta)$ and $\Phi_k\in \Aut(\Omega)$ be such that $\varphi_k(0)=w_k$ and $\Phi_k(\mu(w_k)) = {\bf 0}$, $k=1,2,3,\ldots$.
Then, the sequence of germs of holomorphic embeddings $\{\widetilde \mu_k:=\Phi_k\circ(\mu\circ \varphi_k)\}_{k=1}^{+\infty}$ at $0\in \Delta$ into $\Omega$ $($passing to some subsequence if necessary$)$ converges to the germ of holomorphic isometry 
$\widetilde \mu:(\Delta,m_0g_\Delta;0)\to (\Omega,g_\Omega;{\bf 0})$ for some integer $m_0\ge 1$, say $\widetilde \mu$ is defined on $U'=\mathbb B^1(0,\epsilon')$ for some $\epsilon'>0$, satisfying the following properties:
\begin{enumerate}
\item $\lVert \widetilde \sigma(\widetilde\mu(w)) \rVert^2=\lVert \sigma(\mu(b))\rVert^2$ is independent of $w\in U'$, where $\widetilde \sigma(z)$ denotes the second fundamental form of $(\widetilde \mu(U'),g_\Omega|_{\widetilde \mu(U')})$ in $(\Omega,g_\Omega)$ at $z=\widetilde \mu(w)$ for $w\in U'$,
\item the normal form of ${\widetilde \mu'(w)\over \lVert \widetilde\mu'(w)\rVert_{g_\Omega}}$ is independent of $w\in U'$ and so is the rank of ${\widetilde \mu'(w)\over \lVert \widetilde\mu'(w)\rVert_{g_\Omega}}$.
\end{enumerate}
By the same procedure, this yields a holomorphic isometry from $($$\Delta$, $m_0g_\Delta$$)$ to $(\Omega,{g_\Omega})$, denoted also by $\widetilde \mu$, such that properties $1$ and $2$ hold true on $\Delta$.
\end{pro}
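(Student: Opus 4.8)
I would prove the germ version first (properties 1 and 2 on $U'$) and then propagate everything to all of $\Delta$; that the rescaled germs $\widetilde\mu_k:=\Phi_k\circ(\mu\circ\varphi_k)$ subconverge to a germ of holomorphic isometry $\widetilde\mu:(\Delta,m_0g_\Delta;0)\to(\Omega,g_\Omega;{\bf 0})$ is already Lemma \ref{LemSeqHE1}. The first preparatory step is to record the elementary invariance facts that drive everything. Each $\Phi_k\in\Aut(\Omega)$ is a biholomorphic isometry of $(\Omega,g_\Omega)$, so it carries the germ of curve $\mu(\varphi_k(U'))$ to an isometric copy with the same second fundamental form, and its differential is a complex-linear isometry conjugating the curvature tensor, hence preserves the normal form and the rank of tangent vectors; each $\varphi_k\in\Aut(\Delta)$ merely reparametrises, so $(\mu\circ\varphi_k)'(w)=\varphi_k'(w)\,\mu'(\varphi_k(w))$ is a nonzero multiple of $\mu'(\varphi_k(w))$ with the same normalised tangent $\widehat v:=v/\lVert v\rVert_{g_\Omega}$. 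Writing $\widetilde\sigma_k$ for the second fundamental form of $\widetilde\mu_k(U')$ in $(\Omega,g_\Omega)$, this gives, for every $w\in U'$ and every large $k$ (after shrinking $U'$ so that $\varphi_k(U')$ lands in the domain of $\mu$, cf.\ the proof of Lemma \ref{LemSeqHE1}), that $\lVert\widetilde\sigma_k(\widetilde\mu_k(w))\rVert=\lVert\sigma(\mu(\varphi_k(w)))\rVert$ and that $\widehat{\widetilde\mu_k'(w)}$ and $\widehat{\mu'(\varphi_k(w))}$ have the same normal form (hence the same rank).

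The heart of the matter is the boundary behaviour of these invariants along $\mu$, for which I would combine the rationality of the Bergman kernel with Lemmas \ref{LemAsGC1}, \ref{LemEigen1} and \ref{LemQuotRA1}. Since $K_\Omega(z,z)=\mathrm{Vol}(\Omega)^{-1}h_\Omega(z,z)^{-(p(\Omega)+2)}$ with $h_\Omega$ polynomial, the metric $g_\Omega$, its inverse, its curvature tensor, the induced second fundamental form of the curve $\mu(U\cap\Delta)$, and the Hermitian form $H_{\widehat{\mu'(w)}}$ (in the notation of Lemma \ref{LemEigen1}) all become, after composition with the holomorphic map $\mu$, quotients of real-analytic functions of $w$ whose denominators are powers of $h_\Omega(\mu(w),\overline{\mu(w)})=\pm\rho(\mu(w))$ and of $\lVert\mu'(w)\rVert_{g_\Omega}^2$; in particular $\lVert\sigma(\mu(w))\rVert^2$ and the coefficients of the characteristic polynomial of $H_{\widehat{\mu'(w)}}$ extend across $U\cap\partial\Delta$ as such quotients. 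They are moreover uniformly bounded near a general $b\in U\cap\partial\Delta$: the eigenvalues of $H_{\widehat{\mu'(w)}}$ lie in $[-2,0]$ by Lemma \ref{LemEigen1}, so their elementary symmetric functions are bounded, while the Gauss equation for the complex curve $\mu(U\cap\Delta)\subset(\Omega,g_\Omega)$ writes $\lVert\sigma(\mu(w))\rVert^2$ as the difference of the ambient holomorphic sectional curvature in the direction $\widehat{\mu'(w)}$, which lies in $[-2,0]$ by Lemma \ref{LemEigen1}, and the Gaussian curvature $\kappa(w)$ of $(U\cap\Delta,\mu^*g_\Omega)$, which tends to $-2/m_0$ by Lemma \ref{LemAsGC1}. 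Applying Lemma \ref{LemQuotRA1} (after the conformal change identifying a neighbourhood of $b_0$ in $\Delta$ with a neighbourhood of a boundary point of the upper half-plane), I would conclude that $\lVert\sigma(\mu(w))\rVert^2$ and the characteristic polynomial of $H_{\widehat{\mu'(w)}}$ --- hence its spectrum, and, via the explicit diagonalisation in Lemma \ref{LemEigen1}, the normal form $\eta(w)$ of $\widehat{\mu'(w)}$ --- extend continuously across a general $b$, so that $\lim_{w\to b}\lVert\sigma(\mu(w))\rVert^2=\lVert\sigma(\mu(b))\rVert^2$ and $\eta_b:=\lim_{w\to b}\eta(w)$ exist.

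With these two preparations the limit is routine. Since $w_k\to b\in\partial\Delta$ one has $\varphi_k\to b$ uniformly on $\overline{U'}$, so $\varphi_k(U')\subset U_b\cap\Delta$ for large $k$; after passing to a subsequence $\widetilde\mu_k\to\widetilde\mu$ uniformly on $\overline{U'}$ with $\widetilde\mu$ an immersion (as $\widetilde\mu^*g_\Omega=m_0g_\Delta$, $m_0\ge1$), hence $\widetilde\sigma_k(\widetilde\mu_k(w))\to\widetilde\sigma(\widetilde\mu(w))$ and $\widehat{\widetilde\mu_k'(w)}\to\widehat{\widetilde\mu'(w)}$ for each $w\in U'$. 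Feeding the identities of the first step and the continuity of the second step into these convergences yields, for every $w\in U'$, that $\lVert\widetilde\sigma(\widetilde\mu(w))\rVert^2=\lim_k\lVert\sigma(\mu(\varphi_k(w)))\rVert^2=\lVert\sigma(\mu(b))\rVert^2$ and that the normal form of $\widehat{\widetilde\mu'(w)}$ equals $\lim_k\eta(\varphi_k(w))=\eta_b$; this is precisely properties 1 and 2 for the germ $\widetilde\mu$, with rank $=\#\{j:(\eta_b)_j>0\}$. To pass from the germ to all of $\Delta$ I would invoke Mok \cite{Mo12}: the germ of holomorphic isometry $\widetilde\mu:(\Delta,m_0g_\Delta;0)\to(\Omega,g_\Omega;{\bf 0})$ is the restriction of a holomorphic isometric embedding, still denoted $\widetilde\mu$, defined on all of $\Delta$ (indeed with graph extending to an affine algebraic variety). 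On this global $\widetilde\mu$ the function $w\mapsto\lVert\widetilde\sigma(\widetilde\mu(w))\rVert^2$ and the coefficients of the characteristic polynomial of $H_{\widehat{\widetilde\mu'(w)}}$ are real-analytic on the connected set $\Delta$ and constant on the nonempty open set $U'$, hence constant on $\Delta$; the normal form of $\widehat{\widetilde\mu'(w)}$, being continuous in $w$ and confined to the finitely many normal forms realising the now constant characteristic polynomial, is then constant on $\Delta$ as well. Thus properties 1 and 2 hold on all of $\Delta$.

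The step I expect to be the main obstacle is the second one: verifying that $\lVert\sigma(\mu(w))\rVert^2$ and the normal form of $\widehat{\mu'(w)}$ are genuinely governed by quotients of real-analytic functions that survive across $\partial\Delta$, so that Lemma \ref{LemQuotRA1} applies, and in particular recovering the normal form itself --- not merely curvature eigenvalue data --- continuously at the boundary, for which the explicit diagonalisation of $H_\eta$ in Lemma \ref{LemEigen1} is essential; by contrast the uniform boundedness needed to run Lemma \ref{LemQuotRA1} is supplied cheaply by the Gauss equation together with the $[-2,0]$ spectral bounds.
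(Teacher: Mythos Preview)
Your proof is correct and follows essentially the same route as the paper: both use Lemma~\ref{LemSeqHE1} for the limiting germ, transport invariants through $\Phi_k$ and $\varphi_k$, apply Lemma~\ref{LemQuotRA1} to the characteristic polynomial of $H_{\eta}$ (bounded via Lemma~\ref{LemEigen1}) and to $\lVert\sigma\rVert^2$ (bounded via Gauss and Lemma~\ref{LemAsGC1}), and then use continuity of the $a_j(w)$ together with the finiteness of admissible values $-2a_j^2$ among the eigenvalues to force constancy of the normal form. Your globalisation step is slightly cleaner than the paper's: after extending $\widetilde\mu$ to all of $\Delta$ via \cite{Mo12}, you observe that the characteristic-polynomial coefficients and $\lVert\widetilde\sigma\rVert^2$ are real-analytic on the connected set $\Delta$ and constant on $U'$, hence constant everywhere, whereas the paper phrases this as re-running the rescaling at a good boundary point of the extended map. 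One small caution on phrasing: you assert that the normal form $\eta(w)$ itself ``extends continuously across $b$'' via the diagonalisation in Lemma~\ref{LemEigen1}, but the map from the eigenvalue set of $H_\eta$ back to the normal form need not be single-valued at special configurations; what actually carries the argument (and what the paper does) is exactly your finiteness-plus-continuity step on the $a_j$'s once the eigenvalues are known to be constant, so it is safest to argue directly on $U'$ rather than asserting a boundary limit of $\eta(\cdot)$.
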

\begin{proof}
In Lemma \ref{LemSeqHE1} we have already constructed the germ of holomorphic isometry $\widetilde \mu$:$(\Delta,m_0g_\Delta;0)$ $\to$ $ (\Omega,{g_\Omega};{\bf 0})$. We will show that $\widetilde \mu$ satisfies properties $1$ and $2$.
We have $\widetilde \mu'(w)= \lim_{k\to +\infty} \widetilde \mu_k'(w) \neq {\bf 0}$ for $w\in U'$ because $\widetilde \mu:(\Delta,m_0g_\Delta;0)\to (\Omega,{g_\Omega};{\bf 0})$ is a germ of holomorphic isometry.
Let $\widetilde\eta_k(w)$ (resp.\,$\eta(w)$) be the normal form of ${\widetilde\mu_k'(w)\over \lVert\widetilde\mu_k'(w)\rVert_{g_\Omega}}$ (resp.\,${\mu'(w)\over \lVert \mu'(w)\rVert_{g_\Omega}}$) for $w\in U'$ (resp.\,$w\in U\cap\Delta$).
We also let $\widetilde \eta(w)$ be the normal form of ${\widetilde \mu'(w)\over \lVert \widetilde \mu'(w)\rVert_{g_\Omega}}$.

Writing a tangent vector $\upsilon=\sum_{j=1}^r \upsilon_j e_{\psi_j}$ in normal form, we let $H_{\upsilon}(\alpha,\beta):=R_{\upsilon\overline{\upsilon}\alpha\overline\beta}(\Omega,{g_\Omega})$ be the Hermitian bilinear form and $\hat H_{\upsilon}$ be the corresponding Hermitian matrix. Denote by $P_{\upsilon}(\lambda):=\det(\lambda I_N - \hat H_{\upsilon})$ the characteristic polynomial of $\hat H_{\upsilon}$.
We have shown that all eigenvalues of $H_{\eta(w)}$ (resp.\,$H_{\widetilde\eta_k(w)}$, resp.\,$H_{\widetilde\eta(w)}$) belong to $[-2,0]\subset\mathbb R$ by Lemma \ref{LemEigen1}.
For simplicity, we may suppose that $\varphi_k(U')\subset U\cap\Delta$ for any $k\ge 1$.
Fix an arbitrary point $w\in U'$.
From the construction ${\widetilde \mu_k'(w)\over \lVert \widetilde \mu_k'(w) \rVert_{g_\Omega}}$ is equivalent to ${\varphi_k'(w)\over |\varphi_k'(w)|}{\mu'(\varphi_k(w))\over \lVert \mu'(\varphi_k(w))\rVert_{g_\Omega}}$ under the $G_0$-action so that the normal form $\widetilde \eta_k(w)$ is equivalent to $\eta(\varphi_k(w))$ under the $K$-action for $k\ge 1$, where $G_0:=\mathrm{Aut}_0(\Omega)$ and $K\subset G_0$ is the isotropy subgroup at ${\bf 0}$.
From the uniqueness of the normal form (cf.\,Mok \cite{Mo02}) we have $\widetilde \eta_k(w)=\eta(\varphi_k(w))$ and thus $H_{\widetilde \eta_k(w)}=H_{\eta(\varphi_k(w))}$ for any integer $k\ge 1$.
Since the eigenvalues of $H_{\eta(\zeta)}$ belong to $[-2,0]\subset\mathbb R$, the coefficients of $P_{\eta(\zeta)}(\lambda)$ are bounded functions of $\zeta$ on $U\cap\Delta$ and may be written as quotients of real-analytic functions of $\zeta$ on $U_b=\mathbb B^1(b,\epsilon_b)\subset U$ from the construction for some $\epsilon_b>0$.
It follows from Lemma \ref{LemQuotRA1} that for a general point $b\in U\cap\partial\Delta$ all coefficients of $P_{\eta(\zeta)}(\lambda)$ can be extended as real-analytic functions of $\zeta$ on $U_{b}$. 
By shrinking $U'$ if necessary, we may suppose that $\varphi_k(U')$ lies inside $U_b\cap\Delta$ for $k$ sufficiently large.
Since $\varphi_k(w)\to b$ as $k\to+\infty$ for any $w\in U'$, $\{P_{\eta(\varphi_k(w))}(\lambda)\}_{k=1}^{+\infty}$ converges to some polynomial $P_\infty(\lambda)$ of $\lambda$ which is independent of $w\in U'$.
In particular, the roots of $P_\infty(\lambda)$ are independent of $w\in U'$.
Since $P_{\widetilde \eta_k(w)}(\lambda)=P_{\eta(\varphi_k(w))}(\lambda)$ and some subsequence of $\{P_{\widetilde \eta_k(w)}(\lambda)\}_{k=1}^{+\infty}$ converges to $P_{\widetilde \eta(w)}(\lambda)$, we have $P_{\widetilde\eta(w)}(\lambda)=P_\infty(\lambda)$ so that the roots of $P_{\widetilde\eta(w)}(\lambda)$, equivalently the eigenvalues of $H_{\widetilde \eta(w)}$, are independent of $w\in U'$.

Write $\widetilde \eta(w)=\sum_{j=1}^r a_j(w)e_{\psi_j}$, where $a_1(w)\ge\cdots \ge a_r(w)\ge 0$ are real.
Then, $-2a_1(w)^2$, $\ldots$, $-2a_r(w)^2$ are some eigenvalues of $H_{\widetilde\eta(w)}$ by the proof of Lemma \ref{LemEigen1}. Since for each $j$, $1 < j < r$, $a_j(w)$ varies continuously in $w$ and there are only finitely many nonnegative real numbers $\alpha$ such that each $-2\alpha^2$ is among the $N$ eigenvalues of $H_{\widetilde 
\eta(w)}$ (which are independent of $w$), we conclude that
the normal form $\widetilde \eta(w)=\sum_{j=1}^r a_j(w)e_{\psi_j}$ is independent of $w\in U'$ and so is the rank of $\widetilde\eta(w)$, i.e., $\widetilde \mu$ satisfies property $2$.

Since $\widetilde\mu$ is a germ of holomorphic isometry from $(\Delta,m_0g_\Delta)$ to $(\Omega,g_\Omega)$, from the Gauss equation we have
$$\lVert \widetilde\sigma(\widetilde \mu(w)) \rVert^2
=R_{\widetilde \eta(w)\overline{\widetilde \eta(w)}\widetilde \eta(w)\overline{\widetilde \eta(w)}}(\Omega,g_\Omega)-\left(-{2\over m_0}\right)
=-2\sum_{j=1}^r a_j(w)^4+{2\over m_0},$$
which is independent of $w\in U'$ because $a_j(w)$, $1\le j\le r$, are independent of $w\in U'$ from the last paragraph.
Actually, denoting by $\kappa(\zeta)$ the Gauss curvature of $(U\cap\Delta,\mu^*g_\Omega|_{U\cap\Delta})$ at $\zeta\in U\cap\Delta$ we have
\[ R_{\widetilde \eta_k(w)\overline{\widetilde \eta_k(w)}\widetilde \eta_k(w)\overline{\widetilde \eta_k(w)}}(\Omega,{g_\Omega})
-\kappa(\varphi_k(w)) = \lVert \sigma(\mu(\varphi_k(w)))\rVert^2\]
for $w\in U'$.
Since the right-hand side of the above equality converges to $\lVert \widetilde\sigma(\widetilde \mu(w)) \rVert^2$ as $k\to +\infty$ (by passing to some subsequence if necessary) and $\lVert \sigma(\mu(\zeta))\rVert^2$ extends as a real-analytic function around a general point $b$ of $U\cap\partial\Delta$ (cf.\,Mok \cite{Mo09}), we have $\lVert \widetilde\sigma(\widetilde \mu(w)) \rVert^2
=\lVert \sigma(\mu(b)) \rVert^2$ for $w\in U'$.

Note that $\widetilde \mu$ extends to a holomorphic isometry $\widetilde \mu:(\Delta,m_0g_\Delta)\to (\Omega,g_\Omega)$ by \cite{Mo12}. By choosing a good boundary point $b\in \partial\Delta$ and by the same procedure, we can construct a (global) holomorphic isometry from $(\Delta,m_0g_\Delta)$ to $(\Omega,g_\Omega)$, denoted also by $\widetilde\mu$, such that $\widetilde\mu$ satisfies properties $1$ and $2$ on $\Delta$, as desired. 
\end{proof}
\begin{rem}\label{RemarkCHIE1}
\rm{\begin{enumerate}
\item[\rm{(a)}]
The positive integer $m_0$ is actually the vanishing order of $\rho(\mu(w))$ as $w\to b$ and we have $-\rho(\mu(w))=(1-|w|^2)^{m_0}\chi(w)$ on $U_b=\mathbb B^1(b,\epsilon_b)$, $\epsilon_b>0$, for some positive smooth function $\chi$ on $U_b$.
\item[\rm{(b)}] We equip a bounded symmetric domain $\Omega$ with the Bergman metric in the statement of Theorem \ref{MainThm} since we need to apply the extension theorem of Mok \cite{Mo12} which was only proven for Bergman metrics.
\end{enumerate}}
\end{rem}

\section{Proof of Theorem \ref{MainThm}}
\label{Sec:Proof}
We first prove a special case of Theorem \ref{MainThm} as follows where the bounded symmetric domain $\Omega$ is irreducible and of tube type, and the argument will be generalized to the case where $\Omega$ is reducible and of tube type.
We will also show that the general case of Theorem \ref{MainThm} where $\Omega$ is an arbitrary bounded symmetric domain is reducible to the case where $\Omega$ is of tube type.

\begin{thm}\label{ThmTubeDomain1}
Let $\Omega\Subset \mathbb C^N$ be an irreducible bounded symmetric domain of rank $r\ge 2$ in its Harish-Chandra realization.
Suppose $\Omega$ is of tube type.
Let $\mu:U=\mathbb B^1(b_0,\epsilon)\to \mathbb C^N$, $\epsilon>0$, be a holomorphic embedding such that $\mu(U\cap\Delta)\subset \Omega$ and $\mu(U\cap\partial\Delta)\subset \partial\Omega$, where $b_0\in \partial\Delta$.
Denote by $\sigma(z)$ the second fundamental form of $\mu(U\cap\Delta)$ in $(\Omega,g_\Omega)$ at $z=\mu(w)$. Then, for a general point $b\in U\cap\partial\Delta$ we have $\lim_{w\in U\cap \Delta,\;w\to b} \lVert \sigma(\mu(w))\rVert = 0$.
\end{thm}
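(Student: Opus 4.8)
\emph{Proof plan.} I would argue by contradiction, combining the rescaling construction of \S\ref{Sec:CHIE} with the Poincar\'e-Lelong equation on an auxiliary tube domain. Suppose Theorem \ref{ThmTubeDomain1} fails. Since by Mok \cite{Mo09} the function $w\mapsto\lVert\sigma(\mu(w))\rVert^2$ extends real-analytically across a general point of $U\cap\partial\Delta$, its failure to vanish identically there means that for a general $b\in U\cap\partial\Delta$ one has $\lim_{w\to b}\lVert\sigma(\mu(w))\rVert^2=\lVert\sigma(\mu(b))\rVert^2=:c_0>0$. Fix such a $b$ and apply Proposition \ref{ProConstHI1}: this produces a holomorphic isometric embedding $\widetilde\mu:(\Delta,m_0g_\Delta)\to(\Omega,g_\Omega)$ with $m_0\ge 1$ an integer, whose image $Z:=\widetilde\mu(\Delta)$ has constant second-fundamental-form norm equal to $c_0>0$ and whose unit tangent has a normal form $\widetilde\eta=\sum_{j=1}^k a_je_{\psi_j}$, $a_1\ge\cdots\ge a_k>0$, $\sum_ja_j^2=1$, independent of the base point; in particular $Z$ has everywhere tangential rank $k$. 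By the Gauss equation in the form obtained in the proof of Proposition \ref{ProConstHI1}, $c_0=\lVert\widetilde\sigma\rVert^2=2/m_0-2\sum_ja_j^4$, which is $\le 0$ when $k=1$; hence $2\le k\le r$, and when $k=r$ one runs the argument below with $\Omega$ itself in place of the auxiliary domain, so the real content lies in the range $2\le k<r$. It remains to show that no such $Z$ can exist.

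The crucial step is an integrability statement: I want to \emph{sandwich a tube domain} $\Omega'$ with $Z\subset\Omega'\subset\Omega$, where $\Omega'\subset\Omega$ is an irreducible totally geodesic tube-type characteristic subdomain of rank $k$, inside which the one-dimensional tangent spaces $T_z(Z)$ are generic (of rank $k=\rank(\Omega')$) and, consequently, mutually $\Aut_0(\Omega')$-equivalent. The parallelism of the curvature tensor of $(\Omega,g_\Omega)$ is what makes this possible: property $2$ of Proposition \ref{ProConstHI1} forces the Hermitian form $H_{\widetilde\eta}$ of Lemma \ref{LemEigen1}, together with its eigenspace decomposition of $\widetilde\mu^{*}T_\Omega$, to be parallel along $Z$, which pins down the strongly orthogonal subset $\{\psi_1,\dots,\psi_k\}\subset\Psi$ and with it a candidate $\Omega'$ passing through $\mathbf 0=\widetilde\mu(0)$, tangent to $Z$ there, and containing the polydisk directions $e_{\psi_1},\dots,e_{\psi_k}$. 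The delicate assertion is that $Z$ does not leave $\Omega'$; here the estimates of Mercer \cite{Me93} for the Kobayashi metric and the Kobayashi distance on bounded convex domains are used, to control the divergent sequence $\{\Phi_k\}$ of automorphisms of $\Omega$ appearing in the rescaling and to force its limiting image into a tube domain. I expect this confinement --- extracting the correct $\Omega'$ from the curvature data, and then proving $Z\subset\Omega'$ by convexity and Kobayashi-metric estimates --- to be the principal obstacle.

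Granting the sandwiching, the conclusion follows from Mok \cite{Mo02}. Since $\Omega'\subset\Omega$ is totally geodesic, $g_{\Omega'}=g_\Omega|_{\Omega'}$, so $\widetilde\mu:(\Delta,m_0g_\Delta)\to(\Omega',g_{\Omega'})$ is again a holomorphic isometric embedding and the second fundamental form of $Z$ in $\Omega'$ still has constant norm $c_0>0$; moreover $Z\subset\Omega'$ is now a holomorphic curve with generic, $\Aut_0(\Omega')$-equivalent tangent spaces in a tube domain. To such a curve one applies the Poincar\'e-Lelong argument of \cite{Mo02}: one restricts the canonical potential of $\Omega'$, which is $-\log N'$ up to an additive constant, $N'$ the generic (Jordan-algebra) norm of the tube domain, and its derivatives along $\widetilde\mu$, and observes that an appropriate holomorphic section over $Z\cong\Delta$ built from $N'\circ\widetilde\mu$ and $\widetilde\mu'$ is nowhere zero precisely because the tangent of $Z$ is generic, so that the associated Poincar\'e-Lelong equation carries no zero-divisor term; matching the resulting curvature identity against the Gauss equation then forces $\lVert\widetilde\sigma\rVert\equiv 0$, contradicting $c_0>0$. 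This proves Theorem \ref{ThmTubeDomain1}. The same scheme, run over the irreducible factors of $\Omega$ and their characteristic subdomains, handles the reducible-tube case, and the general case of Theorem \ref{MainThm} is then reduced to the tube case.
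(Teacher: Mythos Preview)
Your overall architecture matches the paper's: rescale via Proposition \ref{ProConstHI1} to obtain a holomorphic isometric Poincar\'e disk $Z=\widetilde\mu(\Delta)$ with $\Aut(\Omega)$-equivalent tangent lines of constant rank $k$, confine $Z$ to a rank-$k$ tube characteristic subdomain $\Omega'$, then run the Poincar\'e-Lelong argument of \cite{Mo02} inside $\Omega'$. But the mechanism you sketch for the confinement step is not the one that works, and it misidentifies how the Kobayashi estimates enter.

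The constancy of the normal form $\widetilde\eta$ does \emph{not} by itself make any eigenspace decomposition of $H_{\widetilde\eta}$ parallel along $Z$: the normal form is only an $\Aut(\Omega)$-equivalence class, and nothing prevents the null space $\mathcal N_{\widetilde\mu'(w)}$ (and with it the candidate $T_{\widetilde\mu(w)}(\Omega'_{\widetilde\mu(w)})$) from rotating inside $T_{\widetilde\mu(w)}(\Omega)$ as $w$ varies. What the paper does instead is construct, from curvature data, a holomorphic subbundle $W\subset T_\Omega|_Z$ with fibers $W_x=T_x(\Omega'_x)$ for varying rank-$k$ characteristic subdomains $\Omega'_x$ (Lemmas \ref{LemHVSB_CSD1}--\ref{LemHVSB_W1}), and shows that its second fundamental form $\tau:T_Z\otimes W\to T_\Omega|_Z/W$ is holomorphic (Lemma \ref{TauTensorHolo1}). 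The Mercer--Kobayashi estimates enter \emph{here}, not to control the divergent automorphisms $\Phi_k$: they yield $\lVert\partial/\partial z_j\rVert_{g_\Omega}\le C'/\delta(\zeta)$ at $\widetilde\mu(\zeta)$, which combined with $\lVert d\zeta\rVert\lesssim\delta(\zeta)$ forces $\lVert\hat\tau(\zeta)\rVert\to 0$ at a general boundary point, where $\hat\tau$ represents $\tau|_{T_Z\otimes T_Z}$ (this also requires extending $W$ holomorphically across $\partial\Delta$ via Lemmas \ref{lem:NewBundleV}--\ref{ExtW}). A \emph{second} round of rescaling then makes $\lVert\hat\tau\rVert^2$ constant, hence identically zero (Lemma \ref{LemTensorV}); a curvature computation upgrades this to $\tau\equiv 0$ (Lemma \ref{LemVSF1}); and only then does a foliation argument on the bundle $\mathcal{NS}_{r-k}(\Omega)$ trap $Z$ in a single leaf $\Omega'$ (Lemma \ref{LemInCSD1}). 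Your proposed route --- Kobayashi estimates controlling the $\Phi_k$ and forcing the limiting image into a tube domain --- cannot do this job: the limit $Z$ already exists by Montel, and whether it sits in a fixed characteristic subdomain is a statement about the parallelism of $W$ along $Z$, not about convergence of the rescaling.

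One smaller point on your final step: in the Poincar\'e-Lelong argument you need not only that the tangent of $Z$ is generic in $\Omega'$ (so that $\hat Z\cap\mathcal S_{k-1}(\Omega')=\varnothing$) but also that $\lVert s\rVert_o$ is \emph{constant} on the tautological lift $\hat Z$; it is the constancy of the normal form, not merely the nonvanishing of the section, that gives $\partial\overline\partial\log\lVert s\rVert_o^2\equiv 0$ and forces $\kappa_Z\equiv -2/k$.
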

\subsection{Geometry on embedded Poincar\'e disks}
\label{Proof:Sec1}
\subsubsection{Geometry on embedded Poincar\'e disks in tube domains}
In this section, we suppose that $\Omega$ is an irreducible bounded symmetric domain of tube type and of rank $\ge 2$.
Recall that we have constructed a holomorphic isometry $\widetilde\mu:(\Delta,m_0g_\Delta)\to (\Omega,{g_\Omega})$ from $\mu$ such that $\widetilde \mu(0)={\bf 0}$, $\lVert \widetilde \sigma(\widetilde \mu(w))\rVert^2\equiv \lVert \sigma(\mu(b))\rVert^2$ for any $w\in \Delta$ and $\widetilde\mu'(w)=d\widetilde\mu\left({\partial\over \partial w}\right)(w)$ is of constant rank $k$ on $\Delta$ for some $k$, $1\le k\le r=\rank(\Omega)$.
We write $Z=\widetilde\mu(\Delta)$ and $\eta(w)=\sum_{j=1}^k \eta_j(w) e_{\psi_j}$ as the normal form of ${\widetilde \mu'(w)\over \lVert \widetilde \mu'(w) \rVert_{g_\Omega}}$ with $\eta_1(w)\ge \cdots \ge \eta_k(w)>0$, where $\Psi=\{\psi_1,\ldots,\psi_r\}$ is a maximal strongly orthogonal set of noncompact positive roots \cite{Wol72}.
Let $\mathcal N_\eta$ be the null space of the Hermitian bilinear form $H_\eta(\alpha,\beta)=R_{\eta\overline\eta\alpha\overline\beta}(\Omega,{g_\Omega})$, which is of complex dimension $n_k(\Omega)$.
Here $n_k(\Omega)$ is the $k$-th null dimension of the irreducible bounded symmetric domain $\Omega$ (cf.\,\cite{Mo89}), noting that $n_r(\Omega) = 0$.
We also write $n_0(\Omega) := \dim_{\mathbb C}(\Omega)$.
For $x\in \Omega$, let $Q_x$ be the Hermitian bilinear form on $T_x(\Omega)\varotimes\overline{T_x(\Omega)}$ given by
$Q_x(\alpha\varotimes\overline\beta,\alpha'\varotimes\overline{\beta'}) = R_{\alpha\overline{\alpha'}\beta'\overline\beta}(\Omega,g_\Omega)$.
For $w\in \Delta$, we define
\[ W_{\widetilde\mu(w)}
= \left\{ v\in T_{\widetilde\mu(w)}(\Omega) : Q_{\widetilde\mu(w)}(v\varotimes \overline \zeta,\cdot )\equiv 0\;\;\forall\;\zeta\in \mathcal N_{\widetilde\mu'(w)}\right\}, \]
where $\mathcal N_{\widetilde\mu'(w)}
=\mathcal N_{\eta(w)}
=\left\{ v\in T_{\widetilde\mu(w)}(\Omega): R_{\eta(w)\overline{\eta(w)}v\overline v}(\Omega,g_\Omega)=0\right\}=\{ v \in T_{\widetilde\mu(w)}(\Omega): \eta(w)\varotimes \overline{v} \in \Ker(Q_{\widetilde\mu(w)})\}$.
Then, we have $T_{\widetilde\mu(w)}(Z)\subset W_{\widetilde\mu(w)}\subset T_{\widetilde\mu(w)}(\Omega)$.
Letting
$\mathcal N_k = \bigcap_{j=1}^k \{ \varphi\in \Delta_M^+: \varphi\neq \psi_j, \;\varphi -\psi_j \text{ is not a root}\}$, we have $\mathcal N_\eta = \bigoplus_{\varphi\in \mathcal N_k} \mathfrak g_\varphi$.
Define $\widetilde{\mathcal N}:=\bigcap_{\varphi \in \mathcal N_k} \{ \psi \in \Delta_M^+; \psi\neq \varphi,\; \psi - \varphi \text{ is not a root}\}$. Then, the normal form of $W_{\widetilde\mu(w)}$ is given by
$\bigcap_{\zeta\in \mathcal N_\eta} \mathcal N_\zeta
= \bigoplus_{\psi\in \widetilde{\mathcal N}} \mathfrak g_\psi$.
\begin{lem}\label{LemHVSB_CSD1}
In the above construction, if $\Omega$ is of tube type, then for any $x\in Z$, $W_x = T_x(\Omega'_x)$ for some characteristic subdomain $\Omega'_x\subseteq \Omega$ of rank $k$ passing through $x$ and $\Omega'_x$ is of tube type.
\end{lem}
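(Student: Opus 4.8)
The plan is to reduce the assertion to the single root-theoretic identity $\bigoplus_{\psi\in\widetilde{\mathcal N}}\mathfrak g_\psi=\mathfrak m_\Lambda^+$ for $\Lambda:=\{\psi_1,\dots,\psi_k\}\subset\Psi$, and then to transport it from ${\bf 0}$ to a general $x\in Z$ by homogeneity. For the latter, observe that the curvature tensor of $(\Omega,g_\Omega)$, hence the Hermitian form $Q$, is $G_0$-invariant (indeed parallel), and that $W_{\widetilde\mu(w)}$ depends on $x=\widetilde\mu(w)$ only through the unit vector $\upsilon(w):=\widetilde\mu'(w)/\lVert\widetilde\mu'(w)\rVert_{g_\Omega}$ via $\mathcal N_{\widetilde\mu'(w)}=\mathcal N_{\upsilon(w)}\subset T_x(\Omega)$. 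Choosing $\Phi\in G_0$ with $\Phi(x)={\bf 0}$ and $d\Phi(\upsilon(w))=\eta:=\sum_{j=1}^k\eta_je_{\psi_j}$ — possible because $\eta$ is by definition the normal form of $\upsilon(w)$ — one gets $d\Phi(W_x)=W_{\bf 0}=\bigoplus_{\psi\in\widetilde{\mathcal N}}\mathfrak g_\psi$, the last equality being the description of $W_{\bf 0}$ recorded just above the lemma. Granting the identity, $d\Phi(W_x)=\mathfrak m_\Lambda^+=T_{\bf 0}(\Omega_\Lambda)$, so $W_x=T_x(\Omega'_x)$ with $\Omega'_x:=\Phi^{-1}(\Omega_\Lambda)$ a ($G_0$-translate of the) characteristic subdomain $\Omega_\Lambda$ of $\Omega$, of rank $|\Lambda|=k$ and passing through $x$.

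To prove the identity I would first recast the defining conditions as orthogonality. For $\varphi,\psi\in\Delta_M^+$ one has $\varphi\perp\psi$ (with respect to the Killing form) if and only if $\varphi\neq\psi$ and $\psi-\varphi$ is not a root: since $\mathfrak m^+$ is abelian the sum $\varphi+\psi$ is never a root, so the $\varphi$-string through $\psi$ is $\{\psi-p\varphi,\dots,\psi\}$ with $p=2\langle\psi,\varphi\rangle/\langle\varphi,\varphi\rangle$, which vanishes exactly when $\langle\psi,\varphi\rangle=0$. Hence $\mathcal N_k=\{\varphi\in\Delta_M^+:\varphi\perp\psi_j\ (1\le j\le k)\}$, and since a noncompact root space always lies in the derived subalgebra of $\mathfrak l_{\Psi\smallsetminus\Lambda}$ this gives $\mathcal N_\eta=\bigoplus_{\varphi\in\mathcal N_k}\mathfrak g_\varphi=\mathfrak m_{\Psi\smallsetminus\Lambda}^+=T_{\bf 0}(\Omega_{\Psi\smallsetminus\Lambda})$, a characteristic subdomain of rank $r-k$; similarly $\widetilde{\mathcal N}=\{\psi\in\Delta_M^+:\psi\perp\varphi\ \forall\varphi\in\mathcal N_k\}$. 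The inclusion $\bigoplus_{\psi\in\widetilde{\mathcal N}}\mathfrak g_\psi\subseteq\mathfrak m_\Lambda^+$ is then immediate and uses no hypothesis: by strong orthogonality of $\Psi$ each $\psi_l$ with $l>k$ lies in $\mathcal N_k$, so $\psi\in\widetilde{\mathcal N}$ forces $\psi\perp\psi_l$ for all $l>k$, i.e. $\psi\in\Delta_M^+\cap\mathfrak m_\Lambda^+$.

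The reverse inclusion is where the \emph{tube-type} hypothesis is essential, and is the step I expect to be the main obstacle. Here I would invoke the structure theory of Moore and Harish-Chandra for the noncompact positive roots relative to $\Psi$: when $\Omega$ is of tube type the restricted root system is of type $C_r$, so every $\varphi\in\Delta_M^+$ carries a well-defined unordered pair $\{i(\varphi),j(\varphi)\}\subset\{1,\dots,r\}$ (there being no ``$e_i$-type'' noncompact roots precisely because $\Omega$ is of tube type), and $\varphi\perp\psi$ holds if and only if these pairs are disjoint. Reading off pairs, $\mathcal N_k$ consists of the roots with pair inside $\{k+1,\dots,r\}$ while $\Delta_M^+\cap\mathfrak m_\Lambda^+$ consists of those with pair inside $\{1,\dots,k\}$; disjointness is then automatic, so $\Delta_M^+\cap\mathfrak m_\Lambda^+\subseteq\widetilde{\mathcal N}$, which yields the identity. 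Finally $\Omega_\Lambda$ — and hence $\Omega'_x=\Phi^{-1}(\Omega_\Lambda)$ — is again of tube type, its restricted root system being the sub-system $C_k\subset C_r$ (alternatively one checks this directly against Wolf's classification of characteristic subdomains), completing the proof. The delicacy of the last inclusion, and the reason Theorem \ref{MainThm} is first reduced to the tube-type case, is that for a non-tube irreducible domain the identical computation produces a $W_x$ strictly smaller than the tangent space of any characteristic subdomain — already for the type-$\mathrm I$ domains of $p\times q$ matrices with $p<q$ — so the statement as it stands genuinely fails outside the tube-type setting.
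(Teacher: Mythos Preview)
Your argument is correct and more uniform than the paper's. The paper proves the lemma by a case-by-case verification: explicit matrix computations for the classical tube domains $D^{\mathrm I}_{p,p}$, $D^{\mathrm{II}}_{2r}$, $D^{\mathrm{III}}_r$ (exhibiting the normal form of $W_{\bf 0}$ as the appropriate block-matrix subspace), explicit root computations from \cite{Zh84} for the exceptional $D^{\mathrm{VI}}$, and a short direct treatment of the rank-$2$ types. Your approach bypasses this classification entirely by invoking the restricted root structure --- type $C_r$ for tube domains --- to obtain the identity $\widetilde{\mathcal N}=\Delta_M^+\cap\mathfrak m_\Lambda^+$ in one stroke. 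The key implication you need, ``disjoint pairs $\Rightarrow$ orthogonality'', is indeed valid: if $\psi-\varphi$ were a root it would be compact and hence restrict to $0$ or to some $\gamma_i-\gamma_j$, neither of which can equal $(\gamma_a+\gamma_b)-(\gamma_c+\gamma_d)$ when $\{a,b\}\subset\{1,\dots,k\}$ and $\{c,d\}\subset\{k+1,\dots,r\}$ are disjoint. (Your stated ``if and only if'' is stronger than what the argument actually uses; the proof only needs this forward direction together with the special case $\varphi\perp\psi_j\Leftrightarrow j\notin\{i(\varphi),j(\varphi)\}$, both of which are unproblematic.) The paper's explicit computations, by contrast, have the advantage that they are reused almost verbatim in the proof of Lemma~\ref{ExtW}, where one must identify $V_x=[[T_x(Z),\mathfrak m^-],T_x(Z)]$ concretely in each type; your structural viewpoint does not immediately supply that.
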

\begin{proof}
Fix $x\in Z$.
We first consider the case where $\Omega=D^\mathrm{VI}$.
If $k=3=\mathrm{rank}(\Omega)$, then $W_x=T_x(\Omega)$ so that the result follows directly and $\Omega'_x=\Omega$.
If $k=1$, then $W_x=T_x(Z)=T_x(\Delta_\eta)$ with $\Delta_\eta\subset \Omega$ being the minimal disk passing through $x=\widetilde \mu(w)$ because $\bigcap_{\zeta\in \mathcal N_{\eta(w)}} \mathcal N_\zeta = \mathbb C \eta(w)$ (cf.\,\cite[p.\,98]{MT92}).
Suppose $k=2$.
Note that the automorphism group of the exceptional domain $D^{\mathrm{VI}}$ corresponds to the Lie group $E_7$.
From \cite{Zh84} and \cite[p.\,868]{Si81}, we put $\Psi=\{\psi_1,\psi_2,\psi_3\}$ with $\psi_1=x_1-x_2$, $\psi_2=x_1+x_2+x_3$ and $\psi_3=\sum_{j=1}^7 x_j - x_3$, where $x_j$, $1\le j\le 7$, is the standard basis of $\mathbb R^7$.
Write $\eta(w) = \eta_1(w) e_{x_1-x_2}
+\eta_2(w) e_{x_1+x_2+x_3}$.
Then, we have
\[\begin{split}
 \mathcal N_2
=&\bigcap_{j=1}^2\{ \varphi\in \Delta_M^+: \varphi\neq \psi_j, \;\varphi -\psi_j \text{ is not a root}\}\\
=&\left\{\sum_{j=1}^7 x_j - x_3\right\}
= \{\psi_3\}. \end{split}\]
Actually, if $\eta(w)=\eta_1(w) e_{\psi_{j_1}}
+\eta_2(w) e_{\psi_{j_2}}$ for some distinct $j_1,j_2\in \{1,2,3\}$, then
$\mathcal N_{\eta(w)} = \mathbb C e_{\psi_{j_3}}$
with $j_3\in \{1,2,3\}\smallsetminus \{j_1,j_2\}$.
Since $e_{\psi_{j_3}}$ is a characteristic vector, the normal form of $W_{\widetilde \mu(w)}$ is
$\mathcal N_{e_{\psi_{j_3}}}=T_{\bf 0} (\Omega')$ for some characteristic subdomain $\Omega'\subset\Omega=D^{\mathrm{VI}}$ of rank $2$ by \cite[Proposition 1.8]{MT92} and we have $\Omega'\cong D^{\mathrm{IV}}_{10}$ by \cite{Wol72}.
When $\Omega$ is of type-$\mathrm{IV}$ and $k=1$ (resp.\,$k=2$), we have $W_x=T_x(Z)=T_x(\Delta_\eta)$ for a unique minimal disk $\Delta_\eta\subset \Omega$ passing through $x\in Z$ satisfying $T_x(\Delta_\eta)=\mathbb C\eta$ (resp.\,$W_x=T_x(\Omega)$). Note that these arguments not only work for $D^{\mathrm{IV}}_N$, but also for any irreducible bounded symmetric domain of rank $2$, including $D^{\mathrm{V}}$.

When $\Omega$ is of type $\mathrm{I},\mathrm{II}$ or $\mathrm{III}$, the result follows from the use of normal form $\eta$ and the computations in \cite{Mo89}.
If $k=r$, then we have $W_x=T_x(\Omega)$.
For each $x\in Z$, we see that the normal form of $W_{x}$ is the holomorphic tangent space to some characteristic symmetric subdomain $\Omega'\subset \Omega$ of rank $k$ at ${\bf 0}$, as follows.
\begin{enumerate}
\item When $\Omega=D^{\mathrm{I}}_{p,p}$, $2\le p=r$, and $1\le k\le p$, the normal form $\eta$ $=$ $\mathrm{diag}_{p,p}$ $(\eta_1,\ldots,\eta_k,0,\ldots,0)$ is a $p$-by-$p$ diagonal matrix and it is clear that
\[ \bigcap_{\zeta \in \mathcal N_\eta} \mathcal N_\zeta
=  \left\{\begin{bmatrix}
Z' & \\
 & {\bf 0}
\end{bmatrix} \in M(p,p;\mathbb C): Z'\in M(k,k;\mathbb C)\right\} = T_{\bf 0} (D^{\mathrm{I}}_{k,k}) \]
by \cite{Mo89},
where we identify $\Omega'=D^{\mathrm{I}}_{k,k}$ with its image via the standard embedding $D^{\mathrm{I}}_{k,k} \hookrightarrow D^{\mathrm{I}}_{p,p}$, $Z' \mapsto \begin{bmatrix}
Z' & \\
 & {\bf 0}
\end{bmatrix}$.
\item When $\Omega=D^{\mathrm{III}}_r$, the normal form $\eta=\mathrm{diag}_{r,r}(\eta_1,\ldots,\eta_k,0,\ldots,0)$ is a $r$-by-$r$ diagonal matrix, and it is clear that
\[ \bigcap_{\zeta \in \mathcal N_\eta} \mathcal N_\zeta = \left\{\begin{bmatrix}
Z' & \\
 & {\bf 0}
\end{bmatrix} \in M_s(r;\mathbb C): Z'\in M_s(k;\mathbb C)\right\} = T_{\bf 0} (D^{\mathrm{III}}_{k}) \]
by \cite{Mo89},
where we identify $\Omega'=D^{\mathrm{III}}_{k}$ with its image via the standard embedding $D^{\mathrm{III}}_{k} \hookrightarrow D^{\mathrm{III}}_{r}$, $Z' \mapsto \begin{bmatrix}
Z' & \\
 & {\bf 0}
\end{bmatrix}$.
\item When $\Omega=D^{\mathrm{II}}_{2r}$, the normal form
$\eta = \mathrm{diag}_{2r,2r}(\eta_1 J_1,\ldots,\eta_k J_1,{\bf 0}
)$ is a $2r$-by-$2r$ block diagonal matrix,
where $J_1 := \begin{pmatrix}
0 & 1\\
-1 & 0
\end{pmatrix}$.
Then, it is clear that
\[ \bigcap_{\zeta \in \mathcal N_\eta} \mathcal N_\zeta
=  \left\{\begin{bmatrix}
Z' & \\
 & {\bf 0}
\end{bmatrix} \in M_a(2r;\mathbb C): Z'\in M_a(2k;\mathbb C)\right\} = T_{\bf 0}(D^{\mathrm{II}}_{2k}) \]
by \cite{Mo89},
where $\Omega'=D^{\mathrm{II}}_{2k}$ is identified with its image via the standard embedding $D^{\mathrm{II}}_{2k}\hookrightarrow D^{\mathrm{II}}_{2r}$, $Z'\mapsto \begin{bmatrix}
Z'&\\
& {\bf 0}
\end{bmatrix}$.
\end{enumerate}
From the classification of boundary components of any irreducible bounded symmetric domain and the notion of the characteristic subdomains in \cite{Wol72} and \cite{MT92}, we see that $\Omega'\subset \Omega$ is a characteristic subdomain of rank $k$.
Then, by using the $G_0$-action and the fact that $\Omega'$ is an invariantly geodesic submanifold of $\Omega$, we see that $W_x = T_x(\Omega'_x)$ for some characteristic subdomain $\Omega'_x\subseteq \Omega$ of rank $k$.
Since $\Omega$ is of tube type, all its characteristic subdomains are of tube type (cf.\,\cite{Wol72}).
\end{proof}
\begin{rem}\label{RemHVSB_CSD1}\rm{
Let $\Omega$ be an irreducible bounded symmetric domain of rank $r\ge 2$ which is not necessarily of tube type. Assume that $T_x(Z)$ is spanned by a rank-$k$ vector $\eta_x\in T_x(\Omega)$ for each $x\in Z$ with $k<r$. Then, for any $x\in Z$ we have $W_x = T_x(\Omega'_x)$ for some invariantly geodesic submanifold $\Omega'_x\subseteq \Omega$ passing through $x$ such that $\Omega'_x$ is an irreducible bounded symmetric domain of rank $k$ and of tube type.
More precisely, when $\Omega$ is of non-tube type and $\eta$ is of rank $k<r$, we have
\begin{enumerate}
\item[(a)] If $\Omega\cong D^{\mathrm{I}}_{p,q}$, $q> p=r\ge 2$, then $W_x\cong T_{\bf 0}(D^{\mathrm{I}}_{k,k})=M(k,k;\mathbb C)$ and $\Omega'_x\cong D^{\mathrm{I}}_{k,k}$.
\item[(b)] If $\Omega\cong D^{\mathrm{II}}_{2r+1}$, $r\ge 2$, then
$W_x\cong T_{\bf 0}(D^{\mathrm{II}}_{2k})=M_a(2k;\mathbb C)$ and $\Omega'_x\cong D^{\mathrm{II}}_{2k}$.
\item[(c)] When $\Omega\cong D^{\mathrm{V}}$ (which corresponds to the Lie group $E_6$), the result already follows as we have mentioned in the proof of Lemma \ref{LemHVSB_CSD1}.
\end{enumerate}
}
\end{rem}

From now on the holomorphic vector bundle $W$ is taken to be defined for $\Omega$ irreducible and for $Z\subset \Omega$ a holomorphically embedded Poincar\'e disk with $\mathrm{Aut}(\Omega)$-equivalent holomorphic tangent spaces $T_x(Z)=\mathbb C \eta_x$ of rank $k<r=:\mathrm{rank}(\Omega)$, i.e., $\eta_y\in T_y(\Omega)$ is a rank-$k$ tangent vector for any $y\in Z$.

\begin{lem}\label{LemHVSB_W1}
In the above construction, $W:=\bigcup_{x\in Z}W_x\subset T_{\Omega}|_Z$ is a holomorphic vector subbundle.
\end{lem}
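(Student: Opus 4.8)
The plan is to trivialize $T_\Omega$, reduce the statement to the holomorphy of a Gauss-type map into a Grassmannian, and then extract holomorphy from the explicit descriptions of the fibre $W_x$ obtained in Lemma \ref{LemHVSB_CSD1} and Remark \ref{RemHVSB_CSD1}. Since $\Omega \Subset \mathbb C^N$ is open in its Harish-Chandra realization, $T_\Omega \cong \Omega \times \mathbb C^N$ holomorphically; pulling back by the holomorphic isometric embedding $\widetilde\mu : \Delta \to \Omega$, it is enough to show that $w \mapsto W_{\widetilde\mu(w)} \subset \mathbb C^N$ is holomorphic as a map $\Delta \to \mathrm{Gr}(d,N)$ for a suitable $d$. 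First I would check that the fibre dimension is constant. By Proposition \ref{ProConstHI1} the vector $\widetilde\mu'(w)$ has constant rank $k$ along $Z$ and constant normal form; hence $\mathcal N_{\widetilde\mu'(w)} = \mathcal N_{\eta(w)} = \bigoplus_{\varphi\in\mathcal N_k}\mathfrak g_\varphi$ has constant dimension $n_k(\Omega)$, and the normal form of $W_{\widetilde\mu(w)}$ equals $\bigoplus_{\psi\in\widetilde{\mathcal N}}\mathfrak g_\psi$, whose dimension $d := |\widetilde{\mathcal N}|$ depends only on $\Omega$ and $k$ (equivalently, by Lemma \ref{LemHVSB_CSD1} and Remark \ref{RemHVSB_CSD1}, $W_x = T_x(\Omega'_x)$ for a characteristic subdomain, resp.\ invariantly geodesic submanifold, $\Omega'_x$ whose biholomorphism type is pinned down by $\Omega$ and $k$; cf.\ \cite{Wol72}, \cite{MT92}). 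Since the curvature tensor $R(\Omega,g_\Omega)$ is real-analytic in the Harish-Chandra coordinates and $\mathcal N_{\eta(w)}$ and $Q_x$ are built from it, the assignment $x \mapsto W_x$ is real-analytic; thus $W$ is at least a real-analytic subbundle of $T_\Omega|_Z$ of rank $d$, and only holomorphy remains to be shown.

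The heart of the matter is that, although $W_x$ is defined through Hermitian data --- $\mathcal N_{\eta(w)}$ is the null space of the Hermitian form $H_{\eta(w)}(\alpha,\beta) = R_{\eta(w)\overline{\eta(w)}\alpha\overline{\beta}}$, and $W_x$ is cut out by the vanishing of $Q_x(\,\cdot\otimes\overline\zeta,\,\cdot\,)$ over $\zeta\in\mathcal N_{\eta(w)}$ --- the two complex conjugations so introduced cancel, so that $W_x$ is in fact defined by complex-linear equations whose coefficients depend \emph{holomorphically} on the tangent vector $\widetilde\mu'(w)$. I would verify this type by type, following the computations already carried out in the proof of Lemma \ref{LemHVSB_CSD1}: for $\Omega$ of type $\mathrm I$, $\mathrm{II}$ or $\mathrm{III}$ one rewrites $W_x = T_x(\Omega'_x)$ intrinsically in terms of the image and the kernel of the homomorphism (resp.\ the alternating or symmetric form) represented by $\widetilde\mu'(w)$ --- objects that depend algebraically, hence holomorphically, on $\widetilde\mu'(w)$ and make no reference to the $K$-transformation needed to bring $\widetilde\mu'(w)$ into normal form; for $\Omega$ of type $\mathrm{IV}$ and for the rank-$\le 2$ cases of types $\mathrm V$ and $\mathrm{VI}$ one uses the explicit descriptions of $\mathcal N_{\eta(w)}$ and $W_x$ recorded there. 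Since $\widetilde\mu'(w) = d\widetilde\mu(\partial/\partial w)$ is a nowhere-vanishing holomorphic section of $T_Z$, it follows that $w \mapsto W_{\widetilde\mu(w)}$ is holomorphic into $\mathrm{Gr}(d,N)$, i.e., $W$ is a holomorphic vector subbundle of $T_\Omega|_Z$.

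The step I expect to be the main obstacle is precisely this last one: making rigorous, uniformly over all Cartan types, the cancellation of the two conjugations --- equivalently, showing that the subspace $W_x\subset\mathbb C^N$ is an algebraic, in particular holomorphic, function of the line $T_x(Z) = \mathbb C\,\widetilde\mu'(w)$ rather than merely a real-analytic one. It is worth noting that the reduction genuinely relies on Proposition \ref{ProConstHI1}: because the normal form of $\widetilde\mu'(w)$ is constant along $Z$, the vector $\widetilde\mu'(w)$ keeps constant rank $k$ and the relevant determinantal strata (images, kernels, null spaces) have locally constant dimension, which is what permits a single algebraic description of $W_x$ valid along all of $Z$ within a fixed $\Omega$.
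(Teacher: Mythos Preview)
Your approach is viable in principle but takes a genuinely different route from the paper, and it is considerably more laborious. The paper does not attempt to exhibit $W_x$ as an algebraic (hence holomorphic) function of the line $T_x(Z)$; instead it gives a short, type-independent differential-geometric argument. Using the parallelism $\nabla R\equiv 0$ of the curvature tensor, one first checks that the bundle $\mathcal N=\bigcup_w\mathcal N_{\eta(w)}$ is $\nabla$-invariant, so that (via the metric lifting) any $\zeta\in\mathcal N_x$ extends locally with $\nabla_v\zeta=0$. Differentiating the defining relation $Q(\gamma\otimes\overline\zeta,\alpha\otimes\overline\beta)=0$ in the $(0,1)$-direction and using $\nabla R\equiv 0$ then yields $(\nabla_{\overline v}\gamma)(x)\in W_x$ for every smooth section $\gamma$ of $W$; i.e.\ $W\subset T_\Omega|_Z$ is $\overline\partial$-closed, hence holomorphic. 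No Grassmannian, no case analysis, and no reference to the Harish-Chandra trivialization is needed.

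Your ``cancellation of the two conjugations'' heuristic is correct in spirit---in fact the paper later (Lemmas~\ref{lem:NewBundleV} and~\ref{ExtW}) constructs the manifestly holomorphic bundle $V_x=[[T_x(Z),\mathfrak m^-],T_x(Z)]$ via Lie brackets and proves $V_x=W_x$ type by type, which is essentially what your programme would amount to. So what you propose is not wrong, but it front-loads the case-by-case work of those later lemmas into the present one, whereas the paper's $\overline\partial$-closedness argument settles Lemma~\ref{LemHVSB_W1} in a few lines and defers the Lie-algebraic identification $V=W$ to where it is actually needed (for extending $W$ across $\partial\Delta$). One caution about your sketch: in the Harish-Chandra trivialization the curvature tensor at $x\neq{\bf 0}$ is \emph{not} the constant tensor at ${\bf 0}$, so ``image and kernel of $\widetilde\mu'(w)$'' alone do not a priori describe $W_x$; you would really need the intrinsic Lie-bracket description (or an equivalent) to make your holomorphy claim rigorous.
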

\begin{proof}
On the holomorphic curve $Z$ we write $\mathcal N:= \bigcup_{w\in \Delta} \mathcal N_{\eta(w)}$.  For $x\in Z=\widetilde \mu(\Delta)$, we have 
$\mathcal N_x = \{ \zeta \in T_x(\Omega): Q(\xi\varotimes \overline\zeta,\cdot ) \equiv 0,\;\forall\;\xi\in T_x(Z) \}$ and 
$W_x = \{ \gamma\in T_x(\Omega): Q(\gamma\varotimes \overline\zeta,\cdot ) \equiv 0,\;\forall\;\zeta\in \mathcal N_x \}$.
We claim first of all that the vector subbundle $\mathcal N \subset T_\Omega|_Z$ is $\nabla$-invariant. 
We consider arbitrary $\xi \in T_x(Z)$, $\zeta \in \mathcal N_x$ and $\alpha, \beta \in T_x(\Omega)$ and by abuse of notation use the same symbols to denote extensions of these vectors at the point $x$ to smooth local sections at $x$ sometimes subject to additional conditions, and the same convention will be adopted throughout the rest of the section. Since $T_Z \subset T_\Omega|_Z$ is a holomorphic line subbundle, any $\xi \in T_x(Z)$ can be extended to $\xi \in \Gamma_{\mathrm{loc},x}(Z,T_Z)$.  Since $\nabla R \equiv 0$, for any $(1,0)$-tangent vector $v$ of $Z$ at $x$, for the extensions $\zeta \in \Gamma_{\mathrm{loc},x}(Z,\mathcal A(\mathcal N))$ and $\alpha, \beta \in \Gamma_{\mathrm{loc},x}(Z,\mathcal A(T_\Omega|_Z))$, we have
\[ 0=\nabla_{\overline v} (Q(\xi\varotimes \overline\zeta,\alpha\varotimes \overline\beta))
= Q(\xi\varotimes \overline{\nabla_v\zeta},\alpha\varotimes \overline\beta). \]
It follows that $\nabla_v\zeta \in \mathcal N_x$, hence $\mathcal N$ is $\nabla$-invariant, as claimed.
If we identify $\overline{T_\Omega|_Z}$ with $T^*_\Omega|_Z$ by means of the lifting operator defined by the K\"ahler metric $g_\Omega$, $\overline{\mathcal N}$ can be identified with a holomorphic vector subbundle of $T^*_\Omega|_Z$. Through this identification, any $\zeta \in \mathcal N_x$ can be extended to $\zeta \in \Gamma_{\mathrm{loc},x}(Z,\mathcal A(\mathcal N))$ such that $\nabla_v\zeta \equiv 0$.  

We fix $x\in Z$. Let $\gamma \in \Gamma_{\mathrm{loc},x}(Z,\mathcal A(W))$.
Then, for any $(1,0)$-tangent vector $v$ of $Z$ at $x$ and any $\zeta \in \Gamma_{\mathrm{loc},x}(Z,\mathcal A(\mathcal N))$ so that $\nabla_v\zeta \equiv 0$, we have  
\[ 0=\nabla_{\overline v} (Q(\gamma\varotimes \overline\zeta,\alpha\varotimes \overline\beta))
= Q(\nabla_{\overline v} \gamma \varotimes \overline\zeta,\alpha\varotimes \overline\beta) \]
for $\alpha, \beta \in \Gamma_{\mathrm{loc},x}(Z,\mathcal A(T_\Omega|_Z))$.
Therefore, we have $(\nabla_{\overline v} \gamma)(x)\in W_x$. As a consequence, $W \subset T_\Omega|_Z$ is $\overline{\nabla}$-invariant, i.e., $\overline{\partial}$-invariant, hence $W \subset T_\Omega|_Z$ is a holomorphic vector subbundle, as desired.
\end{proof}
\begin{lem}\label{TauTensorHolo1}
Define the $(1,0)$-part of the second fundamental form $\tau:T_Z \varotimes W \to T_\Omega|_Z/W$ of the holomorphic vector subbundle $(W,g_\Omega|_W)\subset (T_\Omega|_Z,g_\Omega)$ by $\tau_x(\eta\varotimes \gamma) = (\nabla_\eta \gamma)(x) \mod W_x$ for each $x\in Z$, $\eta\in T_x(Z)$ and $\gamma\in W_x$.
Then, $\tau$ is holomorphic.
\end{lem}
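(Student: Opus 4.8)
The plan is to show that $\tau$, a priori a smooth section of the bundle $\mathrm{Hom}(T_Z\varotimes W, T_\Omega|_Z/W)$, is in fact annihilated by $\overline\partial$, and hence holomorphic. Since $T_Z$, $W$, and $T_\Omega|_Z/W$ are all holomorphic vector bundles over $Z$ (the first two being holomorphic subbundles of $T_\Omega|_Z$ by construction and by Lemma \ref{LemHVSB_W1}, the last being a holomorphic quotient), the bundle $\mathrm{Hom}(T_Z\varotimes W, T_\Omega|_Z/W)$ carries a natural $\overline\partial$-operator, and it suffices to check $\overline\partial\tau = 0$ locally, evaluated on a holomorphic frame.

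First I would fix $x\in Z$ and choose local generators adapted to the holomorphic structures: a local holomorphic section $\eta \in \Gamma_{\mathrm{loc},x}(Z,T_Z)$ generating $T_Z$, and a local holomorphic frame $\gamma_1,\dots,\gamma_d \in \Gamma_{\mathrm{loc},x}(Z,W)$ of $W$ (which exists by Lemma \ref{LemHVSB_W1}), so that $\overline\partial\eta = 0$ and $\overline\partial\gamma_i = 0$, i.e. $\nabla_{\overline v}\eta = 0$ and $\nabla_{\overline v}\gamma_i = 0$ for every $(0,1)$-vector $\overline v$ on $Z$ (here $\nabla$ is the Chern connection of $g_\Omega$ restricted to $Z$, and the $(0,1)$-part of the Chern connection on a holomorphic subbundle is the $\overline\partial$-operator). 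The key computation is then to differentiate the defining formula $\tau(\eta\varotimes\gamma_i) \equiv \nabla_\eta\gamma_i \pmod{W}$ in a $\overline v$ direction. Using that the ambient connection $\nabla$ on $(\Omega,g_\Omega)$ is Kähler, so that $\nabla_{\overline v}\nabla_\eta - \nabla_\eta\nabla_{\overline v} = \nabla_{[\overline v,\eta]} + R(\overline v,\eta)$, and that on the one-dimensional $Z$ we may take $\eta$ and $\overline v$ to have vanishing Lie bracket at $x$ (e.g. by using a local holomorphic coordinate $w$ on $Z$ and taking $\eta = \partial/\partial w$, $\overline v = \partial/\partial\overline w$), we get
\[
\nabla_{\overline v}\left(\nabla_\eta \gamma_i\right)
= \nabla_\eta\left(\nabla_{\overline v}\gamma_i\right) + R(\overline v,\eta)\gamma_i
= R(\overline v,\eta)\gamma_i ,
\]
since $\nabla_{\overline v}\gamma_i = 0$. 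It remains to argue that $R(\overline v,\eta)\gamma_i \in W_x$, for then $\nabla_{\overline v}(\nabla_\eta\gamma_i) \equiv 0 \pmod W$, which is precisely the statement that the smooth section $\tau(\eta\varotimes\gamma_i)$ of the holomorphic bundle $T_\Omega|_Z/W$ is $\overline\partial$-closed, hence holomorphic; as $\eta$ and the $\gamma_i$ form holomorphic frames this gives $\overline\partial\tau = 0$.

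The main obstacle — and the heart of the argument — is thus the claim that the ambient curvature operator sends $W_x$ back into $W_x$ along the curve directions, i.e. $R_{\eta\overline v}\gamma \in W_x$ for $\gamma\in W_x$; equivalently, writing things in terms of the curvature $(4,0)$-tensor, one needs $R_{\eta\overline\eta\,\gamma\,\overline\delta} = 0$ for all $\delta$ in a complement of $W_x$, together with the analogous mixed-type identities. This should follow from the very definition of $W_x$ as the subspace of vectors $\gamma$ with $Q_x(\gamma\varotimes\overline\zeta,\cdot)\equiv 0$ for all $\zeta\in\mathcal N_x$, combined with the $\nabla$-invariance of $\mathcal N$ established inside the proof of Lemma \ref{LemHVSB_W1} (which let us choose the frame of $\mathcal N$, under the identification $\overline{T_\Omega|_Z}\cong T_\Omega^*|_Z$, to be $\nabla$-parallel along $Z$). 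Concretely, I would run the same Bianchi-plus-parallelism bookkeeping as in Lemma \ref{LemHVSB_W1}: differentiate the identity $Q(\gamma\varotimes\overline\zeta,\alpha\varotimes\overline\beta)\equiv 0$ defining $W$ once more in a holomorphic direction, use $\nabla R\equiv 0$ and $\nabla_{\overline v}\zeta\equiv 0$ to peel off derivatives, and read off that $R_{\eta\overline v}\gamma$ still pairs trivially against $\mathcal N_x$ under $Q$, hence lies in $W_x$. Once this closure property is in hand the holomorphy of $\tau$ is immediate from the displayed curvature computation, so I expect the write-up to be short modulo carefully invoking the parallelism of $R$ and the structure results already proven.
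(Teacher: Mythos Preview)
Your reduction is the same as the paper's: to show $\tau$ is holomorphic it is enough to show, for holomorphic $\gamma\in\Gamma_{\mathrm{loc},x}(Z,W)$ and $\eta\in T_x(Z)$, that $\nabla_{\overline\eta}(\nabla_\eta\gamma)(x)\in W_x$, and this amounts (via $\nabla_{\overline\eta}\gamma=0$) to proving the curvature-closure statement $R(\eta,\overline\eta)\gamma\in W_x$. So far so good.

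The gap is in how you propose to prove $R(\eta,\overline\eta)\gamma\in W_x$. Your plan is to differentiate the defining identity $Q(\gamma\varotimes\overline\zeta,\alpha\varotimes\overline\beta)\equiv 0$ ``once more in a holomorphic direction'' and use ``$\nabla_{\overline v}\zeta\equiv 0$''. Two problems. First, what Lemma~\ref{LemHVSB_W1} actually gives is a frame of $\mathcal N$ with $\nabla_{v}\zeta\equiv 0$ (the $(1,0)$-direction), not $\nabla_{\overline v}\zeta\equiv 0$; this comes from identifying $\overline{\mathcal N}$ with a holomorphic subbundle of $T_\Omega^*|_Z$, so holomorphicity of $\overline\zeta$ translates to $\nabla_v\zeta=0$ after conjugation. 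Second, even with the correct condition, differentiating $R_{\gamma\overline\alpha\beta\overline\zeta}=0$ in the $\eta$-direction yields $R_{\nabla_\eta\gamma,\overline\alpha,\beta,\overline\zeta}+R_{\gamma,\overline\alpha,\beta,\overline{\nabla_{\overline\eta}\zeta}}=0$, which involves $\nabla_{\overline\eta}\zeta$ --- and nothing you have established controls this (that is precisely the content of the later Lemma~\ref{LemVSF1}, which \emph{uses} the holomorphy of $\tau$ and Lemma~\ref{LemTensorV} as input). So the bookkeeping you sketch is circular and does not deliver $R(\eta,\overline\eta)\gamma\in W_x$.

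The paper closes this step by a different, structural argument: by Lemma~\ref{LemHVSB_CSD1} one has $W_x=T_x(\Omega'_x)$ for a characteristic (hence invariantly geodesic) subdomain $\Omega'_x\subset\Omega$, and Tsai's criterion for invariantly geodesic submanifolds gives $[[\mathfrak m^-,W_x],W_x]\subset W_x$. Combined with the symmetric-space curvature formula $(R(\eta,\overline\beta)\gamma)(x)=[[\overline{\beta(x)},\eta(x)],\gamma(x)]$ and the inclusions $\eta(x)\in T_x(Z)\subset W_x$, $\gamma(x)\in W_x$, this gives $R(\eta,\overline\beta)\gamma\in W_x$ immediately. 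You should replace your differentiation sketch by this Lie-algebraic step.
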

\begin{proof}
We fix $x\in Z$. It suffices to show that for any $(1,0)$-tangent vectors $\beta$ and $\eta$ of $Z$ at $x$, and any $\gamma\in \Gamma_{\mathrm{loc},x}(Z,W)$, $\nabla_{\overline \beta}(\nabla_\eta \gamma)(x) \in W_x$. This would imply that $\nabla_{\overline\beta}(\tau(\eta\varotimes \gamma))= 0$, i.e., $\tau$ is holomorphic, by projecting $\nabla_{\overline \beta}(\nabla_\eta \gamma)(x)$ to the quotient bundle $T_\Omega|_Z/W$.
Note that $R(\eta,\overline\beta)\gamma=-\nabla_{\overline \beta}(\nabla_\eta \gamma)$, so it suffices to show that $R_{\eta\overline \beta \gamma \overline \xi}(\Omega,g_\Omega)=0$ for any $\xi$ orthogonal to $W$, equivalently $R(\eta,\overline\beta)\gamma$ takes values in $W$.
For each $x\in Z$, $W_x=T_x(\Omega'_x)$ for some characteristic subdomain $\Omega'_x\subset \Omega$ of rank $k$ containing $x$.
Note that $\Omega'_x \subset \Omega$ is an invariantly geodesic submanifold, we can regard $x$ as a base point of $\Omega$ and thus
$[[\mathfrak m^-,W_x],W_x] \subset W_x$
by \cite[Lemma 4.3]{Ts93}.
This shows that $(R(\eta,\overline\beta)\gamma)(x) = [[\overline{\beta(x)},\eta(x)],\gamma(x)] \in W_x$ because $\eta(x)\in T_x(Z) \subset W_x$ and $\gamma(x)\in W_x$.
Then, $-\nabla_{\overline\beta}(\nabla_\eta \gamma) = R(\eta,\overline\beta)\gamma$ takes value in $W$ so that $\tau$ is holomorphic.
Moreover, we can regard $\tau \in \Gamma(Z,  T_Z^*\varotimes W^* \varotimes (T_\Omega|_Z/W))$ as a holomorphic section.
\end{proof}

Our next goal is to show that $\tau$ vanishes identically.
The first step is to obtain the asymptotic vanishing of $\lVert \tau|_{T_Z\varotimes T_Z}(\zeta)\rVert^2$ as $\zeta$ approaches a general point $\hat b\in \partial\Delta$.
For this purpose, we will need the local holomorphic extension of the second fundamental form $\tau$ around a general point $b'\in \partial\Delta$.
Therefore, we will extend the definition of $W_{\widetilde\mu(\zeta)}$, $\zeta\in \Delta$, to some open neighborhood of $b'$ by making use of the local holomorphic extension $f$ of $\widetilde\mu$ to $U_{b'}:=\mathbb B^1(b',\epsilon)$ for some $\epsilon>0$.
Actually, we will define a complex vector space $V_x$ for any $x\in Z=\widetilde\mu(\Delta)$ (resp.\,$x\in f(U_{b'})$).
Then, we show that the vector bundles $V=\bigcup_{x\in Z}V_x$ and $W$ are identical (when $\Omega$ is of tube type) and that $V':= \bigcup_{x\in S}V_x$ is also a holomorphic vector bundle over $S:=f(U_{b'})$.

Before defining the vector bundles $V$ and $V'$, we need the following basic setting.
Identify $\Omega\cong G_0/K$, where $G_0=\mathrm{Aut}_0(\Omega)$ and $K\subset G_0$ is the isotropy subgroup at ${\bf 0}$.
Note that we have the Harish-Chandra decomposition
$\mathfrak g=\mathfrak m^-\varoplus \mathfrak k^{\mathbb C}\varoplus \mathfrak m^+$ of the Lie algebra $\mathfrak g$ of the Lie group $G$, where $G$ is the complexification of $G_0$.
Moreover, we have $\Omega \Subset \mathfrak m^+\cong T_{\bf 0}(\Omega)=\mathbb C^N$ and we can identify $T_x(\Omega)\cong \mathfrak m^+$ for any $x\in \Omega$.
Recall that $\mathfrak p =\mathfrak m^-\varoplus \mathfrak k^{\mathbb C}$ is the parabolic subalgebra of $\mathfrak g$ and $P\subset G$ is the parabolic subgroup with Lie algebra $\mathfrak p$.
We identify $\Omega\Subset \mathfrak m^+\cong\mathbb C^N \subset X_c=G/P$ as an open subset by the Harish-Chandra and Borel embeddings, where $X_c=G/P$ is the compact dual of $\Omega$.
Note that we may regard $\mathfrak g$ as the Lie algebra of holomorphic vector fields on $X_c$, and $\mathfrak m^-$ as the vector space of holomorphic vector fields vanishing to the order $\ge 2$ at $x$ for $x\in X_c$.
We write $\mathfrak p_x=\mathfrak m^-\varoplus\mathfrak k_x^{\mathbb C}$, where $\mathfrak p_x$ is the Lie algebra of the parabolic isotropy subgroup $P_x \subset G$ at $x\in X_c$ and $\mathfrak k_x^{\mathbb C}$ is the Lie algebra of a Levi factor $K_x^{\mathbb C}\subset P_x$ of $P_x$.

In Lemma \ref{LemHVSB_W1} we have proven that $W \subset T_\Omega|_Z$ is a holomorphic vector subbundle.
From \cite{Mo12} there is a local holomorphic extension $f$ of $\widetilde\mu$ to $U_{b'}=\mathbb B^1(b',\epsilon)$, $\epsilon>0$, for a general point $b'\in \partial\Delta$, namely, $f:U_{b'}\to \mathbb C^N$ is a holomorphic embedding such that $f|_{U_{b'}\cap \Delta}=\widetilde\mu|_{U_{b'}\cap \Delta}$.
For any $x\in S:=f(U_{b'}) \subset \mathfrak m^+$, we can also identify $T_x(S)\subset \mathfrak m^+$ as an affine linear subspace.
Now, for any $x\in Z=\widetilde\mu(\Delta)$ we define
$V_x:=E_x/\mathfrak p_x \subset \mathfrak g/\mathfrak p_x\cong \mathfrak m^+\cong T_x(\Omega)$,
where
\[ E_x:=\left\{[[\xi_1,\pi],\xi_2](x) \in \mathfrak g:\begin{split}
& \xi_j,\pi \in \mathfrak g,\; \xi_j(x)\,\mathrm{mod}\, \mathfrak p_x \in T_x(Z),\;j=1,2,\\
& \pi\;\text{vanishes to the order $\ge 2$ at $x$} \end{split}\right\}. \]
Replacing $T_x(\Omega)$ by $\mathfrak m^+$ and $Z$ by $S$, $V_x\subset \mathfrak m^+$ is also defined for any $x\in S$.
Let $F:=\mathfrak m^+\times \mathfrak m^+$ be the trivial vector bundle over $\mathfrak m^+$.

\begin{lem}\label{lem:NewBundleV}
Let the notation be as before. Suppose $\Omega$ is an irreducible bounded symmetric domain of rank $\ge 2$ which is not necessarily of tube type.
Defining $V:=\bigcup_{x\in Z} V_x \subseteq T_\Omega|_Z$, $V\subset T_\Omega|_Z$ is a holomorphic vector subbundle such that $T_x(Z)\subseteq V_x\subseteq W_x$ for any $x\in Z$.
Moreover, for a general point $b'\in \partial\Delta$, we have a local holomorphic extension $f$ of $\widetilde\mu$ to $U_{b'}:=\mathbb B^1(b',\epsilon)$, $\epsilon>0$, such that $V':=\bigcup_{x\in S} V_x \subset F|_S$ is a holomorphic vector bundle, where $S:=f(U_{b'})$.
\end{lem}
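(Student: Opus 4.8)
The plan is to establish Lemma~\ref{lem:NewBundleV} in three stages: (i) identify $V_x$ with a Lie-algebraic object that is manifestly functorial in $x$ along $Z$; (ii) use the holomorphicity/real-analyticity of $\widetilde\mu$ together with the parallelism $\nabla R\equiv 0$ to show that the assignment $x\mapsto V_x$ has locally constant rank and varies holomorphically, hence defines a holomorphic subbundle $V\subset T_\Omega|_Z$; and (iii) extend the construction across a general boundary point by transporting everything to $S=f(U_{b'})$ via the local holomorphic extension $f$ furnished by Mok~\cite{Mo12}, checking that the defining equations for $V_x$ still make sense there (since $\mathfrak g$ acts as holomorphic vector fields on all of $X_c$, and $\mathfrak p_x$ is defined for every $x\in X_c$, not just for $x\in\Omega$). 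The inclusions $T_x(Z)\subseteq V_x\subseteq W_x$ should then be a direct computation from the definition of $E_x$.

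First I would unwind the definition of $E_x$. Fixing $x\in Z$ and using the Harish-Chandra decomposition $\mathfrak g=\mathfrak m^-\oplus\mathfrak k^{\mathbb C}\oplus\mathfrak m^+$ relative to the base point $x$ (legitimate since, after applying an element of $G_0$, any $x\in\Omega$ may be taken to be the origin, and $\Omega'_x\subset\Omega$ through $x$ is invariantly geodesic), a holomorphic vector field vanishing to order $\ge 2$ at $x$ is precisely an element of $\mathfrak m^-$; a field $\xi$ with $\xi(x)\bmod\mathfrak p_x\in T_x(Z)=\mathbb C\eta_x$ has $\mathfrak m^+$-component a multiple of $\eta_x$ modulo $\mathfrak p_x$. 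Hence $E_x(x)\bmod\mathfrak p_x$ is spanned by brackets $[[\eta_x,\pi],\eta_x]$ with $\pi\in\mathfrak m^-$, i.e.\ $V_x=\{[[\eta_x,\pi],\eta_x]:\pi\in\mathfrak m^-\}\bmod\mathfrak p_x$. By the curvature formula on Hermitian symmetric spaces, $[[\eta_x,\overline{\eta_x}],\cdot]$ realizes the curvature operator $R(\eta_x,\overline{\eta_x})$, and one identifies (via the conjugation/lifting operator induced by $g_\Omega$ that was used already in Lemma~\ref{LemHVSB_W1}) the image of the map $\pi\mapsto[[\eta_x,\pi],\eta_x]$ with $\im\big(Q_x(\eta_x\otimes\overline{\,\cdot\,},\cdot)\big)^\perp$-type data, giving $V_x\subseteq W_x$; the inclusion $T_x(Z)\subseteq V_x$ comes from choosing $\pi$ appropriately (e.g.\ the generator of $\mathfrak m^-$ dual to $\eta_x$, using that minimal disks are totally geodesic). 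The rank of $V_x$ is then governed by the rank-$k$ normal form of $\eta_x$, which by Proposition~\ref{ProConstHI1}(2) is constant along $Z$; so $\dim V_x$ is constant.

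For holomorphicity of $V\subset T_\Omega|_Z$ I would argue exactly as in Lemma~\ref{LemHVSB_W1}: extend a frame of $V$ near $x\in Z$ by smooth sections, differentiate the defining relations in a $\overline v$-direction tangent to $Z$, and use $\nabla R\equiv 0$ together with $T_Z\subset T_\Omega|_Z$ holomorphic to conclude that $\overline\partial$ preserves $V$; constancy of rank then upgrades this to a holomorphic subbundle. For the boundary statement, apply Mok~\cite{Mo12} to get the holomorphic embedding $f:U_{b'}\to\mathbb C^N$ with $f|_{U_{b'}\cap\Delta}=\widetilde\mu$, noting $S=f(U_{b'})\subset\mathfrak m^+\subset X_c$; the subspaces $\mathfrak p_x$ and the $\mathfrak g$-action by holomorphic vector fields on $X_c$ are defined for every $x\in X_c$, so $V_x\subset\mathfrak m^+$ makes sense for $x\in S$, and $x\mapsto T_x(S)$ is holomorphic because $f$ is holomorphic. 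The same $\overline\partial$-invariance computation (now on $S$, which need not lie in $\Omega$ but still lies in $X_c$ where $R$ — or rather the algebraic curvature-type operator $[[\cdot,\cdot],\cdot]$ — is holomorphic in the relevant sense) plus constancy of rank (which persists by real-analyticity and the genericity of $b'$, as in Lemma~\ref{LemQuotRA1} and Proposition~\ref{ProConstHI1}) shows $V'\subset F|_S$ is a holomorphic vector bundle.

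The main obstacle I anticipate is the boundary extension: on $S=f(U_{b'})$ one no longer has the Bergman metric $g_\Omega$ or its parallel curvature tensor available literally, so the identification of $V_x$ with a curvature-defined space, and the $\overline\partial$-invariance argument, must be rephrased purely algebraically in terms of the $\mathfrak g$-module structure and the family of parabolics $\{\mathfrak p_x\}_{x\in X_c}$ — one must check that the bracket expression defining $E_x$ depends holomorphically on $x$ as $x$ ranges over an open subset of $X_c$ (this is where writing $V$ as a subbundle of the \emph{trivial} bundle $F=\mathfrak m^+\times\mathfrak m^+$ is the right move, since $\mathfrak g$ acts globally). A secondary point requiring care is verifying that $V$ over $Z$ and $V'$ over $S$ genuinely agree on the overlap $U_{b'}\cap\Delta$, which follows once both are exhibited as the image of the same universally-defined algebraic map $\mathfrak m^-\to\mathfrak m^+$, $\pi\mapsto[[\eta_x,\pi],\eta_x]\bmod\mathfrak p_x$, evaluated at the (matching) tangent directions $\eta_x$.
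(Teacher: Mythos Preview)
Your approach is essentially the one the paper takes, and the identification $V_x=[[T_x(Z),\mathfrak m^-],T_x(Z)]$ is exactly the key reduction. Two points where the paper's execution is sharper than your sketch: first, the inclusion $V_x\subseteq W_x$ is obtained in one line from Tsai's Lemma~4.3, namely $[[W_x,\mathfrak m^-],W_x]\subseteq W_x$ (since $W_x=T_x(\Omega'_x)$ for an invariantly geodesic $\Omega'_x$), rather than through any curvature--lifting identification; second, the holomorphicity of $V$ (and of $V'$ over $S$) requires no $\overline\partial$-invariance argument at all, because the defining data $E_x$ are Lie brackets in $\mathfrak g$ depending only on the holomorphically varying line $T_x(Z)$ and the holomorphically varying parabolic $\mathfrak p_x$ on $X_c$---so the ``main obstacle'' you anticipate simply does not arise. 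For $T_x(Z)\subseteq V_x$, your parenthetical ``$\pi$ dual to $\eta_x$'' does not quite work (that bracket gives $\sum_j\eta_j^3\,c\,e_{\psi_j}$, not a multiple of $\eta$); the paper instead takes $\pi=e_{-\psi_j}$ for each $j$ separately to recover $e_{\psi_j}\in V_x$ and then sums.
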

\begin{proof}
The existence of a local holomorphic extension $f$ of $\widetilde\mu$ around a general point $b'\in \partial\Delta$ follows from Mok \cite{Mo12}.
From the definition of $V_x$ for any $x\in Z$ (resp.\,$x\in S$), it follows readily that $V'\subset F|_S$ (resp.\,$V:=\bigcup_{x\in Z} V_x\subset T_\Omega|_Z$) is a holomorphic vector subbundle because $V_x$ varies holomorphically as $x$ varies on $S$ (resp.\,$Z$).

Note that $[\mathfrak k^{\mathbb C},\mathfrak m^+]\subset \mathfrak m^+$, $[\mathfrak k^{\mathbb C},\mathfrak m^-]\subset \mathfrak m^-$, $[\mathfrak m^+,\mathfrak m^-]\subset \mathfrak k^{\mathbb C}$,
$[\mathfrak m^+,\mathfrak m^+]=0$, $[\mathfrak m^-,\mathfrak m^-]=0$, and $\mathfrak m^-$ is the vector space of holomorphic vector fields on $X_c$ vanishing to the order $\ge 2$ at $x$.
Thus, for any $x\in Z$, $V_x=E_x/\mathfrak p_x$ is identical to $[[T_x(Z),\mathfrak m^-],T_x(Z)]$.
From now on we can identify $V_x=[[T_x(Z),\mathfrak m^-],T_x(Z)]$ for any $x\in Z$.
Thus, for any $x\in Z$ we have 
$V_x=[[T_x(Z),\mathfrak m^-],T_x(Z)]\subseteq [[W_x,\mathfrak m^-],W_x] \subseteq W_x$
by Lemma \ref{LemHVSB_CSD1} and \cite[Lemma 4.3]{Ts93}.

We may assume that $\eta$ is of rank $k$ and write $\eta=\sum_{j=1}^k \eta_j e_{\psi_j}$ in normal form, where $\eta_1\ge \cdots \ge \eta_k>0$.
From Lie theory we have $[[e_{\varphi},e_{-\varphi}],e_{\varphi}]
=\varphi(H_{\varphi}) e_{\varphi}$ with $\varphi(H_{\varphi})\neq 0$, $[e_{\varphi},e_{-\varphi}]=H_\varphi$ for any $\varphi\in \Delta_M^+$, and $[H_{\psi_j},e_{\pm \psi_i}]=0$ for distinct $i,j$, $1\le i,j\le r=\mathrm{rank}(\Omega)$.
Thus, we have 
\[ \begin{split}
[[\eta,e_{-\psi_j}],\eta] 
=& \sum_{s,t=1}^k \eta_s \eta_t [[e_{\psi_s},e_{-\psi_j}],e_{\psi_t}]
= \sum_{t=1}^k \eta_j \eta_t [[e_{\psi_j},e_{-\psi_j}],e_{\psi_t}]\\
=& \sum_{t=1}^k \eta_j \eta_t [H_{\psi_j},e_{\psi_t}]
= \eta_j^2 [H_{\psi_j},e_{\psi_j}]\\
=& \eta_j^2 \psi_j(H_{\psi_j}) e_{\psi_j}.
\end{split}\]
Since $\sigma_j:=\eta_j^2 \psi_j(H_{\psi_j})\neq 0$ for $1\le j\le k$, we choose $\alpha_j={1\over \sigma_j}e_{-\psi_j}\in \mathfrak m^-$ and we have $ e_{\psi_j}=[[\eta,\alpha_j],\eta]\in V_x$ for $1\le j\le k$.
Therefore, we have $\eta=\sum_{j=1}^k \eta_j e_{\psi_j}\in V_x$.
In particular, $T_x(Z)=\mathbb C \eta \subset V_x$ and we conclude that $T_x(Z)\subseteq V_x \subseteq W_x$ for any $x\in Z$, as desired.
\end{proof}

From Lemma \ref{lem:NewBundleV} we have $T_x(Z)\subseteq V_x \subseteq W_x$ for any $x\in Z=\widetilde\mu(\Delta)$. But our goal here is to construct a holomorphic vector bundle which extends the definition of $W$ to some open neighborhood of a general point on the unit circle $\partial\Delta$.
Therefore, when $\Omega$ is of tube type, we will show that $V_x=W_x$ for any $x\in Z$ and thus $V:=\bigcup_{x\in Z}V_x=W$.
Then, we will have the local extension $V'=\bigcup_{x\in S} V_x$ of $V=W$, where $S:=f(U_{b'})$ as in the above.
Recall that $T_x(Z)$ is spanned by a rank-$k$ vector $\eta_x$ for any $x\in Z$ and $1\le k\le \mathrm{rank}(\Omega)$. In the notation as above we have

\begin{lem}\label{ExtW}
Suppose $\Omega$ is an irreducible bounded symmetric domain of rank $r \ge 2$ which is not necessarily of tube type, and $\eta_x$ is of rank $k<r$ for any $x\in Z$. Then, we have $V_x=W_x$ for any $x\in Z$.
\end{lem}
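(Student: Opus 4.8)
The plan is to prove the reverse inclusion $W_x \subseteq V_x$, since $V_x \subseteq W_x$ is already supplied by Lemma \ref{lem:NewBundleV}. By Lemma \ref{LemHVSB_CSD1} together with Remark \ref{RemHVSB_CSD1}, for every $x \in Z$ one has $W_x = T_x(\Omega'_x)$ for an invariantly geodesic submanifold $\Omega'_x \subseteq \Omega$ which is an irreducible bounded symmetric domain of rank $k$ and of tube type. Using the $G_0$-homogeneity of the whole construction I would move $x$ to the base point $\mathbf 0$, write $\Omega' := \Omega'_{\mathbf 0}$, and pass to the Harish-Chandra decomposition $\mathfrak g_{\Omega'} = \mathfrak m^-_{\Omega'} \oplus \mathfrak k^{\mathbb C}_{\Omega'} \oplus \mathfrak m^+_{\Omega'}$ of the complexified Lie algebra of $\mathrm{Aut}_0(\Omega')$, embedded compatibly in $\mathfrak g = \mathfrak m^- \oplus \mathfrak k^{\mathbb C} \oplus \mathfrak m^+$, so that $W_{\mathbf 0} = \mathfrak m^+_{\Omega'}$ and $\mathfrak m^-_{\Omega'} \subseteq \mathfrak m^-$ with matching brackets. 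Since $\eta := \eta_{\mathbf 0}$ spans $T_{\mathbf 0}(Z) \subseteq W_{\mathbf 0}$ and is of rank $k = \mathrm{rank}(\Omega')$, it is a generic, equivalently invertible, element of the tube-type Hermitian Jordan triple $\mathfrak m^+_{\Omega'}$; its normal form $\eta = \sum_{j=1}^{k} \eta_j e_{\psi_j}$ with all $\eta_j > 0$ makes this transparent.

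Next I would reinterpret the defining double bracket. Because $[\mathfrak m^-, \mathfrak m^-] = 0$, for $\xi \in \mathfrak m^-$ one has $[[\eta, \xi], \eta] = -\,\mathrm{ad}(\eta)^2 \xi$, and when $\xi$ ranges over the subspace $\mathfrak m^-_{\Omega'}$ this is, up to a nonzero normalization constant, the quadratic (Bergman) operator $Q_\eta$ of $\eta$ acting on $\mathfrak m^+_{\Omega'}$. The key point is that for $\eta$ invertible in a Jordan triple of tube type the operator $Q_\eta$ is invertible, hence surjective onto $\mathfrak m^+_{\Omega'}$. I would either invoke this directly from standard Jordan-triple theory, or argue Lie-theoretically: completing $\eta$ to an $\mathfrak{sl}_2$-triple $(\eta, h, \eta^-)$ inside $\mathfrak g_{\Omega'}$ with $h$ twice the grading element $\zeta_{\Omega'}$ --- which is exactly what tube type together with maximal rank of $\eta$ permits --- the weights of $h$ on $\mathfrak g_{\Omega'}$ lie in $\{-2, 0, 2\}$, so every irreducible $\mathfrak{sl}_2$-summand is trivial or three-dimensional, and $\mathrm{ad}(\eta)^2 \colon \mathfrak m^-_{\Omega'} \to \mathfrak m^+_{\Omega'}$ is therefore an isomorphism. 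Either way,
\[
V_{\mathbf 0} \;\supseteq\; \{\, [[\eta, \xi], \eta] : \xi \in \mathfrak m^-_{\Omega'} \,\} \;=\; \mathfrak m^+_{\Omega'} \;=\; W_{\mathbf 0},
\]
and combined with $V_{\mathbf 0} \subseteq W_{\mathbf 0}$ this yields $V_{\mathbf 0} = W_{\mathbf 0}$; transporting back by $G_0$ gives $V_x = W_x$ for all $x \in Z$.

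As an alternative that stays closer to the bookkeeping already used in Lemma \ref{LemHVSB_CSD1}, one may run the same argument case by case: in each classical type $\Omega'_x$ is $D^{\mathrm I}_{k,k}$, $D^{\mathrm{II}}_{2k}$ or $D^{\mathrm{III}}_{k}$ realized as an explicit block inside the Harish-Chandra model of $\Omega$, $\eta$ is the corresponding (block-)diagonal invertible matrix, and $\xi \mapsto [[\eta, \xi], \eta]$ becomes, in the coordinates of \cite{Mo89}, a concrete invertible linear map built from left- and right-multiplication by $\eta$, hence manifestly onto $T_{\mathbf 0}(\Omega'_x)$; the remaining cases --- $k = 1$, the rank-$2$ type-$\mathrm{IV}$ domains, and the rank-$2$ subdomains occurring in $D^{\mathrm V}$ and $D^{\mathrm{VI}}$ --- are handled just as in the proof of Lemma \ref{LemHVSB_CSD1} and Remark \ref{RemHVSB_CSD1}.

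The step I expect to be the main obstacle is not the surjectivity of $Q_\eta$, which is classical, but the compatibility bookkeeping that makes it applicable: verifying that the sub-triple $\mathfrak m^+_{\Omega'_x} = W_x$ is genuinely a tube-type Jordan triple in which $\eta_x$ has full rank $k$ (rather than a smaller rank in the sub-triple), that $\mathfrak m^-_{\Omega'_x}$ sits inside $\mathfrak m^-$ with the ambient bracket restricting correctly, and that the containment $[[W_x, \mathfrak m^-], W_x] \subseteq W_x$ coming from \cite[Lemma 4.3]{Ts93} is consistent with restricting the middle slot to $\mathfrak m^-_{\Omega'_x}$ --- all of which rely on the precise way characteristic subdomains and invariantly geodesic submanifolds sit inside $\Omega$, as recorded in \cite{Wol72}, \cite{MT92} and \cite{Ts93}. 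Once these are in place the proof closes immediately.
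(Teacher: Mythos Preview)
Your proposal is correct and gives a cleaner, type-independent argument than the paper's proof. The paper establishes $W_x \subseteq V_x$ by an explicit case-by-case computation: for the classical types $\mathrm{I}$, $\mathrm{II}$, $\mathrm{III}$ it writes $\eta$ as a (block-)diagonal invertible matrix and inverts the map $B \mapsto 2\eta'' B \eta''$ by hand, reduces types $\mathrm{IV}$ and $\mathrm{V}$ to the trivial case $k=1$, and for $D^{\mathrm{VI}}$ with $k=2$ carries out a direct root-vector calculation using the explicit root data from \cite{Zh84} and \cite{Si81}. Your argument replaces all of this with a single structural observation: since $\Omega'_x$ is of tube type and $\eta$ has full rank $k$ there, one can conjugate the standard $\mathfrak{sl}_2$-triple $(\sum e_{\psi_j},\,2\zeta_{\Omega'},\,\sum e_{-\psi_j})$ by a torus element of $K^{\mathbb C}_{\Omega'}$ to obtain a triple with first member $\eta$ and the \emph{same} semisimple element $2\zeta_{\Omega'}$, so that $\mathrm{ad}(\eta)^2 \colon \mathfrak m^-_{\Omega'} \to \mathfrak m^+_{\Omega'}$ is an isomorphism by $\mathfrak{sl}_2$-representation theory; equivalently, the quadratic operator $P(\eta)$ is invertible on the tube-type Jordan algebra $\mathfrak m^+_{\Omega'}$.

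What your approach buys is uniformity and the elimination of the exceptional-case root computation; what the paper's approach buys is that it never leaves the concrete Harish-Chandra models already set up in Lemma~\ref{LemHVSB_CSD1}, so no additional Jordan-theoretic or $\mathfrak{sl}_2$-theoretic input is needed. The compatibility bookkeeping you flag --- that $\mathfrak m^-_{\Omega'_x} \subset \mathfrak m^-$ with the ambient bracket restricting correctly, and that $\eta$ really has full rank in the sub-triple --- is indeed the only place requiring care, and it follows from the invariantly-geodesic embedding of $\Omega'_x$ (\cite[Lemma 4.3]{Ts93}, \cite{MT92}) together with the fact that the maximal polydisk of $\Omega'_x$ is spanned by $e_{\psi_1},\ldots,e_{\psi_k}$.
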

\begin{proof}
Since we have $V_x\subseteq W_x$ for any $x\in Z$ by Lemma \ref{lem:NewBundleV}, it remains to show that $W_x\subseteq V_x$ for any $x\in Z$.
In what follows we simply write $\eta=\eta_x$ in normal form. Recall that $V_x=[[T_x(Z),\mathfrak m^-],T_x(Z)]$ and $T_x(Z)=\mathbb C \eta$. We will also make use of the normal form of $W_x$ as in the proof of Lemma \ref{LemHVSB_CSD1} and Remark \ref{RemHVSB_CSD1}.

\medskip\noindent(1)
Consider the case where $\Omega\cong D^{\mathrm{I}}_{p,q}$, $q\ge p=r\ge 2$.
Then, we have
\[ \mathfrak m^+ = \left\{ \begin{bmatrix}
0 & A\\
0 & 0
\end{bmatrix}\in M(p+q,p+q;\mathbb C): A\in M(p,q;\mathbb C)\right\}\]
and
\[ \mathfrak m^- = \left\{\begin{bmatrix}
0 & 0 \\
B & 0
\end{bmatrix} \in M(p+q,p+q;\mathbb C): B\in M(q,p;\mathbb C)\right\}.\]
Write $\eta = \begin{bmatrix}
0 & \eta' \\
0 & 0
\end{bmatrix} \in \mathfrak m^+$ in normal form so that $$\eta'=\mathrm{diag}_{p,q}(\eta_1,\ldots,\eta_k,0,\ldots,0)$$ and $\eta_1\ge \cdots \ge \eta_k>0$.
For any $\beta=\begin{bmatrix}
0 & 0 \\
B & 0
\end{bmatrix}\in \mathfrak m^-$ we have
$[\eta,\beta]=\eta\beta-\beta\eta
= \begin{bmatrix}
\eta'B & 0\\
0 & -B\eta'
\end{bmatrix}$
and thus
$$[[\eta,\beta],\eta]
= \begin{bmatrix}
\eta'B & 0\\
0 & -B\eta'
\end{bmatrix}\eta-\eta\begin{bmatrix}
\eta'B & 0\\
0 & -B\eta'
\end{bmatrix}
= \begin{bmatrix}
0 & 2 \eta' B \eta'\\
0 & 0
\end{bmatrix}.$$
Writing $B=\begin{bmatrix}
B_1&B_2\\
B_3&B_4
\end{bmatrix}$ so that $B_1\in M(k,k;\mathbb C)$ and $$\eta''=\mathrm{diag}_{k,k}(\eta_1,\ldots,\eta_k)\in M(k,k;\mathbb C),$$ we have
$2 \eta' B \eta'
=  \begin{bmatrix}
2\eta'' B_1 \eta'' & 0 \\
0 & 0
\end{bmatrix}$.
Note that $\eta''$ is invertible because $\det\eta''=\prod_{j=1}^k \eta_j \neq 0$.
Thus, for any $\gamma=\begin{bmatrix}
0 & A\\
0 & 0
\end{bmatrix} \in W_x\subset T_x(\Omega)\cong\mathfrak m^+$ so that $A=\begin{bmatrix}
A' & 0\\
0 & 0
\end{bmatrix}$ for some $A'\in M(k,k;\mathbb C)$ (by the proof of Lemma \ref{LemHVSB_CSD1}), we may choose $\beta:=\begin{bmatrix}
0 & 0 \\
B & 0
\end{bmatrix}\in \mathfrak m^-$ such that $B=\begin{bmatrix}
B_1&B_2\\
B_3&B_4
\end{bmatrix}$ with $B_1 = {1\over 2} (\eta'')^{-1}A'(\eta'')^{-1}\in M(k,k;\mathbb C)$.
Then, from the above computations we have
$[[\eta,\beta],\eta] = \gamma$. Hence, $W_x\subseteq V_x$ so that $V_x=W_x\cong M(k,k;\mathbb C)$.

\medskip\noindent (2) 
When $\Omega\cong D^{\mathrm{II}}_{2r}$ or $D^{\mathrm{II}}_{2r+1}$, $r\ge 2$, we may replace $\eta''$ by $\hat\eta$ $=$ $\mathrm{diag}_{2k,2k}(\eta_1J_1,\dots,\eta_kJ_1)$ $\in$ $M_a(2k;\mathbb C)$, $\eta_1\ge \cdots \ge \eta_k>0$, in the above, where $J_1=\begin{pmatrix}
0 & 1\\
-1 &0
\end{pmatrix}$.
Then, $\hat\eta$ is invertible and $B_1$ $=$ ${1\over 2} \hat\eta^{-1}A'\hat\eta^{-1}$ $\in$ $M(2k,2k;\mathbb C)$ is antisymmetric whenever $A'$ is antisymmetric. Hence, we also obtain $W_x\subseteq V_x$ so that $V_x=W_x\cong M_a(2k;\mathbb C)$.

\medskip\noindent (3) Consider the case where $\Omega\cong D^{\mathrm{III}}_p$, $p\ge 2$.
By restricting to the space of $p$-by-$p$ symmetric matrices, we also have $W_x\subseteq V_x$ in this case by the same arguments in the above.
This is because $\eta''$ is a diagonal matrix and thus $B_1 = {1\over 2} (\eta'')^{-1}A'(\eta'')^{-1}\in M(k,k;\mathbb C)$ is symmetric whenever $A'$ is symmetric.
Hence, we have $V_x=W_x\cong M_s(k;\mathbb C)$.

\medskip For any irreducible bounded symmetric domain $\Omega$ of rank $\ge 2$, if $\eta$ is of rank $k=1$, then from the proof of Lemma \ref{LemHVSB_CSD1} we already have $T_x(Z)=W_x\cong T_x(\Delta_\eta)$ and thus $V_x=W_x\cong T_x(\Delta_\eta)$ by the fact that $T_x(Z)\subseteq V_x\subseteq W_x$, where $\Delta_\eta=\mathbb C\eta\cap \Omega$ is a minimal disk of $\Omega$.

\medskip\noindent (4) For $\Omega\cong D^{\mathrm{IV}}_N$, $N\ge 3$, or $\Omega\cong D^{V}$, we have $r=\mathrm{rank}(\Omega)=2$. Then, $\eta$ is of rank $k=1$ and the result follows from the last paragraph.

\medskip\noindent (5)
Finally, we consider the case where $\Omega\cong D^{\mathrm{VI}}$, which is of rank $3$. We are done if $\eta$ is of rank $1$ as in the above. Thus, it remains to show that $W_x\subseteq V_x$ when $\eta$ is of rank $k=2$.
We will make use of the data obtained from \cite{Zh84} as in the proof of Lemma \ref{LemHVSB_CSD1}.
When $\eta$ is of rank $2$, we may assume $\eta=\eta_1e_{x_1-x_2}+\eta_2e_{x_1+x_2+x_3}$ with $\eta_1\ge \eta_2>0$.
From direct computation, for each $\varphi\in \Delta_M^+\smallsetminus \Psi$, if $[[e_{x_1-x_2},e_{-\varphi}],e_{x_1+x_2+x_3}]\neq 0$, then $[[e_{x_1-x_2},e_{-\varphi}],e_{x_1+x_2+x_3}]$ is a nonzero scalar multiple of one of the $e_{x_1-x_j}$, $4\le j\le 7$, and $e_{x_1+x_3+x_i}$, $4\le i\le 7$. Moreover, recall that $[[e_{\psi},e_{-\psi}],e_\psi]$ is a nonzero scalar multiple of $e_\psi$ for $\psi\in \Psi$.
Write $\Psi=\{\psi_1,\psi_2,\psi_3\}$ with $\psi_1=x_1-x_2,\;\psi_2=x_1+x_2+x_3$ and $\psi_3=d-x_3$.
From the Jacobi identity and $[\mathfrak m^+,\mathfrak m^+]=0$, we also have $[[e_{\psi_1},e_{-\varphi}],e_{\psi_2}]=[[e_{\psi_2},e_{-\varphi}],e_{\psi_1}]$.
For $1\le j\le 3$, we have $[[e_{\psi_j},e_{-\varphi}],e_{\psi_j}]=0$ for any $\varphi\in \Delta_M^+\smallsetminus\{\psi_j\}$ because $\psi_j-\psi_i$ is not a root for $i\neq j$ and $2\psi_j-\varphi$ is not a root whenever $\varphi\in \Delta_M^+\smallsetminus \Psi$.
Fixing any $\varphi\in \{x_1-x_j,\;4\le j\le 7,\;x_1+x_3+x_i,\; 4\le i\le 7\}\subset \Delta_M^+$, we have shown that $e_{\varphi}=c_{\varphi}[[e_{\psi_1},e_{-\phi}],e_{\psi_2}]$ for some $\phi\in \Delta_M^+\smallsetminus \Psi$ and some scalar constant $c_\varphi\neq 0$.
Thus, we have
\[\begin{split}
 [[\eta,e_{-\phi}],\eta]
=& \sum_{s,t=1}^2 \eta_s\eta_t [[e_{\psi_s},e_{-\phi}],e_{\psi_t}]
= 2\eta_1\eta_2 [[e_{\psi_1},e_{-\phi}],e_{\psi_2}]\\
=& 2\eta_1\eta_2 {1\over c_{\varphi}} e_{\varphi}
\end{split}\]
so that $[[\eta,\beta],\eta]=e_{\varphi}$, where $\beta:={c_\varphi\over 2\eta_1\eta_2}e_{-\phi}\in \mathfrak m^-$.
Since $V_x=[[T_x(Z),\mathfrak m^-],T_x(Z)]$ and $T_x(Z)=\mathbb C\eta$, $V_x$ contains the $\mathbb C$-linear span of $e_{x_1-x_j}$, $4\le j\le 7$, $e_{x_1+x_3+x_i}$, $4\le i\le 7$, $e_{x_1-x_2}$ and $e_{x_1+x_2+x_3}$.
On the other hand, we have $W_x=\mathcal N_{e_{d-x_3}}$ as in the proof of Lemma \ref{LemHVSB_CSD1}, where $d:=\sum_{j=1}^7x_j$.
By direct computation $\mathcal N_{e_{d-x_3}}=\mathcal N_{e_{x_1-x_2}}\cap \mathcal N_{e_{x_1+x_2+x_3}}$ is the $\mathbb C$-linear span of $e_{x_1-x_j}$, $4\le j\le 7$, $e_{x_1+x_3+x_i}$, $4\le i\le 7$, $e_{x_1-x_2}$ and $e_{x_1+x_2+x_3}$. 
Hence, we have $W_x\subseteq V_x$ and thus $W_x=V_x$, as desired.
\end{proof}

\begin{rem}
\rm{
\begin{enumerate}
\item[(a)] It is evident from the arguments in the proof of Lemma \ref{ExtW} that analogue of the statement of Lemma \ref{ExtW} holds true when $\Omega$ is of tube type and of type $\mathrm{I}$, $\mathrm{II}$ or $\mathrm{III}$, and $\eta$ is of rank $r=\mathrm{rank}(\Omega)$.
\item[(b)] When $\Omega$ is of type $\mathrm{IV}$ or of type $\mathrm{VI}$, the analogue of the statement of Lemma \ref{ExtW} also holds true when $\eta$ is of rank $r=\mathrm{rank}(\Omega)$.
More precisely, when $\Omega$ is of type $\mathrm{IV}$, we have $\mathrm{rank}(\Omega)=2$ and the result follows from Tsai \cite[Proof of Lemma 5.2]{Ts93}. When $\Omega$ is of type $\mathrm{VI}$, we have $\mathrm{rank}(\Omega)=3$ and the result follows from explicit computation by taking Lie brackets of root vectors.
\end{enumerate}
}
\end{rem}

\subsubsection{Estimates on the Kobayashi metric and the Kobayashi distance and vanishing of the second fundamental form}
By applying the rescaling argument to the local holomorphic extension of the holomorphic isometry $\widetilde\mu$ around a general point $b'\in \partial\Delta$ as in Proposition \ref{ProConstHI1}, we can obtain another holomorphic isometry from $(\Delta,m_0g_\Delta)$ to $(\Omega,g_\Omega)$ satisfying the two properties in Proposition \ref{ProConstHI1}. We will still denote such a holomorphic isometry by $\widetilde\mu$ and its image by $Z$. Then, we may construct the vector subbundle $W\subset T_\Omega|_Z$ and the holomorphic vector bundle $V$ over $Z$ for the holomorphic curve $Z$ as we have done before.
By the same arguments, the statements of Lemmas \ref{LemHVSB_CSD1}, \ref{LemHVSB_W1}, \ref{TauTensorHolo1}, \ref{lem:NewBundleV} and \ref{ExtW} hold true.
\begin{lem}\label{LemTensorV}
Under the above assumptions, for any $x\in Z$ and $\eta, \beta \in \Gamma_{\mathrm{loc},x}(Z,T_Z)$, we have $\tau_x(\eta(x)\varotimes \beta(x)) = 0$, i.e., $(\nabla_\eta \beta)(x)\in W_x$, equivalently $\tau|_{T_Z\varotimes T_Z}\equiv 0$.
\end{lem}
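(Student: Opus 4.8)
The plan is to view $\tau|_{T_Z\otimes T_Z}$ as a holomorphic section of the twisted quotient bundle $L:=T_Z^{*\otimes 2}\otimes(T_\Omega|_Z/W)$ over $Z\cong\Delta$ — it is holomorphic by Lemma \ref{TauTensorHolo1} — and to show that this section vanishes identically. Writing $\eta$ for a nonvanishing local holomorphic section of $T_Z$, one observes first that $\tau_x(\eta(x)\otimes\eta(x))$ is precisely the component along $W_x^{\perp}$ of the second fundamental form $\widetilde\sigma$ of $Z\subset\Omega$ at $x$; since $T_x(Z)\subseteq W_x$ we get $W_x^{\perp}\subseteq T_x(Z)^{\perp}$ and hence $\lVert\tau|_{T_Z\otimes T_Z}\rVert^2_{g_\Omega}\le\lVert\widetilde\sigma\rVert^2_{g_\Omega}$, which is a finite constant on $Z$ by Proposition \ref{ProConstHI1}(1). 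Thus $\tau|_{T_Z\otimes T_Z}$ is a bounded holomorphic section, and the strategy is to combine a holomorphic extension of this section across a general boundary point of $\Delta$ with its asymptotic vanishing there, so as to force it to vanish by the identity theorem in one variable.

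For the extension, I would use that by Mok \cite{Mo12} the holomorphic isometry $\widetilde\mu$ extends to a holomorphic embedding $f$ on $U_{b'}=\mathbb B^1(b',\epsilon)$ for a general $b'\in\partial\Delta$, and that by Lemmas \ref{lem:NewBundleV} and \ref{ExtW} the bundle $W=V$ over $Z$ admits the holomorphic extension $V'=\bigcup_{x\in S}V_x\subset F|_S$ over $S=f(U_{b'})$. Crucially, $V_x=[[T_x(Z),\mathfrak m^-],T_x(Z)]$ is cut out purely algebraically from the root data, so this extension needs no control of $g_\Omega$ up to $\partial\Omega$. Transporting the construction of the second fundamental form to the pair $V'\subset F|_S$ yields a holomorphic section $\tau'$ of $T_S^{*\otimes 2}\otimes(F|_S/V')$ that restricts to $\tau|_{T_Z\otimes T_Z}$ on $U_{b'}\cap\Delta$; fixing a holomorphic frame of this bundle near $b'$, the section $\tau|_{T_Z\otimes T_Z}$ is then represented by holomorphic functions on $U_{b'}$.

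The heart of the matter is the asymptotic vanishing: I would show that $\lVert\tau|_{T_Z\otimes T_Z}(\widetilde\mu(w))\rVert^2_{g_\Omega}\to 0$ as $w\to\hat b$ for a general $\hat b\in\partial\Delta$, and that the fiber norm of $L$ with respect to $g_\Omega$ behaves along $Z$ near $\hat b$ in such a way that this forces the frame coefficients of $\tau|_{T_Z\otimes T_Z}$ to tend to $0$ as well. The inputs are: the precise boundary vanishing $-\rho(\widetilde\mu(w))=(1-|w|^2)^{m_0}\chi(w)$ of Remark \ref{RemarkCHIE1}(a), which pins down the rate at which $g_\Omega$ degenerates in directions tangent to $Z$ (this is the same computation underlying Lemma \ref{LemAsGC1}); Mercer's two-sided estimates \cite{Me93} for the Kobayashi metric and distance on the bounded convex domain $\Omega$, which govern the anisotropic rate of blow-up of $g_\Omega$ in the directions of $T_\Omega|_Z/W$ transverse to $W_x=T_x(\Omega'_x)$; and the fact from Lemma \ref{LemHVSB_CSD1} that the characteristic subdomain $\Omega'_x$ is totally geodesic and of tube type, which fixes the "boundary type" of those transverse directions. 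Balancing these rates against the boundedness of $\tau'$ in the holomorphic frame yields the asymptotic vanishing. Granting it, the frame coefficients of $\tau|_{T_Z\otimes T_Z}$ are holomorphic functions on $U_{b'}$ vanishing on the arc $U_{b'}\cap\partial\Delta$, which has accumulation points, so by the identity theorem they vanish on $U_{b'}$; restricting to $U_{b'}\cap\Delta$ and continuing analytically over the connected curve $Z$ gives $\tau|_{T_Z\otimes T_Z}\equiv 0$, i.e.\ $(\nabla_\eta\beta)(x)\in W_x$ for all $x\in Z$ and $\eta,\beta\in\Gamma_{\mathrm{loc},x}(Z,T_Z)$.

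The step I expect to be the main obstacle is exactly this asymptotic vanishing. In the case of a smooth boundary point, treated by elementary means in Mok \cite{Mo14}, the relevant degenerations are isotropic and the estimate is transparent; when $\widetilde\mu$ exits a lower-dimensional boundary stratum of $\Omega$ one must simultaneously track the degeneration of $g_\Omega$ along $Z$, its anisotropic blow-up transverse to the moving characteristic subdomain $\Omega'_x$, and the boundedness of the coefficients of $\tau$ coming from the holomorphic extension, and show that the first two conspire to beat the third. This is where Mercer's Kobayashi-metric estimates and the tube-type structure of $\Omega'_x$ are indispensable, and also where the normalization achieved by the rescaling argument — constancy of $\lVert\widetilde\sigma\rVert^2$ and of the normal form of $\widetilde\mu'$ in Proposition \ref{ProConstHI1} — is used to make all the quantities involved uniform along $Z$.
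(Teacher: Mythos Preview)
Your overall setup --- viewing $\hat\tau:=\tau|_{T_Z\otimes T_Z}$ as a holomorphic section of $L=S^2T_Z^*\otimes(T_\Omega|_Z/W)$, extending it holomorphically across a general $b'\in\partial\Delta$ via $V'=W'$, and invoking Mercer's Kobayashi estimates --- matches the paper. The genuine gap is the inference you draw from the asymptotic vanishing. The estimate one actually obtains is
\[
\lVert\hat\tau(\zeta)\rVert \;\le\; \hat C\,\delta(\zeta)\sum_k|\tau_{11}^k(\zeta)|,
\]
coming from $\lVert d\zeta\rVert^2\asymp\delta(\zeta)^2$ (the isometry) together with $\lVert\nu_k(\zeta)\rVert\le\lVert\epsilon_k(\zeta)\rVert_{g_\Omega}\le C'/\delta(\zeta)$ (the Kobayashi/Mercer bound). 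Thus the \emph{fiber norm of your holomorphic frame of $L$ itself decays like $\delta(\zeta)$}, so $\lVert\hat\tau\rVert\to 0$ is perfectly compatible with the coefficients $\tau_{11}^k$ staying bounded and nonzero on the arc. There is no matching lower bound $\lVert\nu_k\rVert\gtrsim\delta^{-2}$ (and none should be expected: directions in $W^\perp$ need not be ``normal'' to $\partial\Omega$ in the sense that gives the maximal $\delta^{-2}$ blow-up), so you cannot conclude that the frame coefficients vanish on $U_{b'}\cap\partial\Delta$, and the identity-theorem step collapses.

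The paper closes this gap not by squeezing the coefficients but by iterating the rescaling argument once more. One shows first, exactly as above, that $\lVert\hat\tau(\zeta)\rVert\to 0$ at a general boundary point and that $\lVert\hat\tau\rVert^2$ extends real-analytically there (this is where the holomorphic extension $V'$ is used, together with Lemma \ref{LemQuotRA1}). Then one applies Proposition \ref{ProConstHI1} to the local holomorphic extension of $\widetilde\mu$ and passes to a new embedded Poincar\'e disk (still called $\widetilde\mu$, $Z$) for which --- by the same mechanism that made $\lVert\widetilde\sigma\rVert^2$ and the normal form constant --- the quantity $\lVert\hat\tau\rVert^2$ is now \emph{identically constant on $\Delta$}. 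Since that constant equals the boundary value $0$, one gets $\hat\tau\equiv 0$. In short: replace your ``coefficients vanish on the arc $\Rightarrow$ identity theorem'' endgame by ``rescale again so that $\lVert\hat\tau\rVert^2$ is constant, hence $\equiv 0$''.
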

\begin{proof}
By Lemma \ref{TauTensorHolo1} we may regard $\tau|_{T_Z\varotimes T_Z}$ as a holomorphic section $\hat\tau$ $\in$ $\Gamma$($Z$, $S^2T_Z^*\varotimes (T_\Omega|_Z/W)$).
Let $\nu_k=\epsilon_k \;\rm{mod}\; W$ be holomorphic basis of the quotient bundle $T_\Omega|_Z/W$, namely,
$$\nu_k(\zeta)=\epsilon_k(\zeta)\;\rm{mod}\; W_{\widetilde\mu(\zeta)},$$
where $\epsilon_k(\zeta) = {\partial\over \partial z_k}\big|_{z=\widetilde \mu(\zeta)}$.
By identifying $Z=\widetilde\mu(\Delta)\cong \Delta$ we may write
$\hat\tau(\zeta) = \sum_k \tau_{11}^k(\zeta) d\zeta\varotimes d\zeta \varotimes \nu_k(\zeta)$.
Then, we have
\[ \lVert \hat\tau(\zeta) \rVert
\le \sum_k |\tau_{11}^k(\zeta)| \lVert d\zeta \rVert^2 \lVert \nu_k(\zeta)\rVert. \]
Since $\widetilde\mu$ is a holomorphic isometry, we have $\left\lVert \widetilde \mu'(\zeta) \right\rVert^2_{g_\Omega}
=\left\lVert {\partial\over \partial \zeta} \right\rVert^2_{m_0g_\Delta}= {m_0\over (1-|\zeta|^2)^2}$.
Thus, we have $\lVert d\zeta \rVert \le C''\cdot \delta(\zeta)$ for some real constant $C''>0$, where $\delta(\zeta):=1-|\zeta|$.
Moreover, we have $\lVert\nu_k(\zeta)\rVert\le \lVert \epsilon_k(\zeta)\rVert_{g_\Omega}$ (cf.\,\cite{Mo10}).
We claim that
\[ \lVert \epsilon_k(\zeta)\rVert_{g_\Omega} \le C'{1\over \delta(\zeta)} \]
for some positive real constant $C'$.
The idea is to use the Kobayashi distance, the Kobayashi metric on $\Omega$, and the convexity of $\Omega$.
Denote by $d_\Delta(\cdot,\cdot)$ (resp.\,$d_\Omega(\cdot,\cdot)$) the Kobayashi distance on $\Delta$ (resp.\,$\Omega$) with $d_\Delta(0,\zeta)=\log{1+|\zeta|\over 1-|\zeta|}$ and $d_\Delta(\cdot,\cdot)$ is defined by using the Bergman metric $ds_\Delta^2$ on $\Delta$ (cf.\,\cite{Ko98}).
From \cite{Ko98}, for a complex manifold $M$ we define the Kobayashi pseudo-metric by
\[ F_{M}(v)
=\inf \left\{\lVert \hat v \rVert_{ds_\Delta^2}: \hat v \in T_0(\Delta),\;f\in \mathrm{Hol}(\Delta,M),\;f(0)=x,\;f_*\hat v = v
\right\} \]
for $v\in T_x(M)$, $x\in M$.
For $x\in \Omega$, let $\delta_\Omega(x)=
\delta(x,\partial\Omega)$ be the Euclidean distance from $x$ to the boundary $\partial\Omega$.
Note that ${1\over \sqrt{2}}F_{\mathbb B^N}(\xi)
= \lVert \xi \rVert_{g_{\mathbb B^N}}$.
Fix $x\in \Omega$.
By the definition of $\delta_\Omega(x)=\delta(x,\partial\Omega)$, we have $\mathbb B^N(x,\delta_\Omega(x))\subseteq \Omega$ and thus we have a holomorphic map $f:\mathbb B^N\to \Omega$ given by $f(w) = \delta_\Omega(x) w + x$.
Then, $f$ maps $\mathbb B^N$ biholomorphically onto $\mathbb B^N(x,\delta_\Omega(x))$ and
$df_{\bf 0} \left({1\over \delta_\Omega(x)}{\partial\over \partial w_j}\big|_{\bf 0}\right)
= {\partial\over \partial z_j}\big|_{x}$.
For $v=\epsilon_j(\zeta)={\partial\over \partial z_j}\big|_{\widetilde\mu(\zeta)}\in T_{\widetilde\mu(\zeta)}(\Omega)$, by the Ahlfors-Schwarz lemma and \cite[p.\,90]{Ko98} there is a positive real constant $C_0$ (independent of the choice of vectors tangent to $\Omega$) such that
\[ \begin{split}
\left\lVert v\right \rVert_{g_\Omega}
&\le C_0 F_{\Omega}(v)
\le  C_0 F_{\mathbb B^N}\left({1\over \delta_\Omega(x)}{\partial\over \partial w_j}\bigg|_{\bf 0}\right)\\
&=\sqrt{2} C_0\left\lVert{1\over \delta_\Omega(x)}{\partial\over \partial w_j}\bigg|_{\bf 0} \right\rVert_{g_{\mathbb B^N}}
={\sqrt{2}C_0\over \delta_\Omega(x)},
\end{split}\]
where $x=\widetilde \mu(\zeta)$.
In particular, there is a positive real constant $C$ such that
$\left\lVert \epsilon_j(\zeta)\right\rVert_{g_\Omega}$
$\le$ $C{1\over \delta_\Omega(\widetilde\mu(\zeta))}$
for $1\le j\le N$ and $\zeta\in \Delta$.
Since $\Omega\Subset \mathbb C^N$ is convex, it follows from \cite[Proposition 2.4]{Me93} that there is $C_1\in \mathbb R$ such that
$C_1- {1\over 2} \log \delta_\Omega(z) \le {1\over 2} d_\Omega({\bf 0},z)$
for any $z\in \Omega$ (cf.\,Remark below).
(Noting that Mercer \cite{Me93} defined the Kobayashi distance to be $k_\Omega(\cdot,\cdot)={1\over 2}d_\Omega(\cdot,\cdot)$.)
Then, we have
$e^{-2C_1} \delta_\Omega(z) \ge e^{-d_\Omega({\bf 0},z)}$
so that
\[ \delta(\zeta) \le 2\cdot e^{-d_\Delta(0,\zeta)}
\le 2 \cdot e^{-d_\Omega({\bf 0},\widetilde\mu(\zeta))} \le 2 e^{-2C_1} \cdot \delta_\Omega(\widetilde\mu(\zeta)). \]
It follows that $\lVert \epsilon_j(\zeta)\rVert_{g_\Omega}
\le C {1\over \delta_\Omega(\widetilde\mu(\zeta))}
\le C' {1\over \delta(\zeta)}$
for $\zeta\in \Delta$ and $1\le j\le N$, where $C'$ is some positive real constant. The claim is proved.
Thus, we have
\[ \lVert \hat\tau(\zeta) \rVert
\le \hat C \,\delta(\zeta) \cdot \sum_k |\tau_{11}^k(\zeta)| \]
for some positive real constant $\hat C$.
Here the summation in the above inequality is a finite sum.
By Lemma \ref{lem:NewBundleV} and Lemma \ref{ExtW} we can extend the definition of $\hat\tau$ to an open neighborhood of a general point on $\partial\Delta$.
Thus, $\lVert \hat\tau(\zeta) \rVert^2$ can be extended as a real-analytic function on some open neighborhood $U_{b'}$ of a general point $b'\in \partial\Delta$ in $\mathbb C$ (by Lemma \ref{LemQuotRA1}) 
and each $|\tau_{11}^k(\zeta)|$ is bounded from above by a uniform positive real constant on $U_{b'}$.
In particular, we have $\lVert \hat\tau(\zeta) \rVert\to 0$ as $\zeta\to b''$ for any $b''\in U_{b'}\cap\partial\Delta$.
Hence, we have $\lVert \hat\tau(\zeta) \rVert\to 0$ as $\zeta \to b'$ for a general point $b'\in \partial\Delta$.

From Mok \cite{Mo12} we have a local holomorphic extension $F$ of the holomorphic isometry $\widetilde\mu$ around any general point $\hat b\in \partial\Delta$.
By applying the rescaling argument to $F$ as in Proposition \ref{ProConstHI1} and choosing a good boundary point $\hat b\in \partial\Delta$, we can obtain another holomorphic isometry from $(\Delta,m_0g_\Delta)$ to $(\Omega,g_\Omega)$, still denoted by $\widetilde\mu$ for simplicity, such that the following hold true.
\begin{enumerate}
\item[(a)] $\widetilde\mu$ satisfies the two properties in Proposition \ref{ProConstHI1}.
\item[(b)] Constructing the vector subbundle $W\subset T_\Omega|_Z$ over the holomorphic curve $Z:=\widetilde\mu(\Delta)$ as we have done before, the statements of Lemmas \ref{LemHVSB_CSD1}, \ref{LemHVSB_W1} and \ref{TauTensorHolo1} hold true by the same arguments as in the corresponding proofs.
\item[(c)] For the holomorphic section $\hat\tau\in \Gamma(Z,S^2T_Z^*\varotimes (T_\Omega|_Z/W))$ representing $\tau|_{T_Z\varotimes T_Z}$ over the (new) holomorphic curve $Z$, $\lVert \hat\tau(\zeta) \rVert^2$ extends real-analytically around a general point in $\partial\Delta$ and that $\lVert \hat\tau(\zeta) \rVert\to 0$ as $\zeta \to b'$ for a general point $b'\in \partial\Delta$ by the above arguments.
\end{enumerate}
By the analogous arguments in the proof of Proposition \ref{ProConstHI1} for showing that $\lVert \widetilde\sigma\rVert^2\equiv\text{constant}$, we may also obtain that $\lVert \hat\tau(\zeta) \rVert^2$ is identically constant on $\Delta$. Together with part (c) in the above, we have $\lVert \hat \tau(\zeta) \rVert^2\equiv 0$ on $\Delta$, i.e., $\tau|_{T_{Z}\varotimes T_{Z}}(\zeta)\equiv 0$ on $\Delta$.
\end{proof}
\begin{rem}\rm{
For any bounded symmetric domain $\Omega$ the inequality from \cite[Proposition 2.4]{Me93} can be derived using the Polydisk Theorem.
We refer the readers to the Appendix (i.e., Section \ref{App}) of the current article.}
\end{rem}

\begin{lem}\label{LemVSF1}
In the above construction, we have $\tau\equiv 0$.
\end{lem}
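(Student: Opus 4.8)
The plan is to leverage Lemma \ref{LemTensorV}, which already gives the vanishing of $\tau$ on the sub-tensor $T_Z \varotimes T_Z$, and to bootstrap from there to the full vanishing of $\tau : T_Z \varotimes W \to T_\Omega|_Z / W$. The key structural input is that for each $x \in Z$ one has $W_x = T_x(\Omega'_x)$ for a characteristic subdomain $\Omega'_x \subseteq \Omega$ of rank $k$ which is an invariantly geodesic submanifold (Lemma \ref{LemHVSB_CSD1}), together with the identification $V_x = W_x$ (Lemma \ref{ExtW}) and $V_x = [[T_x(Z),\mathfrak m^-],T_x(Z)]$ (Lemma \ref{lem:NewBundleV}). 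Thus every element of $W_x$ is a $\mathbb C$-linear combination of brackets of the form $[[\eta,\alpha],\eta]$ with $\eta$ spanning $T_x(Z)$ and $\alpha \in \mathfrak m^-$. Since $\tau$ is a holomorphic section (Lemma \ref{TauTensorHolo1}) and $Z \cong \Delta$, it suffices to prove the pointwise statement $\tau_x(\eta \varotimes \gamma) = 0$ for $\gamma \in W_x$.

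First I would reduce, using the Leibniz rule for $\nabla$ along $Z$ and the holomorphicity of $\tau$, the computation of $\tau_x(\eta \varotimes \gamma)$ for $\gamma = [[\eta,\alpha],\eta] \in V_x = W_x$ to an expression involving iterated covariant derivatives of the line subbundle $T_Z$ in directions tangent to $Z$, together with curvature terms coming from $\nabla R \equiv 0$. Concretely, extend $\eta$ to a local holomorphic section of $T_Z$ and extend $\gamma$ to a local holomorphic section of $W$ of the form $[[\eta,\alpha],\eta]$ (possible since $V' = \bigcup_{x\in S} V_x$ is a holomorphic bundle, Lemma \ref{lem:NewBundleV}); then $\nabla_\eta \gamma$ modulo $W$ can be rewritten, via the identification of $\mathfrak g$ with holomorphic vector fields on $X_c$ and the fact that $\Omega'_x \subset \Omega$ is totally geodesic as an invariantly geodesic submanifold, in terms of $\nabla_\eta \eta$ modulo $W$ — which is exactly $\tau|_{T_Z \varotimes T_Z}$ applied to $\eta \varotimes \eta$, and that vanishes by Lemma \ref{LemTensorV}. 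The bracket relations $[[\mathfrak m^-, W_x], W_x] \subseteq W_x$ from Tsai \cite[Lemma 4.3]{Ts93} are what guarantee that the "error" terms produced when differentiating the bracket expression stay inside $W$ and hence die in the quotient.

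Alternatively — and this may be the cleaner route — I would argue that since $W_x = T_x(\Omega'_x)$ with $\Omega'_x \subset \Omega$ totally geodesic, the second fundamental form of $W \subset T_\Omega|_Z$ "is" the restriction of the second fundamental form of $Z$ in $\Omega'_x$: that is, $\tau_x(\eta \varotimes \gamma)$ depends only on the second fundamental form $\sigma$ of $Z$ inside $\Omega'_x$, and one can identify $\tau|_{T_Z \varotimes T_Z}$ with (a component of) this $\sigma$. Since $Z \subset \Omega'_x$ has the same tangent directions as $Z \subset \Omega$, and by Proposition \ref{ProConstHI1} the norm of the relevant second fundamental form is a constant, the vanishing of $\tau|_{T_Z \varotimes T_Z} \equiv 0$ forces $Z$ to be totally geodesic in $\Omega'_x$, whence $\nabla_\eta W_x \subseteq W_x$ for all $\eta \in T_x(Z)$, i.e. $\tau \equiv 0$. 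The passage from "$\tau$ vanishes on $T_Z \varotimes T_Z$" to "$Z$ is totally geodesic in $\Omega'_x$" uses that $T_Z \varotimes T_Z$ generates the relevant part of the normal geometry because $T_x(Z) = \mathbb C\eta_x$ is one-dimensional and spans, via brackets with $\mathfrak m^-$, all of $W_x$.

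The main obstacle I expect is making the bracket-theoretic bookkeeping in the second paragraph precise: one must carefully track, using the Harish-Chandra decomposition $\mathfrak g = \mathfrak m^- \varoplus \mathfrak k^{\mathbb C} \varoplus \mathfrak m^+$ and the vanishing relations $[\mathfrak m^+,\mathfrak m^+]=0$, which terms in $\nabla_{\overline\beta}(\nabla_\eta \gamma)$ land in $W$ when $\gamma$ is written as an iterated bracket, and confirm that the only surviving obstruction is proportional to $\tau|_{T_Z \varotimes T_Z}$. A secondary technical point is ensuring that the local holomorphic extension of $\gamma = [[\eta,\alpha],\eta]$ across a general boundary point $b' \in \partial\Delta$ is compatible with the extension $V'$ of $W$ supplied by Lemma \ref{lem:NewBundleV}, so that holomorphicity of $\tau$ and the identity theorem can be invoked to upgrade pointwise vanishing at interior points to vanishing on all of $\Delta$. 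Once these are in place, $\tau \equiv 0$ follows, and this is precisely the integrability statement needed to conclude (in the subsequent argument) that $Z$ sits inside a totally geodesic tube domain, setting up the Poincar\'e–Lelong argument.
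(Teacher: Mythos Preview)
Your Route 2 is circular: you cannot pass from $\tau|_{T_Z\varotimes T_Z}=0$ to ``$Z$ is totally geodesic in $\Omega'_x$'' because the characteristic subdomain $\Omega'_x$ varies with $x$, and the existence of a single $\Omega'$ containing all of $Z$ is precisely Lemma~\ref{LemInCSD1}, proven \emph{after} and \emph{using} $\tau\equiv 0$. What $\tau|_{T_Z\varotimes T_Z}=0$ says is only that $(\nabla_\eta\eta)(x)\in W_x$, which is not a total-geodesy statement for $Z$ in any fixed submanifold.

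Your Route 1, on the other hand, is correct, and your worry about the bracket bookkeeping is unfounded once you phrase things via curvature. By Lemma~\ref{ExtW} every $\gamma_x\in W_x=V_x$ is of the form $R_x(\eta_x,\overline{\beta_x})\eta_x$ for some $\beta_x\in T_x(\Omega)$; extend $\beta$ smoothly and $\eta$ holomorphically, and set $\gamma=R(\eta,\overline\beta)\eta$. Since $\nabla R\equiv 0$,
\[
\nabla_\eta\gamma = R(\nabla_\eta\eta,\overline\beta)\eta + R(\eta,\overline{\nabla_{\overline\eta}\beta})\eta + R(\eta,\overline\beta)\nabla_\eta\eta.
\]
Each term is of the form $R(u,\overline v)w=[[\overline v,u],w]$ with $u,w\in W_x$ (using $(\nabla_\eta\eta)(x)\in W_x$ from Lemma~\ref{LemTensorV} and $\eta_x\in T_x(Z)\subset W_x$), hence lies in $[[\mathfrak m^-,W_x],W_x]\subset W_x$ by Tsai's lemma. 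Thus $(\nabla_\eta\gamma)(x)\in W_x$ and $\tau\equiv 0$. This is a valid alternative proof.

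The paper's argument is different and does not use the identification $V=W$ at all. It works purely with the dual description of $W$ via the null bundle $\mathcal N$: one differentiates the identity $R_{\hat\eta\overline\zeta\alpha\overline\beta}\equiv 0$ (for $\hat\eta\in T_Z$, $\zeta\in\mathcal N$) along $\eta$, and uses $(\nabla_\eta\hat\eta)(x)\in W_x$ to kill one term and conclude $(\nabla_{\overline\eta}\zeta)(x)\in\mathcal N_{\eta(x)}$. Then, differentiating $R_{\gamma\overline\zeta\alpha\overline\beta}\equiv 0$ for $\gamma\in\Gamma(Z,W)$, the term $R(\gamma,\overline{\nabla_{\overline\eta}\zeta},\cdot,\cdot)$ vanishes by what was just shown, leaving $R(\nabla_\eta\gamma,\overline\zeta,\cdot,\cdot)=0$, i.e.\ $\nabla_\eta\gamma\in W$. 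Your approach trades this two-step differentiation through $\mathcal N$ for the case-by-case verification of $W_x\subseteq V_x$ in Lemma~\ref{ExtW}; the paper's route avoids that dependence.
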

\begin{proof}
By Lemma \ref{LemTensorV}, we have $\tau|_{T_Z\varotimes T_Z}\equiv 0$, i.e., $(\nabla_\eta \hat\eta)(x)\in W_x$ for any $\eta\in T_x(Z)$, $\hat\eta\in \Gamma_{\mathrm{loc},x}(Z,T_Z)$ and $x\in Z$.
Note that $R_{\eta\overline\zeta\alpha\overline\beta}=0$ for $\eta\in T_x(Z)$, $\zeta\in \mathcal N_\eta$, and any $\alpha,\beta\in T_x(\Omega)$, where $x\in Z$.
From the definition of $W$, we have
$R(\nabla_\eta\hat\eta,\overline\zeta,\alpha,\overline\beta)=0$, because $\gamma\in \Gamma(Z,W)$ if and only if for any $x\in Z$ we have $R_{\gamma(x)\overline\zeta\alpha\overline\beta}=0$ for any $\alpha,\beta \in T_x(\Omega)$ and any $\zeta \in \mathcal N_{\eta}$, where $\eta\in T_x(Z)$.
Thus, for any $x\in Z$ we have
$R(\eta,\overline{(\nabla_{\overline\eta}\zeta)(x)},\alpha,\overline\beta)=0$ for any $\alpha,\beta \in T_x(\Omega)$.
In particular, $(\nabla_{\overline\eta}\zeta)(\widetilde\mu(w))\in \mathcal N_{\eta(w)}$ for any $w\in \Delta$.
For any $\gamma\in \Gamma_{\mathrm{loc},x}(Z,W)$, $\zeta\in \mathcal N_\eta$ and any $\alpha,\beta \in \Gamma_{\mathrm{loc},x}(Z,T_\Omega|_Z)$, we have $R_{\gamma\overline\zeta \alpha\overline\beta}=0$ so that
\[ R(\nabla_\eta\gamma,\overline\zeta,\alpha,\overline\beta)+R(\gamma,\overline{\nabla_{\overline\eta}\zeta},\alpha,\overline\beta) =0. \]
Since $(\nabla_{\overline\eta}\zeta)(\widetilde\mu(w))\in \mathcal N_{\eta(w)}$, we have
$R((\nabla_\eta\gamma)(\widetilde\mu(w)),\overline{\zeta},\alpha,\overline{\beta}) = 0$
for arbitrary $\zeta\in \mathcal N_{\eta(w)}$, $\alpha,\beta\in T_{\widetilde\mu(w)}(\Omega)$.
Therefore, $(\nabla_\eta\gamma)(\widetilde\mu(w)) \in W_{\widetilde\mu(w)}$ for arbitrary $w\in \Delta$, i.e., $\tau\equiv 0$.
\end{proof}
\begin{lem}\label{LemInCSD1}
In the above construction, we have $Z=\widetilde\mu(\Delta)\subset \Omega'$ for some characteristic subdomain $\Omega'\subseteq \Omega$ of rank $k$.
\end{lem}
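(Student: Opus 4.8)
The plan is to upgrade the vanishing of the second fundamental form $\tau$ of $W\subset T_\Omega|_Z$ (Lemma~\ref{LemVSF1}) to the statement that $W$ is a \emph{parallel} subbundle along $Z$, and then to run an open--closed argument on $\Delta$ showing that $Z$ stays inside the characteristic subdomain determined at a single base point.

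First I would make precise what $\tau\equiv 0$ gives. Since $W\subset T_\Omega|_Z$ is a holomorphic subbundle (Lemma~\ref{LemHVSB_W1}), its $(0,1)$-second fundamental form vanishes automatically, while $\tau\equiv 0$ is the vanishing of its $(1,0)$-part; hence for the Levi-Civita $=$ Chern connection $\nabla$ of $(\Omega,g_\Omega)$ one has $\nabla_v\gamma\in W$ whenever $v$ is real-tangent to $Z$ and $\gamma$ is a smooth section of $W$ along $Z$. Equivalently, parallel transport in $\Omega$ along any curve contained in $Z$ carries the fibres of $W$ to one another. Recall also that $T_Z\subset W$ by construction, so the canonical section $\widetilde\mu'$ of $T_Z$ is a section of $W$.

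Next I would fix $x_0:=\widetilde\mu(0)={\bf 0}\in Z$ and use Lemma~\ref{LemHVSB_CSD1} to write $W_{x_0}=T_{x_0}(\Omega')$ for a characteristic subdomain $\Omega'\subseteq\Omega$ of rank $k$; if $k=r$ then $\Omega'=\Omega$ and there is nothing to prove, so assume $k<r$, and, conjugating by an element of $K$, assume $\Omega'=\Omega\cap\mathfrak m^+_\Lambda$ in Harish-Chandra coordinates for a suitable $\Lambda\subset\Psi$ with $|\Lambda|=k$. Since $\Omega'$ is a totally geodesic complex submanifold of $(\Omega,g_\Omega)$, is closed in $\Omega$, and $T_{\Omega'}$ is parallel along curves in $\Omega'$, the set $A:=\{\,w\in\Delta:\widetilde\mu(w)\in\Omega'\text{ and }W_{\widetilde\mu(w)}=T_{\widetilde\mu(w)}(\Omega')\,\}$ is a nonempty closed subset of $\Delta$. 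To see that $A$ is open, fix $w_0\in A$ and $p_0:=\widetilde\mu(w_0)\in\Omega'$; after applying an automorphism of $\Omega$ preserving $\Omega'$ and sending $p_0$ to ${\bf 0}$ (which exists by the homogeneity of $\Omega'$ inside $\Omega$) I may assume $p_0={\bf 0}$. In Harish-Chandra coordinates $(z',z'')\in\mathfrak m^+_\Lambda\oplus(\mathfrak m^+_\Lambda)^\perp$ the Chern connection of $g_\Omega$ vanishes at ${\bf 0}$, while $\Omega'=\Omega\cap\{z''=0\}$ and the Christoffel components $\Gamma^{z''}_{z'z'}$ vanish identically along $\{z''=0\}$ because $\Omega'$ is totally geodesic, hence their $z'$-derivatives vanish along $\{z''=0\}$ as well. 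Now $\widetilde\mu'(w)\in W_{\widetilde\mu(w)}$ for every $w$, while all iterated covariant derivatives $\nabla^m_{\widetilde\mu'}\widetilde\mu'$ remain sections of $W$ by parallelism; combining this with the vanishing statements above, a routine induction on $m$ shows that the $z''$-component of $\widetilde\mu$ vanishes to infinite order at $w_0$, hence --- being holomorphic in $w$ --- vanishes identically near $w_0$, i.e.\ $\widetilde\mu(w)\in\Omega'$ for $w$ near $w_0$. On that neighbourhood $W$ and $T_{\Omega'}$ are then both parallel subbundles of $T_\Omega|_Z$ along $Z$ that agree at $w_0$, hence agree throughout, so the neighbourhood lies in $A$. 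Thus $A$ is open, $A=\Delta$ by connectedness, and $Z=\widetilde\mu(\Delta)\subseteq\Omega'$, a characteristic subdomain of rank $k$.

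The only substantial point is the local step inside the openness argument: passing from ``$\widetilde\mu'$ is everywhere tangent to the parallel bundle $W$, with $W_{\bf 0}=T_{\bf 0}\Omega'$'' to ``$\widetilde\mu$ stays in $\Omega'$ near ${\bf 0}$''. This is exactly where the total geodesy of $\Omega'$ must be fed in, and where some care is needed to verify that the infinite-order vanishing of the normal jet of $\widetilde\mu$ at ${\bf 0}$ really propagates through the induction --- one tracks that at each stage the correction terms have normal component built from $z'$-derivatives of $\Gamma^{z''}_{z'z'}$ taken along $\{z''=0\}$, which vanish. Once this is in place, everything else --- closedness, continuity of the subbundles $W$ and $T_{\Omega'}$, and connectedness of $\Delta$ --- is soft, given Lemmas~\ref{LemHVSB_CSD1}, \ref{LemHVSB_W1} and \ref{LemVSF1}.
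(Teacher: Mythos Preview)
Your argument is correct and reaches the conclusion by a different, more hands-on route than the paper. The paper avoids your local Christoffel/power-series induction by working upstairs in the bundle $\mathcal{NS}_{r-k}(\Omega)\subset\mathbb G(T_\Omega,n_{r-k}(\Omega))$ whose points are tangent spaces to rank-$k$ characteristic subdomains: the liftings $\widehat{\Omega'_x}$ of these subdomains form a holomorphic foliation $\mathscr F$, and the vanishing $\tau\equiv 0$ from Lemma~\ref{LemVSF1} says precisely that the tautological lift $\hat Z=\{[W_x]:x\in Z\}$ is everywhere tangent to $\mathscr F$, hence lies in a single leaf $\widehat{\Omega'}$; projecting down gives $Z\subset\Omega'$ in one stroke. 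Your open--closed argument together with the inductive vanishing of the normal jet is the coordinate-level translation of ``an integral curve of an integrable distribution stays in a leaf'', trading the foliation machinery for explicit use of $\Gamma^{z''}_{z'z'}|_{\Omega'}=0$ and its tangential derivatives. The foliation phrasing is cleaner and makes the uniqueness of $\Omega'$ through each point of $Z$ transparent without any local computation; your approach is more self-contained but requires tracking carefully (as you note) that at each stage of the induction the only surviving Christoffel contributions are $z'$-derivatives of $\Gamma^{z''}_{z'z'}$ evaluated along $\{z''=0\}$, which does go through since holomorphy of $\widetilde\mu$ kills all $\partial_{\bar z}$-contributions and the inductive hypothesis kills all $z''$-contractions.
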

\begin{proof}
From the above construction, $T_x(Z)$ is spanned by a rank-$k$ vector $\eta(w)$ at any point $x=\widetilde\mu(w)\in Z$ ($w\in \Delta$) and there is a holomorphic vector subbundle $W\subset T_\Omega|_Z$ with $T_Z\subset W\subset T_\Omega|_Z$.
We first show that there is a characteristic subdomain $\Omega' \subset \Omega$ of rank $k$ such that $Z$ is tangent to $\Omega'$ to the order at least $2$ at some point $\widetilde\mu(w_0)$, $w_0\in \Delta$, and $T_{\widetilde\mu(w_0)} (\Omega') = W_{\widetilde\mu(w_0)}$.
By considering the normal form of $W_{\widetilde\mu(w_0)}$, it is clear that there is a characteristic subdomain $\Omega'\subset \Omega$ of rank $k$ such that $\widetilde\mu(w_0)\in \Omega'$ and $T_{\widetilde\mu(w_0)} (\Omega') = W_{\widetilde\mu(w_0)}$.
Moreover, such $\Omega'$ is unique for each fixed $w_0$.
Actually, if there is a characteristic subdomain $\Omega'' \subset \Omega$ such that $\widetilde\mu(w_0)\in \Omega''$ and $T_{\widetilde\mu(w_0)} (\Omega'') = W_{\widetilde\mu(w_0)}$, then by choosing some $\Phi \in \Aut(\Omega)$ with $\Phi(\widetilde\mu(w_0)) = {\bf 0}$, both $\Phi(\Omega')$ and $\Phi(\Omega'')$ are linear sections of $\Omega$ by complex vector subspaces in $\mathbb C^N\cong \mathfrak m^+$.
But then their holomorphic tangent spaces at ${\bf 0}$ coincide to each other so that $\Phi(\Omega')=\Phi(\Omega'')$, i.e., $\Omega'=\Omega''$.
Since $\tau\equiv 0$ by Lemma \ref{LemVSF1}, we have $(\nabla_\eta \gamma)(\widetilde\mu(w)) \in W_{\widetilde\mu(w)}$ for any $w\in \Delta$, where $\eta\in T_{\widetilde\mu(w)}(Z)$ and $\gamma\in \Gamma_{\mathrm{loc},\widetilde\mu(w)}(Z,W)$.

Denote by $\pi:\mathbb G(T_\Omega, n_{r-k}(\Omega))\to \Omega$ the Grassmann bundle, where $\mathbb G$($T_x(\Omega)$, $n_{r-k}(\Omega)$) is the Grassmannian of the complex $n_{r-k}(\Omega)$-dimen- sional vector subspaces of $T_x(\Omega)$ for each $x\in \Omega$.
From \cite[p.\,99]{MT92}, we let $\mathcal{NS}_{r-k}(\Omega)$ be the collection of all $n_{r-k}(\Omega)$-planes which are holomorphic tangent spaces to the $k$-th characteristic subdomains of $\Omega$.
Then, $\mathcal{NS}_{r-k}(\Omega)$ lies in the Grassmann bundle $\mathbb G(T_\Omega, n_{r-k}(\Omega))$ and is a holomorphic fiber bundle over $\Omega$ with $\mathcal{NS}_{r-k}(\Omega)\cong \mathcal{NS}_{r-k,{\bf 0}}(\Omega)\times \Omega$.
For each $x\in \Omega$ and each $k$-th characteristic subdomain $\Omega'_x\subset \Omega$ containing $x$, we can lift $\Omega'_x$ to $\mathcal{NS}_{r-k}(\Omega)$ as
\[ \widehat{\Omega'_x} = \{ [T_y(\Omega')]\in \mathcal{NS}_{r-k,y}(\Omega): y\in \Omega'_x\}. \]
Such a lifting of $k$-th characteristic subdomains of $\Omega$ forms a tautological foliation $\mathscr F$ on $\mathcal{NS}_{r-k}(\Omega)$ with $n_{r-k}(\Omega)$-dimensional leaves $\widehat{\Omega'_x}$.
Then, we let $\hat Z$ be the tautological lifting of $Z$ to $\mathcal{NS}_{r-k}(\Omega)$ defined by
\[ \hat Z = \{ [W_x]\in \mathcal{NS}_{r-k,x}(\Omega) : x\in Z \}. \]
Since $(\nabla_\eta \gamma)(\widetilde\mu(w_0))\in W_{\widetilde\mu(w_0)}$, $\hat Z$ is tangent to $\widehat{\Omega'}$ at $[W_{\widetilde\mu(w_0)}]$.
Actually, since $(\nabla_\eta \gamma)(x)\in W_x$ for any $x\in Z$, $\hat Z$ is tangent to the leaf $\widehat{\Omega_x'}$ of $\mathscr F$ at $[W_x]$ for any $x\in Z$, where $\Omega'_x \subset \Omega$ is the characteristic subdomain of rank $k$ at $x$ satisfying $T_x(\Omega'_x)=W_x$.
Therefore, $\hat Z$ is an integral curve of the integrable distribution defined by the foliation $\mathscr F$.
From the general theory of foliations, such an integral curve of the distribution induced from $\mathscr F$ must lie inside the single leaf $\widehat{\Omega'}$ of $\mathscr F$, which is also the maximal integral submanifold of the induced integrable distribution.
Therefore, $\hat Z$ itself should lie inside the leaf $\widehat{\Omega'}$ of the foliation $\mathscr F$ because $\hat Z$ is path connected.
Note that $Z$ is the image of $\hat Z$ under the canonical projection $\mathbb G(T_\Omega,n_{r-k}(\Omega)) \to \Omega$.
But then the above argument shows that $Z$ should lie in $\Omega'$ because $\hat Z\subset \widehat{\Omega'}$.
\end{proof}

Let $\Omega$ be an irreducible bounded symmetric domain of rank $r\ge 2$ which is not necessarily of tube type.
Recall that any invariantly geodesic submanifold of $\Omega$ is an irreducible bounded symmetric domain of rank $\le r$.
From the results in Section \ref{Sec:CHIE} and in this section, we have
\begin{pro}\label{pro:redIGM}
Let $\Omega$ be an irreducible bounded symmetric domain of rank $r\ge 2$ which is not necessarily of tube type, and $Z=\widetilde\mu(\Delta)$ be constructed as above.
Assume that the tangent vector $\eta_x$ spanning $T_x(Z)$ is of rank $k$, $1\le k<r$.
Then, there is an invariantly geodesic submanifold $\Omega'\subset \Omega$ such that $\Omega'$ is a rank-$k$ irreducible bounded symmetric domain of tube type and $Z\subseteq \Omega'$. In particular, $\eta_x\in T_x(\Omega')$ is a rank-$k$ tangent vector in $T_x(\Omega')$ for $x\in Z$.
\end{pro}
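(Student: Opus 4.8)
The plan is to deduce Proposition~\ref{pro:redIGM} by running the argument of Lemma~\ref{LemInCSD1} \emph{mutatis mutandis}, replacing ``characteristic subdomain'' by ``invariantly geodesic submanifold'' throughout, the point being that every ingredient used in \S\ref{Proof:Sec1} is available for $\Omega$ irreducible of rank $\ge 2$ without the tube-type hypothesis. First I would record that the structural input is supplied by Remark~\ref{RemHVSB_CSD1}: for each $x\in Z$ one has $W_x=T_x(\Omega'_x)$ for some invariantly geodesic submanifold $\Omega'_x\subseteq\Omega$ which is an irreducible bounded symmetric domain of rank $k$ and of tube type (the only genuinely new ambient types being $D^{\mathrm{I}}_{p,q}$ with $q>p=r$, $D^{\mathrm{II}}_{2r+1}$, and $D^{\mathrm{V}}$, and for the last $r=2$ forces $k=1$). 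With this substitution the proofs of Lemmas~\ref{LemHVSB_W1}, \ref{TauTensorHolo1}, \ref{lem:NewBundleV}, \ref{ExtW}, \ref{LemTensorV} and \ref{LemVSF1} go through unchanged, since they rely only on the parallelism $\nabla R\equiv 0$, on the inclusion $[[\mathfrak m^-,W_x],W_x]\subseteq W_x$ furnished by Tsai~\cite[Lemma~4.3]{Ts93} (which is stated for invariantly geodesic submanifolds), and on the estimates for the Kobayashi metric and distance on bounded convex domains obtained in Mercer~\cite{Me93}. In particular $\tau\equiv 0$ on $Z$, i.e., $(\nabla_\eta\gamma)(x)\in W_x$ for every $x\in Z$, $\eta\in T_x(Z)$ and $\gamma\in\Gamma_{\mathrm{loc},x}(Z,W)$.

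Next I would reproduce the foliation-theoretic argument of Lemma~\ref{LemInCSD1}. Put $N':=\dim_{\mathbb C}W_x=\dim_{\mathbb C}\Omega'_x$, which is constant along $Z$ since $W$ is a holomorphic vector subbundle by Lemma~\ref{LemHVSB_W1}, and consider the Grassmann bundle $\mathbb G(T_\Omega,N')\to\Omega$. Since all invariantly geodesic submanifolds of $\Omega$ of the isomorphism type singled out in Remark~\ref{RemHVSB_CSD1} that pass through a fixed point of $\Omega$ form a single orbit of the isotropy group $K$, the subvariety $\mathcal{IG}\subset\mathbb G(T_\Omega,N')$ consisting of their holomorphic tangent spaces is a $G_0$-homogeneous holomorphic fiber bundle over $\Omega\cong G_0/K$, and the tautological liftings $\widehat{\Omega''}:=\{[T_y(\Omega'')]:y\in\Omega''\}$ assemble into a holomorphic foliation $\mathscr F$ of $\mathcal{IG}$ with leaves of dimension $N'$; this is the exact analogue, for the relevant invariantly geodesic submanifolds, of the set-up with $\mathcal{NS}_{r-k}(\Omega)$ quoted from Mok--Tsai~\cite[p.\,99]{MT92}. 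Forming $\hat Z:=\{[W_x]:x\in Z\}\subset\mathcal{IG}$, the vanishing $\tau\equiv 0$ says precisely that $\hat Z$ is tangent at each point $[W_x]$ to the leaf $\widehat{\Omega'_x}$ of $\mathscr F$ through it, so $\hat Z$ is a connected integral curve of the integrable distribution of $\mathscr F$ and therefore lies inside a single leaf $\widehat{\Omega'}$. Projecting by $\mathbb G(T_\Omega,N')\to\Omega$ gives $Z\subseteq\Omega'$, where $\Omega'\subseteq\Omega$ is an invariantly geodesic submanifold that is a rank-$k$ irreducible bounded symmetric domain of tube type.

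For the final assertion I would argue that, since $\Omega'\subseteq\Omega$ is totally geodesic and of rank $k$, a maximal polydisk of $\Omega'$ through $x\in Z$ is contained in a maximal polydisk of $\Omega$; comparing Harish-Chandra normal forms, the rank of $\eta_x$ computed inside $T_x(\Omega')$ equals its rank inside $T_x(\Omega)$, namely $k=\mathrm{rank}(\Omega')$, so $\eta_x$ is a generic (rank-$k$) tangent vector of $\Omega'$. The main obstacle is exactly the foliation set-up in the non-tube case: one must check that the holomorphic tangent spaces of the pertinent invariantly geodesic submanifolds really do sweep out a homogeneous holomorphic sub-bundle of the Grassmann bundle carrying the tautological foliation. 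For the three new ambient types this is done either by locating these submanifolds on Wolf's list of sub-bounded-symmetric-domains or by redoing the $K$-homogeneity bookkeeping directly (for $D^{\mathrm{V}}$ it is immediate from $k=1$, where $W_x=T_x(Z)$ is the tangent line to the minimal disk through $x$); everything else in the proof is a transcription of the tube-type arguments of \S\ref{Proof:Sec1}.
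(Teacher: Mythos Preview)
Your proposal is correct and follows essentially the same route as the paper. The paper presents Proposition~\ref{pro:redIGM} without a separate proof, stating only that it follows ``from the results in Section~\ref{Sec:CHIE} and in this section''; the point is that after Remark~\ref{RemHVSB_CSD1} the paper has already declared that the bundle $W$ and all subsequent lemmas (Lemmas~\ref{LemHVSB_W1}--\ref{LemInCSD1}) are to be understood for $\Omega$ irreducible with $k<r$ and not necessarily of tube type, so Proposition~\ref{pro:redIGM} is simply the restatement of Lemma~\ref{LemInCSD1} in that generality with ``characteristic subdomain'' replaced by ``invariantly geodesic submanifold of tube type'' as supplied by Remark~\ref{RemHVSB_CSD1}. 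Your write-up makes explicit the one point the paper leaves implicit, namely that the foliation argument of Lemma~\ref{LemInCSD1} (originally phrased using $\mathcal{NS}_{r-k}(\Omega)$) carries over once one checks that the tangent spaces of the relevant invariantly geodesic submanifolds form a $G_0$-homogeneous holomorphic subbundle of the Grassmann bundle with the tautological foliation; this is indeed routine from the $K$-homogeneity of these submanifolds through a base point, and your case check for $D^{\mathrm{I}}_{p,q}$, $D^{\mathrm{II}}_{2r+1}$, $D^{\mathrm{V}}$ is the right way to close that gap.
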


\subsubsection{The Poincar\'e-Lelong equation and proof of Theorem \ref{ThmTubeDomain1}}
From the above construction and lemmas, we can complete the proof of Theorem \ref{ThmTubeDomain1}, as follows.
\begin{proof}[Proof of Theorem \ref{ThmTubeDomain1}]
From the holomorphic embedding $\mu:U\to \mathbb C^N$, by choosing an arbitrary general point $b\in U\cap\partial\Delta$ we have constructed in Proposition \ref{ProConstHI1} a holomorphic isometry $\widetilde \mu:(\Delta,m_0g_\Delta)\to (\Omega,g_\Omega)$
such that $\widetilde \mu(0)={\bf 0}$,
$\lVert \widetilde \sigma(\widetilde\mu(w))\rVert^2 \equiv \lVert\sigma(\mu(b))\rVert^2$ on $\Delta$ and
the normal form of ${\widetilde \mu'(w)\over \lVert \widetilde \mu'(w)\rVert_{g_\Omega}}$ is independent of $w\in \Delta$ and of rank $k$, where $k$ is some integer satisfying $1\le k\le r=\rank(\Omega)$.
By Lemma \ref{LemInCSD1}, $Z=\widetilde\mu(\Delta)$ lies inside a characteristic subdomain $\Omega'\subseteq \Omega$ of rank $k$. When $k=r=\rank(\Omega)$, we have $\Omega'=\Omega$.
Note that $\Omega$ is of tube type, so $\Omega'$ is also of tube type.
Denote by $\sigma'(x)$ the second fundamental form of $(Z,g_{\Omega'}|_Z)$ in $(\Omega',g_{\Omega'})$ at $x\in Z$, where the K\"ahler metric $g_{\Omega'}=g_{\Omega}|_{\Omega'}$ on $\Omega'$ is precisely the restriction of $g_{\Omega}$ to $\Omega'$.

We write $\Omega'=G_0'/K'$ and fix an arbitrary point $w\in \Delta$.
If $\widetilde\mu'(w)$ is a rank-$k'$ vector in $T_{\widetilde\mu(w)}(\Omega')$, then by applying the $K'$-action, the normal form of $\widetilde \mu'(w)$ is tangent to some totally geodesic polydisk $\Pi_{k'}\cong \Delta^{k'}$ in the maximal polydisk $\Pi_k\cong \Delta^k$ of $\Omega'$, which also lies in $\Delta^r\cong\Pi\subset \Omega$.
This implies that the normal form of $\widetilde \mu'(w)$ as a tangent vector in $T_{\widetilde \mu(w)}(\Omega)$ is of rank $k'$.
Therefore, we have $k=k'$ and $\widetilde \mu'(w)$ is a generic vector in $T_{\widetilde\mu(w)}(\Omega')$ for $w\in \Delta$.

Since $\Omega'$ is of tube type, it follows from \cite[Proposition 1]{Mo02} that the $(k-1)$-th characteristic bundle $\mathcal S_{k-1}(\Omega')$ of $\Omega'$ is of codimension $1$ in the projectivized tangent bundle $\mathbb PT_{\Omega'}$ of $\Omega'$.
We refer the readers to Mok \cite[p.\,293]{Mo02} for the notion of the $l$-th characteristic bundles.
From \cite{Mo02}, we have the Poincar\'e-Lelong equation
\begin{equation}\label{Eq:PL}
{\sqrt{-1}\over 2\pi}\partial\overline\partial\log\lVert s\rVert_o^2 = mc_1(L,\widehat{g_{\Omega'}}) - l c_1(\pi^*E,\pi^*g_o) + [\mathcal S_{k-1}(\Omega')],
\end{equation}
where $s\in \Gamma(\mathbb PT_{\Omega'}, L^{-m} \varotimes \pi^* E^l)$ such that the zero divisor of $s$ is precisely $\mathcal S_{k-1}(\Omega')$, $E=\mathcal O(1)|_{\Omega'}$, $L\to \mathbb PT_{\Omega'}$ is the tautological line bundle and $[\mathcal S_{k-1}(\Omega')]$ denotes the current of integration over $\mathcal S_{k-1}(\Omega')$.
Here, $\mathcal O(1)$ denotes the positive generator of the Picard group of the compact dual Hermitian symmetric space $X_c'$ of $\Omega'$. (We may also write $\mathcal O_{X_c'}(1)$ in place of $\mathcal O(1)$ in order to avoid ambiguity, and such a notational convention will be adopted later on in Section \ref{Sec:4.2.3} where we deal with reducible bounded symmetric domains.)
Actually, we also have $m=k$ and $l=2$ by \cite[Proposition 3]{Mo02}.
Denote by $\omega$ the K\"ahler form of $(\Omega',{g_{\Omega'}})$.
Since $\widetilde \mu:(\Delta,m_0g_\Delta)\to (\Omega,{g_{\Omega}})$ is a holomorphic isometry and $Z=\widetilde\mu(\Delta)\subset \Omega'$, we may regard $\widetilde \mu:(\Delta,m_0g_\Delta)\to (\Omega',{g_{\Omega'}})$ as a holomorphic isometry.
Let
\[ \hat{Z} = \{ [\alpha] \in \mathbb P(T_x(\Omega')): x\in Z,\; T_x(Z) = \mathbb C \alpha \} \]
be the tautological lifting of $Z$ to $\mathbb P T_{\Omega'}$.
Then, we have $\hat Z\cap \mathcal S_{k-1}(\Omega')=\varnothing$.
Since the normal form of ${\widetilde \mu'(w)\over \lVert \widetilde \mu'(w)\rVert_{g_\Omega}}$ is constant from the construction, $\lVert s\rVert_o>0$ is constant on $\hat Z$ and thus $\sqrt{-1} \partial\overline\partial \log \lVert s\rVert_o \equiv 0$ on $\hat Z$.
Since $ \hat{Z}$ is disjoint from $\mathcal S_{k-1}(\Omega')$, restricting Eq. (\ref{Eq:PL}) to $\hat Z$ we have an identity of smooth (1,1)-forms on $\hat Z$
\begin{equation}\label{Eq:11form1}
kc_1(L,\widehat{g_{\Omega'}})|_{\hat Z} - 2 c_1(\pi^*E,\pi^*g_o)|_{\hat Z} \equiv 0
\end{equation}
as a consequence of Eq. (\ref{Eq:PL}) and the fact that $\sqrt{-1} \partial\overline\partial \log \lVert s\rVert_o \equiv 0$ on $\hat Z$.
In particular, we have
\begin{equation}\label{Eq:11form2}
 kc_1(T_{Z},{g_{\Omega'}}|_{Z}) - 2 c_1(E,g_o)|_{Z} \equiv 0
\end{equation}
by Eq. (\ref{Eq:11form1}).
Note that $c_1(T_{Z},{g_{\Omega'}}|_{Z}) = {1\over 2\pi}\kappa_{Z} \omega|_{Z}$ by the formula for the Gaussian curvature $\kappa_{Z}$ of $(Z,{g_{\Omega'}}|_Z)$ and \cite[p.\,36]{Mo89}.
In addition, we have $c_1(E,g_o)|_{Z}=-{c\over 4\pi} \omega|_Z$ for some $c>0$.
Thus, we obtain
${k\over 2\pi}\kappa_{Z} \omega|_{Z} + {c\over 2\pi}\omega|_Z\equiv 0$ by Eq. (\ref{Eq:11form2}), i.e., $\kappa_{Z}\equiv -{c\over k}$.
Denote by $\Delta_k$ the holomorphic disk of maximal Gaussian curvature $-{2\over k}$, i.e., of diagonal type in the maximal polydisk $\Delta^k\cong \Pi_k\subset \Omega'$.
Then, we have $-k\kappa_{\Delta_k} \equiv c$ and $\kappa_{\Delta_k}\equiv -{2\over k}$ so that $c=2$ (cf.\,\cite[p.\,297]{Mo02}).
Therefore, we have $\kappa_{Z} \equiv -{2\over k}$.
By the Gauss equation we have
$\lVert \sigma'(\widetilde\mu(w))\rVert^2 \le -{2\over k}-\kappa_Z= -{2\over k} + {2\over k} = 0$
so that $\lVert \sigma'(\widetilde\mu(w))\rVert^2 \equiv 0$ on $\Delta$, i.e., $(Z,{g_{\Omega}}|_Z)\subset (\Omega',{g_{\Omega}}|_{\Omega'})$ is totally geodesic.
But then $(\Omega',{g_{\Omega}}|_{\Omega'})\subseteq (\Omega,{g_{\Omega}})$ is totally geodesic so that $(Z,{g_{\Omega}}|_Z)\subset (\Omega,{g_{\Omega}})$ is totally geodesic and thus
$\lVert \widetilde \sigma(\widetilde \mu(w)) \rVert^2\equiv 0$ on $\Delta$.
In particular, we have $\lVert \sigma(\mu(b))\rVert^2 = \lVert \widetilde \sigma(\widetilde \mu(w)) \rVert^2 = 0$.
Since $b\in U\cap\partial\Delta$ is an arbitrary general point, we have $\lVert \sigma(\mu(w))\rVert^2 \to 0$ as $w\to b'$ for a general point $b'\in U\cap\partial\Delta$.
\end{proof}

\subsection{Complete proof of Theorem \ref{MainThm}}
In Section \ref{Sec:CHIE}, we constructed a holomorphic isometry $\widetilde \mu:(\Delta,m_0g_\Delta)\to (\Omega,g_\Omega)$ into an irreducible bounded symmetric domain with certain properties.
The following shows that our study on such a holomorphic isometry may be reduced to the case where $\Omega$ is of tube type.
\begin{pro}\label{Pro_Reduction_Tube_Domain}
Let $\Omega\Subset \mathbb C^N$ be an irreducible bounded symmetric domain of rank $r\ge 2$ and let $\widetilde\mu:(\Delta,m_0 g_\Delta) \to (\Omega,g_\Omega)$ be the constructed holomorphic isometry so that the holomorphic tangent spaces $T_x(Z)=\mathbb C\eta_x$ of $Z:=\widetilde\mu(\Delta)$ are $\Aut(\Omega)$-equivalent and $\eta_y\in T_y(\Omega)$ is a generic vector for any $y\in Z$.
Then, there exists an invariantly geodesic submanifold $\Omega'\subset \Omega$ containing $Z$ such that $\Omega'$ is an irreducible bounded symmetric domain of rank $r$ and of tube type.
In particular, $(Z,g_\Omega|_Z)\subset (\Omega,g_\Omega)$ is totally geodesic.
\end{pro}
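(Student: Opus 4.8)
The plan is to dispose first of the case when $\Omega$ is of tube type and then to reduce the non-tube case to it. If $\Omega$ is of tube type, take $\Omega' = \Omega$: since $\eta_x$ is generic, i.e.\ of rank $r := \mathrm{rank}(\Omega)$, the curve $Z = \widetilde\mu(\Delta)$ lies trivially in $\Omega' = \Omega$, and the Poincar\'e--Lelong argument in the proof of Theorem \ref{ThmTubeDomain1} for the case $k = r$, applied directly to $\Omega$ (which is of tube type), gives $\kappa_Z \equiv -\tfrac2r$, hence $\lVert\widetilde\sigma\rVert^2 \le -\tfrac2r + \tfrac2r = 0$, so $(Z, g_\Omega|_Z) \subset (\Omega, g_\Omega)$ is totally geodesic. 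Thus the content of the proposition lies in the case where $\Omega$ is \emph{not} of tube type, i.e.\ $\Omega$ is biholomorphic to $D^{\mathrm I}_{p,q}$ with $q > p = r \ge 2$, to $D^{\mathrm{II}}_{2r+1}$ with $r \ge 2$, or to $D^{\mathrm V}$ (of rank $2$).

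For such $\Omega$ the first step is to locate, for each $x \in Z$, a tube-type invariantly geodesic submanifold $\Omega'_x \subseteq \Omega$ of the full rank $r$ through $x$. The bundle $W$ of Lemma \ref{LemHVSB_CSD1} is useless here, since for a generic $\eta_x$ one has $\mathcal N_{\eta_x} = 0$ and hence $W = T_\Omega|_Z$; instead I would use the holomorphic subbundle $V = \bigcup_{x\in Z} V_x \subset T_\Omega|_Z$ of Lemma \ref{lem:NewBundleV}, where $V_x = [[T_x(Z),\mathfrak m^-],T_x(Z)]$. An explicit Lie-bracket computation, one per type, identifies $V_x$: writing $\eta_x$ in normal form, for $\Omega \cong D^{\mathrm I}_{p,q}$ one finds that $V_x$ is the space of $p\times q$ matrices whose row space is contained in that of $\eta_x$, i.e.\ the holomorphic tangent space at $x$ of the copy of $D^{\mathrm I}_{p,p}$ through $x$ determined by that $p$-plane; the cases $D^{\mathrm{II}}_{2r+1}$ (yielding a copy of $D^{\mathrm{II}}_{2r}$ determined by the column span of $\eta_x$) and $D^{\mathrm V}$ (yielding, by a root-vector computation with the $E_6$ data of \cite{Zh84} as in the proof of Lemma \ref{LemHVSB_CSD1}, a rank-$2$ tube-type invariantly geodesic submanifold of type $\mathrm{IV}$) are analogous. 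In each case $V_x = T_x(\Omega'_x)$ for a \emph{uniquely determined} rank-$r$ tube-type invariantly geodesic submanifold $\Omega'_x \subseteq \Omega$ (uniqueness exactly as in the proof of Lemma \ref{LemInCSD1}), and $\eta_x$ is generic in $\Omega'_x$ because $\Omega'_x$ is totally geodesic of rank $r$.

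With $V$ in hand the remaining steps run parallel to Lemmas \ref{TauTensorHolo1}, \ref{LemTensorV}, \ref{LemVSF1}, \ref{LemInCSD1}, with $W$ replaced by $V$. The $(1,0)$-part $\tau\colon T_Z\varotimes V \to T_\Omega|_Z/V$ of the second fundamental form of $(V,g_\Omega|_V) \subset (T_\Omega|_Z, g_\Omega)$ is holomorphic by the argument of Lemma \ref{TauTensorHolo1}, which uses only $\nabla R \equiv 0$ and $[[\mathfrak m^-, V_x], V_x] \subseteq V_x$, the latter being \cite[Lemma 4.3]{Ts93} applied to the invariantly geodesic $\Omega'_x$. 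Using the local holomorphic extension $V'$ of $V$ across a general boundary point supplied by Lemma \ref{lem:NewBundleV}, together with the Kobayashi-metric estimates of Lemma \ref{LemTensorV}, $\lVert\tau|_{T_Z\varotimes T_Z}\rVert^2$ extends real-analytically past $\partial\Delta$, is a constant by the rescaling/constancy argument of Proposition \ref{ProConstHI1}, and tends to $0$ at a general boundary point, so $\tau|_{T_Z\varotimes T_Z} \equiv 0$; the curvature-identity argument of Lemma \ref{LemVSF1} then upgrades this to $\tau \equiv 0$, where in place of the null subbundle $\mathcal N$ used for $W$ one works with the $\nabla$-invariant subbundle of $T_\Omega|_Z$ cut out through $\eta$ by the vanishing of the relevant mixed curvature components. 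Finally the foliation argument of Lemma \ref{LemInCSD1}, applied to the bundle of holomorphic tangent spaces of rank-$r$ tube-type invariantly geodesic submanifolds and its tautological foliation, shows that the lift $\widehat Z = \{[V_x] : x \in Z\}$ is an integral curve of the induced distribution, hence lies in a single leaf, so $Z \subseteq \Omega'$ for a fixed such $\Omega'$. Since $\Omega'$ is of tube type and of rank $r$ and $\eta_x$ is generic in $\Omega'$, the Poincar\'e--Lelong equation on $\Omega'$ (as in the proof of Theorem \ref{ThmTubeDomain1}) gives $\kappa_Z \equiv -\tfrac2r$ and hence that $(Z, g_\Omega|_Z)$ is totally geodesic in $\Omega'$; as $\Omega' \subset \Omega$ is itself totally geodesic, $(Z, g_\Omega|_Z)$ is totally geodesic in $(\Omega, g_\Omega)$. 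I expect the main obstacles to be the explicit identification of $V_x$ in the $D^{\mathrm V}$ ($E_6$) case and, above all, the construction in each non-tube type of the correct $\nabla$-invariant ``null-type'' subbundle so that the curvature identities of Lemma \ref{LemVSF1} close up for $V$ in place of $W$.
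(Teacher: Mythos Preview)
Your strategy coincides with the paper's: in the non-tube case one constructs the holomorphic subbundle $V \subset T_\Omega|_Z$ with $V_x = [[T_x(Z),\mathfrak m^-],T_x(Z)]$, identifies $V_x$ as the holomorphic tangent space of a rank-$r$ tube-type invariantly geodesic submanifold $\Omega'_x$ (namely $D^{\mathrm I}_{p,p}$, $D^{\mathrm{II}}_{2r}$, or $D^{\mathrm{IV}}_8$ in the respective cases), shows $\tau|_{T_Z\varotimes T_Z}\equiv 0$ by the Kobayashi-metric estimate and rescaling, upgrades to $\tau\equiv 0$, and then runs the foliation argument and the Poincar\'e--Lelong computation on $\Omega'$.

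The one point where you and the paper diverge is precisely the obstacle you flagged: the passage from $\tau|_{T_Z\varotimes T_Z}\equiv 0$ to $\tau\equiv 0$. You propose to mimic Lemma \ref{LemVSF1} using a ``null-type'' subbundle, but for generic $\eta$ there is no natural candidate (indeed $\mathcal N_\eta=0$), and the dual characterisation of $W$ as a kernel has no obvious analogue for $V$, which is defined as an \emph{image}. The paper sidesteps this entirely. It observes that $V$ admits the equivalent curvature-theoretic description $V_x=\rho(P(\eta\varotimes\eta)\varotimes T^*_x(\Omega))$, where $P$ is the parallel tensor determined by $g(P(\alpha\varotimes\beta),\overline\gamma\varotimes\overline\delta)=R_{\alpha\overline\gamma\beta\overline\delta}$ and $\rho$ is the obvious contraction; this is the same bundle because $\rho(P(\eta\varotimes\eta)\varotimes e_\mu^*)=R(\eta,\overline{e_\mu})\eta=-[[\eta,e_{-\mu}],\eta]$. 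Since $P$ is parallel and $\rho$ is a contraction, for any local section $\rho(P(\eta\varotimes\eta)\varotimes\omega^*)$ of $V$ one has
\[
\nabla_{\hat\eta}\bigl(\rho(P(\eta\varotimes\eta)\varotimes\omega^*)\bigr)
= \rho\bigl(P(\nabla_{\hat\eta}\eta\varotimes\eta)\varotimes\omega^*\bigr)
+ \rho\bigl(P(\eta\varotimes\nabla_{\hat\eta}\eta)\varotimes\omega^*\bigr)
+ \rho\bigl(P(\eta\varotimes\eta)\varotimes\nabla_{\hat\eta}\omega^*\bigr),
\]
and once $(\nabla_{\hat\eta}\eta)(x)\in V_x$ each term lies in $V_x$ by $[[\mathfrak m^-,V_x],V_x]\subset V_x$ (Tsai's lemma for the invariantly geodesic $\Omega'_x$). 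Thus $V$ is $\nabla$-parallel along $Z$ and $\tau\equiv 0$ follows immediately, with no auxiliary subbundle needed. This is the missing ingredient in your sketch; with it, the rest of your outline goes through exactly as you wrote.
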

\begin{proof}
If $\Omega$ is of tube type, then the result follows from the proof of Theorem \ref{ThmTubeDomain1}.
From now on we consider the case where $\Omega$ is of non-tube type.
From the classification of irreducible bounded symmetric domains, $\Omega$ is biholomorphic to either
$D^{\mathrm{I}}_{p,q}$ ($p<q$), $D^{\mathrm{II}}_{2n+1}$ ($n\ge 2$) or $D^{\mathrm{V}}$.
Define $P:T_\Omega\varotimes T_\Omega\to T_\Omega\varotimes T_\Omega$ by $g(P(\alpha\varotimes \beta),\overline\gamma\varotimes \overline\delta) = R_{\alpha\overline\gamma\beta\overline\delta}(\Omega,g_\Omega)$.
Here $g_x(\cdot,\cdot)$ is a natural Hermitian pairing of the basis for $S^2 T_x(\Omega)$, i.e.,
$g_x(e_i\cdot e_j, \overline{e_s}\cdot \overline{e_l})=1$ (resp.\,$0$) if $\{i,j\}=\{s,l\}$ (resp.\,$\{i,j\}\neq \{s,l\}$).
Then, $P$ is parallel because $\nabla R\equiv 0$.
We define $\rho:(T_\Omega\varotimes T_\Omega)\varotimes T_\Omega^*\to T_\Omega$ so that for each $x\in \Omega$,
$\rho_x:(T_x(\Omega)\varotimes T_x(\Omega))\varotimes T_x^*(\Omega)\to T_x(\Omega)$
is a multilinear map given by $\rho_x(\mu\varotimes \nu)(\omega^*) = \omega^*(\nu)\mu$ for decomposable elements
$(\mu\varotimes \nu)\varotimes \omega^*\in (T_x(\Omega)\varotimes T_x(\Omega))\varotimes T_x^*(\Omega)$.
We have $P(\alpha\varotimes \alpha) = \sum_{\varphi,\varphi'\in \Delta_M^+} R_{\alpha\overline{e_\varphi}\alpha\overline{e_{\varphi'}}}(\Omega,g_\Omega) e_\varphi \varotimes e_\varphi'$ and
$\rho(P(\alpha\varotimes \alpha)\varotimes e_{\mu}^*)
= \sum_{\varphi\in \Delta_M^+} R_{\alpha\overline{e_\varphi}\alpha\overline{e_{\mu}}}(\Omega,g_\Omega)
e_\varphi$.
Define the vector subbundle $V:=\rho(P(\eta\varotimes \eta)\varotimes T_\Omega^*)\subset T_\Omega|_Z$, where $\eta$ is a non-zero holomorphic vector field on $Z=\widetilde\mu(\Delta)\subset \Omega$.

By using the normal form $\eta(w)\in T_{\bf 0}(\Omega)$ of ${\widetilde\mu'(w)\over \lVert \widetilde\mu'(w) \rVert_{g_\Omega}}$, if $\Omega$ is of the classical type, then it follows from direct computation of the Riemannian curvature of $(\Omega,g_\Omega)$ that the normal form of $V_x$ ($x\in Z$) as a complex vector subspace of $T_{\bf 0}(\Omega)$ is exactly $M(p,p;\mathbb C)=T_{\bf 0}(D^{\mathrm{I}}_{p,p})$ (resp.\,$M_a(2n;\mathbb C)=T_{\bf 0}(D^{\mathrm{II}}_{2n})$) if $\Omega\cong D^{\mathrm{I}}_{p,q}$ ($p<q$) (resp.\,$D^{\mathrm{II}}_{2n+1}$ ($n\ge 2$)).
When $\Omega\cong D^{\mathrm{V}}$, it follows from the computation of Tsai \cite[pp.\,149--151]{Ts93} and $R(v,\overline w)v'=-[[v,\overline w],v']$ that the normal form of $V_x$ ($x\in Z$) as a complex vector subspace of $T_{\bf 0}(\Omega)$ is exactly $T_{\bf 0}(\Omega')$ for some invariantly geodesic submanifold $\Omega'\subset \Omega$ satisfying $\Omega'\cong D^{\mathrm{IV}}_8$.
Actually, we write the normal form $\eta(w)=\eta_1(w)e_{x_1-x_2}+\eta_2(w)e_{x_1+x_2+x_3}$ and we compute
$R(\eta(w),\overline{e_{\varphi}})\eta(w)=[[e_{-\varphi},\eta(w)],\eta(w)]$ for each noncompact positive root $\varphi$.
It follows from Tsai \cite[pp.\,149--151]{Ts93} that the normal form of $V_x$ is $\rho(P(\eta(w)\varotimes\eta(w))\varotimes T^*_{\bf 0}(\Omega))$, which is spanned by $e_{x_1-x_i}$, $4\le i\le 6$; $e_{x_1+x_3+x_i}$, $4\le i\le 6$; $e_{x_1-x_2}$ and $e_{x_1+x_2+x_3}$.
Here $\eta(w)=\eta_{\widetilde\mu(w)}$ for $w\in \Delta$.
In particular, the normal form of $V_x$ is exactly $T_{\bf 0}(Q^8)=T_{\bf 0}(D^{\mathrm{IV}}_8)$, where ${\bf 0}$ is identified with the base point $o\in Q^8$.
It is then obvious that $\mathrm{Span}_{\mathbb C}\{e_{\psi_j}(x):j=1,\ldots,k\}\subset V_x$ and $\eta_x\in V_x$ for each $x\in Z$ for each $x\in Z$.
By similar arguments as in the proof of Lemma \ref{LemHVSB_W1}, $V\subset T_\Omega|_Z$ is a holomorphic vector subbundle with $T_Z\subset V$.

Define the second fundamental form $\tau:T_Z\varotimes V\to T_\Omega|_Z/V$ by $\tau(\eta\varotimes \gamma) = \nabla_\eta \gamma\mod V$.
Then, it follows from the arguments in the proof of Lemma \ref{TauTensorHolo1} that $\tau$ is holomorphic since $V_x=T_x (\Omega'_x)$ for some invariantly geodesic submanifold $\Omega'_x\subset \Omega$.
Note that the vector bundle $V$ here is actually the same as the vector bundle $V$ in Lemma \ref{lem:NewBundleV} and Lemma \ref{ExtW}.
Representing $\tau|_{T_Z\varotimes T_Z}$ as a holomorphic section $\hat\tau\in \Gamma(Z,S^2T^*_Z\varotimes (T_\Omega|_Z/V))$, we can extend the definition of $\hat\tau(\zeta)$ to an open neighborhood of a general point on $\partial\Delta$ by Lemma \ref{lem:NewBundleV} and Lemma \ref{ExtW}.
Then, by the arguments in the proof of Lemma \ref{LemTensorV} we have $\tau|_{T_Z\varotimes T_Z}\equiv 0$ after applying the rescaling argument to a local holomorphic extension of $\widetilde\mu$ around a general point $b'\in \partial\Delta$ if necessary.
From the definition of $V\subset T_\Omega|_Z$ and the fact that $(\nabla_{\eta}\hat\eta)(x)\in V_x$ for any $x\in Z$, $\eta\in T_x(Z)$ and $\hat\eta\in \Gamma_{\mathrm{loc},x}(Z,T_Z)$, we have $\tau\equiv 0$.
Actually, $\rho$ is a contraction and thus for $\hat\eta\in T_x(Z)$ and $\eta\in \Gamma_{\mathrm{loc},x}(Z,T_Z)$, we have
\[ \begin{split}
&\nabla_{\hat\eta}(\rho(P(\eta\varotimes\eta)\varotimes \omega^*))(x)\\
=& \rho(\nabla_{\hat\eta}(P(\eta\varotimes\eta))\varotimes \omega^*))(x) + \rho(P(\eta\varotimes\eta)\varotimes (\nabla_{\hat\eta}\omega^*))(x)\\
=&  \rho(
P((\nabla_{\hat\eta}\eta)(x)\varotimes\eta(x))\varotimes \omega^*(x)
) 
+\rho(P(\eta(x)\varotimes(\nabla_{\hat\eta}\eta)(x))
\varotimes \omega^*(x)) \\
&+ \rho(P(\eta(x)\varotimes\eta(x))\varotimes (\nabla_{\hat\eta}\omega^*)(x)),
\end{split}\]
which lies in $V_x$ because $(\nabla_{\hat\eta}\eta)(x)\in V_x$ and $[[\mathfrak m^-,V_x],V_x]\subset V_x$ (cf.\,Tsai \cite[Lemma 4.3]{Ts93}).
In other words, $V$ is parallel on $Z$.
By applying the foliation technique as in the proof of Lemma \ref{LemInCSD1}, there is an invariantly geodesic submanifold $\Omega'\subset \Omega$ such that $Z\subset \Omega'$ and $T_x(\Omega')=V_x$ for any $x\in Z$.
In addition, such a submanifold $\Omega'$ is irreducible and of tube type as a Hermitian symmetric space of the noncompact type. More precisely, we have
\begin{enumerate}
\item[(i)]
If $\Omega\cong D^{\mathrm{I}}_{p,q}$ ($p<q$) (resp.\,$\Omega\cong D^{\mathrm{II}}_{2n+1}$ ($n\ge 2$)), then $\Omega'\cong D^{\mathrm{I}}_{p,p}$ (resp.\,$\Omega'\cong D^{\mathrm{II}}_{2n}$).
\item[(ii)]
If $\Omega\cong D^{\mathrm{V}}$, then $\Omega'\cong D^{\mathrm{IV}}_8$.
\end{enumerate}
From the arguments in the proof of Theorem \ref{ThmTubeDomain1}, $(Z,g_\Omega|_Z)\subset (\Omega',g_\Omega|_{\Omega'})$ is totally geodesic and thus $(Z,g_\Omega|_Z)\subset (\Omega, g_\Omega)$ is totally geodesic.
\end{proof}

By Proposition \ref{pro:redIGM}, Proposition \ref{Pro_Reduction_Tube_Domain} and the proof of Theorem \ref{ThmTubeDomain1}, we have actually proven Theorem \ref{MainThm} under the assumption that the bounded symmetric domain $\Omega$ is irreducible.

Now, it remains to consider the case where the bounded symmetric domain $\Omega$ is reducible.
The idea is to generalize the methods to the case where $\Omega$ is reducible throughout Section \ref{Sec:CHIE}, Section \ref{Proof:Sec1}, and that in Proposition \ref{Pro_Reduction_Tube_Domain}. Then, this will complete the proof of Theorem \ref{MainThm}.

We write $\Omega=\Omega_1\times \cdots \times \Omega_m \Subset \mathbb C^{N_1}\times \cdots \times \mathbb C^{N_m} = \mathbb C^N$ for some integer $m\ge 1$, where $\Omega_j\Subset \mathbb C^{N_j}$ is an irreducible bounded symmetric domain in its Harish-Chandra realization for $j=1,\ldots,m$.
Equipping $\Omega$ (resp.\,$\Delta$) with the Bergman metric $ds_\Omega^2$ (resp.\,$ds_\Delta^2$), by slight modifications we obtain analogues of Lemma \ref{LemAsGC1}, Lemma \ref{LemSeqHE1}, Lemma \ref{LemEigen1}, Proposition \ref{ProConstHI1} and the results in Section \ref{Proof:Sec1} when $\Omega$ is reducible.
Recall that $\mu:U=\mathbb B^1(b_0,\epsilon)\to \mathbb C^{N_1}\times\cdots \times \mathbb C^{N_m}=\mathbb C^N$, $\epsilon>0$, is a holomorphic embedding such that $\mu(U\cap\Delta)\subset \Omega$ and $\mu(U\cap\partial\Delta)\subset \partial\Omega$, where $b_0\in \partial\Delta$.
We also write $\mu=(\mu_1,\ldots,\mu_m)$ with $\mu_j:U\to \mathbb C^{N_j}$ being a holomorphic map for $j=1,\ldots,m$.
\subsubsection{Basic settings}
We write the Bergman kernel
$K_\Omega(z,\xi)={1\over Q_\Omega(z,\xi)}$ for some polynomial $Q_\Omega(z,\xi)$ in $(z,\overline\xi)$.
Then, we have the K\"ahler form
$\omega_{ds_\Omega^2}=-\sqrt{-1}\partial\overline\partial\log Q_\Omega(z,z)$ of $(\Omega,ds_\Omega^2)$.
When $\Omega=\Delta$, we have
$Q_\Delta(z,\xi) = \pi\cdot (1-z\overline{\xi})^2$ for $z,\xi\in \mathbb C$.
We can construct a germ of holomorphic isometry $\widetilde\mu$ as in Lemma \ref{LemSeqHE1} and Proposition \ref{ProConstHI1}.
Actually, for a general point $b\in U\cap\partial\Delta$ there is an open neighborhood $U_b$ of $b$ in $U\subset\mathbb C$ such that
\[ Q_\Omega(\mu(w),\mu(w)) = \chi(w) (1-|w|^2)^{\lambda'}
= {\chi(w)\over \pi^{\lambda'\over 2}} Q_\Delta(w,w)^{\lambda'\over 2}
 \]
on $U_b$ for some positive smooth function $\chi$ on a neighborhood of $\overline{U_b}$ and some positive integer $\lambda'$.
We may construct the sequence $\{\widehat\mu_j=\Phi_j\circ \mu\circ \varphi_j\}_{j=1}^{+\infty}$ as in Section \ref{Sec:CHIE} such that
\[ \widehat\mu_j^* \omega_{ds_\Omega^2}
={\lambda'\over 2} \omega_{ds_\Delta^2}
-\sqrt{-1}\partial\overline\partial\log \chi(\varphi_j(\zeta)). \]
Then, we obtain a germ of holomorphic isometry
$\widetilde\mu: \left(\Delta, {\lambda'\over 2} ds_\Delta^2;0\right)
\to \left(\Omega, ds_\Omega^2;{\bf 0}\right)$
by taking the limit of some subsequence of $\{\widehat\mu_j\}_{j=1}^{+\infty}$.
Note that such a germ $\widetilde\mu$ could be extended to a holomorphic isometry from $\left(\Delta, {\lambda'\over 2} ds_\Delta^2\right)$ to $\left(\Omega, ds_\Omega^2\right)$ by the extension theorem of Mok \cite{Mo12}.
We also denote the extension of $\widetilde\mu$ by $\widetilde\mu$ and write $Z=\widetilde\mu(\Delta)$.
By decomposing $T_x(\Omega)=T_{x_1}(\Omega_1)\varoplus \cdots\varoplus T_{x_m}(\Omega_m)$ for $x=(x_1,\ldots,x_m)\in\Omega_1\times \cdots \times \Omega_m$, we may decompose the normal form $\eta(w)=\eta_1(w)+\ldots+\eta_m(w)\in T_{{\bf 0}}(\Omega_1)\varoplus \cdots\varoplus T_{{\bf 0}}(\Omega_m)$ of ${\widetilde\mu'(w)\over \lVert \widetilde\mu'(w) \rVert_{ds_\Omega^2}}$.
Then, we have analogous results as in Proposition \ref{ProConstHI1} for the case where $\Omega$ is reducible.
More precisely, the normal form $\eta(w)$ is independent of $w\in \Delta$ and $\lVert \widetilde\sigma(\widetilde\mu(w))\rVert^2 \equiv \lVert \sigma(\mu(b))\rVert^2$ on $\Delta$, where $\widetilde\sigma(x)$ denotes the second fundamental form of $(Z,ds_\Omega^2|_Z)$ in $(\Omega,ds_\Omega^2)$ at $x\in Z$.

From now on $Z=\widetilde\mu(\Delta)$ has $\mathrm{Aut}(\Omega)$-equivalent holomorphic tangent spaces $T_x(Z)=\mathbb C\eta_x$ and $\eta_y\in T_y(\Omega)$ is of rank $k$ for any $y\in Z$.

\subsubsection{Insertion of a tube domain containing the embedding Poincar\'e disk}\label{Sec:ZinIGM}
The first step is to show that since the holomorphic tangent spaces of $Z:=\widetilde\mu(\Delta)$ are $\mathrm{Aut}(\Omega)$-equivalent and of rank $k$, $Z$ lies inside an invariantly geodesic submanifold $\Omega'\subset\Omega$ of rank $k$ and of tube type as a bounded symmetric domain.
Write $\widetilde\mu=(\widetilde\mu_1,\ldots,\widetilde\mu_m)$, where $\widetilde\mu_j:\Delta\to \Omega_j\Subset \mathbb C^{N_j}$ is a holomorphic map for $j=1,\ldots,m$.

By permuting the irreducible factors $\Omega_j$'s of $\Omega$, we may assume that
$\eta(w)=\eta_1(w)+\ldots+\eta_m(w)\in T_{\bf 0}(\Omega)=T_{\bf 0}(\Omega_1)\varoplus \cdots \varoplus T_{\bf 0}(\Omega_m)$ is of rank $k=\sum_{j=1}^{m} k_j$ and each 
$\eta_i(w)\in T_{\bf 0}(\Omega_i)$ is of rank $k_i$ such that $k_l>0$ for $l=1,\ldots,m'$, $k_j=0$, $\eta_j(w)=0$ and $\widetilde\mu_j(w)\equiv x'_j$ is a constant map for $m'+1\le j\le m$ provided that $m'<m$.

\vskip 0.25cm
\noindent\textbf{Tube type:}
We first consider the case where $\Omega$ is of tube type, equivalently all $\Omega_j$'s are of tube type.
For $x\in \Omega$, let $Q_x$ be a Hermitian bilinear form on $T_x(\Omega)\varotimes\overline{T_x(\Omega)}$ given by
$Q_x(\alpha\varotimes\overline\beta,\alpha'\varotimes\overline{\beta'}) = R_{\alpha\overline{\alpha'}\beta'\overline\beta}(\Omega,ds_\Omega^2)$.
For $x_j\in \Omega_j$, we also let $Q^{(j)}_{x_j}$ be a Hermitian bilinear form on $T_{x_j}(\Omega_j)\varotimes\overline{T_{x_j}(\Omega_j)}$ defined by
$Q^{(j)}_{x_j}(\alpha\varotimes\overline\beta,\alpha'\varotimes\overline{\beta'}) := R_{\alpha\overline{\alpha'}\beta'\overline\beta}(\Omega_j,ds_{\Omega_j}^2)$ and let $\mathcal N^{(j)}_{\alpha_j}$ be the null space of the Hermitian bilinear form $H^{(j)}_{\alpha_j}(v,v'):= R_{\alpha_j\overline{\alpha_j}v\overline{v'}}(\Omega_j,ds_{\Omega_j}^2)$ for $\alpha_j\in T_{x_j}(\Omega_j)$.
For $w\in \Delta$, we define $W_{\widetilde\mu(w)}
:= \left\{ v\in T_{\widetilde\mu(w)}(\Omega) : Q_{\widetilde\mu(w)}(v\varotimes \overline \zeta,\cdot )\equiv 0\;\;\forall\;\zeta\in \mathcal N_{\widetilde\mu'(w)}\right\}$.
Then, we have
$W_{\widetilde\mu(w)}= \bigoplus_{j=1}^m W^{(j)}_{\widetilde\mu_j(w)}$,
where
\[ W^{(j)}_{\widetilde\mu_j(w)}
:=\left\{v_j \in T_{\widetilde\mu_j(w)}(\Omega_j): Q^{(j)}_{\widetilde\mu_j(w)}(v_j\varotimes\overline\zeta,\cdot)\equiv 0\; \;\forall\;\zeta\in \mathcal N^{(j)}_{\widetilde\mu_j'(w)}\right\},\]
$j=1,\ldots,m$.
For $x=(x_1,\ldots,x_m)\in Z\subset \Omega=\Omega_1\times\cdots \times \Omega_m$, we have
\[ \begin{split}
W_x&= \bigoplus_{j=1}^m W^{(j)}_{x_j}\\
&= \begin{cases}
T_{x_1}(\Omega'_{1,x_1})\varoplus\cdots \varoplus
T_{x_{m'}}(\Omega'_{m',x_{m'}})
\varoplus \{{\bf 0}\}\varoplus \cdots \varoplus \{{\bf 0}\}
& \text{if } m'<m\\
T_{x_1}(\Omega'_{1,x_1})\varoplus\cdots \varoplus
T_{x_{m}}(\Omega'_{m,x_{m}}) & \text{if } m'=m
\end{cases}
\end{split}\]
for some characteristic subdomain $\Omega'_{j,x_j}\subseteq \Omega_j$ of rank $k_j$, $j=1,\ldots,m'$.
Note that it is possible that $\Omega'_{i,x_i}=\Omega_i$ for some $i$.
Similarly, we may define the holomorphic vector bundle $V$ (resp.\,$V'$) as in Lemma \ref{lem:NewBundleV} and Lemma \ref{ExtW}. Then, by the arguments in the proofs of Lemma \ref{lem:NewBundleV} and Lemma \ref{ExtW} we have $V_x=W_x$ for any $x\in Z$.
Thus, our results in Section \ref{Proof:Sec1} can be generalized to the case where $\Omega$ (resp.\,$\Omega'$) is reducible.
It follows from the arguments in Section \ref{Proof:Sec1} that there is a characteristic subdomain $\Omega'$ of $\Omega$ containing $Z=\widetilde\mu(\Delta)$ such that $\Omega'=\Omega'_1\times\cdots \times \Omega'_{m'}\times \{x_{m'+1}\}\times\cdots \times \{x_m\}$
(resp.\,$\Omega':=\Omega'_1\times\cdots \times \Omega'_{m}$)
if $m'< m$ (resp.\,$m'=m$),
where $\Omega'_j\subset \Omega_j$ is a characteristic subdomain of rank $k_j$, $1\le j\le m'$.
Note that each $\Omega'_j$ is of tube type and each $\eta_j(w)\in T_{\bf 0}(\Omega_j')$ is of rank $k_j=\mathrm{rank}(\Omega_j')$ for $j=1,\ldots,m'$.

\vskip 0.25cm
\noindent\textbf{Non-tube type:}
Suppose $\Omega=\Omega_1\times\cdots \times \Omega_m$ is of non-tube type.
If $k_l<\mathrm{rank}(\Omega_l)$ for some $l$, $1\le l\le m'$, then we have $Z\subset \Omega_1\times\cdots \times \Omega_{l-1}\times \Omega'_{l}\times \Omega_{l+1}\times\cdots \times \Omega_{m}$ for some invariantly geodesic submanifold $\Omega'_{l}$ of $\Omega_l$ such that $\Omega'_l$ is an irreducible bounded symmetric domain of tube type and of rank $k_l$ by making use of Proposition \ref{pro:redIGM}. Inductively, there is an invariantly geodesic submanifold $\Omega'$ of $\Omega$ such that $\Omega'$ is a bounded symmetric domain of rank $k$ and $Z\subseteq \Omega'$.
In this case, $T_x(Z)$ is spanned by a generic vector in $T_x(\Omega')$ for any $x\in Z$.
From now on we may suppose that $T_x(Z)$ is spanned by a generic vector in $T_x(\Omega)$ for any $x\in Z$ and $m'=m$ without loss of generality.

In analogy to the case in which we consider the holomorphic vector subbundle $W\subset T_\Omega|_Z$, we generalize the method in the proof of Proposition \ref{Pro_Reduction_Tube_Domain} to the case where $\Omega$ is reducible and equipped with the Bergman metric $ds_\Omega^2$.
The key point is that our construction of the holomorphic vector subbundle $V\subset T_\Omega|_Z$ comes from the Riemannian curvature tensor of $(\Omega,ds_\Omega^2)$, which is decomposed into the sum of Riemannian curvature tensors of $(\Omega_j,ds_{\Omega_j}^2)$ for $j=1,\ldots,m$.
Note that we may also define $V$ as in Lemma \ref{lem:NewBundleV} and we also have the vector bundle $V'$ extending $V$ locally in the sense of Lemma \ref{lem:NewBundleV}.
Then, it follows that there is an invariantly geodesic submanifold $\Omega'_j\subseteq \Omega_j$ of rank equal to that of $\Omega_j$ and of tube type for $j=1,\ldots,m$ such that $Z\subset \Omega':=\Omega'_1\times\cdots \times \Omega'_{m}$.
In particular, $\Omega'\subset \Omega$ is an invariantly geodesic submanifold which is of tube type and $\mathrm{rank}(\Omega')= \mathrm{rank}(\Omega)$.

\medskip
In any case, given a bounded symmetric domain $\Omega$ of rank $r$, the Poincar\'e disk $Z$ lies inside an invariantly geodesic submanifold $\Omega'\subset \Omega$ of rank $k$ and of tube type such that the holomorphic tangent spaces $T_x(Z)$ are $\Aut(\Omega')$-equivalent and $T_y(Z)$ is spanned by a generic vector in $T_y(\Omega')$ for any $y\in Z$.
This completes the first step of the proof of Theorem \ref{MainThm}.

\subsubsection{Application of the Poincar\'e-Lelong equation in the reducible case}\label{Sec:4.2.3}
We note that the method of using the Poincar\'e-Lelong equation as in the proof of Theorem \ref{ThmTubeDomain1} may be extended to the case where the bounded symmetric domain $\Omega'$ is reducible.
\begin{pro}\label{Pro_General_Poincare_Lelong}
Let $\Omega'=\Omega'_1\times\cdots \times \Omega'_{{m'}}$ be a bounded symmetric domain of tube type and of rank $k$, where $\Omega'_j$, $1\le j\le m'$, are the irreducible factors of $\Omega'$ and $m'$ is a positive integer.
Equip $\Omega'$ with a K\"ahler metric $g'_{\Omega'}:=\sum_{j=1}^{m'} \mathrm{Pr}_j^* g'_{\Omega'_j}$ on $\Omega'$, where $g'_{\Omega'_j}:=\lambda_j g_{\Omega'_j}$ for some positive integer $\lambda_j$ and $\mathrm{Pr}_j:\Omega' \to \Omega'_j$ is the canonical projection onto the $j$-th irreducible factor $\Omega'_j$ of $\Omega'$, $j=1,\ldots,m'$.
We also let $Z\subset \Omega'$ be the holomorphic curve, i.e., $Z$ is the image of the holomorphic isometry $\widetilde\mu: (\Delta,\lambda ds_\Delta^2) \to (\Omega',g'_{\Omega'})$ for some positive real constant $\lambda>0$, such that $T_x(Z)$ is spanned by a rank-$k$ unit vector $\eta_x\in T_x(\Omega')$ for any $x\in Z$ and the normal form of $\eta_y$ is independent of $y\in Z$.
Then, $(Z,g'_{\Omega'}|_Z)\subset (\Omega',g'_{\Omega'})$ is totally geodesic.
\end{pro}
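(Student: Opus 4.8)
The plan is to reduce the statement to the irreducible tube-type case already settled in the proof of Theorem~\ref{ThmTubeDomain1}, by decomposing $\widetilde\mu$ along the irreducible factors of $\Omega'$, and then to recover the vanishing of the second fundamental form $\sigma'$ of $Z$ in $(\Omega',g'_{\Omega'})$ from the product structure of the curvature tensor together with the Gauss equation. Write $\widetilde\mu=(\widetilde\mu_1,\ldots,\widetilde\mu_{m'})$, $\widetilde\mu_j\colon\Delta\to\Omega'_j$, and put $Z_j:=\widetilde\mu_j(\Delta)$. Since the maximal polydisk of $\Omega'$ is the product of those of the $\Omega'_j$, any tangent vector of $\Omega'$ has rank equal to the sum of the ranks of its components; as $\eta_x$ has rank $k=\rank(\Omega')=\sum_{j=1}^{m'}\rank(\Omega'_j)$, its normal form decomposes as $\eta_x=\sum_j\eta_{x,j}$ with each $\eta_{x,j}$ a generic (in particular nonzero) tangent vector of $\Omega'_j$, and the hypothesis that the normal form of $\eta_x$ be independent of $x\in Z$ means precisely that each $\eta_{x,j}$ is. Because the $g'_{\Omega'_j}$-norm is $G_0$-invariant, the numbers $c_j:=\lVert\eta_{x,j}\rVert^2_{g'_{\Omega'_j}}$ are positive constants with $\sum_j c_j=\lVert\eta_x\rVert^2_{g'_{\Omega'}}=1$; and since $\widetilde\mu'_j(w)/\lVert\widetilde\mu'(w)\rVert_{g'_{\Omega'}}$ has normal form $\eta_{x,j}$, its $g'_{\Omega'_j}$-length is $\sqrt{c_j}$, whence $\widetilde\mu_j^*g'_{\Omega'_j}=c_j\,\widetilde\mu^*g'_{\Omega'}=c_j\lambda\,ds_\Delta^2$. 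Thus each $\widetilde\mu_j\colon(\Delta,c_j\lambda\,ds_\Delta^2)\to(\Omega'_j,g'_{\Omega'_j})$ is a holomorphic isometry onto $Z_j$ whose unit tangent is a generic vector of $\Omega'_j$ with constant normal form.

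For each index $j$ with $\rank(\Omega'_j)\ge2$ I would then run verbatim the final part of the proof of Theorem~\ref{ThmTubeDomain1}: the Poincar\'e--Lelong equation of Mok~\cite{Mo02} on $\mathbb{P}T_{\Omega'_j}$, combined with the facts that the tautological lift of $Z_j$ is disjoint from $\mathcal{S}_{\rank(\Omega'_j)-1}(\Omega'_j)$ and that $\lVert s\rVert_o$ is constant along it (the normal form of the tangent being constant), forces the Gaussian curvature of $Z_j$ in its induced metric to equal that of the diagonal disk of the maximal polydisk of $\Omega'_j$, and hence $\sigma'_j\equiv0$, i.e.\ $Z_j\subset(\Omega'_j,g'_{\Omega'_j})$ is totally geodesic; passing from the canonical metric $g_{\Omega'_j}$ to the constant multiple $g'_{\Omega'_j}=\lambda_j g_{\Omega'_j}$ does not affect this. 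When $\rank(\Omega'_j)=1$, so that $\Omega'_j=\Delta$, the curve $Z_j$ is the image of a holomorphic immersion of a disk into a disk, hence an open subset of $\Delta$ and trivially totally geodesic.

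To recombine, write the unit tangent of $Z$ as $e(w)=\widetilde\mu'(w)/\lVert\widetilde\mu'(w)\rVert_{g'_{\Omega'}}=(e_1(w),\ldots,e_{m'}(w))$. As $(\Omega',g'_{\Omega'})=\prod_j(\Omega'_j,g'_{\Omega'_j})$ is a Riemannian product, the mixed curvature terms vanish and $R_{e\overline{e}e\overline{e}}(\Omega',g'_{\Omega'})=\sum_j R_{e_j\overline{e_j}e_j\overline{e_j}}(\Omega'_j,g'_{\Omega'_j})$. Since $Z_j$ is totally geodesic in $\Omega'_j$, the Gauss equation there gives $R_{e_j\overline{e_j}e_j\overline{e_j}}(\Omega'_j,g'_{\Omega'_j})=\lVert e_j(w)\rVert^4_{g'_{\Omega'_j}}\kappa_{Z_j}=c_j^2\kappa_{Z_j}$, where $\kappa_{Z_j}$ is the Gaussian curvature of the induced metric $\widetilde\mu_j^*g'_{\Omega'_j}=c_j\lambda\,ds_\Delta^2$ on $Z_j$, so that $\kappa_{Z_j}=\kappa_0/(c_j\lambda)$ with $\kappa_0<0$ the constant Gaussian curvature of $(\Delta,ds_\Delta^2)$; hence $R_{e\overline{e}e\overline{e}}(\Omega',g'_{\Omega'})=\sum_j c_j^2\cdot\kappa_0/(c_j\lambda)=(\kappa_0/\lambda)\sum_j c_j=\kappa_0/\lambda$. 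On the other hand $(Z,g'_{\Omega'}|_Z)$ is isometric, via $\widetilde\mu$, to $(\Delta,\lambda\,ds_\Delta^2)$, so its Gaussian curvature is $\kappa_Z\equiv\kappa_0/\lambda$. The Gauss equation for $Z\subset(\Omega',g'_{\Omega'})$ reads $\kappa_Z=R_{e\overline{e}e\overline{e}}(\Omega',g'_{\Omega'})-\lVert\sigma'(e,e)\rVert^2$, i.e.\ $\kappa_0/\lambda=\kappa_0/\lambda-\lVert\sigma'(e,e)\rVert^2$, so $\lVert\sigma'(e,e)\rVert^2\equiv0$ on $\Delta$ and $(Z,g'_{\Omega'}|_Z)\subset(\Omega',g'_{\Omega'})$ is totally geodesic.

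The one point demanding care is the bookkeeping of the first step: that the normal form and the norm of $\eta_x$ split across the irreducible factors of $\Omega'$, and that this forces each induced metric $\widetilde\mu_j^*g'_{\Omega'_j}$ to be the constant multiple $c_j\lambda\,ds_\Delta^2$ with the $c_j$ summing to $1$; it is exactly the identity $\sum_j c_j=1$ that makes the factorwise holomorphic sectional curvatures add up to the Gaussian curvature of $Z$, which is what kills $\sigma'$. Alternatively one could attempt to apply the Poincar\'e--Lelong equation directly on $\mathbb{P}T_{\Omega'}$ for the reducible tube domain $\Omega'$, but checking that the relevant results of \cite{Mo02} --- the codimension-one property of $\mathcal{S}_{k-1}(\Omega')$ and the identification of the line bundles appearing in the equation --- persist in the reducible setting appears to require essentially the same factorwise analysis, so the route above is the more economical one.
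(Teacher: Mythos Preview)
Your argument is correct and takes a genuinely different route from the paper's own proof. The paper carries out the Poincar\'e--Lelong computation \emph{directly} on the reducible domain $\Omega'$: it introduces, for each factor, a divisor $\mathcal S_{k-1}^{\,j}(\Omega')\subset\mathbb P T_{\Omega'}$, identifies the associated line bundle as $L^{-k_j}\otimes\pi_j^*E_j^{\,2}$ (extending Mok's \cite{Mo02} computation factorwise, including the rank-one factors), writes down one Poincar\'e--Lelong equation per factor on $\mathbb P T_{\Omega'}$, restricts to the tautological lift $\hat Z$, and sums to obtain $\kappa_Z\equiv -2/\!\sum_j\lambda_j k_j$; this is then matched against the maximal holomorphic sectional curvature of $(\Omega',g'_{\Omega'})$ via the Gauss equation. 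By contrast, you decompose $\widetilde\mu$ into its factor maps, feed each one into the already-established irreducible case, and recombine through the product-curvature identity $R_{e\bar e e\bar e}=\sum_j R_{e_j\bar e_j e_j\bar e_j}$ together with the bookkeeping $\sum_j c_j=1$. Your route is more modular and avoids re-deriving the line-bundle identifications in the reducible setting; the paper's route, on the other hand, shows explicitly how the Poincar\'e--Lelong machinery itself extends to products, which is of some independent interest and does not require knowing that each $\widetilde\mu_j$ is separately an isometric immersion with constant normal form.
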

\begin{proof}
If $\Omega'$ is irreducible, then we are done by the proof of Theorem \ref{ThmTubeDomain1}.
Consider the case where $\Omega'_1\times\cdots \times \Omega'_{m'}$ is reducible and of tube type with irreducible factors $\Omega'_j$, $1\le j\le m'$, and $m'\ge 2$ is an integer.
Under the assumptions, each $\Omega'_j$ is an irreducible bounded symmetric domain of rank $k_j\ge 1$ and of tube type, $1\le j \le m'$, so that $k=\sum_{j=1}^{m'}k_j$.
We only need to apply the method in the proof of Theorem \ref{ThmTubeDomain1} and that in \cite{Mo02}, and we generalize the settings to the case where $\Omega'$ is reducible.
Denote by $\mathcal S^{(j)}_{l,x_j}(\Omega'_j)$ the $l$-th characteristic variety for $\Omega'_j$ at $x_j\in \Omega'_j$, $j=1,\ldots,m'$.
For $x=(x_1,\ldots,x_{m'})\in \Omega'$, we denote by
$\mathcal S_{k-1,x}^j(\Omega')=
\left\{ [v_1\varoplus \cdots \varoplus v_{m'}]\in \mathbb P\left(T_{x_1}(\Omega'_1)\varoplus \cdots \varoplus T_{x_{m'}}(\Omega'_{m'})\right): v_j \in \widehat{\mathcal S}^{(j)}_{k_j-1,x_j}(\Omega'_j)\right\}$, where $\widehat{\mathcal S}^{(j)}_{k_j-1,x_j}(\Omega'_j)$ is the affine cone over $\mathcal S^{(j)}_{k_j-1,x_j}(\Omega'_j)$ in $T_{x_j}(\Omega'_j)$, $1\le j\le m'$.
Then, $\mathcal S_{k-1,x}(\Omega'):= \bigcup_{j-1}^{m'} \mathcal S_{k-1,x}^j(\Omega')$ is a union of $m'$ hypersurfaces of $\mathbb P(T_x(\Omega'))$.
Moreover, we obtain a divisor $\mathcal S_{k-1}^j(\Omega')=\bigcup_{x\in \Omega'}  \mathcal S_{k-1,x}^j(\Omega')\subseteq \mathbb PT_{\Omega'}$, which yields a divisor line bundle $[\mathcal S_{k-1}^j(\Omega')]$ over $\mathbb PT_{\Omega'}$ for $1\le j\le m'$.

Let $L\to \mathbb PT_{X_c'}$ be the tautological line bundle and $\pi: \mathbb PT_{X_c'}\to X_c'$ be the projectivized tangent bundle over the compact dual Hermitian symmetric space $X_c'$ of $\Omega'$.
Writing $X_c'=X_{c,1}'\times\cdots \times X_{c,m'}'$ so that each $X_{c,j}'$ is the compact dual Hermitian symmetric space of $\Omega_j'$, we have $\mathrm{Pic}(X_c')\cong \mathrm{Pic}(X_{c,1}')\times\cdots \times \mathrm{Pic}(X_{c,m'}')$.
In analogy to the case of $\Omega'$, we define the divisor $\mathcal S_{k-1}^j(X_c')\subset \mathbb PT_{X_c'}$ and a divisor line bundle $[\mathcal S_{k-1}^j(X_c')]$ over $\mathbb PT_{X_c'}$ for $1\le j\le m'$.
Denote by $\mathrm{Pr}_{c,j}:X_c' \to X_{c,j}'$ the canonical projection onto the $j$-th irreducible factor $X_{c,j}'$ of $X_c'$ and $\pi_{c,j}:=\mathrm{Pr}_{c,j}\circ \pi$, $j=1,\ldots,m'$.
Therefore, $\mathrm{Pic}(\mathbb P T_{X_c'})$ is generated by $\pi_{c,j}^*\mathcal O_{X_{c,j}'}(1)$, $j=1,\ldots,m'$, and $L$.
Since $\mathcal S^j_{k-1,x}(X_c')$ is of degree $k_j$ as a subvariety of $\mathbb P(T_x(X_c'))$ for any $x\in X_c'$, $L^{k_j}\varotimes [\mathcal S^j_{k-1}(X_c')]$ is a holomorphic line bundle which is trivial on every fiber of $\pi:\mathbb PT_{X_c'}\to X_c'$ by Mok \cite[p.\,293]{Mo02}.
Then, it follows from the proof of \cite[Proposition 3]{Mo02} that
$[\mathcal S^j_{k-1}(X_c')]\cong L^{-k_j}\varotimes \pi_{c,j}^* \mathcal O_{X_{c,j}'}(2)$ when $\Omega'_j$ is of rank $\ge 2$.
If $\Omega'_j\cong \Delta$ is the unit disk for some $j$, then we also have $[\mathcal S_{k-1}^j(X_c')]\cong L^{-1}\varotimes \pi_{c,j}^* \mathcal O_{X_{c,j}'}(2)$ with $X_{c,j}'\cong \mathbb P^1$.

We also denote by $\pi : \mathbb PT_{\Omega'} \to \Omega'$ the canonical projection for simplicity, and recall that $\mathrm{Pr}_j:\Omega'\to \Omega_j'$ is the canonical projection onto the $j$-th irreducible factor of $\Omega'$.
Write $\pi_j:=\mathrm{Pr}_j\circ \pi$ and let $E_j$ be the restriction of $\mathcal O_{X_{c,j}'}(1)$ to $\Omega_j'$ for $j=1,\ldots,m'$.
We also denote by $L$ the restriction of $L$ to $\Omega'$ and $\widehat{g'_{\Omega'}}$ the canonical Hermitian metric on $L|_{\Omega'}$ induced from the K\"ahler metric $g'_{\Omega'}$ on $\Omega'$.
By duality, we have
$[\mathcal S_{k-1}^j(\Omega')]\cong L^{-k_j}\varotimes\pi_j^*E_j^{2}$ for $j=1,\ldots,m'$.
It follows from \cite{Mo02} that for $j=1,\ldots,m'$ we have the Poincar\'e-Lelong equation
\begin{equation}\label{Eq:RedPL1}
{\sqrt{-1}\over 2\pi}\partial\overline\partial\log\lVert s_j\rVert_o^2
=k_j c_1\big(L,\widehat{g'_{\Omega'}}\big) -
2c_1\left(\pi_j^* E_j,
\pi_j^*h^j_o\right) + [\mathcal S^j_{k-1}(\Omega')],
\end{equation}
where $s_j$ is a non-trivial holomorphic section of $L^{-k_j}\varotimes \pi_j^*E_j^{2}$ whose zero set is precisely $\mathcal S_{k-1}^j(\Omega')$ and $[\mathcal S^j_{k-1}(\Omega')]$ denotes the current of integration over $\mathcal S^j_{k-1}(\Omega')$.
Here the Hermitian metric $h^j_o$ on $E_j=\mathcal O_{X_{c,j}'}(1)|_{\Omega'_j}$ is induced from the K\"ahler metric $g'_{\Omega_j'}$ on $\Omega_j'$.
Let $\hat Z$ be the tautological lifting of $Z$ to $\mathbb PT_{\Omega'}$.
Then, $\hat Z$ is disjoint from $\mathcal S_{k-1}^j(\Omega')$ for any $j$.
Since the normal form of the unit tangent vector $\eta_x$ in $T_x(Z)$ is independent of $x\in Z$, $\lVert s_j\rVert_o>0$ is constant on $\hat Z$ from the construction of $\hat Z$ and thus $\partial\overline\partial\log\lVert s_j\rVert_o^2\equiv 0$ on $\hat Z$.
Therefore, by Eq. (\ref{Eq:RedPL1}) we have
\[ k_j c_1\big(L,\widehat{g'_{\Omega'}}\big)|_{\hat Z} -
 2c_1\left(\pi_j^* E_j,
\pi_j^*h^j_o\right)|_{\hat Z} = 0 \]
and thus
\begin{equation}\label{Eq:redPL2}
-k_jc_1(T_Z,g'_{\Omega'}|_{Z}) +
2c_1\left( \mathrm{Pr}_j^*E_j,
\mathrm{Pr}_j^* h^j_o\right)|_Z = 0.
\end{equation}
It follows from \cite{Mo02} and the proof of Theorem \ref{ThmTubeDomain1} that
$$2 c_1\left( \mathrm{Pr}_j^*E_j,\mathrm{Pr}_j^* h^j_o\right) = -{2\over 2\pi} \mathrm{Pr}_j^*\omega_{g_{\Omega'_j}} = -{1\over \pi \lambda_j}\mathrm{Pr}_j^*\omega_{g'_{\Omega'_j}}$$
for any $j$. Moreover, we have $c_1(T_Z,g'_{\Omega'}|_{Z})={1\over 2\pi} \kappa_Z \omega_{g'_{\Omega'}}|_Z$. Therefore, we have $-\lambda_jk_j{1\over 2\pi} \kappa_Z \omega_{g'_{\Omega'}}|_Z = {1\over \pi}\mathrm{Pr}_j^*\omega_{g'_{\Omega'_j}}|_Z$ for any $j$ by Eq. (\ref{Eq:redPL2}) and thus
\[ -\sum_{j=1}^{m'} \lambda_j k_j\kappa_Z \omega_{g'_{\Omega'}}|_Z
= 2\sum_{j=1}^{m'}\mathrm{Pr}_j^*\omega_{g'_{\Omega'_j}}|_Z
= 2\omega_{g'_{\Omega'}}|_Z. \]
Writing $l_0:=-\sum_{j=1}^{m'} \lambda_j k_j$, the above equality becomes $l_0\kappa_Z \omega_{g'_{\Omega'}}|_Z=2\omega_{g'_{\Omega'}}|_Z$, i.e., $l_0\kappa_Z\equiv 2$.
Denote by $\Delta_k$ a totally geodesic holomorphic disk in $(\Omega',g'_{\Omega'})$ of constant Gaussian curvature $\kappa_{\Delta_k}$ which is equal to the maximal holomorphic sectional curvature of $(\Omega',g'_{\Omega'})$.
Then, we have $\kappa_{\Delta_k}=-{2\over \sum_{j=1}^{m'}\lambda_j k_j}$, where $k_j=\mathrm{rank}(\Omega_j')$, $j=1,\ldots,m'$.
Let $\sigma'(x)$ be the second fundamental form of $(Z,g'_{\Omega'}|_Z)\subset (\Omega',g'_{\Omega'})$ at $x\in Z$ and $\eta_x\in T_x(Z)\subset T_x(\Omega')$ be a unit tangent vector.
Then, we have $\lVert \sigma'(x)\rVert^2 = R_{\eta_x\overline{\eta_x}\eta_x\overline{\eta_x}}(\Omega',g'_{\Omega'})-\kappa_Z\le \kappa_{\Delta_k}-\kappa_Z = {2\over l_0}-{2\over l_0}=0$ for any $x\in Z$ by the Gauss equation, i.e., $\lVert \sigma'\rVert^2\equiv 0$, and thus $(Z,g'_{\Omega'}|_Z)\subset (\Omega',g'_{\Omega'})$ is totally geodesic.
\end{proof}
\subsubsection{Conclusion of the proof}
From our construction and the above two steps, we complete the proof of Theorem \ref{MainThm} as follows.
\begin{proof}[Proof of Theorem \ref{MainThm}]
The case where $\Omega$ is of rank $1$ is obviously true by our construction in Section \ref{Sec:CHIE}, so we assume that $\Omega$ is of rank $\ge 2$.
Following the construction of the holomorphic curve $Z$ throughout Sections \ref{Sec:CHIE} and \ref{Sec:Proof}, we first consider the case where $\Omega$ is of tube type.
Then, we have shown that $Z\subset \Omega'$ for some rank-$k$ characteristic subdomain $\Omega'\subset \Omega$ of tube type such that the holomorphic tangent spaces $T_x(Z)=\mathbb C \eta_x$ are $\mathrm{Aut}(\Omega')$-equivalent and $\eta_y\in T_y(\Omega')$ is a generic vector for any $y\in Z$.
It follows from Proposition \ref{Pro_General_Poincare_Lelong} that $(Z,ds_\Omega^2|_Z)\subset (\Omega',ds_{\Omega}^2|_{\Omega'})$ is totally geodesic.
Then, $(Z,ds_\Omega^2|_Z)\subset (\Omega,ds_\Omega^2)$ is totally geodesic by the total geodesy of $(\Omega',ds_\Omega^2|_{\Omega'})$ in $(\Omega,ds_\Omega^2)$.
From the proof of Theorem \ref{ThmTubeDomain1}, we have $\lVert \sigma(\mu(w))\rVert^2 \to 0$ as $w\to b$ for a general point $b\in U\cap\partial\Delta$.
Hence, the proof is complete under the assumption that $\Omega$ is of tube type.

Now, it remains to consider the case where $\Omega$ is of non-tube type.
In Section \ref{Sec:ZinIGM}, we have shown that $Z\subset \Omega'$ for some invariantly geodesic submanifold $\Omega'\subset \Omega$ such that $\Omega'$ is of tube type, the holomorphic tangent spaces $T_x(Z)=\mathbb C\eta_x$ are $\mathrm{Aut}(\Omega')$-equivalent and $\eta_y\in T_y(\Omega')$ is a generic vector for any $y\in Z$.
Writing $\Omega'=\Omega'_1\times\cdots \times \Omega'_m \subset \Omega = \Omega_1\times \cdots \times \Omega_m$, we have
$ds_{\Omega}^2|_{\Omega'}
= \sum_{j=1}^m (p(\Omega_j)+2) \mathrm{Pr_j}^*g_{\Omega'_j}$, where $\mathrm{Pr}_j:\Omega'\to \Omega'_j$ is the canonical projection onto the $j$-th irreducible factor of $\Omega'$ for $1\le j\le m$.
Here, for $1\le j\le m$, $p(\Omega_j)$ denotes the complex dimension of the VMRTs of the compact dual Hermitian symmetric space of $\Omega_j$ at the base point (see Section \ref{Sec:2.1}).
Then, Proposition \ref{Pro_General_Poincare_Lelong} asserts that $(Z,ds_\Omega^2|_Z)\subset (\Omega',ds_\Omega^2|_{\Omega'})$ is totally geodesic.
This yields the total geodesy of $(Z,ds_\Omega^2|_Z)\subset (\Omega,ds_\Omega^2)$.
In analogy to the case where $\Omega$ is of tube type, from our construction we have $\lVert \sigma(\mu(w))\rVert^2 \to 0$ as $w\to b$ for a general point $b\in U\cap\partial\Delta$.
\end{proof}

\section{Applications}\label{Sec:5}

\subsection{Total geodesy of equivariant holomorphic embeddings}
As a first application of Theorem \ref{ThmHIAT} we have a result on the total geodesy of equivariant holomorphic isometries between bounded symmetric domains, as follows.  

\begin{thm}[Theorem 3.5.2 \cite{Mo11}]\label{ThmEquiv}
Let $D$ and $\Omega$ be bounded symmetric domains, $\Phi:\Aut_0(D)\to \Aut_0(\Omega)$ be a group homomorphism, and $F:D\to \Omega$ be a $\Phi$-equivariant holomorphic map.
Then, $F(D) \subset \Omega$ is a totally geodesic complex submanifold with respect to the Bergman metric $ds_\Omega^2$.
\end{thm}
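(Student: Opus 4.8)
The plan is to deduce the statement from Theorem~\ref{ThmHIAT} by exploiting the homogeneity forced by $\Phi$-equivariance. First I would normalise the problem. The pullback $F^*ds_\Omega^2$ is a semipositive $\Aut_0(D)$-invariant Hermitian form on $D$; writing $D=D_1\times\cdots\times D_m$ in terms of its irreducible factors, invariance forces $F^*ds_\Omega^2=\sum_{j\in J}\lambda_j\,\mathrm{Pr}_j^*ds_{D_j}^2$ with $\lambda_j>0$ for the indices $j$ in some subset $J$. Since a holomorphic map with vanishing pullback metric is constant, $F$ is constant along the factors outside $J$, hence factors as $F=F'\circ\mathrm{Pr}$ through the projection onto $D':=\prod_{j\in J}D_j$, and $F'$ is $\Phi'$-equivariant for the induced homomorphism $\Phi':\Aut_0(D')\to\Aut_0(\Omega)$ (restrict $\Phi$ to $\Aut_0(D')\times\{1\}$). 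Replacing $(D,F,\Phi)$ by $(D',F',\Phi')$ I may assume $F:(D,\tilde g)\to(\Omega,ds_\Omega^2)$ is a $\Phi$-equivariant holomorphic isometric immersion with $\tilde g=\sum_j\lambda_j\,\mathrm{Pr}_j^*ds_{D_j}^2$. Because $\Aut_0(D)$ acts transitively on $D$, the image $F(D)=\Phi(\Aut_0(D))\cdot F(o)$ is a single orbit, so $F(D)$ is an (immersed) homogeneous complex submanifold; in particular its second fundamental form $\sigma$ in $(\Omega,ds_\Omega^2)$ satisfies $\lVert\sigma\rVert\equiv\mathrm{const}$, and since $\Phi(K)$ fixes $F(o)$ (where $K\subset\Aut_0(D)$ is the isotropy at $o$) and $dF$ intertwines the isotropy actions, $\sigma_o$ is a $K$-equivariant symmetric tensor on $T_oF(D)=dF(T_oD)$.

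The second step feeds totally geodesic Poincar\'e disks of $(D,\tilde g)$ into Theorem~\ref{ThmHIAT}. Fix an irreducible factor $D_j$ and a minimal-disk vector $\xi\in T_oD_j$, and let $\gamma:(\Delta,\lambda ds_\Delta^2)\to(D,\tilde g)$ be the corresponding totally geodesic disk. Then $F\circ\gamma:(\Delta,\lambda ds_\Delta^2)\to(\Omega,ds_\Omega^2)$ is a holomorphic isometric embedding, so by Theorem~\ref{ThmHIAT} its second fundamental form $\sigma^{F\gamma}$ tends to $0$ along some sequence $w\to b$ for a general boundary point $b\in\partial\Delta$. For the nested submanifolds $F\gamma(\Delta)\subset F(D)\subset\Omega$ and a unit tangent vector $v_w$ of $F\gamma(\Delta)$ at $F\gamma(w)$ one has the orthogonal decomposition $\lVert\sigma^{F\gamma}(v_w,v_w)\rVert^2=\lVert\beta(v_w,v_w)\rVert^2+\lVert\sigma(v_w,v_w)\rVert^2$, where $\beta$ is the second fundamental form of $F\gamma(\Delta)$ inside $F(D)$; hence $\lVert\sigma(v_w,v_w)\rVert\le\lVert\sigma^{F\gamma}(v_w,v_w)\rVert\to0$. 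But $F\gamma(\Delta)$ is itself homogeneous --- it is the orbit of $F\gamma(0)$ under $\Phi$ of the $\mathrm{SL}(2,\mathbb R)$-subgroup acting on the minimal disk --- so there are elements of $\Phi(\Aut_0(D))\subset\Aut_0(\Omega)$ preserving $F(D)$ and carrying $(F\gamma(w),v_w)$ to $(F\gamma(0),v_0)$; thus $\lVert\sigma(v_w,v_w)\rVert$ is independent of $w$ and therefore $0$. This shows $\sigma(dF\xi,dF\xi)=0$ for every minimal-disk vector $\xi\in T_oD_j$ and every $j$. Running the same argument with the diagonal disk $\zeta\mapsto(\delta_i(\zeta),\delta_j(\zeta))$ of a pair of standard minimal disks $\delta_i\subset D_i$, $\delta_j\subset D_j$ with $i\ne j$ --- which is totally geodesic of constant curvature in $(D,\tilde g)$, since the Levi-Civita connection of a product metric is unchanged by constant rescaling of each factor, and whose $F$-image is again homogeneous (orbit of a diagonally embedded $\mathrm{SL}(2,\mathbb R)$) --- yields $\sigma(dF(\xi_i+\xi_j),dF(\xi_i+\xi_j))=0$, and combining with the previous vanishings gives $\sigma(dF\xi_i,dF\xi_j)=0$ for all minimal-disk vectors $\xi_i\in T_oD_i$, $\xi_j\in T_oD_j$.

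It remains to promote these partial vanishings to $\sigma_o\equiv 0$. Minimal-disk vectors span each $T_oD_j$, so by \emph{bilinearity} $\sigma$ vanishes on $dF(T_oD_i)\times dF(T_oD_j)$ for all $i\ne j$. For a single factor $D_j$: if $\mathrm{rank}(D_j)=1$ then $T_oD_j$ is already spanned by a minimal-disk vector; if $\mathrm{rank}(D_j)\ge2$ I would apply the Polydisk Theorem, running the argument of the previous paragraph on diagonal disks of all sub-polydisks of the maximal polydisk $\Pi_j\subset D_j$ (all of which are totally geodesic constant-curvature disks, with equal speeds since a single $\lambda_j$ is involved) to get $\sigma(dF e_{\psi_a},dF e_{\psi_b})=0$ for all $a,b$, hence $\sigma$ vanishes on $dF(T_o\Pi_j)$; since $\sigma_o$ is $K_j$-equivariant and $\bigcup_{k\in K_j}k\cdot T_o\Pi_j=T_oD_j$, it vanishes on $dF(T_oD_j)$ (alternatively one may invoke the total geodesy of equivariant maps out of irreducible domains of rank $\ge2$ from the proof of Hermitian metric rigidity). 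Assembling the diagonal and off-diagonal blocks gives $\sigma_o=0$, so by homogeneity $\sigma\equiv0$ and $F(D)\subset(\Omega,ds_\Omega^2)$ is totally geodesic. I expect the main obstacle to be the bookkeeping in the reducible case --- verifying that the relevant diagonal disks of $(D,\tilde g)$ really are totally geodesic Poincar\'e disks and that the pertinent component of $\sigma$ is constant along the chosen boundary approach --- together with the final step, where it is essential to exploit bilinearity (and, for higher-rank factors, the $K$-action) rather than a naive ``vanishes on rank-one vectors'' statement, since the cone of minimal-disk directions may lie on quadrics.
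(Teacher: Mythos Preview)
Your proposal is correct and follows the same overall strategy as the paper --- reduce to an equivariant isometric immersion, then kill the second fundamental form $\sigma$ at one point using Theorem~\ref{ThmHIAT} on suitably chosen Poincar\'e disks --- but the execution differs in a noteworthy way.

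The paper handles the off-diagonal terms $\sigma(\eta_i,\eta_j)$ for $i\neq j$ and the diagonal terms for rank $\ge 2$ factors by the Gauss equation alone: for $i\neq j$ one has $R^S_{\eta_i\overline{\eta_i}\eta_j\overline{\eta_j}}=0$ (product curvature) and $R^\Omega_{\eta_i\overline{\eta_i}\eta_j\overline{\eta_j}}\le 0$, forcing $\sigma(\eta_i,\eta_j)=0$; for a rank $\ge 2$ factor $D_i$ the same argument is run on zero-curvature pairs $(\alpha,\zeta)$ furnished by the proof of Hermitian metric rigidity. Theorem~\ref{ThmHIAT} is invoked only for rank-$1$ factors. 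Your argument instead applies Theorem~\ref{ThmHIAT} uniformly --- to minimal disks, to diagonal disks in $D_i\times D_j$, and to diagonal disks of sub-polydisks inside a higher-rank factor --- and assembles the pieces via bilinearity and the $K$-action. This is valid (the diagonal in $\Delta\times\Delta$ with metric $\lambda_i g+\lambda_j g$ is totally geodesic because the product connection is independent of the scalings, and the image disks are orbits of diagonally embedded $\mathrm{SL}(2,\mathbb R)$'s under $\Phi$, so the relevant component of $\sigma$ is constant along each disk). The paper's route is more economical, since the Gauss equation is elementary while Theorem~\ref{ThmHIAT} carries the full weight of the rescaling and Poincar\'e--Lelong machinery; your route has the virtue of being conceptually uniform. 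One small point to make explicit in your write-up: after obtaining $\sigma\equiv 0$ on each $dF(k\cdot T_o\Pi_j)$, conclude $\sigma(dFv,dFv)=0$ for every $v\in T_oD_j$ (Polydisk Theorem), and then polarize --- the union $\bigcup_k k\cdot T_o\Pi_j$ is not a linear subspace, so bilinearity alone on each piece does not immediately give the full vanishing without this intermediate step.
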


In Mok \cite[p.\,255]{Mo11} a brief sketch of the deduction of Theorem \ref{ThmEquiv} from Theorem \ref{ThmHIAT} was given. To make the article self-contained we give here the full proof, and in the next application we will make use of Theorem \ref{ThmEquiv} to study the uniformization map $\pi: \Omega \to X_\Gamma := \Omega/\Gamma$ from a bounded symmetric domain $\Omega$ to a not necessarily arithmetic quotient $X_\Gamma := \Omega/\Gamma$ by a torsion-free discrete subgroup $\Gamma \subset {\mbox{Aut}}(\Omega)$.

\begin{proof}[Proof of Theorem \ref{ThmEquiv}]
Let $D = D_1\times\cdots \times D_m$ be the decomposition of $D$ into irreducible factors, where $m\ge 1$. Denote by $\sigma$ the $(1,0)$-part of the second fundamental form of $D$ in $\Omega$. $F^*ds_\Omega^2$ is ${\rm Aut}_0(D)$-equivariant, hence $F^*ds_\Omega^2 = \lambda_1\pi_1^*ds_{D_1}^2 + \cdots + \lambda_m\pi_m^*ds_{D_m}^2$ for some $\lambda_i \ge 0$, $1 \le i \le m$, where $\pi_i: D = D_1\times\cdots \times D_m \to D_i$ denotes the canonical projection onto the $i$-th Cartesian factor. Thus, removing factors $D_i$ for which $\lambda_i = 0$ we may assume without loss of generality that $F$ is a holomorphic immersion. Let now $x_0 \in D$ and $U$ be a sufficiently small open neighborhood of $x_0$ in $D$ such that $F|_U : U \to \Omega$ is a holomorphic embedding.  We identify $U$ with $S := F(U) \subset \Omega$, and denote by $R^\Omega$ resp.\,$R^S$ the curvature tensor of $(\Omega,ds_\Omega^2)$ resp.\,$(S,ds_\Omega^2|_S)$. 

Denote by $\sigma$ the (1,0)-part of the second fundamental form of $(S,ds_\Omega^2|_S)$ $\hookrightarrow$ $(\Omega,ds_\Omega^2)$. For $\alpha, \beta \in T_{x_0}(U) \cong T_{F(x_0)}(S)$, by the Gauss equation we have $R^S_{\alpha\overline{\alpha}\beta\overline{\beta}} = R^\Omega_{\alpha\overline{\alpha}\beta\overline{\beta}} -\|\sigma(\alpha,\beta)\|^2$.  If now we take $i \neq j$, $1 \le i, j \le m$, and $\alpha = \eta_i$ resp.\,$\beta = \eta_j$, where, by an obvious abuse of notation, $\eta_i \in T_{x_0}(D_i)$ resp.\,$\eta_j \in T_{x_0}(D_j)$, then 
$$
0 = R^S_{\eta_i\overline{\eta_i}\eta_j\overline{\eta_j}} = R^\Omega_{\eta_i\overline{\eta_i}\eta_j\overline{\eta_j}} - \|\sigma(\eta_i,\eta_j)\|^2 \le 
-\|\sigma(\eta_i,\eta_j)\|^2,
$$
which implies in particular that $\sigma(\eta_i,\eta_j)$ = 0.  To prove that $\sigma \equiv 0$ it suffices therefore to show that for any $i$, $1 \le i \le m$, we have $\sigma(\eta'_i,\eta''_i) = 0$ whenever $\eta'_i, \eta''_i \in T_{x_0}(D_i)$.  When $D_i$ is of rank $\ge 2$, for $\alpha, \zeta \in T_{x_0}(D_i)$ such that $R^S_{\alpha\overline{\alpha}\zeta\overline{\zeta}} = 0$, by the Gauss equation  
$$
0 = R^S_{\alpha\overline{\alpha}\zeta\overline{\zeta}} = R^\Omega_{\alpha\overline{\alpha}\zeta\overline{\zeta}} - \|\sigma(\alpha,\zeta)\|^2 \le 
-\|\sigma(\alpha,\zeta)\|^2,
$$ 
so that $\sigma(\alpha,\zeta) = 0$.  From the proof of Hermitian metric rigidity (Mok \cite[Proposition 3.4]{Mo87}), by polarization this already implies that $\sigma(\eta'_i,\eta''_i) = 0$ whenever $\eta'_i, \eta''_i \in T_{x_0}(D_i)$. On the other hand, when $D_i$ is of rank 1, for any nonzero vector $\eta_i \in T_{x_0}(D_i)$ there is a (totally geodesic) minimal disk $\Delta_{\eta_i} \subset D_i$ such that $T_{x_0}(\Delta_{\eta_i}) = \mathbb C\eta_i$.  From the $\Phi$-equivariance of $F$ the norm of the second fundamental form $\sigma_1$ of $F(\Delta_{\eta_i})$ in $\Omega$ is a constant. Hence, by Theorem \ref{ThmHIAT} $F|_{\Delta_{\eta_i}}$ is totally geodesic, i.e., $\sigma_1 \equiv 0$. Since by the Gauss equation we have $\|\sigma(\eta_i,\eta_i)\|^2 \le \|\sigma_1(\eta_i,\eta_i)\|^2$ we conclude that $\sigma(\eta_i,\eta_i) = 0$.  As we vary $\eta_i \in T_{x_0}(D_i)$, by polarization we conclude that $\sigma(\eta_i',\eta_i'') = 0$ for any $\eta_i', \eta_i'' \in T_{x_0}(D_i)$.  The proof of Theorem \ref{ThmEquiv} is complete.
\end{proof} 

\subsection{Total geodesy of algebraic subsets admitting compact quotients}
We now apply Theorem \ref{ThmEquiv} to study a problem arising from functional transcendence theory (cf.\,Ullmo-Yafaev \cite{UY11}).  It is given by Theorem \ref{ThmBialg} in the Introduction concerning varieties which are bi-algebraic with respect to the uniformization map $\pi: \Omega \to X_\Gamma :=\Omega/\Gamma$ (cf.\,Theorem \ref{ThmBialg} for the precise statement). Our proof of Theorem \ref{ThmBialg} yields a stronger statement.

\begin{thm}\label{ThmUnif}  
Let $\Omega \Subset \mathbb C^N$ be a bounded symmetric domain in its Harish-Chandra realization, and $Z \subset \Omega$ be an irreducible algebraic subset.  Suppose there exists a torsion-free discrete subgroup $\check\Gamma \subset {\rm Aut}(\Omega)$ such that $\check\Gamma$ stabilizes $Z$ and $Z/\check\Gamma$ is compact.  Then, $Z \subset \Omega$ is totally geodesic.
\end{thm}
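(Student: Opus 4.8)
The plan is to reduce the statement to the total geodesy of an equivariant holomorphic embedding between bounded symmetric domains (Theorem \ref{ThmEquiv}) by producing, inside the algebraic set $Z$, a large isometrically embedded symmetric space and then identifying $Z$ with its algebraic Zariski closure. First I would observe that $Z/\check\Gamma$ is a compact complex space and, by resolving singularities if necessary, one may study a smooth model $Y$ together with the induced metric coming from $ds_\Omega^2|_{\mathrm{Reg}(Z)}$. The curvature of $(\mathrm{Reg}(Z), ds_\Omega^2|_{\mathrm{Reg}(Z)})$ is bounded above by that of $(\Omega, ds_\Omega^2)$, so by the Gauss equation it has a negatively-pinched holomorphic sectional curvature away from the second fundamental form; in particular the canonical line bundle of a smooth compact model is big, and by the existence theorem of Aubin and Yau (together with Nadel's semisimplicity theorem \cite{Na90}) one obtains control of the automorphism group of the universal cover $\widetilde Y$. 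The key input is to use the maximum principle for plurisubharmonic functions applied to $\|\sigma\|^2$, the norm of the second fundamental form of $\mathrm{Reg}(Z) \subset \Omega$: since $\check\Gamma$ acts by isometries of $\Omega$ preserving $Z$, the function $\|\sigma\|^2$ descends to the compact quotient, and a Bochner-type or subharmonicity argument forces it to be concentrated in a way that, combined with the asymptotic total geodesy Theorem \ref{ThmHIAT} applied to minimal disks exiting $\partial\Omega$, pins down the geometry.

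Concretely, the steps I would carry out are: (1) show that $Z \subset \Omega$ contains an isometric copy $S$ of a Riemannian symmetric space of the noncompact type, arising as an orbit of the identity component of the Zariski closure $\overline{\check\Gamma}^{\mathscr Z\!ar}$ acting on $\Omega$ — here one uses that $\check\Gamma$ is infinite (as $Z/\check\Gamma$ is compact and $Z$ noncompact, being a positive-dimensional algebraic subset of a bounded domain) and leaves the algebraic set $Z$ invariant, so its Zariski closure in $\mathrm{Aut}(\Omega)$ is a reductive subgroup whose symmetric-space orbit through a generic point lies in $Z$; (2) prove the dimension estimate $\dim_\mathbb R S \ge \dim_\mathbb C Z$, using that $Z$ is the smallest algebraic subset of $\Omega$ containing $S$ (any smaller one would be $\overline{\check\Gamma}^{\mathscr Z\!ar}$-invariant and contradict minimality of $Z$ as an irreducible component); (3) in the extreme case $\dim_\mathbb R S = \dim_\mathbb C Z$, where $S \subset Z$ is totally real, deduce that $Z$ carries a holomorphic $\mathrm{O}(s;\mathbb C)$-structure, i.e.\ a holomorphic complex Riemannian metric, and derive a contradiction with the ampleness (or bigness) of $K_Y$ on a compact quotient model $Y$ as in \cite{Na90}; (4) in the mixed range $s < \dim_\mathbb R S < 2s$, build special holomorphic $G$-structures on $Z$ and hence on $Y$, contradicting the polystability of $T_Y$ with respect to the canonical polarization coming from the Kähler–Einstein metric of Aubin–Yau; (5) conclude that necessarily $\dim_\mathbb R S = 2\dim_\mathbb C Z$, so $S$ is a complex submanifold and $S = Z$ (by minimality), whence $Z$ is the image of an equivariant holomorphic embedding of a bounded symmetric domain and Theorem \ref{ThmEquiv} gives that $Z \subset \Omega$ is totally geodesic.

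The main obstacle I expect is step (1) together with the cohomological/curvature bookkeeping in steps (3)–(4): one must show rigorously that the Zariski closure of $\check\Gamma$ is large enough to produce a symmetric-space orbit inside $Z$ of the required dimension, and that the various holomorphic $G$-structures obtained on $Y$ genuinely obstruct the polystability of $T_Y$. The delicate point is that $Z$ may be singular, so all of this must be carried out on $\mathrm{Reg}(Z)$ or on a desingularization while keeping track of the metric degeneration along the exceptional locus; controlling $\|\sigma\|^2$ near $\mathrm{Sing}(Z)$ and invoking Theorem \ref{ThmHIAT} for the local holomorphic curves (minimal disks of $\Omega$ meeting $Z$) that exit $\partial\Omega$ is what bridges the local differential geometry with the global algebraicity hypothesis. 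Once the complex-analyticity of $S$ is in hand, the reduction to Theorem \ref{ThmEquiv} is formal, and Theorem \ref{ThmBialg} follows since totally geodesic algebraic subsets descend to totally geodesic subvarieties of $X_\Gamma$.
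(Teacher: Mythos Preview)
Your five-step architecture matches the paper's: produce a semisimple group $H_0 \subset G_0$ stabilizing $Z$, set $S = H_0 x$, obtain $\dim_\mathbb R S \ge \dim_\mathbb C Z$, rule out the totally real and mixed cases via holomorphic ${\rm O}(p;\mathbb C) \times {\rm GL}(s-p;\mathbb C)$-structures and polystability of $T_{\hat Y}$ with respect to the K\"ahler--Einstein metric, and conclude via Theorem \ref{ThmEquiv} when $S$ is complex. Steps (3)--(5) are essentially what the paper does.

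The gap is in how you propose to execute steps (1)--(2). The maximum principle is \emph{not} applied to $\|\sigma\|^2$, and Theorem \ref{ThmHIAT} plays no direct role in the proof of Theorem \ref{ThmUnif}; it enters only through Theorem \ref{ThmEquiv}, which is invoked as a black box at the very end. What the paper actually does is: take the identity component $H_0$ of the stabilizer of $\widehat Z$ in $G_0$ (positive-dimensional since $\check\Gamma$ is infinite), complexify to $H \subset G = \mathrm{Aut}_0(X_c)$, and then prove via the maximum principle applied to bounded plurisubharmonic functions of the form $\Phi(z) = \sup_{\gamma \in \check\Gamma} |P(\gamma z)|$ (for polynomials $P$ vanishing on a candidate proper subvariety) that the complex orbit $Hx$ is Zariski dense in $Z$, hence $Z$ is an irreducible component of $Hx \cap \Omega$ (Proposition \ref{density}). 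This pseudo-homogeneity delivers two things you were trying to supply by other means: $Z$ is \emph{smooth}, so no desingularization is needed and your concerns about $\mathrm{Sing}(Z)$ and metric degeneration along an exceptional locus are vacuous; and $\dim_\mathbb R S \ge \dim_\mathbb C Z$ follows directly because $S = H_0 x$ is a real form of the complex orbit $Hx$ of which $Z$ is an open piece.

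Finally, the semisimplicity (of noncompact type) of $H_0$ is not automatic from a Zariski-closure argument on $\check\Gamma$; it is precisely where Nadel's theorem enters, applied to the compact K\"ahler manifold $\check Y = Z/\check\Gamma$ with ample canonical bundle \emph{after} smoothness of $Z$ is established. Your first paragraph invokes Nadel before smoothness is known, which inverts the logical order the argument requires.
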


In Theorem \ref{ThmBialg}, which generalizes the cocompact case of Ullmo-Yafaev \cite{UY11}, $\check Y := Z/\check\Gamma$ is assumed to be a subvariety on some projective quotient manifold $X_\Gamma := \Omega/\Gamma$ where $\Gamma \subset {\rm Aut}(\Omega)$ is a torsion-free and not necessarily arithmetic cocompact lattice.  In Theorem \ref{ThmUnif} by contrast there is no ambient projective manifold $X_\Gamma$.

\vskip 0.2cm
\noindent
{\it Deduction of Theorem \ref{ThmBialg} from Theorem \ref{ThmUnif}} \
In the notation of Theorem \ref{ThmBialg} let $\Gamma \subset {\rm Aut}(\Omega)$ be a torsion-free cocompact lattice and write $X_\Gamma := \Omega/\Gamma$, which is a projective manifold.  Let $\pi: \Omega \to X_\Gamma$ be the uniformization map, $Y \subset X_\Gamma$ be an irreducible subvariety, and $Z \subset \Omega$ be an irreducible component of $\pi^{-1}(Y)$.  Let $\check \Gamma \subset \Gamma$ be the subgroup given by $\check\Gamma := \big\{\gamma \in \Gamma: \gamma(Z) = Z\big\}$. Then $\check\Gamma$ acts as a torsion-free discrete group of automorphisms on $Z$, and, defining $\check Y := Z/\check\Gamma$, the canonical map $\alpha: \check Y \to X_\Gamma$ is a birational morphism onto $Y$, hence $\check Y$ is projective, and Theorem \ref{ThmBialg} follows from Theorem \ref{ThmUnif}. \qquad \qquad \qquad \qquad\qquad  \qquad\qquad \qquad\qquad \qquad \qquad \qquad\qquad \qquad\qquad \qquad\qquad \quad \ $\square$

\subsubsection{Strategy of proof of Theorem \ref{ThmUnif}}
When $\Gamma \subset \mbox{\rm Aut}_0(\Omega)$ is an arithmetic but not necessarily cocompact lattice, Theorem \ref{ThmBialg} was established by Ullmo-Yafaev \cite{UY11}.  Their proof makes use of a monodromy result of Andr\'e-Deligne (cf.\,\cite{An92}) which relies on Hodge Theory, for which arithmeticity of the lattices plays a crucial role. Our proof of Theorem \ref{ThmUnif}, which implies Theorem \ref{ThmBialg} as we have seen, will rely on the existence of K\"ahler-Einstein metrics on compact K\"ahler manifolds with ample canonical line bundle and the proof of the semisimplicity theorem for the identity component of a regular covering of such a manifold due to Nadel \cite{Na90}.

Our proof of Theorem \ref{ThmUnif} breaks up into several steps culminating in the use of Theorem \ref{ThmEquiv}. We will prove that $Z$ is nonsingular and that the K\"ahler manifold $(Z,ds_\Omega^2|_Z)$ is the image of some bounded symmetric domain by an equivariant holomorphic isometric embedding in order to be able to apply Theorem \ref{ThmEquiv} to conclude that $(Z,ds_\Omega^2|_Z) \subset (\Omega,ds_\Omega^2)$ is totally geodesic.

To start with let $H_0 \subset G_0$ be the identity component of the stabilizer subgroup of $Z$. It follows readily from the algebraicity of $Z \subset \Omega$ that $\dim_\mathbb R H_0 > 0$. We cannot show directly that $H_0$ acts transitively on $Z$.  In its place we show using methods of complex analysis that there exists a complex algebraic subgroup $H \subset G := \mbox{Aut}(X_c)$ which is at the same time a complexification of $H_0$ in $G$, such that the orbit $Hz_0$ for any $z_0 \in Z$ contains $Z$. (Recall that $\Omega \subset X_c = G/P$ is the Borel embedding.) In particular, $Z$ is nonsingular. These first steps of the argument remain valid when $Z/\check \Gamma$ is only assumed quasi-projective. 

Since $Z$ is nonsingular and the discrete group $\check \Gamma$ acts without fixed points on $Z$, it follows that $\check Y := Z/\check \Gamma$ is nonsingular. To proceed further we will make use of the compactness of $\check Y$, so that it is a projective manifold with ample canonical line bundle, implying the existence on $\check Y$ of a K\"ahler-Einstein metric of negative Ricci curvature (Aubin \cite{Au78}, Yau \cite{Ya78}).  By Nadel \cite{Na90}, which exploited the polystability of the holomorphic tangent bundle of $\check Y$ as a consequence of the existence of K\"ahler-Einstein metrics, we know that the identity component of ${\rm Aut}(Z)$ is semisimple and of the noncompact type. The same proof applies to show that $H_0 \subset G_0$ is a semisimple Lie subgroup of the noncompact type.
Let $S := H_0x \subset Z$ for some $x\in Z$.
By cohomological arguments, we will deduce that $\dim_{\mathbb R}(S)=\dim_{\mathbb R}(Z)$.
At the same time, this will also imply $S\cong H_0/L$ for some maximal compact subgroup $L$ of $H_0$ by dimension arguments.
As a consequence, $S=Z \subset \Omega$ is a Hermitian symmetric space of the noncompact type and Theorem \ref{ThmUnif} will follow from Theorem \ref{ThmEquiv}.

\subsubsection{Pseudo-homogeneity of algebraic subsets admitting quasi-projective quotients} 
We say that an irreducible algebraic subset $E \subset \Omega \subset X_c$ is pseudo-homogeneous to mean that it is an open subset in the complex topology of an orbit in $X_c$ under some complex algebraic subgroup of $G = {\rm Aut}_0(X_c)$.  We will prove that the algebraic subset $Z \subset \Omega$ in Theorem \ref{ThmUnif} is pseudo-homogeneous in this sense by means of methods of complex analysis, more precisely by means of Riemann extension theorem on bounded plurisubharmonic functions and the maximum principle on plurisubharmonic functions on compact complex spaces. 

It is convenient to introduce the Zariski topology on $\Omega$ and its algebraic subsets.  A subset $E \subset \Omega$ is Zariski closed if and only if it is an algebraic subset of $\Omega$.  For a Zariski closed subset $V \subset \Omega$, $V$ inherits the Zariski topology from $\Omega$ by restriction, and a subset $E \subset V$ is Zariski closed if and only if $E \subset \Omega$ is Zariski closed.   

In what follows we make use of gothic letters to denote real or complex Lie algebras of real or complex Lie groups in a self-evident manner. The Lie algebra of a real (resp.\,complex) Lie group will be identified with its tangent space (resp.\,holomorphic tangent space) at the identity element.

In order to convert the problem concerning discrete groups $\check\Gamma \subset {\rm Aut}(\Omega)$ which stabilize an algebraic subset $Z \subset \Omega$ to questions on Lie groups of holomorphic isometries, to start with we prove

\begin{pro}\label{pro5.20}
In the notation of Theorem \ref{ThmUnif}, there exists a positive-dimen-sional algebraic subgroup $H_0 \subset G_0$
such that $h(Z) = Z$ for any $h \in H_0$ and such that $H_0 \cap \check\Gamma$ is of finite index in $\check\Gamma$.
\end{pro}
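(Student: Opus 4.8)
The plan is to realize $H_0$ as the identity component of the full stabilizer $N:=\{g\in G_0:g(Z)=Z\}$ of $Z$ in $G_0=\Aut_0(\Omega)$, after showing that $N$ is the identity component of the group of real points of a real algebraic subgroup of the $\mathbb R$-algebraic group $\mathbf G$ determined by $\mathbf G(\mathbb C)=G$ and $\mathbf G(\mathbb R)^{\circ}=G_0$. First I would note that $\Aut(\Omega)$ has finitely many connected components, so $\check\Gamma_0:=\check\Gamma\cap G_0$ has finite index in $\check\Gamma$; since $\check\Gamma_0$ stabilizes $Z$ we have $\check\Gamma_0\subseteq N$, and because any $H_0\subseteq G_0$ satisfies $\check\Gamma\cap H_0=\check\Gamma_0\cap H_0$, it will suffice to arrange $[\check\Gamma_0:\check\Gamma_0\cap H_0]<\infty$.

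For the algebraicity, let $\overline Z\subset X_c=G/P$ be the Zariski closure of $Z$ in the compact dual. Writing $Z=V\cap\Omega$ with $V\subset X_c$ Zariski closed, one has $\overline Z\subseteq V$ and hence $\overline Z\cap\Omega\subseteq V\cap\Omega=Z\subseteq\overline Z\cap\Omega$, so $\overline Z\cap\Omega=Z$. The stabilizer $\widehat H:=\{g\in G:g(\overline Z)=\overline Z\}$ is a Zariski-closed, hence complex algebraic, subgroup of $G$, and it contains $N$, since $g(Z)=Z$ forces $g(\overline Z)=\overline{g(Z)}=\overline Z$ (Zariski closure). Therefore the Zariski closure $\mathbf N$ of $N$ inside $\mathbf G$ — automatically defined over $\mathbb R$, being the closure of a set of real points — satisfies $\mathbf N(\mathbb C)\subseteq\widehat H$, so every element of $\mathbf N(\mathbb C)$ preserves $\overline Z$. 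In particular the connected Lie subgroup $\mathbf N(\mathbb R)^{\circ}\subseteq\mathbf G(\mathbb R)^{\circ}=G_0$ preserves $\overline Z$, and its elements, lying in $G_0$, preserve $\Omega$, hence preserve $\overline Z\cap\Omega=Z$; thus $\mathbf N(\mathbb R)^{\circ}\subseteq N$. Since also $N\subseteq\mathbf N(\mathbb R)\cap G_0$, we get $N^{\circ}=\mathbf N(\mathbb R)^{\circ}=:H_0$, a connected algebraic subgroup of $G_0$. Because the real points of a real algebraic variety have only finitely many connected components, $[N:H_0]<\infty$, and therefore $[\check\Gamma_0:\check\Gamma_0\cap H_0]\le[N:H_0]<\infty$, so that $H_0\cap\check\Gamma$ has finite index in $\check\Gamma$.

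It remains to see $\dim_{\mathbb R}H_0>0$, equivalently $\dim\mathbf N>0$. Assume $\dim_{\mathbb C}Z\ge 1$. Then $Z$ is a positive-dimensional closed analytic subvariety of the bounded domain $\Omega\Subset\mathbb C^N$, hence non-compact by the maximum principle applied to the restrictions of the coordinate functions; since $Z/\check\Gamma$ is compact, $\check\Gamma$, and hence $\check\Gamma_0\subseteq\mathbf N(\mathbb R)$, must be infinite. A zero-dimensional algebraic group has finitely many points, so $\mathbf N(\mathbb R)$ infinite forces $\dim\mathbf N>0$, and then $\dim_{\mathbb R}\mathbf N(\mathbb R)^{\circ}=\dim\mathbf N>0$ because the identity is a smooth real point. (If $\dim_{\mathbb C}Z=0$, then $Z$ is a single point $p$, $N$ is the isotropy subgroup of $G_0$ at $p$, which is compact of positive dimension, while $\check\Gamma_0$, a torsion-free discrete subgroup of a compact group, is trivial; the statement is then immediate.) I expect the algebraicity step — proving that $\mathrm{Stab}_{G_0}(Z)$ has finitely many connected components — to be the main obstacle, and it is exactly the passage through the Zariski closure $\overline Z\subset X_c$ and the algebraic hull $\mathbf N\subseteq\mathbf G$ that resolves it; the positivity of $\dim H_0$ is then a soft consequence of the non-compactness of $Z$ weighed against the compactness of $Z/\check\Gamma$.
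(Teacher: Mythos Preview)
Your argument is essentially the paper's: pass to the Zariski closure $\overline Z\subset X_c$, observe that its stabilizer $\widehat H$ in $G$ is complex algebraic and contains $\check\Gamma_0$, take the identity component of the real points, and use infinitude of $\check\Gamma$ (forced by non-compactness of $Z$ versus compactness of $Z/\check\Gamma$) to get positive dimension. The paper is slightly more direct---it simply sets $H_0':=\widehat H\cap G_0$ and $H_0:=(H_0')^\circ$, without your detour through the $\mathbb R$-Zariski closure $\mathbf N$ of $N$ inside $\mathbf G$---but the content is the same, and your explicit handling of $\check\Gamma_0=\check\Gamma\cap G_0$ is in fact more careful than the paper's.

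There is one slip you should repair. You assert ``$Z=V\cap\Omega$'' and hence ``$\overline Z\cap\Omega=Z$'', but in the paper's convention an algebraic subset $Z\subset\Omega$ is only an \emph{irreducible component} of $\widehat Z\cap\Omega$ for some irreducible $\widehat Z\subset X_c$; in general $\overline Z\cap\Omega$ may well have several analytic irreducible components, so the equality is unjustified. This is precisely the step where you conclude $\mathbf N(\mathbb R)^\circ\subseteq N$. The fix is immediate: $\mathbf N(\mathbb R)^\circ$ is connected and permutes the locally finite family of irreducible components of $\overline Z\cap\Omega$, so the assignment $g\mapsto g(Z)$ is locally constant and hence constant, giving $g(Z)=Z$ for every $g\in\mathbf N(\mathbb R)^\circ$. (The paper's own proof requires the same connectedness observation to pass from ``$H_0$ preserves $\widehat Z\cap\Omega$'' to ``$H_0$ preserves $Z$'', and leaves it implicit as well.) With that correction your proof goes through.
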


\begin{proof}
Recall that by definition $Z$ is an irreducible component of $\widehat Z \cap \Omega$ for some irreducible subvariety $\widehat Z \subset X_c = G/P$, where  $G = {\rm Aut}_0(X_c)$ and $P \subset G$ is some parabolic subgroup. Define now $\mathscr H := \big\{h \in G: h(\widehat Z) = \widehat Z \big\}$. $\mathscr H \subset G$ is a subgroup defined by a set of algebraic equations on $G$, and as such it is a complex algebraic subgroup.  For any $\gamma \in \check\Gamma$ we have $\gamma(Z) = Z$, hence also $\gamma(\widehat Z) = \widehat Z$ by the identity theorem for holomorphic functions.  Therefore, $\check\Gamma \subset \mathscr H$. 

We claim that $\check\Gamma$ is an infinite group.  This is obvious when $\check Y$ is compact by the maximum principle. In general, let $\check Y \subset W$ be a projective compactification, and let $\sigma: W^\dagger \to W$ be a desingularization. Suppose $\check \Gamma$ is finite.  Then, any continuous bounded plurisubharmonic function $\varphi$ on $\Omega$, when restricted to $Z$, gives rise to a continuous bounded plurisubharmonic function $\psi$ on $\check Y^\dagger$ obtained by summing over the finite fibers of the uniformization map $\varpi: Z \to \check Y$ and pulling back to the nonsingular model $\check Y^\dagger = \sigma^{-1}(\check Y)$ by $\sigma: W^\dagger \to W$.  Clearly one can choose $\varphi$ so that $\psi$ is nonconstant.  On the other hand, by the Riemann extension theorem for bounded plurisubharmonic functions, $\psi$ extends to a plurisubharmonic function on the compact complex manifold $W^\dagger$ and must hence be constant by the maximum principle, a plain contradiction.

Define $H_0' :=\mathscr H \cap G_0 \subset G_0$.  Since $\check\Gamma \subset   
\mathscr H \cap G_0$ and the algebraic group $H_0'$ has at most a finite number of connected components, 
from ${\rm Card}(\check\Gamma) = \infty$ we conclude that $\dim(H_0') > 0$.  
Moreover, writing $H_0$ for the identity component of the algebraic group $H_0'$, $H_0\cap \check\Gamma$ is of finite index in $\check\Gamma$, and we have proven Proposition \ref{pro5.20}, as desired.
\end{proof}

\begin{pro}\label{pro5.21}
Let $H_0 \subset G_0$ be a connected real algebraic subgroup.  Then there exists a connected complex algebraic subgroup $H \subset G$ such that $T_e(H)$ agrees with $\frak h_0 \otimes_\mathbb R\mathbb C$.
\end{pro}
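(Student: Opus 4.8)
The plan is to obtain $H$ as the complexification, formed \emph{inside} the ambient complex group $G$, of the real algebraic group $H_0$, and then to check that this complexification has Lie algebra exactly $\mathfrak h_0\otimes_{\mathbb R}\mathbb C$. Recall that $G_0=\Aut_0(\Omega)$ is a connected semisimple real Lie group and that $G$ is its complexification; fix a connected semisimple linear algebraic group $\mathbf G$ defined over $\mathbb R$ together with identifications $G_0=\mathbf G(\mathbb R)^{\circ}$ and $G=\mathbf G(\mathbb C)^{\circ}$ (for instance via the adjoint representation on $\mathfrak g$). Under this set-up the hypothesis that $H_0\subset G_0$ is a connected real algebraic subgroup means precisely that $H_0$ is the identity component, in the Hausdorff topology, of $\mathbf H(\mathbb R)$ for some algebraic subgroup $\mathbf H\subseteq\mathbf G$ defined over $\mathbb R$; replacing $\mathbf H$ by its identity component as an algebraic group we may assume $\mathbf H$ is connected, and then $\mathrm{Lie}(\mathbf H)=\mathrm{Lie}\big(\mathbf H(\mathbb R)^{\circ}\big)=\mathrm{Lie}(H_0)=\mathfrak h_0$.

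First I would perform the base change from $\mathbb R$ to $\mathbb C$: the inclusion $\mathbf H\hookrightarrow\mathbf G$ of $\mathbb R$-groups induces an inclusion $\mathbf H_{\mathbb C}\hookrightarrow\mathbf G_{\mathbb C}$ of complex algebraic groups, with $\mathbf H_{\mathbb C}$ connected because $\mathbf H$ is connected. Setting $H:=\mathbf H_{\mathbb C}(\mathbb C)$, which is connected and hence contained in $\mathbf G(\mathbb C)^{\circ}=G$, we obtain a connected complex algebraic subgroup of $G$, and extension of scalars for Lie algebras yields $T_e(H)=\mathrm{Lie}(\mathbf H_{\mathbb C})=\mathrm{Lie}(\mathbf H)\otimes_{\mathbb R}\mathbb C=\mathfrak h_0\otimes_{\mathbb R}\mathbb C$, where the last identification is the canonical one viewing $\mathfrak h_0\otimes_{\mathbb R}\mathbb C$ as the complex subalgebra $\mathfrak h_0+\sqrt{-1}\,\mathfrak h_0$ of $\mathfrak g=\mathfrak g_0\otimes_{\mathbb R}\mathbb C$. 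This already proves the proposition.

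As an equivalent route not relying on an explicit choice of $\mathbf H$, one may invoke Chevalley's correspondence between connected algebraic subgroups of $G$ and algebraic Lie subalgebras of $\mathfrak g$: it then suffices to check that $\mathfrak h_0\otimes_{\mathbb R}\mathbb C$ is an algebraic Lie subalgebra of $\mathfrak g$, which follows from the fact that $\mathfrak h_0$ is an algebraic Lie subalgebra of $\mathfrak g_0$ (being the Lie algebra of the real algebraic group $H_0$) together with the compatibility of the algebraic-hull operation with extension of scalars, the Jordan decomposition and the $\mathbb Q$-linear eigenvalue conditions defining replicas being insensitive to passing from $\mathbb R$ to $\mathbb C$. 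A third, purely topological option is to take $H$ to be the Zariski closure of $H_0$ in $G$ for the complex algebraic structure: it is a connected complex algebraic subgroup containing $H_0$, so $T_e(H)\supseteq\mathfrak h_0\otimes_{\mathbb R}\mathbb C$, and the algebraicity of $H_0$ over $\mathbb R$ forces $\dim_{\mathbb C}H=\dim_{\mathbb R}H_0$, whence equality.

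I do not expect a serious obstacle: the statement is standard structure theory. The only point requiring care is the bookkeeping that legitimises base change — identifying "connected real algebraic subgroup of $G_0$" with "identity component of the $\mathbb R$-points of an $\mathbb R$-subgroup of $\mathbf G$" — and, correspondingly, checking that the complex subgroup so produced genuinely sits inside $G$ (and not merely inside some abstract complexification of $H_0$) and that its Lie algebra is identified correctly with the image of $\mathfrak h_0\otimes_{\mathbb R}\mathbb C$ in $\mathfrak g$.
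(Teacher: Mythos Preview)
Your argument is correct, and the statement is indeed standard structure theory; any of your three routes works. The paper, however, proceeds quite differently and much more explicitly: rather than invoking base change of group schemes or Chevalley's correspondence, it embeds $G_0\subset\mathbb R^N$, $G\subset\mathbb C^N$, takes a generating set $I(H_0)\subset\mathbb R[x_1,\ldots,x_N]$ of real polynomial equations defining $H_0$, views these as complex polynomials, and lets $V\subset G$ be the irreducible component through $e$ of their common zero locus. It then checks by hand, using the identity theorem, that $V$ is closed under multiplication and inversion (via the auxiliary set $F=\{y\in V:yV\subset V,\ y^{-1}V\subset V\}$), and finally sets $H=V$. The identification of $T_e(V)$ with $\mathfrak h_0\otimes_{\mathbb R}\mathbb C$ is obtained by producing, via the simply connected complex group with Lie algebra $\mathfrak h_0\otimes_{\mathbb R}\mathbb C$, a local complex submanifold $U\subset G$ with the correct tangent space and observing that the germs $(U;e)$ and $(V;e)$ coincide. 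Your approach packages all of this into the single operation ``base change $\mathbf H\mapsto\mathbf H_{\mathbb C}$'', which is cleaner and shorter but presupposes the scheme-theoretic formalism; the paper's argument is entirely elementary and self-contained, trading machinery for an explicit verification of the group axioms on the complexified variety.
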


\begin{proof}
At the level of Lie algebras we have $\frak h_0 \subset \frak g_0$.  Define $\frak h := \frak h_0 \otimes_\mathbb R\mathbb C$ and write $r := \dim_\mathbb R H_0$.
Let $\mathcal H$ be the simply connected complex Lie group with Lie algebra $\cong \frak h$. Then, there exists a holomorphic homomorphism $\alpha: \mathcal H \to G$ with discrete kernel such that $d\alpha(T_e(\mathcal H)) = \frak h \subset \frak g$.  There exists thus a complex submanifold $U$ containing $e$ of some open subset $W \subset X$ such that $U$ is the image under $\alpha$ of some open neighborhood of $e \in \mathcal H$.  We will assume without loss of generality that $U$ is closed under taking inverses, i.e., $U = U^{-1}$. It remains to prove that $(U;e)$ is the germ of a complex algebraic subgroup $H \subset G$. Embed $G_0 \subset G$, $G_0 \subset \mathbb R^N$, $G \subset \mathbb R^N\otimes_\mathbb R\mathbb C = \mathbb C^N$, as the real form of a complex algebraic group.  $H_0 \subset G_0$ is defined as the common zero set of a finite-dimensional real vector subspace $I(H_0) \subset \mathbb R[x_1,\cdots,x_N]$.  Let $\Lambda: \mathbb R[x_1,\cdots,x_N] \to \mathbb R[z_1,\cdots,z_N]$ be the $\mathbb R$-algebra homomorphism defined from $\Lambda(x_i) = z_i$ for $1 \le i \le N$.  Define $E := \Lambda(I(H_0))\otimes_\mathbb R\mathbb C$, and denote by $V \subset G \subset \mathbb C^N$ the irreducible component containing $e$ of the common zero set of $E$.  From the definition we have $H_0 \subset V$.  From the implicit function theorem $V$ is smooth along $H_0$ and we have $\dim_\mathbb C V = r = \dim_\mathbb R H_0$, hence the germs of complex submanifolds $(V;e)$ and $(U;e)$ of $(G;e)$ are identical.  Therefore, the complex affine algebraic subvariety $V \subset G$ contains $U$ and it remains to check that (a) $V$ is closed under multiplication induced from $G$ and (b) any $x \in V$ is invertible in $V$.  Granting this, the proof is completed by setting $H = V$.

Define $F := \big\{y\in V: yV \subset V, y^{-1}V\subset V \big\}$.  Then, $F \subset V$ is defined as the common zero set of a set of complex polynomials and it is hence a complex algebraic subvariety. On the other hand, for any $x \in U$, $xU$ contains an open neighborhood of $x$ in $U$, hence $xV \subset V$, and similarly $x^{-1}V \subset V$, so that $U \subset V$ and hence $F = V$ by the identity theorem for holomorphic functions. In particular, $V$ is closed under multiplication, proving (a).  Moreover, for any $y \in V$, we have $yV \subset V$ and $y^{-1}V \subset V$, so that also $V \subset yV$, hence $yV = V$. Therefore, there exists $w \in V$ such that $yw = e$, hence also $wy = e$, so that any $y \in V$ is invertible, proving (b), as desired.   
\end{proof}

We call $H \subset G$ the complexification of $H_0$ inside $G$.

\begin{pro}\label{density}
Let $\Omega \Subset \mathbb C^N \subset X_c$ be a bounded symmetric domain in its Harish-Chandra realization and Borel embedding into $X_c = G/P$, the compact dual of $\Omega$, where $G$ is the identity component of $\text{\rm Aut}(X_c)$. Let $G_0$ be the identity component of  $\text{\rm Aut}(\Omega)$, $G_0 \subset G$ being a noncompact real form.  Let $Z \subset \Omega$ be an irreducible algebraic subset.  Suppose there exists a torsion-free discrete subgroup $\check \Gamma \subset {\rm Aut}(\Omega)$ such that $\check\Gamma$ stabilizes $Z$ and $\check Y = Z/\check\Gamma$ is quasi-projective.  Let $H_0 \subset G_0$ be the identity component of the $($positive-dimensional$)$ stabilizer subgroup of $Z$, and $H \subset G$ be the complexification of $H_0$ inside $G$. Then, $Z$ is an irreducible component of $Hx \cap \Omega$.
\end{pro}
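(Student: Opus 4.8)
\emph{Proof proposal.} The plan is to reduce the statement to the equality $\dim_{\mathbb C}(Hx)=\dim_{\mathbb C}(Z)$ and then to establish the non-obvious inequality $\dim_{\mathbb C}(Hx)\ge \dim_{\mathbb C}(Z)$ by the function-theoretic method --- the Riemann extension theorem for bounded plurisubharmonic functions together with the maximum principle on compact complex spaces --- already exploited in the proof of Proposition \ref{pro5.20}, combined with the Zariski-closure formalism on the compact dual $X_c$. The first half is routine bookkeeping; the genuinely delicate point is the function-theoretic input, and I expect it to be the main obstacle.

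First I would dispose of the easy inclusion $Hx\cap\Omega\subseteq Z$ for $x\in Z$. The construction in Proposition \ref{pro5.21} exhibits $H$ as the Zariski closure of $H_0$ inside the complex algebraic group $G$: indeed $H_0\subseteq V$ and both $V$ and $\overline{H_0}^{\,\mathrm{Zar}}$ are irreducible of dimension $\dim_{\mathbb R}H_0$, so they coincide and equal $H$. Consider the orbit morphism $\phi\colon G\to X_c$, $g\mapsto g\cdot x$. Since $H_0$ stabilises $Z$ we have $\phi(H_0)=H_0x\subseteq Z$; as the image under a morphism of a Zariski closure lies in the Zariski closure of the image, $Hx=\phi\bigl(\overline{H_0}^{\,\mathrm{Zar}}\bigr)\subseteq\overline{\phi(H_0)}^{\,\mathrm{Zar}}=\overline{H_0x}^{\,\mathrm{Zar}}$. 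Because $Z$ is Zariski closed in $\Omega$ and, for any $A\subseteq\Omega$, the Zariski closure of $A$ inside $\Omega$ equals $\overline{A}^{\,\mathrm{Zar},X_c}\cap\Omega$, this gives $Hx\cap\Omega\subseteq Z$, hence $\overline{Hx}\cap\Omega\subseteq Z$ and $\dim_{\mathbb C}(Hx)\le\dim_{\mathbb C}(Z)$. Since $\overline{Hx}\cap\Omega$ is irreducible and Zariski closed in $\Omega$, the proposition is now equivalent to the equality $\dim_{\mathbb C}(Hx)=\dim_{\mathbb C}(Z)$, which I would prove for the generic $x\in Z$; once this is known, $Z=\overline{Hx}\cap\Omega$ is a dense open subset of the orbit $Hx$, in particular non-singular, and $Z$ is the unique irreducible component of $Hx\cap\Omega$, and the same conclusion then holds for every $x\in Z$, since a strictly smaller orbit contained in $Z$ would, after the same discussion, force a proper algebraic subset of the irreducible $Z$ to fill out $Z$.

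For the reverse inequality I would argue by contradiction. Put $\check\Gamma_0:=\check\Gamma\cap H_0$; since the stabiliser $\mathscr H$ of $\widehat Z$ in $G$ is a complex algebraic group, hence has finitely many connected components, and $\check\Gamma\subseteq\mathscr H$, the intersection of $\check\Gamma$ with the identity component of $\mathscr H\cap\mathrm{Aut}(\Omega)$ --- which is exactly $H_0$ --- is of finite index in $\check\Gamma$, so $W:=Z/\check\Gamma_0$ is a finite \'etale cover of $\check Y$, again quasi-projective, and compact in the situation of Theorem \ref{ThmUnif}. The essential observation is that for each $x'\in Z$ the subvariety $F_{x'}:=\overline{Hx'}\cap\Omega\subseteq Z$ is $H_0$-invariant --- because $H_0\subseteq H$ implies $h\cdot\overline{Hx'}=\overline{Hx'}$ for $h\in H_0$ --- and therefore $\check\Gamma_0$-invariant. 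Were $\dim_{\mathbb C}(Hx)<\dim_{\mathbb C}(Z)$ for the generic $x$, the family $\{F_{x'}\}$ would constitute the fibres of a dominant rational map $\psi\colon Z\dashrightarrow B$ with $\dim B=\dim_{\mathbb C}(Z)-\dim_{\mathbb C}(Hx)>0$; since $\check\Gamma_0$ preserves every fibre it acts trivially on $B$, so $\psi$ descends to a dominant rational map $\bar\psi\colon W\dashrightarrow B$ of the quasi-projective (resp.\ projective) variety $W$ onto a base of positive dimension, with general fibre $F_{x'}/\check\Gamma_0$.

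The main obstacle is to rule this out, and this is where the function theory enters. I would take a bounded, non-constant, continuous plurisubharmonic function $\varphi$ on $\Omega$ (for instance $\varphi(z)=\lVert z\rVert^2$ in Harish-Chandra coordinates) and exploit the $H_0$-invariance of the fibration: the general fibre $F_{x'}$ is the Zariski closure in $\Omega$ of the orbit $H_0x'$ of the \emph{isometry} group $H_0$, so the interplay between $\varphi$ and the $H_0$-action restricts the behaviour of $\varphi|_{F_{x'}}$, and, passing to a projective compactification $W\subseteq\overline W$ and a desingularisation $\sigma\colon\overline W^{\dagger}\to\overline W$, one extends the resulting data across $\overline W^{\dagger}\setminus\check Y^{\dagger}$ by the Riemann extension theorem for bounded plurisubharmonic functions and applies the maximum principle on the compact manifold $\overline W^{\dagger}$ to conclude that it must be constant along $B$ --- contradicting $\dim B>0$. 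Correctly manufacturing the relevant ($\check\Gamma$-invariant) plurisubharmonic quantity on the total space or on $B$ so that the maximum principle genuinely bites is, I expect, the crux; everything preceding it is formal. With the contradiction in hand, $\dim_{\mathbb C}(Hx)=\dim_{\mathbb C}(Z)$, so $\overline{Hx}\cap\Omega=Z$ and $Z$ is an irreducible component of $Hx\cap\Omega$, as claimed.
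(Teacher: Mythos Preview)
Your overall strategy --- reduce to a dimension comparison and invoke the maximum principle for bounded plurisubharmonic functions on a compactified quotient --- matches the paper's, and your easy inclusion $Hx\cap\Omega\subseteq Z$ (hence $\dim_{\mathbb C} Hx\le\dim_{\mathbb C} Z$) via $H=\overline{H_0}^{\,\mathrm{Zar}}$ is correct and clean. The gap is precisely where you flag it: you do not construct the $\check\Gamma$-invariant bounded plurisubharmonic function that drives the contradiction, and the detour through a putative fibration $Z\dashrightarrow B$ does not bring you closer to one. A dominant rational map $W\dashrightarrow B$ with $\dim B>0$ is no contradiction for a quasi-projective $W$, and the vague ``interplay between $\varphi$ and the $H_0$-action'' does not produce a function that is simultaneously bounded, plurisubharmonic, $\check\Gamma$-invariant, and nonconstant on $Z$.

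The paper bypasses the fibration and argues directly: assuming $Hx\cap Z$ is not Zariski dense in $Z$, let $E\subsetneq Z$ be a Zariski-closed set containing it and choose a polynomial $P$ on the ambient $\mathbb C^N\supset\Omega$ with $P|_{\widehat E\cap\mathbb C^N}\equiv 0$ but $P|_{\widehat Z\cap\mathbb C^N}\not\equiv 0$. The single formula
\[
\Phi(z)\;:=\;\sup_{\gamma\in\check\Gamma}\,\lvert P(\gamma z)\rvert
\]
does the job: bounded because $\overline\Omega$ is compact, continuous plurisubharmonic because $\{P\circ\gamma\}_{\gamma}$ is uniformly bounded and hence locally uniformly Lipschitz by Cauchy estimates, and $\check\Gamma$-invariant by construction. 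It vanishes on $Hx\cap Z$ (after passing, as you do, to the finite-index subgroup $\check\Gamma\cap H_0\subset H$, each $\gamma$ preserves $Hx$) yet is nonzero at some point of $Z\smallsetminus E$; its descent to $\check Y$, extended across a desingularised projective compactification by the Riemann extension theorem, then violates the maximum principle. With Zariski density in hand the paper finishes by the orbit-disjointness trick you allude to: if some $y\in Z\smallsetminus Hx$ existed, the same density argument for $y$ gives $\overline{Hy}\cap Z=Z$, impossible since $Hx$ and $Hy$ are disjoint; hence $Z\subset Hx$ for every $x\in Z$. Your endgame needs this made explicit too, since $\dim Hx=\dim Z$ for \emph{generic} $x$ alone yields only that $Z$ is an irreducible component of $\overline{Hx}\cap\Omega$, not of $Hx\cap\Omega$.
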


\begin{proof}
Recall that by definition the irreducible algebraic subset $Z \subset \Omega$ is an irreducible component of $\widehat Z \cap \Omega$ for some irreducible projective algebraic subvariety $\widehat Z \subset X_c$. Consider the orbit $Hx \subset \widehat Z$ of $x \in Z$ under the complex algebraic group $H \subset G$.  Since $S = H_0x \subset Z$ and $Z \subset \Omega$ is a complex-analytic subvariety, we have the inclusion $(Hx;x) \subset (\widehat Z;x)$ of germs of subvarieties, hence $Hx \subset \widehat Z$. We prove first of all that $Hx \cap Z$ is dense in $Z$ with respect to the Zariski topology on $Z$.  Suppose otherwise.  Then there exists a Zariski closed subset $E \subsetneq Z$ such that $E \supset Hx \cap Z$.  There exists a projective algebraic subvariety $\widehat E$ such that $E$ is the union of a finite number of irreducible components of $\widehat E \cap \Omega$. Writing $N = \dim_\mathbb C(\Omega)$, let now $P(z_1,\cdots,z_N)$ be a polynomial in $N$ complex variables such that $P|_{\widehat E \cap \mathbb C^N}\equiv 0$ and such that $P|_{\widehat Z \cap \mathbb C^N} \not\equiv 0$.  

Next, using $P \in \mathbb C[z_1,\cdots,z_N]$ we will derive a contradiction by means of the maximum principle. Define a real function $\Phi: \Omega \to \mathbb R$ by $\Phi(z) = {\rm sup}\{|P(\gamma z)|: \gamma \in \check \Gamma\}$.  Write $f_\gamma(z) := P(\gamma z)$ for $z \in \Omega$. Regarding $\{f_\gamma\}_{\gamma \in \check \Gamma}$ as a family of holomorphic functions on $\Omega$, we have the uniform bound $|f_\gamma(z)| \le \text{\rm sup}\{|P(z)|: z \in \overline{\Omega}\} < \infty$.  From Cauchy estimates, the family of holomorphic functions $\{f_\gamma\}_{\gamma \in \check \Gamma}$ is uniformly Lipschitz on any compact subset of $\Omega$ and it follows that $\Phi$ is uniformly Lipschitz on any compact subset of $\Omega$.  In particular, $\Phi: \Omega \to \mathbb R$ is a continuous bounded plurisubharmonic function on $\Omega$.   Restricting to $Z$ we have $\Phi(z) = 0$ whenever $z \in Hx \cap Z \subset E$ and $\Phi(z_0) \neq 0$ for some $z_0 \in {\rm Reg}(Z) - E$.  By the definition of $\Phi$ we have $\Phi(\gamma z) = \Phi(z)$ for any $\gamma \in \check \Gamma$, hence we obtain by descent a nonconstant bounded plurisubharmonic function $\varphi: W-A \to \mathbb R$. Denote by $\check Y \subset W$ a projective compactification, and define $A := {\rm Sing}(W) \cup (W-\check Y)$. 
Let $\sigma: W^\sharp \to W$ be a desingularization of $W$ and
define $\varphi^\sharp: W^\sharp - \sigma^{-1}(A) \to \mathbb R$ by $\varphi^\sharp = \varphi\circ\sigma$.  Then, $\varphi$ is a nonconstant bounded plurisubharmonic function defined on the nonempty Zariski open subset $W^\sharp - \sigma^{-1}(A) \subset W^\sharp$.  By the Riemann extension theorem for bounded plurisubharmonic functions, $\varphi^\sharp$ extends to a plurisubharmonic function, to be denoted by the same symbol, on the projective manifold $W^\sharp$.  By the maximum principle for plurisubharmonic functions $\varphi^\sharp$ must necessarily be a constant, a plain contradiction.    

Since $H \subset G$ acts algebraically on $X_c$, the Zariski closure of $Hx$ in $\widehat Z \subset X_c$ is the same as its topological closure, and we conclude from the above that $\overline{Hx} \cap Z = Z$. Suppose now $Hx \cap Z \subsetneq Z$ and let $y \in Z-Hx$. The same argument applies to $y$ (in place of $x$) and we have $\overline{Hy} \cap Z = Z$, contradicting with the fact that $Hx$ and $Hy$ are distinct and hence disjoint orbits.  We conclude that $Hx \cap Z = Z$ for any $x \in Z$, i.e., $Z \subset Hx$ for any $x \in Z$. Hence, the germs of subvarieties $(Z;x)$ and $(Hx;x)$ at $x \in Z$ are identical and $Z$ is an irreducible component of $Hx\cap\Omega$.   
\end{proof}

As a direct consequence of Proposition \ref{density}, $Z \subset \Omega$ is a complex submanifold because $Z$ is an irreducible component of $Hx\cap \Omega$ for $x\in Z$. To anticipate the use of this assertion, we state this result as a corollary in the following.

\begin{cor}\label{Cor:Smoothness_of_Z}
Let $\Omega \Subset \mathbb C^N$ be a bounded symmetric domain in its Harish-Chandra realization, and $Z \subset \Omega$ be an irreducible algebraic subset.  Suppose there exists a torsion-free discrete subgroup $\check\Gamma \subset {\rm Aut}(\Omega)$ such that $\check\Gamma$ stabilizes $Z$ and $Z/\check\Gamma$ is quasi-projective.
Then, $Z \subset \Omega$ is a complex submanifold.
\end{cor}
 
\subsubsection{Preliminaries from Riemannian geometry on bounded symmetric domains}
The following lemma in Riemannian geometry is well-known but we include a proof for easy reference. Note that  for any Riemannian symmetric space $(M,h)$ of the semisimple and noncompact type, the underlying manifold $M$ is real-analytic, and $h$ is a real-analytic metric. 

\begin{lem}\label{lem5.23}
Let $(M,h)$ be a Riemannian symmetric manifold $(M,h)$ of the semisimple and noncompact type, and $\gamma$ be an isometry of $(M,h)$. Then, an irreducible component $\Sigma(\gamma)$ of the fixed point set $\text{\rm Fix}(\gamma)$ of any isometry $\gamma$ of $(M,h)$ is necessarily a totally geodesic submanifold.
\end{lem}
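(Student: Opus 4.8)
The plan is to reduce the statement to the standard fact that the fixed point set of an isometry of a Riemannian manifold is a totally geodesic submanifold, being careful about the components. First I would recall the general principle: if $(M,h)$ is any Riemannian manifold and $\gamma \in \mathrm{Isom}(M,h)$, then $\mathrm{Fix}(\gamma)$ is a closed submanifold (possibly with components of varying dimension), and each component is totally geodesic. The argument is local and goes through the exponential map: fix $p \in \mathrm{Fix}(\gamma)$, and consider the linear isometry $d\gamma_p : T_p(M) \to T_p(M)$. Since $\gamma$ commutes with $\exp_p$ in the sense that $\gamma(\exp_p(v)) = \exp_p(d\gamma_p(v))$ for $v$ in a normal neighborhood (because $\gamma$ carries geodesics through $p$ to geodesics through $p=\gamma(p)$ with the correspondingly transformed initial velocity), a point $\exp_p(v)$ near $p$ is fixed by $\gamma$ if and only if $d\gamma_p(v) = v$, i.e.\ $v \in \ker(d\gamma_p - \mathrm{id})$. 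Hence near $p$, $\mathrm{Fix}(\gamma) = \exp_p\big(\ker(d\gamma_p - \mathrm{id})\big)$, which is the image under $\exp_p$ of a linear subspace $V_p := \ker(d\gamma_p - \mathrm{id}) \subset T_p(M)$; this exhibits $\mathrm{Fix}(\gamma)$ locally as a submanifold whose tangent space at $p$ is $V_p$ and which, being swept out by geodesics of $(M,h)$ emanating from $p$ in directions lying in $V_p$, is totally geodesic at $p$.

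Next I would note that the dimension of $V_p$ can a priori jump as $p$ varies over $\mathrm{Fix}(\gamma)$, which is why one passes to a fixed irreducible (equivalently, connected) component $\Sigma(\gamma)$: along $\Sigma(\gamma)$ the dimension of $V_p$ is constant, equal to $\dim \Sigma(\gamma)$, and the local description above shows $T_p(\Sigma(\gamma)) = V_p$ and that every geodesic of $(M,h)$ tangent to $\Sigma(\gamma)$ stays in $\mathrm{Fix}(\gamma)$ near $p$, hence in $\Sigma(\gamma)$ by connectedness and continuity. This is precisely the statement that $\Sigma(\gamma) \subset (M,h)$ is totally geodesic. The real-analyticity of $(M,h)$ for symmetric spaces of the noncompact type — which is recalled in the sentence preceding the lemma — guarantees that $\mathrm{Fix}(\gamma)$ is a real-analytic subset, so its irreducible components coincide with its connected components and the phrase ``irreducible component'' is unambiguous; this is the only place the symmetric-space hypothesis is used, and it is a convenience rather than an essential input.

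The main obstacle, such as it is, is bookkeeping rather than depth: one must justify the identity $\gamma \circ \exp_p = \exp_p \circ\, d\gamma_p$ carefully (it follows because isometries map geodesics to geodesics and are determined near $p$ by their $1$-jet at $p$), and one must handle the fact that $\mathrm{Fix}(\gamma)$ need not be connected or equidimensional by restricting attention to a single component from the outset, as the statement does. I would then conclude by remarking that no completeness or homogeneity beyond what is available is needed; the lemma is purely local once the component is fixed. This is why I include the proof only ``for easy reference'': the content is the classical fixed-point-set lemma, specialized to the real-analytic setting of Riemannian symmetric spaces so that ``irreducible component'' makes sense.
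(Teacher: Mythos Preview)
Your proposal is correct and follows essentially the same approach as the paper: both arguments hinge on the identity $\gamma\circ\exp_p=\exp_p\circ\, d\gamma_p$ to show that geodesics tangent to $\mathrm{Fix}(\gamma)$ remain in it, yielding total geodesy. The only cosmetic difference is that the paper first treats $\Sigma(\gamma)$ as a real-analytic subvariety, works at smooth points via the second fundamental form and polarization, and then invokes the Cartan--Hadamard property (global diffeomorphism of $\exp_x$) to conclude smoothness everywhere, whereas you establish the manifold structure of $\mathrm{Fix}(\gamma)$ directly from the local description $\exp_p(\ker(d\gamma_p-\mathrm{id}))$; one small caution is that your remark ``irreducible components coincide with connected components'' follows from this manifold structure rather than from real-analyticity alone.
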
 

\begin{proof}
Fix$(\gamma) \subset M$ is a real-analytic subvariety. Let $\Sigma(\gamma) \subset {\rm Fix}(\gamma)$ be any irreducible component, and $x \in \Sigma(\gamma)$ be a smooth point.  Since $\gamma(x) = x$ and $\gamma|_{\Sigma(\gamma)} = \text{\rm id}_{\Sigma(\gamma)}$, we have $d\gamma(\eta) = \eta$ for any $\eta \in T_x(\Sigma(\gamma)$). Let $\ell \subset M$ be a geodesic passing through $x$ such that $T_x(\ell) \subset T_x(\Sigma(\gamma))$. From $d\gamma(\eta) = \eta$ for $\eta \in T_x(\ell)$ we conclude that $\gamma(y) = y$ for any $y \in \ell$ by the uniqueness of parametrized geodesics with fixed initial point and fixed initial velocity.
Hence, $\ell \subset \Sigma(\gamma)$. It follows that $\sigma(\eta,\eta) = 0$
for the second fundamental from $\sigma$ of $\Sigma(\gamma) \subset M$ at $x$, and by polarization we have $\sigma \equiv 0$ on ${\rm Reg}(\Sigma(x))$. Finally, being the image of a vector subspace $V \subset T_x(M)$ under the exponential map ${\bf exp}_x: T_x(M) \to M$ at a nonsingular point $x \in \Sigma(\gamma)$, the totally geodesic subset $\Sigma(\gamma) \subset M$ of the Cartan-Hadamard manifold $(M,h)$ is necessarily nonsingular, and it follows that $\Sigma(\gamma) \subset M$ is a totally geodesic submanifold, as desired.
\end{proof}

We have the following lemma on the stabilizer subgroup of $Z \subset \Omega$. 

\begin{lem}
Let $Z \subset \Omega$ be an algebraic subset, and let $\Omega' \subset \Omega$ be the smallest totally geodesic complex submanifold containing $Z$.
Suppose $\gamma \in \text{\rm Aut}(\Omega')$ such that $\gamma|_Z = \text{\rm id}_Z$.  Then, $\gamma = \text{\rm id}_{\Omega'}$.
\end{lem}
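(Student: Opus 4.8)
The plan is to deduce the lemma from the minimality of $\Omega'$ together with Lemma \ref{lem5.23} applied to the fixed-point set of $\gamma$.

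First I would record that $\Omega'$, being a totally geodesic complex submanifold of the bounded symmetric domain $\Omega$, is again a bounded symmetric domain; in particular $(\Omega', ds_{\Omega'}^2)$ is a Riemannian symmetric space of the noncompact type, and the biholomorphism $\gamma \in \Aut(\Omega')$ is an isometry of it. Since $\gamma$ is holomorphic, its fixed-point set ${\rm Fix}(\gamma) \subset \Omega'$, being the preimage of the diagonal under the holomorphic map $z \mapsto (z, \gamma(z))$, is a complex-analytic subvariety of $\Omega'$; and since $\gamma|_Z = {\rm id}_Z$ we have $Z \subset {\rm Fix}(\gamma)$, so ${\rm Fix}(\gamma) \neq \varnothing$.

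Next I would apply Lemma \ref{lem5.23} to the isometry $\gamma$ of $(\Omega', ds_{\Omega'}^2)$: each irreducible component $\Sigma$ of ${\rm Fix}(\gamma)$ is a totally geodesic submanifold of $\Omega'$. Because $d\gamma$ is $\mathbb C$-linear, the tangent space $\ker(d\gamma_x - \mathrm{id})$ to ${\rm Fix}(\gamma)$ at a smooth point $x$ is a complex subspace, so each such $\Sigma$ is in fact a totally geodesic complex submanifold of $\Omega'$; and since $(\Omega', ds_\Omega^2|_{\Omega'}) \subset (\Omega, ds_\Omega^2)$ is totally geodesic and total geodesy is transitive, $\Sigma$ is a totally geodesic complex submanifold of $\Omega$. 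To see that ${\rm Fix}(\gamma)$ is itself such a submanifold I would use that $\Omega'$ is a Cartan--Hadamard manifold: the unique geodesic joining two points of ${\rm Fix}(\gamma)$ is $\gamma$-invariant, so ${\rm Fix}(\gamma)$ is geodesically convex, hence connected, hence equals a single component $\Sigma$. (If one only needs the statement for irreducible $Z$, as in the deduction of Theorem \ref{ThmBialg}, this convexity step can be skipped, since $Z$ then automatically lies in a single irreducible component of ${\rm Fix}(\gamma)$.)

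Finally, ${\rm Fix}(\gamma)$ is a totally geodesic complex submanifold of $\Omega$ containing $Z$, so by the minimality of $\Omega'$ we get $\Omega' \subseteq {\rm Fix}(\gamma) \subseteq \Omega'$; thus ${\rm Fix}(\gamma) = \Omega'$, i.e.\ $\gamma = {\rm id}_{\Omega'}$. The main obstacle is precisely the passage from the component-by-component description furnished by Lemma \ref{lem5.23} to the assertion that ${\rm Fix}(\gamma)$ is a \emph{connected} totally geodesic complex submanifold, and this is where the non-positive curvature of $\Omega'$ (equivalently, the convexity of fixed-point sets of isometries of a simply connected non-positively curved manifold) is used; the remaining steps are formal.
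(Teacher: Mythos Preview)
Your proof is correct and follows essentially the same route as the paper: apply Lemma \ref{lem5.23} to the isometry $\gamma$ of the Hermitian symmetric space $(\Omega',ds_\Omega^2|_{\Omega'})$, observe that the relevant component of ${\rm Fix}(\gamma)$ is a totally geodesic \emph{complex} submanifold of $\Omega'$ (hence of $\Omega$) containing $Z$, and invoke the minimality of $\Omega'$. The paper simply picks an irreducible component $\Sigma(\gamma)$ of ${\rm Fix}(\gamma)$ containing $Z$ and concludes $\Sigma(\gamma)=\Omega'$; your extra Cartan--Hadamard convexity step showing ${\rm Fix}(\gamma)$ is connected is a harmless strengthening that covers the case of reducible $Z$, but is not needed in the paper's setting where $Z$ is irreducible.
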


\begin{proof}
By hypothesis $\gamma \in \text{\rm Aut}(\Omega')$ is such that $\gamma|_Z  = \text{\rm id}_Z$. Note that $(\Omega',ds_\Omega^2|_{\Omega'})$ is a Hermitian symmetric space of the noncompact type.  Let now $\Sigma(\gamma)$ be an irreducible component of Fix$(\gamma)$ such that $Z \subset \Sigma(\gamma)$. 
Since $\gamma$ is a holomorphic automorphism on $\Omega'$, $\Sigma(\gamma) \subset \Omega'$ is a complex-analytic subvariety.
By Lemma \ref{lem5.23}, $\Sigma(\gamma) \subset \Omega'$ is a totally geodesic complex submanifold.  From the minimality of $\Omega' \subset \Omega$ among all totally geodesic complex submanifolds containing $Z$, we have $\Sigma(\gamma) = \Omega'$. In other words, $\gamma = \text{\rm id}_{\Omega'}$, as desired.
\end{proof}    

From now on, replacing $Z \subset \Omega$ by $Z \subset \Omega'$ if necessary we assume without loss of generality that $\Omega$ is the smallest bounded symmetric domain containing $Z$ so that the natural homomorphism 
$\Phi: H_0 \to \text{\rm Aut}(Z,ds_\Omega^2|_Z)$ defined by $\Phi(\gamma) = \gamma|_Z$ is injective. 

\subsubsection{Nadel's semisimplicity theorem on automorphism groups of universal covers of projective manifolds with ample canonical line bundle}
To prove that $Z \subset \Omega$ is totally geodesic it would suffice to prove that $(Z,ds^2_\Omega|_Z)$ is abstractly biholomorphically isometric to a Hermitian symmetric manifold of the semisimple and noncompact type in such a way that Aut$_0(Z,ds^2_\Omega|_Z)$ embeds equivariantly into $G_0 = \text{\rm Aut}_0(\Omega,ds^2_\Omega)$, from which the total geodesy of $Z \subset \Omega$ will follow from Theorem \ref{ThmEquiv}. We have a positive-dimensional algebraic subgroup $H_0 \subset G_0$ acting on $Z$, but to proceed further there are two difficulties. First of all, we are short of proving that $H_0$ acts transitively on $Z$.  Secondly, even when we know that $H_0$ acts transitively on $\Omega$ it is not clear that the inclusion $H_0 \subset G_0$ extends to an equivariant homomorphism Aut$_0(Z,ds^2_\Omega|_Z) \hookrightarrow G_0$. 

While the preparation towards proving Theorem \ref{ThmUnif} works so far equally well when $\check Y = Z/\check\Gamma$ is quasi-projective, from now on we return to the situation where $\check Y = Z/\check\Gamma$ is compact as in the hypothesis of the theorem. For compact K\"ahler manifolds we have the following result of Nadel \cite{Na90}. 

\begin{thm}
Let $X$ be a compact K\"ahler manifold with ample canonical line bundle, and denote by $\pi:\widetilde X \to X$ the uniformization map.  Then, $\text{\rm Aut}_0(\widetilde X)$ is a semisimple Lie group of the noncompact type.
\end{thm}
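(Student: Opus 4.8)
The plan is to reconstruct Nadel's argument using the canonical Kähler–Einstein metric and the polystability of the tangent bundle. First I would invoke the Aubin–Yau theorem: since $X$ is a compact Kähler manifold with ample canonical line bundle $K_X$, it admits a unique Kähler–Einstein metric $\omega_{KE}$ with $\mathrm{Ric}(\omega_{KE}) = -\omega_{KE}$, and I would normalize so that $c_1(X)$ is represented by $-\frac{1}{2\pi}\omega_{KE}$. Pulling back to the universal cover $\widetilde X$ we obtain a complete Kähler–Einstein metric $\widetilde\omega_{KE}$ which is canonically associated to the complex structure, hence invariant under $\mathrm{Aut}(\widetilde X)$; in particular $G := \mathrm{Aut}_0(\widetilde X)$ acts by isometries of $(\widetilde X,\widetilde\omega_{KE})$ and is therefore a (finite-dimensional) Lie group. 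Write $\mathfrak g = \mathrm{Lie}(G)$ and let $\mathfrak g = \mathfrak r \oplus \mathfrak s$ be a Levi decomposition, with $\mathfrak r$ the solvable radical; the goal is to show $\mathfrak r = 0$ and that $\mathfrak s$ has no compact factors.

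The key mechanism is that a holomorphic vector field on $X$ arising from a normal (e.g. abelian or central) subgroup of $G$ would destabilize $T_X$. Concretely, I would argue as follows. Suppose $\mathfrak r \neq 0$; then $\mathfrak r$ contains a nonzero abelian ideal $\mathfrak a \subset \mathfrak g$. Elements of $\mathfrak a$ are $G$-invariant (up to the adjoint action, which is trivial on an ideal modulo $\mathfrak a$ itself) holomorphic vector fields on $\widetilde X$, and one shows they descend to give a nonzero holomorphic subsheaf of $T_X$ — more precisely, the $\pi_1(X)$-invariance together with the structure of $\mathfrak a$ as an ideal produces a nontrivial coherent subsheaf $\mathcal F \subset T_X$ generated by such fields. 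The existence of the Kähler–Einstein metric $\omega_{KE}$ implies, by the standard Bogomolov–type / Yau argument, that $(T_X, \omega_{KE})$ is polystable with respect to the polarization $c_1(K_X)$ and that $\mu(T_X) = \frac{\deg_{K_X} T_X}{\dim X} = -\frac{(K_X^n)}{n} < 0$. On the other hand, a subsheaf generated by global (pulled-back) holomorphic vector fields that are bounded in the $\widetilde\omega_{KE}$-norm — boundedness following from completeness of the metric and the $\pi_1$-invariance, via a maximum-principle argument on $X$ — has nonnegative degree, contradicting negativity of the slope unless $\mathcal F = 0$. The same norm-boundedness / degree argument rules out a compact simple factor of $\mathfrak s$: such a factor would again yield holomorphic vector fields on $\widetilde X$ generating a subsheaf of $T_X$ of nonnegative degree. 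Hence $\mathfrak g$ is semisimple of the noncompact type, i.e. $G = \mathrm{Aut}_0(\widetilde X)$ is a semisimple Lie group of the noncompact type.

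I expect the main obstacle to be the boundedness and descent step: showing that a holomorphic vector field $\xi$ on $\widetilde X$ coming from the Lie algebra action has pointwise $\widetilde\omega_{KE}$-norm bounded on $\widetilde X$ (so that $\|\xi\|^2$ descends to a bounded plurisubharmonic-type function on the compact $X$ and one can control $\log\|\xi\|^2$), and then converting this into the curvature/degree inequality $\deg_{K_X}\mathcal F \ge 0$ for the associated subsheaf $\mathcal F \subset T_X$. This requires the Kähler–Einstein (hence Ricci-pinching) estimates together with the Gauss–Codazzi comparison for the subbundle, essentially the same package of ideas used later in the paper to handle holomorphic $G$-structures on $\check Y$; the remaining Levi-theoretic bookkeeping — passing from "no nonzero abelian ideal and no compact factor" to full semisimplicity of the noncompact type — is then routine Lie theory. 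I would refer to Nadel \cite{Na90} for the details of these estimates, reproducing only the structure of the argument here.
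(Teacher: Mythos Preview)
The paper does not prove this theorem; it is quoted as Nadel's result \cite{Na90} and used as a black box (the subsequent Proposition \ref{pro5.26} is then justified by saying ``the same proof applies''). So there is no in-paper proof to compare your proposal against, and your closing remark that you ``would refer to Nadel \cite{Na90} for the details'' is exactly what the paper does.

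That said, your sketch of Nadel's argument has one soft spot worth flagging. The descent step requires not just an abelian ideal $\mathfrak a \subset \mathfrak g$, but one that is invariant under the action of $\pi_1(X)$ on $\mathfrak g$ by \emph{outer} automorphisms (since $\pi_1(X) \subset \mathrm{Aut}(\widetilde X)$ normalizes the identity component $G$ but need not lie in it). An arbitrary abelian ideal is only $\mathrm{ad}$-invariant; to get $\pi_1$-invariance one must take a \emph{characteristic} abelian ideal, e.g.\ the last nonzero term of the derived series of $\mathfrak r$. Likewise, to exclude compact factors one should use the product of all compact simple ideals, which is again characteristic. Once the subspace $\mathfrak a \subset \mathfrak g$ is $\pi_1$-invariant, the span of $\{\xi(x):\xi\in\mathfrak a\}$ defines a $\pi_1$-invariant coherent subsheaf of $T_{\widetilde X}$ that descends to $\mathcal F \subset T_X$; the degree computation then goes through (for the abelian case the determinant of $\mathcal F$ is trivialized by wedging a basis of commuting fields, for the compact case one uses an invariant inner product on the ideal), contradicting $\mu(T_X)<0$ via polystability. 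Your ``boundedness of $\|\xi\|^2$'' heuristic is not quite the mechanism Nadel uses; the trivialization of $\det\mathcal F$ is more direct.
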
 

Here a semisimple Lie group $Q$ is said to be of the noncompact type if and only if in the direct product decomposition of the universal covering group $\widetilde{Q}$ of $Q$ there are no compact factors.

We have proven that $Z \subset Hx$ for some complex algebraic subgroup $H \subset G = \text{\rm Aut}_0(X_c)$, so that in particular $Z \subset \Omega$ is nonsingular, and $\check Y := Z/\check \Gamma$ is a projective manifold.
The K\"ahler metric $ds^2_\Omega|_Z$ is of nonpositive bisectional curvature and strictly negative holomorphic sectional curvature by the monotonicity on bisectional curvatures resulting from Gauss' equation, hence $\check Y$ inherits a K\"ahler metric of strictly negative Ricci curvature, proving that $\check Y$ has ample canonical line bundle.  Hence, Nadel \cite{Na90} applies to $\check Y$.  However, we will need a modified version as given below, which follows immediately from the proof in \cite{Na90}, since we are dealing with holomorphic isometries of $(Z,ds^2_\Omega|_Z)$ which are restrictions of holomorphic automorphisms of $\Omega$ which stabilize $Z$.
Recall that we have assumed that there is no proper totally geodesic complex submanifold $\Omega' \subset \Omega$ which contains $Z$. We have

\begin{pro}\label{pro5.26}
Suppose there exists a torsion-free discrete subgroup $\check\Gamma\subset \mathrm{Aut}(\Omega)$ such that $\check\Gamma$ stabilizes $Z$ and $Z/\check \Gamma$ is compact. Let $H_0\subset G_0:=\mathrm{Aut}_0(\Omega)$ be the identity component of the subgroup of $G_0$ which stabilizes $Z$. Then, $H_0\subset G_0$ is a semisimple Lie group without compact factors.
\end{pro}

\subsubsection{Proof of Theorem \ref{ThmUnif}}
By Corollary \ref{Cor:Smoothness_of_Z}, $Z \subset \Omega \cong G_0/K$ is a complex submanifold.
Recall $\hat\Gamma := H_0\cap \check\Gamma \subset \check\Gamma$ is a subgroup of finite index by Proposition \ref{pro5.20}.
Thus, by the assumption that $Z/\check\Gamma$ is compact, $\hat Y:=Z/\hat\Gamma$ is a compact complex manifold.
Let $x\in Z$ and $S:=H_0 x\subset Z$.
For any $z\in \Omega$, write $K_z:=\{g\in G_0:g(z)=z\}$ and $(H_0)_z=\{h\in H_0: h(z)=z\}$.
Since $(H_0)_z\subset K_z$ and $K_z$ is a compact group, the isotropy subgroup $(H_0)_z$ is compact, where $z\in \Omega$.
Thus, there is a maximal compact subgroup $L$ of $H_0$ containing $(H_0)_x$, i.e., $(H_0)_x\subseteq L \subseteq H_0$.
By Cartan's fixed point theorem, $L$ has a fixed point $y\in \Omega$ so that $L\subset K_y$.
In particular, $L\subseteq (H_0)_y \subset H_0$ and thus $L=(H_0)_y$ because $L\subset H_0$ is maximal compact and $(H_0)_y$ is compact.
Since $H_0$ is a connected real algebraic group, $H_0/L \cong \mathbb R^n$ is homeomorphic to $\mathbb R^n$ for some integer $n\ge 0$ (see Borel \cite[p.\,124]{Bo98}).

Now, $S_{\hat\Gamma}:=\hat\Gamma \backslash H_0/L$ is a $K(\hat\Gamma,1)$ since $H_0/L\cong \mathbb R^n$ is contractible and $H_0/L \to S_{\hat\Gamma}$ is the universal covering map.
For the notion of $K(\pi,n)$'s, we refer the readers to Whitehead \cite[Chapter V, p.\,244]{Wh78}.
Recall the following fundamental theorem in Algebraic Topology.
\begin{thm}[$\text{cf.\,Whitehead \cite[(4.3) Theorem, p.\,225]{Wh78}}$]\label{thm_homotopy}
Let $N$ be a connected CW complex and $M$ be a $K(\pi,1)$.
Then, the correspondence $f\mapsto f_*$ induces the one-to-one correspondence between $[N,x_0;M,y_0]$ and $\mathrm{Hom}(\pi_1(N),\pi_1(M))$, where $[N,x_0;M,y_0]$ denotes the set of homotopy classes of continuous maps from $N$ to $M$ which map $x_0\in N$ to $y_0\in M$, and $\mathrm{Hom}(\pi_1(N),\pi_1(M))$ denotes the set of all group homomorphisms from $\pi_1(N)$ to $\pi_1(M)$.
\end{thm}

The inclusion map $\iota_{\hat Y}: \hat Y  \hookrightarrow \Omega/\hat\Gamma=:X_{\hat\Gamma}$ induces a group homomorphism 
\[ \Phi:=(\iota_{\hat Y})_*:\pi_1(\hat Y)\to \pi_1(X_{\hat\Gamma}). \]
Let $g:S_{\hat\Gamma}=\hat\Gamma\backslash H_0/L \hookrightarrow X_{\hat\Gamma}:=\Omega/\hat\Gamma$ be the inclusion map which is induced from the natural inclusion $H_0/L \hookrightarrow \Omega$.
Recall that $H_0/L\cong \mathbb R^n$ is contractible as shown above, hence $H_0/L$ is simply connected. On the other hand, $\Omega$ is a Cartan-Hadamard manifold, hence contractible, {\it a fortiori\/} simply connected, and thus $X_{\hat\Gamma}=\Omega/\hat\Gamma$ is a $K(\hat\Gamma,1)$. Therefore, the fundamental group $\pi_1(S_{\hat\Gamma})$ (resp.\,$\pi_1(X_{\hat\Gamma})$) of $S_{\hat\Gamma}$ (resp.\,$X_{\hat\Gamma}$) can be naturally identified with the group ${\rm Cov}_{S_{\hat\Gamma}}\cong \hat\Gamma$ (resp.\,${\rm Cov}_{X_{\hat\Gamma}}\cong \hat\Gamma$) of all covering transformations of the universal covering map $H_0/L\to S_{\hat\Gamma}$ (resp.\,$\Omega\to X_{\hat\Gamma}$),  and it follows readily that the induced group homomorphism $g_*:\pi_1(S_{\hat\Gamma})\to \pi_1(X_{\hat\Gamma})$ is an isomorphism.
Define the group homomorphism $(g_*)^{-1}\circ \Phi:\pi_1(\hat Y) \to \pi_1(S_{\hat\Gamma})$.
Then, by Theorem \ref{thm_homotopy} we have a continuous map $f:\hat Y\to S_{\hat\Gamma}$ such that $f_* = (g_*)^{-1}\circ \Phi$.
Now, the composition
\[ g\circ f: \hat Y \to X_{\hat\Gamma} \]
is a continuous map inducing the homomorphism $(g\circ f)_*=g_*\circ f_*=\Phi$.
Thus, Theorem \ref{thm_homotopy} asserts that $g\circ f:\hat Y \to X_{\hat\Gamma}$ and $\iota_{\hat Y}:\hat Y \to X_{\hat\Gamma}$ are homotopic to each other.
Note that the inclusion map $g:S_{\hat\Gamma}\hookrightarrow X_{\hat\Gamma}$ is a smooth map.
Since $\hat Y$ and $S_{\hat\Gamma}$ are smooth manifolds, by Whitney's approximation theorem (cf.\,Lee \cite[Theorem 6.26, p.\,141]{Lee13})
we may choose $f$ so that $f:\hat Y\to S_{\hat\Gamma}$ is a smooth map.
As a consequence, $g\circ f:\hat Y \to X_{\hat\Gamma}$ and $\iota_{\hat Y}:\hat Y \to X_{\hat\Gamma}$ are homotopic smooth maps.
By the homotopy invariance of cohomology (cf.\,Lee \cite[Proposition 17.10, p.\,445]{Lee13}) we have the same pullback maps $(g\circ f)^*=\iota_{\hat Y}^*:H^p_{\mathrm{dR}}(X_{\hat\Gamma})\to H^p_{\mathrm{dR}}(\hat Y)$ for all $p$. 
With these results, we are ready to finish the proof of Theorem \ref{ThmUnif}, as follows.

\begin{proof}[Proof of Theorem \ref{ThmUnif}]
We may suppose $\dim_{\mathbb C}(Z)\ge 1$; otherwise, the statement is trivial.
By the previous results, $\hat Y:=Z/\hat \Gamma$ is a compact complex manifold.
Choose a point $x\in Z$ and define $S:=H_0x\subset Z$.
The Bergman metric $ds_\Omega^2$ induces a K\"ahler metric $g_{X_{\hat\Gamma}}$ on $X_{\hat\Gamma}:=\Omega/\hat\Gamma$.
Write $\hat\omega$ for the K\"ahler form of $(X_{\hat\Gamma},g_{X_{\hat\Gamma}})$. We have the compact K\"ahler submanifold $(\hat Y,g_{X_{\hat\Gamma}}|_{\hat Y})$ of $(X_{\hat\Gamma},g_{X_{\hat\Gamma}})$.
Writing $s:=\dim_{\mathbb C}(\hat Y)$, we have $\dim_{\mathbb C}(Z)=s$ and
\[ \eta_{\hat Y}:=\iota_{\hat Y}^*{\hat \omega^s\over s!}
= (g\circ f)^*{\hat \omega^s\over s!} + d\eta_0 
= {1\over s!} f^* \left(g^*\hat\omega^s\right) + d\eta_0\]
for some $(2s-1)$-form $\eta_0$ on $\hat Y$.
Moreover, $\eta_{\hat Y}$ is the volume form of the compact K\"ahler manifold $(\hat Y,g_{X_{\hat\Gamma}}|_{\hat Y})$.
Note that $\hat\omega^s$ is a differential $2s$-form on $X_{\hat\Gamma}$.
Suppose $\dim_{\mathbb R}(S_{\hat \Gamma})<2s$.
Then, we have $g^*\hat\omega^s=0$ and thus
\[ \eta_{\hat Y}
= {1\over s!}f^*(g^*\hat\omega^s) + d\eta_0 
= d\eta_0
\]
globally on $\hat Y$.
Since $\hat Y$ is a compact manifold, we have
\[ \mathrm{Vol}(\hat Y) = \int_{\hat Y} \eta_{\hat Y}
= \int_{\hat Y} d\eta_0
= 0 \]
by Stokes' Theorem, a plain contradiction.
Thus, 
\[ \dim_{\mathbb R}(H_0 x)\ge \dim_{\mathbb R}(H_0/L)=\dim_{\mathbb R}(S_{\hat \Gamma})\ge 2s\]
because $(H_0)_x\subset L$ and $H_0x\cong H_0/(H_0)_x$.
On the other hand, $\dim_{\mathbb R}(H_0 x)$ $\le$ $\dim_{\mathbb R}(Z)=2s$.
Hence, we have
\[ \dim_{\mathbb R}(H_0 x) = \dim_{\mathbb R}(H_0/L)=\dim_{\mathbb R}(Z)=2s. \]
Since $S:=H_0x\subseteq Z$ is a smooth embedded submanifold that is closed in $Z$, and $\dim_{\mathbb R}(S) =\dim_{\mathbb R}(Z)=2s$, $S=Z\subset \Omega$ is a complex submanifold.
Moreover, it follows from $H_0x\cong H_0/(H_0)_x$ and $\dim_{\mathbb R}(H_0 x)$ $=$ $\dim_{\mathbb R}(H_0/L)$ that
\[ \dim_{\mathbb R}(L)=\dim_{\mathbb R}((H_0)_x). \]
Note that $H_0$ is connected so that the maximal compact subgroup $L \subset H_0$ is connected.
Then, we have $(H_0)_x=L$ and thus $S=H_0x\cong H_0/(H_0)_x=H_0/L$.
Therefore, $Z=S\cong H_0/L$ and $H_0$ acts transitively on $Z$.

Since $\hat Y=Z/\hat \Gamma$ is compact, by Proposition \ref{pro5.26} $H_0$ is semisimple of the noncompact type, and thus $H_0/L$ is a Riemannian symmetric space of the semisimple and noncompact type.
Hence, the complex manifold $Z=H_0x\cong H_0/L$ is indeed a Hermitian symmetric space of the semisimple and noncompact type.
By Theorem \ref{ThmEquiv}, $S = Z \subset \Omega$ is totally geodesic, as desired.
\end{proof}

\section{Appendix}\label{App}
In the proof of Lemma \ref{LemTensorV} we made use of an inequality obtained by Mercer \cite[Proposition 2.4]{Me93}. 
In order to make the proof of Lemma \ref{LemTensorV} self-contained, we give a proof of the inequality for bounded symmetric domains in which we only make use of the Polydisk Theorem, as follows.
\begin{pro}
Let $\Omega\Subset \mathbb C^N$ be a bounded symmetric domain of rank $r$ and identify $\Omega\cong G_0/K$, where $K$ is the isotropy subgroup of $G_0:=\mathrm{Aut}_0(\Omega)$ at ${\bf 0}$.
Denote by $d_D(\cdot,\cdot)$ the Kobayashi pseudo-distance of any bounded symmetric domain $D\Subset\mathbb C^n$ so that $d_\Delta(0,\zeta)=\log{1+|\zeta|\over 1-|\zeta|}$ for any $\zeta\in \Delta$.
Then, for any point $z\in \Omega$ we have 
\[ d_\Omega({\bf 0},z) \ge -\log\delta(z,\partial\Omega), \]
where $\delta(x,\partial D)$ denotes the Euclidean distance from $x\in D$ to the boundary $\partial D$ of any bounded domain $D\Subset \mathbb C^n$.
\end{pro}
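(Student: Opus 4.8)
The plan is to reduce the estimate to the one-variable case via the Polydisk Theorem and the distance-decreasing property of the Kobayashi pseudo-distance. First I would fix $z\in\Omega$ and recall from the Polydisk Theorem (cf.\,\cite{Wol72}, \cite{Mo14}) that there is $\gamma\in K$ with $\gamma(z)\in\Pi$, where $\Pi\cong\Delta^r$ is the standard maximal polydisk through ${\bf 0}$, and $(\Pi,ds_\Omega^2|_\Pi)\subset(\Omega,ds_\Omega^2)$ is totally geodesic. Since $\gamma$ is a biholomorphic automorphism of $\Omega$ it is an isometry for $d_\Omega$ and fixes ${\bf 0}$, so $d_\Omega({\bf 0},z)=d_\Omega({\bf 0},\gamma(z))$; moreover $\gamma$ is the restriction of a linear unitary transformation of $\mathbb C^N$ preserving $\Omega$, hence it preserves Euclidean distances and $\delta(z,\partial\Omega)=\delta(\gamma(z),\partial\Omega)$. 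Thus it suffices to prove the inequality for a point $w=(w_1,\dots,w_r,0,\dots,0)$ lying in the polydisk $\Pi$.

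Next I would exploit that the inclusion $\Pi\hookrightarrow\Omega$ is holomorphic, so by the distance-decreasing property of the Kobayashi pseudo-distance we have $d_\Omega({\bf 0},w)\le d_\Pi({\bf 0},w)$; but in fact I want the reverse-type bound, so instead I use the inclusion in the other direction together with the product formula for the Kobayashi distance on a polydisk, namely $d_{\Delta^r}(0,w)=\max_{1\le j\le r}d_\Delta(0,w_j)=\max_j\log\frac{1+|w_j|}{1-|w_j|}$. The key remaining point is to compare $d_\Omega({\bf 0},w)$ with $d_\Delta(0,w_j)$ for a well-chosen $j$: projecting $\Omega$ (or rather using that $\Omega$ sits inside a product of disks up to the $K$-action, or using a coordinate projection $\mathrm{pr}_j:\mathbb C^N\to\mathbb C$ that maps $\Omega$ into a disk of radius $R$) one gets, after rescaling, $d_\Omega({\bf 0},w)\ge d_{\Delta(0,R)}(0,w_j)=\log\frac{R+|w_j|}{R-|w_j|}$. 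Choosing the coordinate that (nearly) realizes the Euclidean distance to $\partial\Omega$ and using that $\delta(w,\partial\Omega)$ is comparable to (indeed bounded below by a constant times) $\min_j(R-|w_j|)$ on $\Pi$, one deduces $d_\Omega({\bf 0},w)\ge -\log\delta(w,\partial\Omega)$ after absorbing the normalizing constants; since the statement as written in the Appendix Proposition carries no additive constant, I would normalize the Harish-Chandra realization so that the relevant disk has radius $1$, or simply note that the clean inequality $d_\Omega({\bf 0},z)\ge-\log\delta(z,\partial\Omega)$ follows once $\Omega$ is taken in its Harish-Chandra realization where each coordinate disk in the polydisk has radius $1$.

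The main obstacle I anticipate is the passage from the polydisk coordinates back to the Euclidean distance $\delta(z,\partial\Omega)$ in $\mathbb C^N$: one must check that, on the maximal polydisk $\Pi\subset\Omega$ in the Harish-Chandra realization, the Euclidean distance from $w\in\Pi$ to $\partial\Omega$ is controlled from below by $\min_j(1-|w_j|)$ (up to a harmless constant, or exactly, depending on the chosen normalization), and that one coordinate projection sends $\Omega$ into the unit disk. This is where the precise structure of the Harish-Chandra realization and the strong orthogonality of $\Psi=\{\psi_1,\dots,\psi_r\}$ enter; the rest is a formal chain of Kobayashi distance-decreasing inequalities plus the product formula on the polydisk, which are standard. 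I would therefore structure the write-up as: (i) reduce to $w\in\Pi$ using $K$; (ii) apply the product formula and distance-decreasing property to get $d_\Omega({\bf 0},w)\ge\log\frac{1+|w_{j_0}|}{1-|w_{j_0}|}$ for the index $j_0$ maximizing $|w_j|$ via a coordinate projection into $\Delta$; (iii) bound $1-|w_{j_0}|$ below by $\delta(w,\partial\Omega)$ and conclude $d_\Omega({\bf 0},w)\ge\log\frac{1}{1-|w_{j_0}|}\ge-\log\delta(w,\partial\Omega)$.
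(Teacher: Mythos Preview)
Your strategy (Polydisk Theorem, $K$-action, distance-decreasing for the Kobayashi metric) matches the paper's, but the implementation diverges at the crucial Euclidean comparison and your step (iii) is where the argument slips. You write ``bound $1-|w_{j_0}|$ below by $\delta(w,\partial\Omega)$'' (i.e.\ $\delta(w,\partial\Omega)\le 1-|w_{j_0}|$) and then immediately use the \emph{opposite} inequality to conclude $-\log(1-|w_{j_0}|)\ge -\log\delta(w,\partial\Omega)$. What you actually need is $1-\max_j|w_j|\le\delta(w,\partial\Omega)$ for $w\in\Pi$. The easy direction coming from $\partial\Pi\subset\partial\Omega$ gives only $\delta(w,\partial\Omega)\le 1-\max_j|w_j|$, which is useless here. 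The needed direction is true (it amounts to the Euclidean ball $B(w,1-\max_j|w_j|)$ being contained in $\Omega$, which follows because the Harish-Chandra realization is the unit ball of the spectral norm and the spectral norm is dominated by the Euclidean norm), but you have not supplied this, and it is not a consequence of the comparability statement you wrote down with an unspecified constant; with a constant $c<1$ you would only recover Mercer's inequality with an additive constant, not the clean inequality of the proposition.

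The paper sidesteps this difficulty entirely by applying the $K$-action to the \emph{nearest boundary point} $b$ rather than to $z$: choose $\gamma\in K$ so that $\gamma b\in\partial\Pi$ (possible since $\overline\Omega=\bigcup_{\gamma\in K}\gamma\,\overline\Pi$), then project $\Omega\to\Delta^r$ via the first $r$ Harish-Chandra coordinates $\pi(w_1,\ldots,w_N)=(w_1,\ldots,w_r)$ (which maps $\Omega$ onto $\Delta^r$ by \cite[Lemma~2.2.2]{MN12}). Now $d_\Omega({\bf 0},z)\ge d_{\Delta^r}({\bf 0},\pi(z))\ge -\log\delta(\pi(z),\partial\Delta^r)$, and the Euclidean comparison is the trivial one: $\delta(\pi(z),\partial\Delta^r)\le\|\pi(z)-\pi(b)\|_{\mathbb C^r}\le\|z-b\|_{\mathbb C^N}=\delta(z,\partial\Omega)$, since a coordinate projection decreases Euclidean norms. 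This yields the sharp inequality with no constants. If you want to keep your route (move $z$ into $\Pi$), then for step (ii) use that $\Pi\subset\Omega$ is totally geodesic so $d_\Omega({\bf 0},w)=d_{\Delta^r}({\bf 0},w)=\max_j\log\frac{1+|w_j|}{1-|w_j|}$, and for step (iii) prove the \emph{correct} direction $\delta(w,\partial\Omega)\ge 1-\max_j|w_j|$ by the spectral-norm argument above; otherwise the proof does not close.
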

\begin{proof}
Note that there is $b\in \partial\Omega$ such that $\delta(z,\partial\Omega)=\lVert z-b\rVert_{\mathbb C^N}$. Here $\lVert v \rVert_{\mathbb C^n}$ denotes the complex Euclidean norm of any vector $v\in \mathbb C^n$.
In terms of the Harish-Chandra coordinates $(w_1,\ldots,w_N)$ on $\Omega$ we have the maximal polydisk $\Pi=\Delta^r \times\{{\bf 0}\}\subset \Omega$ and a holomorphic map $\pi:\Omega\to \mathbb C^r$ defined by $\pi(w_1,\ldots,w_N):=(w_1,\ldots,w_r)$ such that $\pi$ maps $\Omega$ onto the $r$-disks $\Delta^r$ (cf.\,Lemma 2.2.2 in Mok-Ng \cite{MN12}).
Up to the $K$-action on $\Omega$, we may assume that $b\in \partial\Pi$, i.e., $b=(b_1,\ldots,b_r,{\bf 0})\in \partial\Delta^r \times \{{\bf 0}\}=\partial\Pi$, because any $\gamma\in K$ is a unitary transformation on $\mathbb C^N$ and $d_\Omega(\cdot,\cdot)$ is invariant under the $G_0$-action on $\Omega$.
Write $b'=(b_1,\ldots,b_r)\in  \partial\Delta^r$, $z=(z_1,\ldots,z_N)$ and $z'=\pi(z)=(z_1,\ldots,z_r)\in \Delta^r$.
Note that there exists $k$, $1\le k\le r$, such that $\delta(z',\partial\Delta^r)=1-|z_k|$.
Then, we have
\[ \begin{split}
d_\Omega({\bf 0},z) 
&\ge d_{\Delta^r}({\bf 0},z')
= \max\left\{ \log {1+|z_j|\over 1-|z_j|}: 1\le j\le r\right\}\\
&\ge \log {1+|z_k|\over 1-|z_k|} \ge -\log (1-|z_k|)
= -\log \delta(z',\partial\Delta^r). 
\end{split}\]
On the other hand, since $b'\in \partial \Delta^r$ we have
$\delta(z',\partial\Delta^r)
\le \lVert z' - b'\rVert_{\mathbb C^r}
\le \lVert z - b\rVert_{\mathbb C^N}
=\delta(z,\partial\Omega)$.
Hence, we have $d_\Omega({\bf 0},z) \ge -\log \delta(z',\partial\Delta^r)\ge -\log\delta(z,\partial\Omega)$, as desired.
\end{proof}

\bigskip\noindent
Shan Tai Chan, Department of Mathematics, The University of Hong Kong, Pokfulam Road, Hong Kong
(E-mail: mastchan@hku.hk)\\
Ngaiming Mok, Department of Mathematics, The University of Hong Kong, Pokfulam Road, Hong Kong
(E-mail: nmok@hku.hk)
\end{document}